\def\@settitle{%
  \vspace*{-0pt}
  \begin{flushleft}%
    \LARGE\bfseries
    \strut\@title\strut
  \end{flushleft}%
}
\def\@setauthors{%
  \begingroup
  \def\thanks{\protect\thanks@warning}%
  \trivlist
  \raggedright
  \large \@topsep27\p@\relax
  \advance\@topsep by -\baselineskip
  \item\relax
  \author@andify\authors
  \def\\{\protect\linebreak}%
  \authors
  \ifx\@empty\contribs
  \else
    ,\penalty-3 \space \@setcontribs
    \@closetoccontribs
  \fi
  \normalfont
  \endtrivlist
  \endgroup
}
\def\@setaddresses{\par
  \nobreak \begingroup
  \small\raggedright
  \def\author##1{\nobreak\addvspace\smallskipamount}%
  \def\\{\unskip, \ignorespaces}%
  \interlinepenalty\@M
  \def\address##1##2{\begingroup
    \par\addvspace\bigskipamount\noindent
    \@ifnotempty{##1}{(\ignorespaces##1\unskip) }%
    {\ignorespaces##2}\par\endgroup}%
  \def\curraddr##1##2{\begingroup
    \@ifnotempty{##2}{\nobreak\noindent\curraddrname
      \@ifnotempty{##1}{, \ignorespaces##1\unskip}\/:\space
      ##2\par}\endgroup}%
  \def\email##1##2{\begingroup
    \@ifnotempty{##2}{\nobreak\noindent E-mail address%
      \@ifnotempty{##1}{, \ignorespaces##1\unskip}\/:\space
      \ttfamily##2\par}\endgroup}%
  \def\urladdr##1##2{\begingroup
    \def~{\char`\~}%
    \@ifnotempty{##2}{\nobreak\noindent\urladdrname
      \@ifnotempty{##1}{, \ignorespaces##1\unskip}\/:\space
      \ttfamily##2\par}\endgroup}%
  \addresses
  \endgroup
  \global\let\addresses=\@empty
}
\def\@setabstracta{%
    \ifvoid\abstractbox
  \else
    \skip@17pt \advance\skip@-\lastskip
    \advance\skip@-\baselineskip \vskip\skip@
    \box\abstractbox
    \prevdepth\z@ 
    \vskip-15pt
  \fi
}
\renewenvironment{abstract}{%
  \ifx\maketitle\relax
    \ClassWarning{\@classname}{Abstract should precede
      \protect\maketitle\space in AMS document classes; reported}%
  \fi
  \global\setbox\abstractbox=\vtop \bgroup
    \normalfont\small
    \list{}{\labelwidth\z@
      \leftmargin0pc \rightmargin\leftmargin
      \listparindent\normalparindent \itemindent\z@
      \parsep\z@ \@plus\p@
      
    }%
    \item[\hskip\labelsep\bfseries\abstractname.]%
}{%
  \endlist\egroup
  \ifx\@setabstract\relax \@setabstracta \fi
}
\def\ps@headings{\ps@empty
  \def\@evenhead{%
    \setTrue{runhead}%
    \normalfont\scriptsize
    \rlap{\thepage}\hfill
    \def\thanks{\protect\thanks@warning}%
    \leftmark{}{}}%
  \def\@oddhead{%
    \setTrue{runhead}%
    \normalfont\scriptsize
    \def\thanks{\protect\thanks@warning}%
    \rightmark{}{}\hfill \llap{\thepage}}%
  \let\@mkboth\markboth
}\ps@headings
\def\section{\@startsection{section}{1}%
  \z@{-1.4\linespacing\@plus-.5\linespacing}{.8\linespacing}%
  {\normalfont\bfseries\Large}}
\def\subsection{\@startsection{subsection}{2}%
  \z@{-.8\linespacing\@plus-.3\linespacing}{.5\linespacing\@plus.2\linespacing}%
  {\normalfont\bfseries\large}}
\def\subsubsection{\@startsection{subsubsection}{3}%
  \z@{.7\linespacing\@plus.2\linespacing}{-1.5ex}%
  {\normalfont\bfseries}}
\def\@secnumfont{\bfseries}
\renewcommand\contentsnamefont{\bfseries}
\def\@starttoc#1#2{\begingroup
  \setTrue{#1}%
  \par\removelastskip\vskip\z@skip
  \@startsection{}\@M\z@{\linespacing\@plus\linespacing}%
    {.5\linespacing}{
      \contentsnamefont}{#2}%
  \ifx\contentsname#2%
  \else \addcontentsline{toc}{section}{#2}\fi
  \makeatletter
  \@input{\jobname.#1}%
  \if@filesw
    \@xp\newwrite\csname tf@#1\endcsname
    \immediate\@xp\openout\csname tf@#1\endcsname \jobname.#1\relax
  \fi
  \global\@nobreakfalse \endgroup
  \addvspace{32\p@\@plus14\p@}%
  \let\tableofcontents\relax
}
\def\contentsname{Contents}
\def\l@section{\@tocline{2}{.5ex}{0mm}{5pc}{}}
\def\l@subsection{\@tocline{2}{0pt}{2em}{5pc}{}}
\def\to{\mathchoice{\longrightarrow}{\rightarrow}{\rightarrow}{\rightarrow}}
\newcommand{\shortxra}[2][]{\ext@arrow 0359\rightarrowfill@{#1}{#2}}
\def\longrightarrowfill@{\arrowfill@\relbar\relbar\longrightarrow}
\newcommand{\longxra}[2][]{\ext@arrow 0359\longrightarrowfill@{#1}{#2}}
\def\addtagsub#1{\let\oldtf=\tagform@\def\tagform@##1{\oldtf{##1}\hbox{$_{#1}$}}}
\def\otimesover#1{\mathbin{\mathop{\otimes}_{#1}}}
\def\Nopagebreak{\@nobreaktrue\nopagebreak}
\newtheoremstyle{theorem-giventitle}
        {}{}              
        {\itshape}                      
        {}                              
        {\bfseries}                     
        {.}                             
        {\thm@headsep}                             
        {\thmnote{\bfseries#3}}
\newtheoremstyle{theorem-givenlabel}
        {}{}              
        {\itshape}                      
        {}                              
        {\bfseries}                     
        {.}                             
        {\thm@headsep}                             
        {\thmname{#1}~\thmnumber{#3}\setcurrentlabel{#3}}
\newtheoremstyle{definition-giventitle}
        {}{}              
        {}                      
        {}                              
        {\bfseries}                     
        {.}                             
        {\thm@headsep}                             
        {\thmnote{\bfseries#3}}
\def\setcurrentlabel#1{\gdef\@currentlabel{#1}}
\newtheorem{theorem}{Theorem}[section]
\newtheorem{theoremalpha}{Theorem}
\newtheorem{proposition}[theorem]{Proposition}
\newtheorem{corollary}[theorem]{Corollary}
\newtheorem{lemma}[theorem]{Lemma}
\newtheorem{assertion}{Assertion}
\theoremstyle{definition}
\newtheorem{definition}[theorem]{Definition}
\newtheorem{remark}[theorem]{Remark}
\theoremstyle{theorem-giventitle}
\newtheorem{theorem-named}{}
\theoremstyle{theorem-givenlabel}
\newtheorem{theorem-labeled}{Theorem}
\theoremstyle{definition-giventitle}
\newtheorem{definition-named}{}
\newtheorem{step-named}{}
\numberwithin{equation}{section}
\newenvironment{describe}{\begin{proof}}{\end{proof}}
\def\Z{\mathbb{Z}}
\def\Q{\mathbb{Q}}
\def\R{\mathbb{R}}
\def\C{\mathbb{C}}
\def\F{\mathcal{F}}
\def\cE{\mathcal{E}}
\def\cG{\mathcal{G}}
\def\cR{\mathcal{R}}
\def\cP{\mathcal{P}}
\def\cW{\mathcal{W}}
\def\Ker{\operatorname{Ker}}
\def\Im{\operatorname{Im}}
\def\sign{\operatorname{sign}}
\def\rank{\operatorname{rank}}
\def\Bl{B\ell}
\def\Lt{L^2}
\def\rhot{\rho^{(2)}}
\def\mathbinover#1#2{\mathbin{\mathop{#1}\limits_{#2}}}
\def\amalgover#1{\mathbinover{\amalg}{#1}}
\def\cupover#1{\mathbinover{\cup}{#1}}
\def\setminus{\smallsetminus}
\begin{document}

\vspace*{0mm}

\title%
{Unknotted gropes, Whitney towers, and doubly slicing knots}

\author{Jae Choon Cha}
\address{
  Department of Mathematics\\
  POSTECH\\
  Pohang 37673\\
  Republic of Korea
  \quad -- and --\linebreak
  School of Mathematics\\
  Korea Institute for Advanced Study \\
  Seoul 02455\\
  Republic of Korea
}
\email{jccha@postech.ac.kr}

\author{Taehee Kim}
\address{
  Department of Mathematics\\
  Konkuk University \\
  Seoul 05029\\
  Republic of Korea
}
\email{tkim@konkuk.ac.kr}

\def\subjclassname{\textup{2010} Mathematics Subject Classification}
\expandafter\let\csname subjclassname@1991\endcsname=\subjclassname
\expandafter\let\csname subjclassname@2000\endcsname=\subjclassname
\subjclass{%
}


\begin{abstract}
  We study the structure of the exteriors of gropes and Whitney towers
  in dimension~4, focusing on their fundamental groups.  In particular
  we introduce a notion of unknottedness of gropes and Whitney towers
  in the 4-sphere.  We prove that various modifications of gropes and
  Whitney towers preserve the unknottedness and do not enlarge the
  fundamental group.  We exhibit handlebody structures of the
  exteriors of gropes and Whitney towers constructed by earlier
  methods of Cochran, Teichner, Horn, and the first author, and use
  them to construct examples of unknotted gropes and Whitney towers.
  As an application, we introduce geometric bi-filtrations of knots
  which approximate the double sliceness in terms of unknotted gropes
  and Whitney towers.  We prove that the bi-filtrations do not
  stabilize at any stage.
\end{abstract}

\maketitle


\section{Introduction}

In 4-dimensional topology, the disk embedding problem is of primary
importance.  Since Freedman's 1983 ICM proceedings
paper~\cite{Freedman:1984-1}, \emph{capped gropes} have been used as
the most essential ingredient for disk embedding, especially for the
non-simply connected case.  The book of Freedman and
Quinn~\cite{Freedman-Quinn:1990-1} presents beautiful grope-based
treatments of foundational results in dimension~4.  It is also natural
to consider a closely related notion of \emph{Whitney towers} together
with gropes.  Cochran, Orr, and Teichner introduced a framework of the
study of knot concordance, which can be viewed as the local case of
the general disk embedding problem, in terms of symmetric gropes and
Whitney towers~\cite{Cochran-Orr-Teichner:1999-1}.  For links, Conant,
Schneiderman, and Teichner developed a theory of asymmetric grope
concordance and Whitney tower concordance, as summarized
in~\cite{Conant-Schneiderman-Teichner:2011-1}.

To obtain flat embedded disks from capped gropes, the present
technology (e.g.\ see \cite{Freedman-Quinn:1990-1,
  Freedman-Teichner:1995-1, Krushkal-Quinn:2000-1, Cha-Powell:2014-1})
requires that the involved \emph{fundamental group} is, very roughly
speaking, ``not too large.''  Such a group is often called
\emph{good}.  The key question, which is still left open, is whether
all groups are good.  The best known result is that subexponential
groups and their iterated extensions and direct limits are good, due
to Freedman and Teichner~\cite{Freedman-Teichner:1995-1} and Krushkal
and Quinn~\cite{Krushkal-Quinn:2000-1}.

\subsubsection*{Unknotted gropes and Whitney towers}

In this paper, motivated by the above, we begin to study the structure
of the exteriors of gropes and Whitney towers in 4-manifolds, focusing
on their fundmantal groups.  We also present an application to double
slicing of knots.

The following definition formulates the case of the smallest possible
fundamental group.

\begin{definition}
  A sphere-like capped grope, or Whitney tower, in the 4-sphere is
  \emph{$\pi_1$-unknotted} if its complement has infinite cyclic
  fundamental group.
\end{definition}

Precise definitions of sphere-like capped gropes and Whitney towers
are given in
Section~\ref{subsection:preliminaries-whitney-towers-gropes}.  (For
those who are familiar with the notion of properly immersed capped
gropes in \cite{Freedman-Quinn:1990-1}, we remark that we consider a more general class of capped gropes in 4-manifolds, for which a cap is allowed to intersect body surfaces as well as caps, similarly to,
e.g.~\cite{Conant-Schneiderman-Teichner:2012-3}.)  An infinite cyclic
fundamental group is the smallest possible in the sense that the
fundamental group of every sphere-like capped grope/Whitney tower
exterior in $S^4$ has an infinite cyclic quotient (see
Remark~\ref{remark:motivation-to-unknotted-definition}).

The following observation justifies our terminology: Freedman showed
that a flat 2-sphere $S$ embedded in $S^4$ is unknotted in the
classical sense if and only if $\pi_1(S^4\setminus S)$ is infinite
cyclic~\cite{Freedman:1984-1}.  Thus an embedeed 2-sphere is unknotted
if and only if it is $\pi_1$-unknotted as a capped grope or Whitney
tower.  Furthermore, if one performs finger moves on a 2-sphere
embedded in $S^4$ and obtains a Whitney tower consisting of the
resulting immersed 2-sphere together with the additional Whitney disks
introduced by the finger moves, then the Whitney tower is
$\pi_1$-unknotted if and only if the original embedded sphere is
unknotted (see Lemma~\ref{lemma:finger-move-fundamental-group}).

We note that if a sphere-like properly immersed capped grope $G$ in
$S^4$ has height at least 1.5 and is unknotted, then for any 2-disk
$D$ contained in the base surface, the disk embedding result
in~\cite[Theorem~3.4]{Cha-Powell:2014-1} tells us that the subgrope
$\overline{G\setminus D}$ can be replaced with a flat embedded disk.
Thus the grope $G$ can be modified, relative to~$D$, to a flat
embedded 2-sphere.

\subsubsection*{Modifications of Whitney towers and gropes}

It turns out that various fundamental operations on capped gropes and
Whitney towers do not enlarge the fundamental group of the exterior,
and consequently preserve the $\pi_1$-unknottedness.  For instance, in
Sections~\ref{section:Unknotted Whitney towers and gropes}
and~\ref{section:schneiderman-transformation}, we show that it is the
case for the following operations:

\begin{enumerate}
\item Regular homotopy of Whitney towers
  (Definition~\ref{definition:whitney-tower-regular-isotopy} and
  Lemma~\ref{lemma:finger-move-fundamental-group})
\item Taking a subtower of a Whitney tower by removing Whitney disks
  (Proposition~\ref{proposition:unknottedness-of-subtower} and
  Corollary~\ref{corollary:unknotted-subtower-of-lower-heights})
\item Whitney disk splitting (Proposition~\ref{proposition:tower-splitting-unknottedness})
\item Symmetric and asymmetric contraction of a capped grope
  (Proposition~\ref{proposition:unknottedness-of-contraction})
\item Pushing an intersection down in a capped grope (Proposition~\ref{proposition:pushing-intersection-down-unknottedness})
\item Krushkal's grope splitting
  (Proposition~\ref{proposition:grope-splitting-unknotedness})
\end{enumerate}

In~\cite{Schneiderman:2006-1}, Schneiderman presented fundamental
procedures which transform a capped grope to a Whitney tower in an
arbitrary 4-manifold, and vice versa.  Concerning this, we prove the
following result on the exteriors:

\begin{theoremalpha}
  \label{theorem:schneiderman-transformation-intro}
  Schneiderman's transformation~\cite{Schneiderman:2006-1} from a
  Whitney tower to a capped grope does not enlarge the fundamental
  group of the exterior, and for sphere-like ones in the 4-sphere, it
  preserves the $\pi_1$-unknottedness.  The same holds for
  Schneiderman's transformation from a capped grope to a Whitney
  tower.
\end{theoremalpha}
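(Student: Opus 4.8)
The plan is to reduce Theorem~\ref{theorem:schneiderman-transformation-intro} to the elementary modifications of Whitney towers and capped gropes analyzed in Sections~\ref{section:Unknotted Whitney towers and gropes} and~\ref{section:schneiderman-transformation}, by exhibiting each of Schneiderman's two transformations as a finite composition of such modifications. The point is that the conceptual mechanism is already in place: every operation of which we have shown that it does not enlarge $\pi_1$ of the exterior induces a surjection on the fundamental groups, and a composition of surjections is a surjection.

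First I would reduce the assertion about $\pi_1$-unknottedness to the assertion about non-enlargement of $\pi_1$. Each of Schneiderman's transformations is supported in a regular neighborhood of its input and leaves the bottom stage untouched, so it carries a sphere-like capped grope or Whitney tower in $S^4$ to a sphere-like capped grope or Whitney tower in $S^4$; this is built into the constructions of~\cite{Schneiderman:2006-1} but must be recorded carefully. Granting non-enlargement, if $G$ is $\pi_1$-unknotted and $G'$ is its transform, then $\pi_1$ of the exterior of $G'$ is a quotient of $\pi_1$ of the exterior of $G$, which is infinite cyclic; on the other hand, by Remark~\ref{remark:motivation-to-unknotted-definition} applied to $G'$, this group surjects onto $\Z$. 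A quotient of $\Z$ that admits a surjection onto $\Z$ is itself infinite cyclic, so $G'$ is $\pi_1$-unknotted. This is the same mechanism by which ``non-enlargement'' yields ``preservation of unknottedness'' for the operations (1)--(6).

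For non-enlargement, I would revisit Schneiderman's procedures~\cite{Schneiderman:2006-1} and present each as a composition of the following local moves: finger moves and regular homotopy; Whitney disk splitting; Krushkal's grope splitting; symmetric and asymmetric contraction of a capped grope; pushing an intersection down; passing to a subtower or subgrope; and, for the step that raises the height in the Whitney-tower-to-grope direction, the move that replaces a Whitney disk $W$ by an additional genus on the sheet it is built on together with a cap parallel to $W$ (with whatever intersected $W$ transported to the new grope stage), which is the reverse of a contraction. For the first six kinds, the corresponding results---Lemma~\ref{lemma:finger-move-fundamental-group}, Proposition~\ref{proposition:tower-splitting-unknottedness}, Proposition~\ref{proposition:grope-splitting-unknotedness}, Proposition~\ref{proposition:unknottedness-of-contraction}, Proposition~\ref{proposition:pushing-intersection-down-unknottedness}, and Proposition~\ref{proposition:unknottedness-of-subtower}---say precisely that the induced map on $\pi_1$ of exteriors is surjective. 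For the remaining move, the transformed capped grope lies in a regular neighborhood of the old Whitney tower and the two exteriors differ only inside the $D^2\times D^2$ neighborhood of $W$; a Van Kampen argument of the same shape as the one proving Proposition~\ref{proposition:unknottedness-of-contraction} shows that the inclusion of the old exterior into the new one is $\pi_1$-surjective, the only new relator being a conjugate of the meridian of $W$, which dies. Composing, the transformation does not enlarge $\pi_1$, and the same argument, with the moves decomposed analogously, handles Schneiderman's transformation in the opposite direction.

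I expect the principal difficulty to be making the decomposition into elementary moves genuinely explicit: Schneiderman describes his procedures as global reorganizations of a grope or tower rather than as sequences of the moves on our list, and translating them requires care, above all with the intersection bookkeeping. Because we permit caps to meet body surfaces as well as caps, the local models carry more decoration than in the classical embedded-cap setting, and one must check that the moves can be arranged so that intersections are never created uncontrollably but only transported --- which is where the preparatory splitting moves (3) and (6) enter. A lesser but unskippable point, used in the first paragraph, is to verify at the outset that ``sphere-like'' and ``contained in $S^4$'' are preserved by both transformations.
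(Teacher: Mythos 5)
Your overall strategy---decompose each of Schneiderman's procedures into elementary moves, check that each move induces a surjection of the exterior's $\pi_1$, and upgrade non-enlargement to preservation of $\pi_1$-unknottedness via $H_1(S^4\setminus{-})\cong\Z$---is exactly the paper's strategy, and your first paragraph matches the proof of Theorem~\ref{theorem:grope-whitney-tower-transformation}. The gap is that your list of elementary moves is not the list Schneiderman's constructions actually use, and the mismatch occurs precisely at the steps where the composition-of-surjections argument is delicate. First, you omit the tri-sheet move (Lemma~3.6 of \cite{Schneiderman:2006-1}), which is used repeatedly in both directions. It deletes one Whitney disk $D$ and introduces another disk $D'$; the introduction half is the dangerous direction, because adding a disk to the 2-complex shrinks the exterior, and the surjection of Proposition~\ref{proposition:unknottedness-of-subtower} runs the other way. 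So ``remove $D$, then add $D'$'' does not compose into a surjection where you need one; the paper instead proves $E_{T'}\cong E_T$ outright by a cellular-expansion argument (Proposition~\ref{proposition:tri-sheet-move-unknottedness}, via Lemma~\ref{lemma:cellular-expansion-exterior}).

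Second, the height-changing steps are not ``the reverse of a contraction'' that you can quote: Proposition~\ref{proposition:unknottedness-of-contraction} gives an epimorphism from the exterior of the \emph{uncontracted} grope onto that of the contracted one, so running contraction backwards gives you no control at all---a priori it enlarges $\pi_1$. The operations Schneiderman actually performs are tubing on a cap along a Whitney disk (tower to grope) and surgery along a cap followed by adjoining a new Whitney disk (grope to tower). Both \emph{add} sheets to the 2-complex, so the exterior shrinks and the worry is new \emph{generators} (meridians of the added caps/disks), not new relators; your Van Kampen sketch ``the only new relator is a conjugate of the meridian of $W$, which dies'' is therefore not the right shape of argument and does not establish $\pi_1(E_T)\twoheadrightarrow\pi_1(E_G)$. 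The paper's resolution is to show that each of these combined operations, with the specific new caps Schneiderman prescribes (a meridional disk of the tube and the trimmed Whitney disk, respectively), leaves the exterior unchanged up to homeomorphism, by exhibiting explicit cellular expansions and collapses relating the new complex to a thickening of the old one (Definition~\ref{definition:cellular-expansion}, Lemma~\ref{lemma:cellular-expansion-exterior}); splitting, pushing down, and grope splitting (Propositions~\ref{proposition:tower-splitting-unknottedness}, \ref{proposition:pushing-intersection-down-unknottedness}, \ref{proposition:grope-splitting-unknotedness}) are used exactly as you intend, but the regular-neighborhood bookkeeping for the tri-sheet, tubing, and surgery steps is the real content of the theorem, and it is what your proposal leaves unproven.
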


In Section~\ref{section:schneiderman-transformation}, we state and
prove a refined version of
Theorem~\ref{theorem:schneiderman-transformation-intro} as
Theorem~\ref{theorem:grope-whitney-tower-transformation}.

\subsubsection*{Analysis of grope constructions}

Cochran and Teichner~\cite{Cochran-Teichner:2003-1},
Horn~\cite{Horn:2010-1}, and Cha~\cite{Cha:2012-1} developed methods
to produce annulus-like symmetric gropes in $S^3\times I$ cobounded by
knots in $S^3\times 0$ and $S^3\times 1$, which can be viewed as
approximations of ordinary concordance.  Together with obstructions
from $L^2$-signatures, these constructions have been used as key
ingredients in revealing the rich structure of the grope and Whitney
tower theory in the context of concordance (e.g.\ see
\cite{Horn:2011-1,Cha-Powell:2013-1,Jang:2017-1}).  Briefly, in these
methods, the two knots in $S^3\times 0$ and $S^3\times 1$ are related
by (iterated) satellite constructions, and annular gropes cobounded by
them are constructed by first finding simpler gropes and surfaces in
3-space for companions and patterns of the satellite constructions,
and then stacking them in 4-space.  A precise formulation of this
procedure is described in Sections~\ref{subsection:product-of-gropes}
and~\ref{subsection:handle-structure-grope-exterior} (see
Definitions~\ref{definition:satellite-capped-grope}--\ref{definition:composition-satellite-grope-and-grope-concordance}
and~\ref{definition:capped-gropes-in-3D}).

We present a handle decomposition of the exterior of a grope in
$S^3\times I$ obtained by these methods.  In particular, we show the
following:

\begin{theoremalpha}
  \label{theorem:composition-grope-handle-decomposition-intro}
  Suppose $G$ is an annular capped grope in $S^3\times I$ cobounded by
  $K\subset S^3\times 0$ and $K'\subset S^3\times 1$, which is
  constructed by the methods of Cochran and
  Teichner~\cite{Cochran-Teichner:2003-1}, Horn~\cite{Horn:2010-1},
  and Cha~\cite{Cha:2012-1} described in
  Sections~\ref{subsection:product-of-gropes}
  and~\ref{subsection:handle-structure-grope-exterior}.  Let $E_G$ and
  $E_K$ be the exteriors of $G$ and~$K$.  Then $E_G$ has a relative
  handle decomposition with 2-handles only:
  $E_G \cong (E_K\times I) \cup (\text{$2$-handles})$.
\end{theoremalpha}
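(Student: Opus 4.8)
The plan is to build the handle decomposition by peeling off the grope $G$ layer by layer, following the inductive structure of the construction in Sections~\ref{subsection:product-of-gropes} and~\ref{subsection:handle-structure-grope-exterior}. Recall that in these methods $G$ is assembled by stacking, in $S^3\times I$, a sequence of ``elementary pieces'': product regions $E_J\times I$ coming from the companions of the satellite operations, and capped gropes-in-3D (in the sense of Definition~\ref{definition:capped-gropes-in-3D}) thickened into a collar $S^3\times[t,t']$. Starting from the bottom $S^3\times 0$ containing $K$, we will show that each such elementary piece contributes only 2-handles (and a product region) to the exterior, and then concatenate. Since adding 2-handles to a product $E_K\times I$ and then taking a further such extension still has the form ``$(E_K\times I)\cup(\text{2-handles})$'' (the new 2-handles are attached along curves in the top of the previously built piece), the result follows by induction on the number of pieces.

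First I would treat the model case: a single capped grope-in-3D $g\subset S^3$ bounded by a knot, thickened to $g\times I\subset S^3\times[0,1]$ with its framing. The exterior of $g\times I$ in $S^3\times[0,1]$ deformation retracts onto the exterior of $g$ in $S^3$ crossed with a collar of the bottom, except that the capped grope $g$ sits as a properly embedded surface-with-caps in $S^3$, so $S^3\setminus g$ is a compact 3-manifold with boundary, and $(S^3\setminus g)\times I$ is glued to $E_K\times I$ along a suitable subsurface of the top. Concretely, I would use that a capped grope-in-3D has a standard spine (a wedge of circles, one pair per surface stage, together with the cap cocores), and that $g$ collapses onto this spine; dually, $S^3\times I$ minus a neighborhood of $g\times I$ is obtained from $E_K\times I$ by attaching one 2-handle for each generator of $H_1$ of the body surfaces and one 2-handle for each cap, the attaching circles being the standard meridians/pushoffs dictated by the symplectic basis and the caps. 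The key point here is a dimension count: a surface stage of a grope has a 1-dimensional core beyond its attaching region, and a cap has a 2-dimensional core, but in the 4-manifold $S^3\times I$ the dual of ``remove a neighborhood of this piece'' always produces handles of index $4-2=2$ in the relative decomposition — there are no 1-handles because the pieces are connected and attached along connected regions, and no 3-handles because we never remove a neighborhood of a closed surface, only of surfaces with boundary lying on the already-built part.

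The main obstacle, and where the real work lies, is \textbf{bookkeeping the attaching regions across the stacking}, i.e.\ verifying that when one elementary piece is glued on top of another, the boundary along which it is attached lies inside the top boundary $E_K\times\{1\}$ (or the top of the previously constructed cobordism) as a codimension-0 submanifold in the way required for the handles to be added \emph{after} the product region, rather than being interleaved with it. This is exactly the content of the ``composition of satellite gropes and grope concordances'' formalism (Definitions~\ref{definition:satellite-capped-grope}--\ref{definition:composition-satellite-grope-and-grope-concordance}): a satellite construction with companion knot $J$ replaces a solid-torus neighborhood, and on exteriors this glues in $E_J\times I$ along a torus, contributing no handles of its own, after which the pattern's grope-in-3D contributes its 2-handles in the new solid torus. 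I would make this precise by choosing, once and for all, a handle decomposition of each $E_J\times I$ piece relative to its incoming boundary that has \emph{no} handles (it is literally a product), so that the only handles in the whole of $E_G$ come from the finitely many grope-in-3D pieces, each contributing 2-handles only. Assembling these, and checking that reordering is legitimate because 2-handles attached in disjoint regions commute, yields $E_G\cong(E_K\times I)\cup(\text{2-handles})$.

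Finally I would record the two minor verifications that the statement as phrased needs: that the framings work out (the gropes-in-3D come with the $0$-framing / untwisted framing forced by the ambient $S^3$, so the 2-handles are attached along framed curves with the evident framings), and that capped gropes in our more general sense — where caps may hit body surfaces — do not introduce extra handles, since an interior intersection of a cap with a body surface only changes which curve a 2-handle is attached along, not the number or index of handles. These are routine once the inductive skeleton above is in place, so I would relegate them to short remarks.
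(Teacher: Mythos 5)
Your overall skeleton---slice the construction into stacked elementary pieces, show each piece adds only $2$-handles to what has already been built, and concatenate---is the same as the paper's, which formalizes it via $ABRS$-admissible complexes (Definition~\ref{definition:admissible-complex}) and a case analysis at critical levels. The gap is in your local analysis, which you settle by the dimension-count heuristic that the dual of removing a neighborhood ``always produces handles of index $4-2=2$,'' with no $3$-handles ``because we never remove a neighborhood of a closed surface, only of surfaces with boundary lying on the already-built part.'' That reasoning is not valid. By the embedded-Morse correspondence the paper itself invokes (an index-$p$ critical point of the swept-out piece gives an index-$(p+1)$ handle of the exterior), a cap---a disk whose boundary lies on the already-built part---closes off over a circle in the \emph{interior} of $S^3\times I$, and if it were disjoint from everything else it would contribute a $3$-handle, not a $2$-handle. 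So it is precisely ``surfaces with boundary on the already-built part'' that threaten to create $3$-handles, and your stated criterion excludes nothing; nor does the construction avoid such closing-off disks: after pushing in, every cap ends in meridional disks that cap circles off in the interior.

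What actually rules out the $3$-handles is the specific plumbing structure of these gropes: each closing-off disk is a meridional disk meeting the base sheet (companion strand or base annulus) in exactly one transverse point, so its neighborhood is absorbed into that of the sheet and contributes no handle at all (Proposition~\ref{proposition:plumbed-disk-regnbd-complement}), while the complementary punctured planar part of the cap has only saddles and contributes $(k-1)$ $2$-handles (Proposition~\ref{proposition:punctured-disk-regnbd-complement}); surface stages give $(2g-1)$ $2$-handles (Proposition~\ref{proposition:surface-stage-regnbd-complement}) and the punctured vertical sheet of the concordance one more (Proposition~\ref{proposition:punctured-vertical-sheet-regnbd-complement}). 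Your count ``one $2$-handle per cap and per $H_1$-generator of each body surface'' reflects the same issue (a cap with a single intersection contributes no handle; a genus $g$ stage contributes $2g-1$, not $2g$). So the proposal is missing exactly the critical-level case analysis---above all the plumbed-disk case---which is the substantive content of the proof; the stacking and bookkeeping you emphasize are comparatively routine once those local statements are available. A minor further point: the elementary piece is not $g\times I$ but the track of pushing the 3D grope into the interior stage by stage, which is what produces the critical-level structure in the first place.
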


We state and prove a full version of
Theorem~\ref{theorem:composition-grope-handle-decomposition-intro} in
Section~\ref{subsection:handle-structure-grope-exterior} (see in
particular
Theorem~\ref{theorem:composition-grope-handle-decomposition}).

One can apply Schneiderman's method~\cite{Schneiderman:2006-1} to
convert a capped grope constructed by the methods
of~\cite{Cochran-Teichner:2003-1, Horn:2010-1,Cha:2012-1} to a Whitney
tower of the same height. Combining
Theorem~\ref{theorem:composition-grope-handle-decomposition-intro}
with our analysis of Schneiderman's method (which is used to prove
Theorem~\ref{theorem:schneiderman-transformation-intro}), it turns out
that the exterior of the resulting Whitney tower has a similar handle
decomposition with 2-handles only.

\subsubsection*{Application to knot double slicing}

We use the notion of $\pi_1$-unknottedness and the above handle
structure results to study double slicing of knots in terms of gropes
and Whitney towers.

Recall that a knot $K$ in $S^3$ is \emph{doubly slice} if there is an
unknotted flat 2-sphere in $S^4$ which intersects the standard
$S^3\subset S^4$ at~$K$.  That is, $K$ is a slice in $S^3$ of an
unknotted 2-sphere in~$S^4$.  Inspired from this, we define a knot $K$
in $S^3$ to be \emph{height $(m,n)$ grope slice} if it is a slice of a
$\pi_1$-unknotted sphere-like capped grope $G$ in $S^4$ such that the
intersections of $G$ with the upper and lower hemispheres of $S^4$ have
height $m$ and $n$ respectively.  Here $m$ and $n$ are nonnegative
half integers.  See Definition~\ref{definition:Whitney tower
  bi-filtration}.  We also define a \emph{height $(m,n)$ Whitney slice
  knot} similarly by replacing the grope $G$ above by a Whitney tower.

We denote by $\cG_{m,n}$ and $\cW_{m,n}$ the collection of height
$(m,n)$ grope slice knots and height $(m,n)$ Whitney slice knots,
respectively.  Each of $\cG_{m,n}$ and $\cW_{m,n}$ is a submonoid of
the monoid of knots under connected sum, and $\{\cG_{m,n}\}$ and
$\{\cW_{m,n}\}$ are decending (non-increasing) bi-filtrations.  It
turns out that a height $(m,n)$ grope slice knot is height $(m,n)$
Whitney slice, that is, $\cG_{m,n}\subset \cW_{m,n}$ (see
Theorem~\ref{theorem:grope-bifiltration-contained-in-whitney-bifiltration}).

A doubly slice knot is height $(m,n)$ grope slice and height $(m,n)$
Whitney slice for all $m, n$, and therefore lies in the intersection
of all $\cG_{m,n}$ and~$\cW_{m,n}$.

It turns out that previously known obstructions to knots being doubly
slice are obstructions to lying in low height terms of our
bi-filtrations.  In Section~\ref{section:bi-filtrations and classical
  obstructions}, we show that the double sliceness obstructions by
Gilmer-Livingston, Friedl, and Livingston-Meier
\cite{Gilmer-Livingston:1983-1,Friedl:2003-4,Livingston-Meier:2015-1}
vanish for knots in $\cG_{4,4}$ or~$\cW_{4,4}$; it is proven in
stronger forms in
Propositions~\ref{proposition:doubly-2-solvability-and-GL-obstructions},
\ref{proposition:doubly-1.5-solvability-and-friedl-obstruction}, and
\ref{proposition:doubly-1.5-solvability-and-livingston-meier-obstruction}.

Our main result on the geometric bi-filtrations $\{\cG_{m,n}\}$ and
$\{\cW_{m,n}\}$ is the following.

\begin{theoremalpha}\label{thmalpha:nontriviality-of-bi-filtrations}
  Let $m,n\ge 3$ be integers.
\begin{enumerate}
\item There exists a family of slice knots $\{J^i\}_{i=1,2,\ldots}$ in
  $\cG_{m,m}$ whose arbitrary nontrivial linear combination
  $\# a_i J^i$ ($a_i\in\Z)$ is not in~$\cW_{m.5,m.5}$.
\item There exists a family of knots $\{J^i_{m,n}\}_{i=1,2,\ldots}$ in
  $\cG_{m,n}$ whose arbitrary nontrivial linear combination
  $\# a_i J^i_{m,n}$ ($a_i\in\Z)$ is not in
  $\cW_{m.5,n}\cup \cW_{m,n.5}$.
\end{enumerate}
\end{theoremalpha}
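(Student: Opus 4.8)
The plan is to construct the required knot families by satellite operations on carefully chosen seeds, using the grope constructions of Cochran--Teichner, Horn, and Cha to place them in $\cG_{m,m}$ (resp.\ $\cG_{m,n}$), and then to obstruct membership in the larger Whitney slice classes $\cW_{m.5,m.5}$ (resp.\ $\cW_{m.5,n}\cup\cW_{m,n.5}$) by $L^2$-signature computations applied to the double branched covers or to the infinite cyclic covers of the double slice exteriors. The key conceptual point is that membership of $K$ in $\cW_{p,q}$, by Theorem~\ref{theorem:composition-grope-handle-decomposition-intro} and the analysis behind Theorem~\ref{theorem:schneiderman-transformation-intro}, forces the existence of a Whitney tower exterior in $S^4$ with controlled handle structure (2-handles only over $E_K\times I$), hence a von~Neumann $\rho$-invariant vanishing result at the appropriate stage: the upper half being a height $p$ Whitney tower bounds, after $\pi_1$-unknottedness is used to identify the relevant $L^2$-homology, and likewise for the lower half. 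So the strategy mirrors the Cochran--Orr--Teichner / Cochran--Teichner filtration non-triviality arguments, but carried out in the doubly-sliced (bi-filtration) setting, where one must keep the upper and lower pieces independently controlled.

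First I would set up the building blocks. For part~(1), take an infinite family of Arf-invariant-zero knots $R^i$ (e.g.\ distinct-enough twisted doubles, or the knots used by Cochran--Teichner) and form iterated satellites $J^i$ along a length-$2m$ commutator curve in the unknot complement, so that the construction of Sections~\ref{subsection:product-of-gropes}--\ref{subsection:handle-structure-grope-exterior} places $J^i$ as a slice of a $\pi_1$-unknotted sphere-like capped grope in $S^4$ whose two hemispherical pieces each have height exactly $m$; symmetry of the construction (use the \emph{same} pattern on both sides, i.e.\ a "doubled" satellite tower) gives $J^i\in\cG_{m,m}$, and one checks $J^i$ is slice by arranging the seed $R^i$ to be slice. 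Then suppose a nontrivial combination $\#a_iJ^i\in\cW_{m.5,m.5}$: apply Theorem~\ref{theorem:composition-grope-handle-decomposition-intro}-type handle control together with Theorem~\ref{theorem:schneiderman-transformation-intro} to reduce to an $L^2$-signature vanishing statement for a $4$-manifold $W$ with $\partial W = M_{\#a_iJ^i}$ and $\pi_1$ controlled by the derived series at level related to $m.5$; a Cheeger--Gromov / Cochran--Teichner $L^2$-signature estimate combined with a judicious choice of the $R^i$ (say with large, additively independent integrals of their classical Levine--Tristram signatures) yields a contradiction. For part~(2), the asymmetry $\cW_{m.5,n}\cup\cW_{m,n.5}$ is handled by making the upper and lower satellite towers of \emph{different} heights $m$ and $n$ and using \emph{two independent} $L^2$-obstructions, one sensitive to height $m.5$ on the top and one to height $n.5$ on the bottom, so that neither half of the union can accommodate the combination.

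The main obstacle I expect is the bookkeeping of fundamental groups under the Schneiderman transformation and the handle moves: to run the $L^2$-signature machine one needs that the relevant $4$-manifold's $\pi_1$ admits the right quotient in the derived (or mixed Bousfield--Kan) series so that the Cheeger--Gromov bound applies and is not swamped by noise from the $2$-handles; Theorem~\ref{theorem:composition-grope-handle-decomposition-intro} is exactly what keeps this under control, but pinning down that the attaching curves of those $2$-handles lie deep enough in the derived series (for the \emph{half}-integer improvement from $m$ to $m.5$) is the delicate step. A secondary difficulty is ensuring the constructed $J^i$ (and their combinations) genuinely sit in $\cG_{m,m}$ and not merely in $\cG_{m-.5,m-.5}$ --- i.e.\ that the grope we build really has the claimed height on the nose --- which requires care with the "stacking in $4$-space" step of the constructions and with the $\pi_1$-unknottedness (using Propositions~\ref{proposition:unknottedness-of-contraction}, \ref{proposition:pushing-intersection-down-unknottedness}, \ref{proposition:grope-splitting-unknotedness} to propagate $\pi_1$-unknottedness through the satellite composition). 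Once these group-theoretic points are secured, the contradiction is obtained by the standard comparison of the $L^2$-$\rho$ invariant of $M_{\#a_iJ^i}$ computed two ways: via the putative bounding $4$-manifold (giving a small bound) versus via the additivity and the explicit seed signatures (giving a large value), exactly as in~\cite{Cochran-Teichner:2003-1,Cha:2012-1}.
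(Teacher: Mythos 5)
There is a genuine gap on the obstruction side. You propose to rule out $\#a_iJ^i\in\cW_{m.5,m.5}$ by applying the handle-structure control of Theorem~\ref{theorem:composition-grope-handle-decomposition-intro} (and the Schneiderman analysis) to the hypothetical Whitney tower witnessing membership. But that theorem only describes exteriors of gropes built by the specific pushing-3D-gropes-and-stacking construction; it says nothing about an \emph{arbitrary} $\pi_1$-unknotted Whitney tower that a nontrivial linear combination might bound, so it cannot be the source of the $L^2$-signature vanishing you need. In the paper the handle decomposition is used only for the \emph{existence} half (to verify $\pi_1$-unknottedness of the constructed gropes, Theorem~\ref{theorem:grope-doubly-slice-construction}). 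The actual bridge from the geometric hypothesis to signature obstructions is Theorem~\ref{theorem:comparison of bi-filtrations}: a height $(m+2,n+2)$ Whitney slice knot is $(m,n)$-solvable, proved via Theorem~\ref{theorem:whitney-tower-to-solution-fundamental-group}, which surgers parallel copies of the top-stage Whitney circles to convert each half-tower in $D^4_\pm$ into an $h$-solution while keeping $\pi_1$ a quotient of $\pi_1(D^4_\pm\setminus T_\pm)$ so that the $\pi_1$-unknottedness of $T$ forces the union of the two solutions to have infinite cyclic fundamental group. Your proposal contains no substitute for this step, and without it the amenable signature theorem has nothing to bite on.

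A second, related gap is that the doubly slice structure is not exploited. The half-integer improvements $\cW_{m.5,n}\cup\cW_{m,n.5}$ are excluded in the paper by a dichotomy specific to $(m,n)$-solutions: the examples are satellites of $\cR=9_{46}$ along two curves $\alpha',\beta'$ whose images generate coprime-order summands $R[t^{\pm1}]/(2t-1)$ and $R[t^{\pm1}]/(t-2)$ of the Alexander module, and the splitting result for double $1$-solutions (Proposition~\ref{proposition:homology-splitting}) plus self-annihilating-kernel arguments force that either $\alpha_1$ survives in $U$ and $\beta_1$ in $V$, or vice versa (Lemma~\ref{lemma:nonvanishing-of-infection-curves}); Proposition~\ref{proposition:non-solvability} then converts survival of the appropriate curve into the failure of $U$ (or $V$) to be an $m.5$- (or $n.5$-) solution, via $\rhot$-invariants over amenable quotients by the mixed-coefficient commutator series and Horn-type seeds $J_0^i$ with prime-indexed signature conditions. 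Your ``two independent $L^2$-obstructions, one for each side'' gestures at this but gives no mechanism preventing both infection curves from dying in the same solution, which is exactly what the Blanchfield-splitting dichotomy is there to exclude; without it the claimed contradiction for a putative $(m.5,n)$- or $(m,n.5)$-solution does not follow.
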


Theorem~\ref{thmalpha:nontriviality-of-bi-filtrations} is proven in a
stronger form in Theorem~\ref{theorem:infinite-rank}.  Since
$\cG_{m,n}\subset \cW_{m,n}$, it follows from
Theorem~\ref{thmalpha:nontriviality-of-bi-filtrations} that both
bi-filtrations $\{\cG_{m,n}\}$ and $\{\cW_{m,n}\}$ are highly
nontrivial.

In the proof of
Theorem~\ref{thmalpha:nontriviality-of-bi-filtrations}, we give the
knots $J^i$ and $J^i_{m,n}$ using iterated satellite constructions.
The main proof consists of two parts: the existence of a
$\pi_1$-unknotted capped grope of the given height, and the
non-existence of a $\pi_1$-unknotted Whitney tower of larger height.

For the existence, briefly speaking, we employ the methods
of~\cite{Cochran-Teichner:2003-1, Horn:2010-1,Cha:2012-1} to produce
capped gropes which slice our knots.  We prove that they are
$\pi_1$-unknotted by using handle structures given by
Theorem~\ref{theorem:composition-grope-handle-decomposition-intro}\@.
Details can be found in
Section~\ref{subsection:construction-of-examples}.

For the non-existence, we first relate our geometric filtrations to
\emph{the solvable bi-filtration}, which was defined by the second
author~\cite{Kim:2006-1} as a double slicing analog of
Cochran-Orr-Teichner's solvable
filtration~\cite{Cochran-Orr-Teichner:1999-1}.  Briefly, a knot $K$ is
\emph{$(m,n)$-solvable} if there are 4-manifolds $U$ and $V$ bounded
by the zero surgery manifold $M(K)$, each of which approximates a
slice disk exterior in terms of a certain duality property over the
group ring $\Z[\pi_1(-)/\pi_1(-)^{(n)}]$, where $\pi_1(-)^{(n)}$
designates the $n$th derived subgroup, such that the union
$U\cup_{M(K)}V$ has infinite cyclic fundamental group.  See
Definition~\ref{definition:(m,n)-solution} for a precise description.
In Section~\ref{subsection: (m,n)-solvable knot}, we prove that a knot
is $(m,n)$-solvable whenever it is height $(m+2,n+2)$ Whitney slice,
that is, $\cW_{m+2,n+2} \subset \F_{m,n}$ (see
Theorem~\ref{theorem:whitney-tower-to-solution-fundamental-group}).
Then, in
Sections~\ref{subsection:amenable-signature}--\ref{subsection:proofs-of-lemma-and-proposition},
we use the \emph{amenable signature theorem}
of~\cite{Cha-Orr:2009-1,Cha:2010-1} and combine it with the ideas
of~\cite{Kim:2006-1} to extract obstructions to being $(m,n)$-solvable
from the von Neumann-Cheeger-Gromov $\rhot$-invariants of the zero
surgery manifolds.

A corollary of (the above outlined proof of)
Theorem~\ref{thmalpha:nontriviality-of-bi-filtrations} is that its
analog holds for the solvable bi-filtration $\{\F_{m,n}\}$ (in place
of both $\{\cG_{m,n}\}$ and~$\{\cW_{m,n}\}$) as well.  This
consequence, which provides infinitely many linearly independent knots
in each stage of the solvable bi-filtration modulo the next stages,
generalizes the main result of~\cite{Kim:2006-1} which gives
nontrivial knots in each stage of the solvable bi-filtration modulo
the next stages.

\subsection*{Acknowledgements}

The authors thank Patrick Orson for his helpful comments on the first
draft version of this paper.  The first named author was supported by
NRF grant 2011-0030044.  The second named author was supported by
NRF grant 2011-0030044 and NRF grant 2015R1D1A1A01056634.

\section{Unknotted Whitney towers and gropes}
\label{section:Unknotted Whitney towers and gropes}

In this section we define the notion of unknotted Whitney towers and
gropes in terms of the fundamental group, and discuss some basic
properties.

\subsection{Preliminaries: Whitney towers and gropes}
\label{subsection:preliminaries-whitney-towers-gropes}

We begin by recalling the definitions of Whitney towers and gropes.
Readers familiar with them may skip to
Section~\ref{subsection:unknotted-towers}, after reading our
conventions given after Definition~\ref{definition:grope}.

In the following definitions, we will assume that the readers are
familiar with the notion of (framed) immersions of surfaces and
Whitney disks; for instance see \cite[\S 1.2, \S
1.4]{Freedman-Quinn:1990-1}.

In this paper, we will define and use \emph{framed} gropes and Whitney
towers only.  In particular a Whitney disk will always be framed as
follows: for a Whitney disk $D$ pairing two intersections of sheets
$A$ and $B$, the restriction of the unique framing of $D$ on
$\partial D$ is equal to the framing determined by the tangential
direction of $A$ on $A\cap \partial D$ and the common normal direction
of $B$ and $D$ on $B\cap \partial D$.

In this paper, surfaces are always orientable.

\begin{definition}[Whitney tower]
  \label{definition:whitney-tower}
  Suppose $S$ is a surface.  An \emph{$S$-like Whitney tower} in a
  4-manifold $W$ is a 2-complex defined inductively as follows.  A
  properly immersed surface
  $(S,\partial S) \looparrowright (W,\partial W)$ is an $S$-like
  Whitney tower in $W$.  Suppose $T$ is an $S$-like Whitney tower and
  $D$ is a framed immersed Whitney disk pairing two intersections of
  opposite signs between two sheets in $T$, where the interior of $D$
  is allowed to transversely intersect the interior of surfaces/disks of $T$
  but is disjoint from the boundary of any surface in~$T$.  Then $T$
  together with $D$ is an $S$-like Whitney tower.  We say that $S$ is
  the \emph{base surface} of~$T$, and $S$ \emph{supports}~$T$.  The
  \emph{boundary} of $T$ is that of the base surface of~$T$.
\end{definition}

We will mostly use sphere-like or disk-like Whitney towers.

We often consider \emph{symmetric Whitney towers}, which are defined
to be a Whitney tower with a well-defined height in the following
sense.  This was introduced by Cochran, Orr and
Teichner~\cite{Cochran-Orr-Teichner:1999-1}.  We remark that the
notion of \emph{order} is defined for any Whitney towers as in, for
instance, \cite{Conant-Schneiderman-Teichner:2012-2}.

\begin{definition}[Height of a Whitney tower]
  \label{definition:height-of-Whitney-tower}
  Suppose $T$ is a Whitney tower.  For surfaces/disks and
  intersections in $T$, the \emph{height} is defined inductively as
  follows.  The height of the base surface of $T$ is one.  If $p$ is
  an intersection of two surfaces/disks of the same height $k$ in $T$, then
  we say $p$ has height~$k$.  If a Whitney disk in $T$ pairs up two
  intersections of the same height $k$, we say the disk has
  height~$k+1$.  (Note that the height may not be defined for some
  intersections and for some Whitney disks.)

  Let $n$ be a positive integer.  We say that $T$ is a \emph{Whitney
    tower of height~$n$} if the following hold: (i) for all
  surfaces/disks in $T$ and their intersections, the height is defined
  and not greater than~$n$, and (ii) all intersections of height $<n$
  are paired up by Whitney disks in~$T$.
  
  We say that $T$ is a \emph{Whitney tower of height $n.5$} if the
  following hold: (i) for all surfaces/disks in $T$, the height is
  defined and not greater than~$n+1$,
  (ii) all intersections between surfaces/disks of height $\le n$ have
  well-defined height, and are paired up by Whitney disks in~$T$, and
  (iii) each height $n+1$ Whitney disk does not meet surfaces/disks of
  height $<n$ (but allowed to meet surfaces/disks of height $n$
  and~$n+1$).
\end{definition}

An immersed surface can be viewed as a Whitney tower of height one.

\begin{definition}[Grope]
  \label{definition:grope}
  A \emph{capped surface} is a surface $\Sigma$ together with disks
  attached along $2g$ standard symplectic basis curves on~$\Sigma$,
  where $g$ is the genus of~$\Sigma$~\cite[\S
  2.1]{Freedman-Quinn:1990-1}.  The disks are called \emph{caps}.
  Suppose $S$ is a surface.  A \emph{model $S$-like capped grope} is a
  2-complex defined inductively as follows.  A 2-complex obtained from
  $S$ by replacing finitely many disjointly embedded disk in $S$ by
  capped surfaces is a model $S$-like capped grope.  Its \emph{base
    surface} is defined to be the surface we obtain by modifying~$S$.
  If $G$ is a model $S$-like capped grope, then a 2-complex obtained
  by replacing a cap with a capped surface is a model $S$-like capped
  grope.  Its caps are the unmodified caps of $G$ together with the
  caps of the attached capped surface.  Its base surface is defined to
  be that of~$G$.  The \emph{body} of a model capped grope $G$ is
  defined to be $G$ with all caps removed.  The boundary of $G$ is the
  boundary of the base surface.

  Note that a model capped grope $G$ admits a standard embedding
  in~$\R^3$.  Composing it with $\R^3 \hookrightarrow \R^4$, $G$
  embeds in $\R^4$.  Take a regular neighborhood of $G\subset \R^4$,
  and then possibly introduce finitely many plumbings between
  caps/surfaces.  An embedding of the result into a 4-manifold $W$ is
  called an \emph{immersed capped grope} in~$W$.
\end{definition}

In this paper we assume that each intersection in an immersed capped
grope always involves a cap.  That is, there is no intersection
between two sheets of body surfaces, while caps are allowed to meet
caps and body surfaces.

We remark that this differs from the notion of a \emph{properly}
immersed capped grope~\cite{Freedman-Quinn:1990-1}, which is defined
to be an immersed capped grope whose intersections are always between
caps.  In our case, every intersection can be changed to intersections
of a cap and the base surface, by ``pushing intersections down'' as
described in~\cite[Section~2.5]{Freedman-Quinn:1990-1} (see also
Section~\ref{subsection:modification-capped-gropes-unknottedness} of
this paper).

\begin{definition}[Height of a capped grope]
  \label{definition:height-of-grope}
  Suppose $G$ is a (model) capped grope.  The \emph{height} of
  surfaces/caps in $G$ is defined inductively as follows.  The height
  of the base surface of $G$ is one.  The height of a surface/cap in
  $G$ is $k+1$ if its boundary is attached to a surface of height~$k$.
  If all caps of $G$ have height $n+1$, then we say that $G$ is a
  \emph{capped grope of height $n$}.  If, for each dual pair of curves
  on the base surface, a height $n$ capped grope is attached along one
  of them and a height $n-1$ capped grope along the other, then we say
  that $G$ is a \emph{capped grope of height~$n.5$}.
\end{definition}

\subsection{Unknotted Whitney towers}
\label{subsection:unknotted-towers}

\begin{definition}
  \label{definition:unknotted-whitney-tower}
  A (union-of-spheres)-like Whitney tower $T$ in $S^4$ is
  \emph{$\pi_1$-unknotted} if $\pi_1(S^4 \setminus T)$ is a free
  group.  (If it is the case, the rank of the free group is
  automatically equal to the number of the base spheres.)  In
  particular, a sphere-like Whitney tower $T$ in $S^4$ is
  \emph{$\pi_1$-unknotted} if $\pi_1(S^4 \setminus T)\cong\Z$.
\end{definition}

\begin{remark}
  \label{remark:motivation-to-unknotted-definition}
  \begin{enumerate}
  \item For every sphere-like Whitney tower $T$ in $S^4$, the
    abelianization $H_1(S^4\setminus~T)$ of $\pi_1(S^4\setminus T)$ is
    equal to~$\Z$ by Alexander duality.  In this sense, for the
    sphere-like case, $T$ is $\pi_1$-unknotted if and only if the
    complement has the ``smallest'' fundamental group.
  \item Definition~\ref{definition:unknotted-whitney-tower} is a
    Whitney tower generalization of the standard notion of an
    unknotted 2-sphere embedded in the 4-sphere, in light of the
    following well-known result~\cite[\S
    11.7A]{Freedman-Quinn:1990-1}: \emph{a locally flat 2-sphere $S$
      embedded in $S^4$ is unknotted if and only if $\pi_1(S^4
      \setminus S) \cong \Z$}.  In other words, $S$ is unknotted as a
    sphere if and only if $S$ is $\pi_1$-unknotted as a Whitney tower.
  \end{enumerate}
\end{remark}

Recall that a \emph{finger move} \cite[\S 1.5]{Freedman-Quinn:1990-1}
for surfaces introduces new intersections paired by an embedded
Whitney disk; the finger move is reversed by a Whitney move across the
disk.  For a Whitney tower, we will consider a finger move which
introduces new intersections of surfaces/disks of the tower, performed
along an arc whose interior is disjoint from the tower.  As our
convention, the newly introduced Whitney disk is added to the
resulting Whitney tower.  Similarly, when we apply a Whitney move
across an embedded Whitney disk in a Whitney tower, we remove the
Whitney disk we used from the tower.

\begin{definition}
  \label{definition:whitney-tower-regular-isotopy}
  A \emph{regular homotopy} for a Whitney tower is a finite sequence
  of ambient isotopy, finger moves, and Whitney moves (across an
  embedded Whitney disk).
\end{definition}

For brevity, we use the following terminology.  If $A$, $B$, and $C$
are subspaces of $X$ satisfying $A\cup B\subset C$ and
$f\colon\pi_1(X\setminus A)\to \pi_1(X\setminus B)$ is a homomorphism
such that $fi_* = j_*$ for the inclusions
$i\colon X\setminus C\to X\setminus A$ and
$j\colon X\setminus C \to X\setminus B$, then we say that $f$ \emph{is
  supported by~$C$}, or $f$ \emph{extends the identity on
  $X\setminus C$}.

For a subcomplex $K$ of a manifold $X$, we denote its regular
neighborhood by $\nu(K)$, and the exterior $\overline{X\setminus \nu(K)}$ by
$E_K$.  When $X$ is a topological
manifold, $\nu(K)$ and $E_K$ are defined if a neighborhood of $K$
admits a PL/smooth structure with respect to which $K$ is PL/smooth.
In this paper it is always the case.  Since we will use the following
standard fact repeatedly, we state it as a lemma.

\begin{lemma}
  \label{lemma:subcomplex-exterior-handle-decomposition}
  Suppose $(K,L)$ is a subcomplex pair embedded in an $n$-manifold~$X$
  with $L = K\cap \partial X$.  If $K=L\cup e_1 \cup \cdots \cup e_r$
  with $e_i$ a $p_i$-cell, then
  $X = E_K \cup h_1 \cup \cdots \cup h_r$ with $h_i$ an
  $(n-p_i)$-handle.
\end{lemma}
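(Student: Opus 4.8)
The statement is the familiar fact that handle-attachment and exterior-taking are Poincaré-dual operations: building up $K$ from $L$ by attaching cells corresponds to building up $X$ from $E_K$ by attaching the Poincaré-dual handles in reverse. The plan is an induction on the number $r$ of cells, reducing everything to the single-cell case.

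For the base step, suppose $K = L \cup e$ with $e$ a single $p$-cell attached to $L$. Choose a regular (PL or smooth) neighborhood $\nu(K)$ in $X$ that restricts to a regular neighborhood $\nu(L)$ of $L$ on $\partial X$, and is itself built as $\nu(L)$ together with a neighborhood of the cell~$e$. Since $e$ is a $p$-cell, a regular neighborhood of $e$ rel its attaching region is a copy of $D^p \times D^{n-p}$ glued along $(\partial D^p)\times D^{n-p}$; thus passing from $\nu(L)$ to $\nu(K)$ amounts to attaching to $E_L$, along $\partial E_L$, the complementary piece $D^p\times D^{n-p}$ glued along $D^p\times(\partial D^{n-p})$ — which is exactly an $(n-p)$-handle $h$. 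Taking complements, $E_K = \overline{X\setminus\nu(K)}$ is obtained from $E_L = \overline{X\setminus\nu(L)}$ by removing the interior of this handle, equivalently $X = E_K \cup h$ with $h$ an $(n-p)$-handle. (Here I use the standard fact that a regular neighborhood of a cell complex can be assembled one cell at a time, with each cell contributing a product neighborhood; see e.g.\ the treatment of handle decompositions from CW structures.)

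For the inductive step, write $K = K' \cup e_r$ where $K' = L\cup e_1\cup\cdots\cup e_{r-1}$ and $e_r$ is a $p_r$-cell attached to $K'$. Apply the base step to the pair $(K,K')$ inside the manifold-with-boundary $E_{K'} \cup (\text{collar})$, or more directly: a regular neighborhood $\nu(K)$ can be chosen to contain $\nu(K')$ with $\nu(K)\setminus\inte\,\nu(K')$ a product neighborhood of $e_r$, so that $E_{K'} = E_K \cup h_r$ with $h_r$ an $(n-p_r)$-handle. By the induction hypothesis applied to $K'$, we have $X = E_{K'}\cup h_1\cup\cdots\cup h_{r-1}$; splicing in $E_{K'} = E_K\cup h_r$ gives $X = E_K \cup h_r \cup h_1\cup\cdots\cup h_{r-1}$, which is the claim after reindexing. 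Since all the handles are attached along disjoint parts of the boundary (the neighborhoods of the distinct cells are disjoint), the order of attachment is immaterial.

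The only real subtlety — and the step I would write most carefully — is the interplay between the cell structure of $K$ and the regular-neighborhood machinery in the topological category: one must know that a neighborhood of $K$ in $X$ carries a PL/smooth structure in which $K$ is PL/smooth (which the paper assumes throughout, as noted just before the lemma), and that relative to such a structure the regular neighborhood of $K$ can be built cell-by-cell with each new $p_i$-cell contributing a $D^{p_i}\times D^{n-p_i}$ glued along $\partial D^{p_i}\times D^{n-p_i}$. Granting that, the complementary decomposition of $X$ into $E_K$ plus dual handles is immediate. Everything else is bookkeeping, so I would keep the write-up short and point to standard references for regular neighborhood theory.
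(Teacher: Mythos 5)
Your proposal is correct and is essentially the paper's own argument: the paper filters $K$ by $K_i = L\cup e_1\cup\cdots\cup e_i$ and notes that $\nu(e_i)\cong D^{p_i}\times D^{n-p_i}$ meets $E_{K_i}$ in $D^{p_i}\times\partial D^{n-p_i}$, hence is an $(n-p_i)$-handle, which is exactly your cell-by-cell induction written as a filtration. The closing remark about disjoint neighborhoods and reordering is unnecessary (the handle labels are arbitrary), but it does no harm.
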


\begin{proof}
  Let $K_0=L$ and $K_i = L\cup e_1 \cup \cdots \cup e_i$.  Then
  $E_{K_r} = E_K$ and $E_{K_{i-1}}$ is obtained from $E_{K_i}$ by
  filling in it with $\nu(e_i) = D^{p_i} \times D^{n-p_i}$.  Since
  $\nu(e_i) \cap E_{K_i} = D^{p_i} \times \partial D^{n-p_i}$,
  $\nu(e_i)$ is an $(n-p_i)$-handle.
\end{proof}

\begin{lemma}[Regular homotopy preserves unknottedness]
  \label{lemma:finger-move-fundamental-group}
  Suppose $T'$ is a Whitney tower obtained from another Whitney tower
  $T$ by a finger move in a 4-manifold~$W$.  Then the exterior $E_T$
  is obtained by attaching a $3$-handle to the exterior~$E_{T'}$, and
  thus there is an isomorphism
  $\pi_1(W\setminus T) \cong \pi_1(W\setminus T')$ supported by a
  regular neighborhood of the trace of the finger move.  Consequently,
  regular homotopy preserves the $\pi_1$-unknotedness of Whitney
  towers in~$S^4$.
\end{lemma}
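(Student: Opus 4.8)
The plan is to analyze the local effect of a finger move on the exterior and then deduce the global consequences. A finger move is performed along an embedded arc $\alpha$ whose interior is disjoint from $T$, with endpoints on two sheets (surfaces or Whitney disks) of $T$; it pushes one sheet along $\alpha$ through the other, creating a pair of transverse intersections paired by a small embedded Whitney disk $D$, which by our convention is added to the tower. First I would set up a regular neighborhood $N = \nu(\alpha)$, a 4-ball, chosen small enough that $N$ meets $T$ only in a standard pair of disk-pieces of the two sheets (before the move) and meets $T'$ in the standard finger-move picture (after the move), including the new Whitney disk $D$. Outside $N$, the towers $T$ and $T'$ agree. The key computation is then entirely local inside $N$: I must identify $E_T \cap N$ and $E_{T'} \cap N$ and show that passing from the latter to the former amounts to attaching a single $3$-handle.

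The main step is this local handle analysis. Inside $N \cong D^4$, the tower $T$ looks like two disjoint properly embedded $2$-disks (slices of the two sheets), while $T'$ looks like the same two disks after a finger push, together with the small Whitney disk $D$. I would describe $T' \cap N$ as obtained from $T \cap N$ by adding cells: the finger-move picture adds to (a collapsed model of) $T\cap N$ a $1$-cell (running along the finger) and a $2$-cell (the Whitney disk $D$ together with the band of the finger), in such a way that $T' \cap N$ collapses onto $T \cap N$. Concretely, $T' \cap N \simeq (T\cap N) \cup e^1 \cup e^2$ where the attaching data is the standard one; alternatively and more cleanly, $(T\cap N, T'\cap N)$ is the standard model in which $T\cap N = (T'\cap N) \cup e^2$, i.e. $T\cap N$ is obtained from $T'\cap N$ by attaching a single $2$-cell. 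Then Lemma~\ref{lemma:subcomplex-exterior-handle-decomposition}, applied relative to the part of $T$ lying in $\partial N$ (which is common to $T$ and $T'$), gives that $E_T$ is obtained from $E_{T'}$ by attaching one $(4-2)=2$-handle — wait, I must be careful about orientation of the inclusion. Since it is $T$ that is the \emph{smaller} complex (fewer Whitney disks), $E_T \supset E_{T'}$, so $E_T$ is obtained from $E_{T'}$ by \emph{filling in} the neighborhoods of the extra cells of $T'$; dualizing, $E_{T'}$ is built from $E_T$ by attaching handles dual to those cells. The net count: $T'$ has one more $1$-cell and one more $2$-cell than $T$ in the local model, and collapsing the $1$-cell against one end of the $2$-cell leaves, up to simple-homotopy/handle cancellation, the effect of attaching a single $3$-handle to $E_{T'}$ to obtain $E_T$. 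I would verify this by writing down the explicit Kirby-type local picture (two parallel sheets, the finger band, the Whitney disk) and checking the handle dimensions directly.

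Granting the local statement $E_T = E_{T'} \cup (\text{one } 3\text{-handle})$, the fundamental-group consequences are immediate and routine: attaching a $3$-handle to a $4$-manifold does not change $\pi_1$ (the attaching sphere is $2$-dimensional and kills nothing, the core adds nothing of dimension $\le 1$), so the inclusion $E_{T'} \hookrightarrow E_T$ induces an isomorphism $\pi_1(W\setminus T') \xrightarrow{\ \cong\ } \pi_1(W \setminus T)$; more precisely, since everything takes place inside $N = \nu(\alpha)$, this isomorphism extends the identity on $W\setminus N$, i.e. it is supported by a regular neighborhood of the trace of the finger move, which is what is asserted. Finally, for the last sentence: a Whitney move across an embedded Whitney disk is the inverse of a finger move (with our convention of removing the disk used), so it too preserves $\pi_1$ of the exterior; ambient isotopy trivially does; hence any regular homotopy, being a finite composition of these three, induces an isomorphism on $\pi_1$ of the exterior, and in particular preserves the property $\pi_1(S^4\setminus T)\cong\Z$, i.e. the $\pi_1$-unknottedness of Whitney towers in $S^4$.

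The step I expect to be the genuine obstacle is the explicit local handle/cell bookkeeping: getting the attaching maps of the finger-move $1$- and $2$-cells exactly right (in particular the framing data and which cell is attached to which sheet) so that the dual handle decomposition collapses to precisely one $3$-handle, rather than, say, a cancelling $2$-handle/$3$-handle pair or an off-by-one discrepancy. Everything after that is formal.
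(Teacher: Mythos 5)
Your overall architecture (localize near the finger arc, produce a dual handle via Lemma~\ref{lemma:subcomplex-exterior-handle-decomposition}, then read off the $\pi_1$ statements and handle Whitney moves as inverses of finger moves) matches the paper, and the concluding $\pi_1$ and regular-homotopy arguments are fine. But the one step that actually carries the lemma is missing, and your attempts to formulate it are incorrect as stated. A finger move does not add cells to $T$: it \emph{modifies} a sheet of $T$ (pushes it through another sheet) and then, by convention, adjoins the new Whitney disk, so $T'$ is not a complex of the form $T\cup e^1\cup e^2$, the assertion ``$T\cap N=(T'\cap N)\cup e^2$'' is false, and ``$E_T\supset E_{T'}$'' has no literal meaning since $T'\not\supset T$. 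Consequently the ``net count: one extra $1$-cell and one extra $2$-cell, cancel them up to simple homotopy'' argument does not go through; simple-homotopy equivalence of the local complexes is in any case not enough to identify exteriors --- what one needs is that the \emph{regular neighborhoods} agree up to ambient isotopy, which is exactly what cellular expansion/collapse inside the ambient manifold (Lemma~\ref{lemma:cellular-expansion-exterior}) is for. You flag this bookkeeping as ``the genuine obstacle'' and defer it, so the proof is not complete.

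The missing idea, which the paper uses and which makes the lemma a one-liner, is the identification $\nu(T')\cong\nu(T\cup\gamma)$, where $\gamma$ is the arc of the finger move: the pushed finger together with the new Whitney disk thickens up to exactly the same regular neighborhood as the original sheet together with a thickened arc (equivalently, both $T\cup\gamma$ and $T'$ expand cellularly to $T'$ union the $3$-dimensional trace of the finger move). Hence $E_{T'}\cong E_{T\cup\gamma}=\overline{E_T\setminus\nu(\gamma)}$, i.e.\ $E_{T'}$ is the exterior of a single $1$-cell inside $E_T$, and Lemma~\ref{lemma:subcomplex-exterior-handle-decomposition} then gives $E_T\cong E_{T'}\cup(\text{one }3\text{-handle})$ with no cancellation argument needed; everything after that in your write-up is correct. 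To repair your proof, replace the cell-count heuristic by this regular-neighborhood (or cellular expansion) identification.
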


\begin{proof}
  Consider a finger move which converts a tower $T$ to another tower
  $T'$.  Suppose this is performed along an arc $\gamma$ joining two
  interior points of surfaces/disks of~$T$.  See
  Figure~\ref{figure:finger-move}.  Then
  $E_{T'} \cong E_{T\cup\gamma} = \overline{E_T \setminus
    \nu(\gamma)}$, that is, $E_{T'}$ is homeomorphic to the exterior
  of $\gamma$ in~$E_T$.  Since $\gamma$ is a 1-cell,
  $E_T \cong E_{T'} \cup(\text{$3$-handle})$ by
  Lemma~\ref{lemma:subcomplex-exterior-handle-decomposition}.  The
  other conclusions follow from this.
\end{proof}

\begin{figure}[H]
  \labellist
  \pinlabel{$\gamma$} at 57 35
  \pinlabel{$T$} at 20 30
  \pinlabel{$T'$} at 205 30
  \endlabellist
  \centering
  \includegraphics{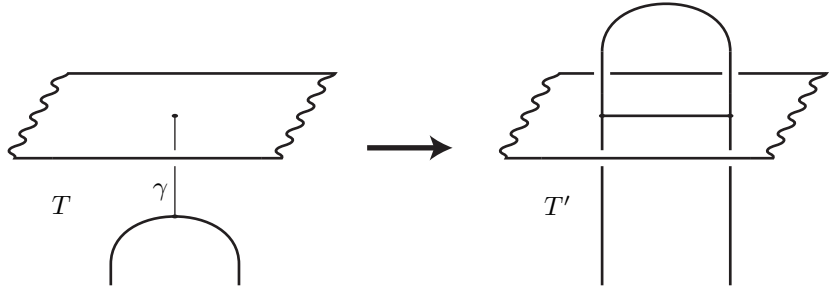}
  \caption{A finger move performed along~$\gamma$.}
  \label{figure:finger-move}
\end{figure}

\begin{proposition}[Unknottedness of subtowers]
  \label{proposition:unknottedness-of-subtower}
  Suppose $T$ is a Whitney tower in a 4-manifold $W$, and let $T'$ be
  a Whitney tower which is obtained from $T$ by removing some Whitney
  disks.  Then $E_{T'} \cong E_T \cup ($handles of index $\ge 2)$, and
  consequently the inclusion induces an epimorphism of
  $\pi_1(W\setminus T)$ onto $\pi_1(W\setminus T' )$.  In addition, if
  $W=S^4$ and $T$ is sphere-like and $\pi_1$-unknotted, then $T'$ is
  $\pi_1$-unknotted.
\end{proposition}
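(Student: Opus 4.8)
The plan is to exhibit $E_{T'}$ as $E_T$ with $2$- and $3$-handles attached, in the spirit of the proof of Lemma~\ref{lemma:finger-move-fundamental-group}, and then to read off the $\pi_1$ statements. First I would reduce to removing a single Whitney disk. Since $T'$ is again assumed to be a Whitney tower, the set of removed Whitney disks must be upward closed under the ``built-on'' partial order on the Whitney disks of~$T$ (otherwise a surviving Whitney disk would pair intersections lying on a sheet absent from~$T'$). So I may list the removed disks as $D_1,\dots,D_k$ with $D_\ell$ carrying no Whitney disk of $\overline{T\setminus(D_1\cup\cdots\cup D_{\ell-1})}$ on it, remove them one at a time through intermediate Whitney towers, and thereby assume $T'=\overline{T\setminus D}$ for a single Whitney disk $D$ with nothing built on it; then $\partial D\subset T'$ and the interior of $D$ meets $T'$ transversely in finitely many points $p_1,\dots,p_j$.

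Next I would record the relevant handle structure. Let $D_0=\overline{D\setminus\nu(T')}$, a $2$-disk with $j$ open subdisks removed; taking $\nu(T')$ thin, $D_0$ is properly embedded in the $4$-manifold $E_{T'}$ with $D_0\cap\partial E_{T'}=\partial D_0$, and $\nu(T'\cup D_0)=\nu(T)$, so the exterior of $D_0$ inside $E_{T'}$ is $E_T$. Choosing $j$ disjoint arcs in $D_0$ from a point of the outer boundary circle to each of the $j$ inner boundary circles displays $D_0$ as $\partial D_0$ with $j$ one-cells and then one two-cell attached. Applying Lemma~\ref{lemma:subcomplex-exterior-handle-decomposition} to $(D_0,\partial D_0)$ inside $E_{T'}$ then gives $E_{T'}\cong E_T\cup(j\text{ three-handles})\cup(\text{one two-handle})$. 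Running this over $D_1,\dots,D_k$ yields $E_{T'}\cong E_T\cup(\text{handles of index }\ge 2)$, which is the first assertion.

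The remaining claims follow formally. Attaching a $2$-handle can only add a relation to $\pi_1$, and attaching a $3$-handle leaves $\pi_1$ unchanged (the attaching $2$-sphere is simply connected), so the inclusion $E_T\hookrightarrow E_{T'}$ induces a surjection $\pi_1(E_T)\twoheadrightarrow\pi_1(E_{T'})$; identifying $W\setminus T\simeq E_T$ and $W\setminus T'\simeq E_{T'}$ gives the asserted epimorphism $\pi_1(W\setminus T)\twoheadrightarrow\pi_1(W\setminus T')$. If moreover $W=S^4$ and $T$ is sphere-like and $\pi_1$-unknotted, then $T'$ is again sphere-like, so $H_1(S^4\setminus T')\cong\Z$ by Alexander duality (Remark~\ref{remark:motivation-to-unknotted-definition}); and $\pi_1(S^4\setminus T')$, being a quotient of the abelian group $\pi_1(S^4\setminus T)\cong\Z$, is abelian and hence equals its abelianization $H_1(S^4\setminus T')\cong\Z$. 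Therefore $\pi_1(S^4\setminus T')\cong\Z$ and $T'$ is $\pi_1$-unknotted.

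The one point I would be careful about is the relative handle bookkeeping for a Whitney disk with interior intersections: one must verify that these intersections contribute only one-cells to $(D_0,\partial D_0)$, hence only three-handles on the exterior side, rather than cells of dimension $\le 0$ that would dualize to index-$\le 1$ handles and destroy the $\pi_1$-surjectivity. The ordering argument reducing to a single ``top'' disk and the identification $\nu(T'\cup D_0)=\nu(T)$ are routine but worth stating explicitly.
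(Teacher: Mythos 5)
Your proposal follows essentially the same route as the paper: reduce to deleting a single Whitney disk $D$, observe that $E_T$ is the exterior in $E_{T'}$ of the part of $D$ surviving in $E_{T'}$, and apply Lemma~\ref{lemma:subcomplex-exterior-handle-decomposition} to get handles of index $\ge 2$, after which the $\pi_1$ statements follow as you say. The extra detail you supply (the top-down ordering of the removed disks and the explicit relative cell structure of $(D_0,\partial D_0)$, giving $j$ $3$-handles and one $2$-handle) is exactly what the paper leaves implicit.

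One small inaccuracy: Whitney disks in this paper are \emph{immersed} (Definition~\ref{definition:whitney-tower}), and the paper's proof explicitly allows $D$ to have self-intersections, so your claim that $D_0=\overline{D\setminus\nu(T')}$ is \emph{properly embedded} in $E_{T'}$ can fail. In that case one should, as the paper does, regard $E_{T'}\cap D$ merely as a $2$-complex embedded in $E_{T'}$; a cell structure then needs $0$-cells at the double points (and correspondingly more $1$-cells), so the dual handles may include $4$-handles and additional $3$-handles rather than exactly ``$j$ $3$-handles and one $2$-handle.'' This does not harm your argument, since all resulting handles still have index $\ge 2$ and handles of index $\ge 3$ leave $\pi_1$ unchanged, but the embeddedness claim and the precise handle count should be stated only for embedded $D$ or replaced by the $2$-complex formulation.
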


\begin{proof}
  It suffices to consider the case that $T'$ is obtained by deleting a
  Whitney disk~$D$.  Note that $D$ is immersed and may intersect other
  surfaces in general.  View $E_{T'}\cap D$ as a 2-complex embedded
  in~$E_{T'}$.  Since $E_T$ is equal to the exterior of $E_{T'}\cap D$
  in $E_{T'}$, it follows that $E_{T'} \cong E_T \cup ($handles of
  index $\ge 2)$ by
  Lemma~\ref{lemma:subcomplex-exterior-handle-decomposition}.
  Therefore the inclusion-induced homomorphism
  $\pi_1(W\setminus T) \to \pi_1(W\setminus T')$ is surjective.  Finally, if $W=S^4$ and $T$
  is a sphere-like Whitney tower which is $\pi_1$-unknotted, then
  since $\pi_1(S^4\setminus T)\cong \Z\cong H_1(S^4\setminus T')$, it follows that
  $\pi_1(S^4\setminus T') \cong \Z$.
\end{proof}

An immediate corollary is that we can lower the height of a Whitney
tower without losing $\pi_1$-unknottedness.

\begin{corollary}
  \label{corollary:unknotted-subtower-of-lower-heights}
  Suppose $S$ is an immersed 2-sphere in $S^4$ supporting a
  $\pi_1$-unknotted Whitney tower of height $h\in \tfrac12\Z$.  Then
  for any $h'<h$, $S$ supports a $\pi_1$-unknotted Whitney tower of
  height~$h'$.
\end{corollary}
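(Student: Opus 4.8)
The plan is to reduce the statement to Proposition~\ref{proposition:unknottedness-of-subtower}. If I can exhibit, for each $h'\in\tfrac12\Z$ with $1\le h'<h$, a Whitney tower of height exactly $h'$ with base surface $S$ obtained from a given height-$h$ tower $T$ on $S$ by deleting some of its Whitney disks, then the $\pi_1$-unknottedness is automatic: $S$ is sphere-like and the ambient manifold is $S^4$, so Proposition~\ref{proposition:unknottedness-of-subtower} applies directly. Thus the entire content is the combinatorial assertion that a symmetric Whitney tower of height $h$ contains, after removing Whitney disks, a symmetric Whitney tower of each smaller admissible height, and by induction on $2(h-h')$ it is enough to handle the single step $h'=h-\tfrac12$.

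For that step I would first isolate two elementary facts read off from Definition~\ref{definition:height-of-Whitney-tower}. First, if $n\ge 2$ is an integer, then any Whitney tower of height $n$ is, with no modification at all, also a Whitney tower of height $(n-1).5$; verifying clauses~(i) and~(ii) of the height-$(n-1).5$ definition is immediate, and the one point needing a remark is clause~(iii), which holds because in a height-$n$ tower \emph{every} intersection has a well-defined height, so a surface/disk of height $n$ can only meet surfaces/disks of height $n$, and in particular never meets anything of height $<n-1$. Second, if $n\ge 1$ and $T$ has height $n.5$, then deleting all of its height-$(n+1)$ Whitney disks yields a Whitney tower of height $n$: after the deletion all surfaces/disks have height $\le n$ and, by clause~(ii) of the height-$n.5$ definition, all their intersections have well-defined height; moreover any intersection of height $<n$ was already paired in $T$ by a Whitney disk of height at most $n$, which therefore survives the deletion. (In both facts the resulting complex is visibly a Whitney tower, being a stage in the inductive construction of $T$.)

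Granting these, the step $h'=h-\tfrac12$ splits into two cases. If $h$ is a half-integer, say $h=n+\tfrac12$, then deleting the height-$(n+1)$ Whitney disks of $T$ gives a height-$n=h'$ tower by the second fact, and we conclude by Proposition~\ref{proposition:unknottedness-of-subtower}. If $h$ is an integer $n$, then $n\ge 2$ because $h'=n-\tfrac12\ge 1$, and the first fact says $T$ itself already has height $(n-1).5=h'$, so nothing is deleted and $\pi_1$-unknottedness is retained trivially. Iterating through the finitely many half-integers between $h$ and $h'$ then produces the required height-$h'$ tower on $S$ as a Whitney-disk deletion of $T$, and applying Proposition~\ref{proposition:unknottedness-of-subtower} (once, to the total deletion, or step by step) finishes the argument.

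I do not anticipate a genuine obstacle: this is essentially bookkeeping against the three clauses of the height definitions. The only spot that calls for care is clause~(iii) in the first fact, where one must invoke that a symmetric Whitney tower of integer height has all of its intersection heights defined — so the top-layer Whitney disks cannot meet low-layer surfaces to begin with — rather than trying to recover this property after a deletion.
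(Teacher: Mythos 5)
Your proposal is correct and follows essentially the same route as the paper: delete the Whitney disks lying above the target height, invoke Proposition~\ref{proposition:unknottedness-of-subtower} for the $\pi_1$-unknottedness, and use the definitional fact that a tower of integer height $n+1$ is automatically a tower of height $n.5$. The paper does this in a single deletion (splitting into the cases $h'\in\Z$ and $h'\in\Z+\tfrac12$) rather than iterating half-steps, but the content is identical.
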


\begin{proof}
  Let $T$ be the given Whitney tower of height~$h$.  When
  $h'=n\in \Z$, remove Whitney disks of height $>n$.  By
  Proposition~\ref{proposition:unknottedness-of-subtower}, the
  resulting tower is $\pi_1$-unknotted, and has height $n$. When
  $h'=n.5$, remove Whitney disks of height $>n+1$.  This produces a
  $\pi_1$-unknotted Whitney tower of height $n+1$, and by definition
  it is also a tower of height~$n.5$.
\end{proof}

\subsection{Unknotted capped gropes}
\label{subsection:unknotted-gropes}

As in the Whitney tower case, we define the notion of unknottedness of
a capped grope in terms of the fundamental group:

\begin{definition}
  \label{definition:unknotted-capped-grope}
  A (union-of-spheres)-like capped grope $G$ immersed in $S^4$ is
  \emph{$\pi_1$-unknotted} if $\pi_1(S^4\setminus G)$ is a free group.
  In particular, a sphere-like capped grope $G$ in $S^4$ is
  \emph{$\pi_1$-unknotted} if $\pi_1(S^4\setminus G) \cong \Z$.
\end{definition}

Analogs of Proposition~\ref{proposition:unknottedness-of-subtower} and
its Corollary~\ref{corollary:unknotted-subtower-of-lower-heights} for
capped gropes can be formulated in terms of contraction.  Suppose
$D_1$ and $D_2$ are dual caps of a capped grope $G$, that is, they are
attached to the same body surface, say $\Sigma$, along curves
intersecting at a single point.  Following~\cite[\S
2.3]{Freedman-Quinn:1990-1}, take two parallel copies of $D_1$ and
$D_2$ and attach them to a square neighborhood of the point
$\partial D_1\cap \partial D_2$ in $\Sigma$ to obtain a disk.  Cut
$\Sigma$ along $\partial D_1$ and $\partial D_2$, and then attach this
disk.  This gives a surface with genus one less than that of~$\Sigma$.
See the left hand side of Figure~\ref{figure:contraction}.  We call
this \emph{symmetric contraction}.  Applying this to all the dual
pairs of caps attached to a top stage surface, we can replace the top
stage surface together with its caps by a disk, which may be viewed as
a new cap.

There is an ``asymmetric'' version of the above.
Following~\cite{Krushkal-Quinn:2000-1}, forget one of the caps, say
$D_2$, cut $\Sigma$ along $\partial D_1$, and attach two parallel
copies of $D_1$ to obtain a new surface with genus one less than that
of~$\Sigma$.  See the right hand side of
Figure~\ref{figure:contraction}.  We call this operation
\emph{asymmetric contraction}.  Similarly to the above, a top stage
surface of a capped grope can be changed to a cap by applying this
repeatedly.

\begin{figure}[H]
  \labellist
  \footnotesize
  \pinlabel{$D_1$} at 186 95
  \pinlabel{$D_2$} at 186 62
  \pinlabel{symmetric} at 118 45
  \pinlabel{asymmetric} at 254 45
  \endlabellist
  \includegraphics{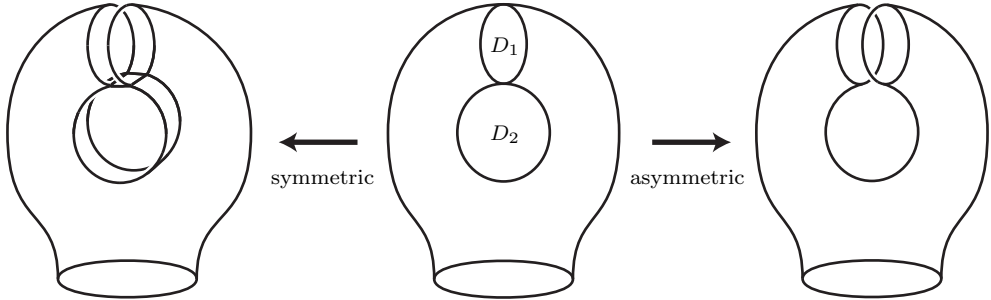}
  \caption{Symmetric and asymmetric contraction.}
  \label{figure:contraction}
\end{figure}

\begin{proposition}[Unknottedness of contraction]
  \label{proposition:unknottedness-of-contraction}
  Suppose $G'$ is a capped grope obtained from another capped grope
  $G$ in a 4-manifold $W$ by either asymmetric or symmetric
  contraction.  Then $E_{G'} \cong E_G \cup (\text{$2$-handles})$.
  Consequently, there is an epimorphism of $\pi_1(W\setminus G)$ onto
  $\pi_1(W\setminus G')$ supported by a regular neighborhood of~$G$.
  In addition, if $W=S^4$ and $G$ is sphere-like and
  $\pi_1$-unknotted, then $G'$ is $\pi_1$-unknotted.
\end{proposition}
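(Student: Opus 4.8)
The plan is to follow the same strategy used in Lemma~\ref{lemma:finger-move-fundamental-group} and Proposition~\ref{proposition:unknottedness-of-subtower}: exhibit $E_{G'}$ as obtained from $E_G$ by attaching handles of index~$\ge 2$, deduce the $\pi_1$-surjection, and then use the $H_1$-computation via Alexander duality to upgrade to an isomorphism with $\Z$ in the sphere-like unknotted case. The key point is that contraction is a \emph{local} move, so it suffices to analyze what happens in a regular neighborhood of a single top-stage capped surface $\Sigma$ together with its caps, and argue that outside this neighborhood the exteriors agree. Concretely, I would first reduce to the case of a single dual pair $D_1, D_2$ (genus-one $\Sigma$) by induction on the genus, and treat the symmetric and asymmetric cases in parallel.

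The main computation is comparing the exterior of the ``before'' picture (the body surface $\Sigma$ union its caps $D_1, D_2$, with whatever intersections the caps carry) with the exterior of the ``after'' picture (the contracted disk $D'$, carrying the inherited intersections). Here I would argue as follows. Both $G$ and $G'$ contain a common subcomplex $G_0$ — everything except the top stage $\Sigma \cup D_1 \cup D_2$ in the one case, resp.\ the contracted disk $D'$ in the other — and in fact one can choose a common regular neighborhood so that $E_G$ and $E_{G'}$ are each obtained from $E_{G_0}$ by carving out the relevant top-stage piece. The cleanest route is to observe that $G'$ can be obtained from $G \cup G'$ (viewed inside a slightly enlarged ambient neighborhood) by deleting the ``old'' top stage, which is a 2-complex, so Lemma~\ref{lemma:subcomplex-exterior-handle-decomposition} gives that $E_{G'} \cong E_{G \cup G'} \cup (\text{handles of index} \ge 2)$; symmetrically $E_G \cong E_{G\cup G'} \cup(\text{handles of index}\ge 2)$. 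But for the stated conclusion I want handles attached \emph{directly onto $E_G$}, so the more economical argument is: the contracted disk $D'$ is built by cutting $\Sigma$ and gluing in parallel copies of $D_1$ (and, in the symmetric case, $D_2$); up to ambient isotopy supported near $\Sigma$, the complement $E_{G'}$ is recovered from $E_G$ by first \emph{filling in} the neighborhoods of $\Sigma$ and the discarded parts (these are 2- and 3-cells being filled, i.e.\ attaching 2- and 1-handles from the $E_G$ side — wrong direction) and then re-carving $D'$. To get the \emph{right} direction I would instead run the cell-by-cell count on the pair $(G', G)$ relative to their common part using a Morse/handle function adapted to the contraction picture of \cite[\S2.3]{Freedman-Quinn:1990-1} and \cite{Krushkal-Quinn:2000-1}, verifying directly that the cells of $G'$ not already in $G$ all have dimension $\le 2$, hence by Lemma~\ref{lemma:subcomplex-exterior-handle-decomposition} contribute only handles of index $\ge 2$ to pass from $E_G$ to $E_{G'}$; the support statement (a regular neighborhood of $G$) is immediate since all the modification takes place there.

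Granting $E_{G'} \cong E_G \cup (\text{$2$-handles})$, the rest is formal and mirrors the earlier proofs: attaching handles of index $\ge 2$ to a 4-manifold does not kill generators of $\pi_1$, and a 2-handle can only impose relations, so the inclusion $E_G \hookrightarrow E_{G'}$ induces an epimorphism $\pi_1(W\setminus G) \twoheadrightarrow \pi_1(W\setminus G')$; since all the handles are attached inside $\nu(G)$, this epimorphism is supported by (a regular neighborhood of) $G$. Finally, if $W=S^4$ and $G$ is sphere-like and $\pi_1$-unknotted, then $\pi_1(S^4\setminus G)\cong \Z$ surjects onto $\pi_1(S^4\setminus G')$, while $H_1(S^4\setminus G')\cong\Z$ by Alexander duality (exactly as in Remark~\ref{remark:motivation-to-unknotted-definition}(1), noting $G'$ is again sphere-like since contraction does not change the number or type of base spheres); a surjection $\Z \twoheadrightarrow \pi_1$ onto a group with $H_1\cong\Z$ is an isomorphism, so $\pi_1(S^4\setminus G')\cong\Z$ and $G'$ is $\pi_1$-unknotted.

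I expect the genuine obstacle to be the handle-direction bookkeeping in the second paragraph — making precise, for both the symmetric and asymmetric contraction pictures, that passing from $E_G$ to $E_{G'}$ uses only handles of index $\ge 2$ (equivalently, that the ``new'' relative cells are of dimension $\le 2$), rather than the a~priori easier statement that the two exteriors become homeomorphic after attaching handles to \emph{both}. The symmetric case, where one glues in parallel copies of \emph{both} caps over a square neighborhood of $\partial D_1 \cap \partial D_2$, is the more delicate of the two and is where I would spend the most care drawing the local model.
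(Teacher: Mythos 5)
You have correctly isolated the crux---getting the handles to be attached to $E_G$ to produce $E_{G'}$, rather than the symmetric ``attach handles to both sides'' statement---but your proposed resolution does not close it. Lemma~\ref{lemma:subcomplex-exterior-handle-decomposition} applies only to a \emph{nested} pair of complexes: it converts the cells of $K\setminus L$ into handles attached to $E_K$ to recover $E_L$. Since neither of $G$, $G'$ contains the other (contraction deletes the top-stage surface and its caps and replaces them by a disk built from parallel copies of the caps), the observation that ``the cells of $G'$ not already in $G$ all have dimension $\le 2$'' is vacuous---it holds for any 2-complex---and feeding it into the lemma can only reproduce the wrong-direction statement you already rejected: writing $G=G_0\cup(\text{old cells})$ and $G'=G_0\cup(\text{new cells})$ over the common part $G_0$ yields $E_{G_0}\cong E_G\cup(\text{2-handles})$ and $E_{G_0}\cong E_{G'}\cup(\text{2-handles})$, not $E_{G'}\cong E_G\cup(\text{2-handles})$.

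What is actually needed---and what the paper supplies---is an intermediate complex interpolating between the two: let $L$ be $G'$ with $k$ additional 2-cells attached, one for each intersection point of the old caps $D_1\cup D_2$ with other sheets (the hatched rectangles of Figure~\ref{figure:contraction-exterior}; each such sheet meets the contracted disk in a pair of points, and the rectangle spans that pair inside the old thickened cap). One then shows that $L$ expands cellularly to $G$ with thickened caps, so $\nu(L)$ is isotopic to $\nu(G)$ and $E_L\cong E_G$ by Lemma~\ref{lemma:cellular-expansion-exterior}; now Lemma~\ref{lemma:subcomplex-exterior-handle-decomposition} applied to the genuinely nested pair $(L,G')$ gives $E_{G'}\cong E_L\cup(\text{2-handles})\cong E_G\cup(\text{2-handles})$. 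Your appeal to a ``Morse/handle function adapted to the contraction picture'' gestures toward this but never produces the extra 2-cells or the regular-neighborhood identification, and without them the index bookkeeping cannot be made to come out in the required direction; this is a genuine gap, not a routine verification. The remainder of your argument (the formal $\pi_1$-surjection, its support in $\nu(G)$, and the Alexander-duality upgrade to $\pi_1\cong\Z$ in the sphere-like case) is correct and agrees with the paper.
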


\begin{proof}
  We assert that $E_G$ is homeomorphic to the exterior $E_L$ of a
  2-complex $L$ obtained by attaching $k$ 2-cells to $G'$, where $k$
  is the number of intersections of the caps $D_1\cup D_2$ with other
  surfaces/caps.  All the conclusions follow from this, since
  $E_{G'} \cong E_L \cup (\text{2-handles})$ by
  Lemma~\ref{lemma:subcomplex-exterior-handle-decomposition}.

  To prove the assertion for the symmetric contraction case, attach to
  $G'$ $k$ 2-cells which are shown in
  Figure~\ref{figure:contraction-exterior} as hatched rectangles.
  Denote the resulting complex by~$L$.  Briefly speaking, the picture
  tells us that $L$ has the same regular neighborhood as that of~$G$,
  and consequently the exterior of $L$ in $W$ is
  homeomorphic to that of~$G$.

  \begin{figure}[H]
    \includegraphics{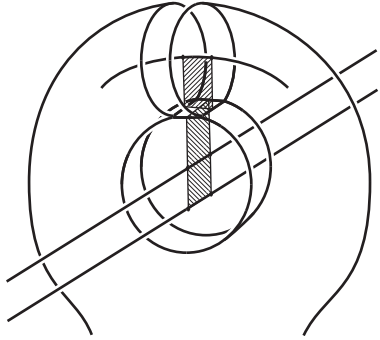}
    \caption{A 2-complex $L$ with the same exterior as the initial
      capped grope~$G$.  This is the case of $k=3$, that is, there are
      three arcs representing other surfaces which intersect the caps.
      The three hatched rectangles are additional 2-cells attached to
      the contraction.}
    \label{figure:contraction-exterior}
  \end{figure} 

  In what follows we will present a formal approach for the last
  sentence, since it will also be useful in later sections.  It is
  best described using the language of PL topology since we need to
  deal with complexes which are not manifolds and their regular
  neighborhoods.

  \begin{definition}
    \label{definition:cellular-expansion}
    Suppose $K$ is a subcomplex of a simplicial complex~$X$.  If
    $\Delta$ is a subcomplex of $X$ such that $(\Delta,\Delta\cap K)
    \cong (D^n,$ an embedded $(n-1)$-ball in $\partial D^n)$ for
    some~$n\ge 1$, then we say that $K\cup \Delta$ is obtained from
    $K$ by \emph{elementary cellular expansion} in~$X$ (and $K$ is
    obtained from $K\cup\Delta$ by \emph{elementary cellular
      collapse}).  We say that $K$ \emph{expands cellularly} to $K'$
    in $X$ (and $K'$ \emph{collapses cellularly} to $K$) if there is a
    sequence of elementary cellular expansions in $X$ transforming $K$
    to~$K'$.
  \end{definition}

  When the ambient complex $X$ is clearly understood, we often omit
  ``in~$X$.''

  The following is a standard fact.  For instance see
  \cite[Chapter~3]{Rourke-Sanderson:1972-1}.

  \begin{lemma}
    \label{lemma:cellular-expansion-exterior}
    If $K$ expands cellularly to $K'$ in a manifold $X$, then the
    regular neighborhoods $\nu(K)$ and $\nu(K')$ in $X$ are isotopic
    in~$X$.  Consequently the exteriors $E_K$ and $E_{K'}$ are
    homeomorphic.
  \end{lemma}

  We remark that Lemma~\ref{lemma:cellular-expansion-exterior} applies
  to subcomplexes in a triangulable codimension zero submanifold of a
  topological manifold, for instace, in a regular neighborhood of an
  immersed capped grope.

  Now, returning to the proof of
  Proposition~\ref{proposition:unknottedness-of-contraction}, observe
  that the hatched rectangles in
  Figure~\ref{figure:contraction-exterior} lie in thickened caps of
  $G$, and each thickened cap of $G$ cut along the hatched rectangles
  is a 3-cell.  It follows that $L$ expands cellularly to $G$ with
  thickened caps.  By Lemma~\ref{lemma:cellular-expansion-exterior},
  it follows that $E_L$ is homeomorphic to~$E_G$.

  A similar argument can be carried out for the asymmetric case as
  well.
\end{proof}

\begin{corollary}
  \label{corollary:unknotted-grope-of-lower-heights}
  Suppose $G$ is a $\pi_1$-unknotted sphere-like capped grope of
  height $h$ in $S^4$, $h\in \frac12\Z$.  Then for any $h'<h$, there
  is a $\pi_1$-unknotted capped grope of height $h'$ which is obtained
  from $G$ by symmetric/asymmetric contraction.
\end{corollary}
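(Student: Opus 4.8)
The plan is to argue analogously to the proof of Corollary~\ref{corollary:unknotted-subtower-of-lower-heights}, with Proposition~\ref{proposition:unknottedness-of-contraction} playing the role that Proposition~\ref{proposition:unknottedness-of-subtower} played there. A symmetric or asymmetric contraction never modifies the base surface, so it sends sphere-like capped gropes to sphere-like capped gropes, and by Proposition~\ref{proposition:unknottedness-of-contraction} it preserves $\pi_1$-unknottedness of such gropes. Hence it suffices to produce, by a finite sequence of such contractions, a capped grope of height exactly~$h'$. I would build this sequence by repeatedly ``contracting away the topmost stage,'' arranging each round so that it lowers the height of the current grope by $\tfrac12$, that is, to the predecessor of its height in $\tfrac12\Z$.

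The heart of the matter is the bookkeeping for one such round, which splits into two cases according to whether the current height is an integer or a half-integer. Suppose first the grope has integer height~$n$, so that the capped grope attached along each curve of the base surface has height $n-1$. For each dual pair of curves on the base, contract all top-stage surfaces of the capped grope attached along exactly one of the two curves (replacing each top-stage surface together with its caps by a single cap, as described before Proposition~\ref{proposition:unknottedness-of-contraction}); this turns that height-$(n-1)$ grope into a height-$(n-2)$ grope, and, since the base surface and its symplectic basis are untouched, the result is a capped grope of height $(n-1).5 = n-\tfrac12$. Suppose instead the grope has half-integer height~$n.5$, so that along each dual pair of curves on the base there is a height-$n$ capped grope and a height-$(n-1)$ capped grope; then the surfaces of maximal height all lie in the height-$n$ (``tall'') gropes. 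Contracting all top-stage surfaces of each tall grope turns it into a height-$(n-1)$ grope, matching the short side, so the result is a capped grope of height~$n$. In either case one round drops the height by~$\tfrac12$.

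To finish, starting from $G$ I would apply $2(h-h')$ such rounds. Each round is a finite sequence of symmetric or asymmetric contractions, and at every intermediate stage the grope remains sphere-like because the base $2$-sphere is never modified; hence Proposition~\ref{proposition:unknottedness-of-contraction} applies repeatedly and shows that $\pi_1$-unknottedness is preserved throughout. The grope obtained at the end has height exactly $h'$ and is $\pi_1$-unknotted, which is what is asserted.

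I do not expect a substantive obstacle here: Proposition~\ref{proposition:unknottedness-of-contraction} supplies all the topology, and what remains is the routine verification, sketched above, that the indicated contractions realize each half-unit drop in height and that one stops precisely at height~$h'$. The one point needing a little care is the half-integer case, where one must contract only the surfaces of genuinely maximal height --- those in the tall gropes --- rather than every surface carrying a cap, since the short gropes also carry caps one level lower; contracting those as well would overshoot, dropping the height by a full unit instead of~$\tfrac12$.
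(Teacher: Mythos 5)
Your proposal is correct and follows essentially the same route as the paper: the paper's proof simply says to contract top-stage surfaces repeatedly until the desired height is reached, with Proposition~\ref{proposition:unknottedness-of-contraction} guaranteeing that $\pi_1$-unknottedness is preserved at each step. Your case analysis (contracting one side of each dual pair to pass from height $n$ to $n-\tfrac12$, and contracting the tall side to pass from $n.5$ to $n$) is just a careful spelling-out of the bookkeeping the paper leaves implicit.
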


\begin{proof}
  Contract top stage surfaces repeatedly until the capped grope has
  the desired height.  The result is $\pi_1$-unknotted by
  Proposition~\ref{proposition:unknottedness-of-contraction}.
\end{proof}

\section{Transformation between unknotted Whitney towers and
  gropes}
\label{section:schneiderman-transformation}

In \cite{Schneiderman:2006-1}, Schneiderman presents fundamental
constructions which convert an immersed $S$-like capped grope to an
$S$-like Whitney tower immersed in a neighborhood of the capped grope,
and vice versa.  Furthermore, he shows that the corresponding capped
gropes and Whitney towers have exactly the same intersection data
(which are described precisely in terms of uni-trivalent trees).  As a
corollary he shows that a capped grope of height $h$ in a 4-manifold
can be transformed to a Whitney tower of height $h$ with the same
boundary \cite[Corollary~2]{Schneiderman:2006-1}.

In this section we show that Schneiderman's method preserves
unknottedness, as stated below:

\begin{theorem}
  \label{theorem:grope-whitney-tower-transformation}
  \leavevmode\Nopagebreak
  \begin{enumerate}
  \item If a capped grope $G$ is obtained from a Whitney tower $T$ in
    a 4-manifold $W$ by Schneiderman's construction, then
    $E_G \cong E_T \cup ($handles of index $\ge 2)$, and thus there is
    an epimorphism of $\pi_1(W\setminus T)$ onto $\pi_1(W\setminus G)$
    supported by a regular neighborhood of\/~$T$.  Consequently, if $T$
    is sphere-like and $\pi_1$-unknotted in $W=S^4$, then $G$ is
    $\pi_1$-unknotted.
  \item If a Whitney tower $T$ is obtained from a capped grope $G$ in
    a 4-manifold $W$ by Schneiderman's construction, then
    $E_T \cong E_G \cup ($handles of index $\ge 2)$, and thus there is
    an epimorphism of $\pi_1(W\setminus G)$ onto $\pi_1(W\setminus T)$
    supported by a regular neighborhood of\/~$G$.  Consequently, if $G$
    is sphere-like and $\pi_1$-unknotted in $W=S^4$, then $T$ is
    $\pi_1$-unknotted.
  \end{enumerate}
\end{theorem}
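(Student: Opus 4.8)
The plan is to reduce the theorem to a careful cell-by-cell analysis of Schneiderman's construction, exactly parallel to the proofs of Lemma~\ref{lemma:finger-move-fundamental-group}, Proposition~\ref{proposition:unknottedness-of-subtower}, and Proposition~\ref{proposition:unknottedness-of-contraction}. In each of those, the exterior of the target object was shown to be obtained from the exterior of the source by attaching handles of index $\ge 2$ (equivalently, removing a neighborhood of a subcomplex built from cells of dimension $\le 2$), and then the $\pi_1$-statement followed formally: an epimorphism $\pi_1(W\setminus T)\twoheadrightarrow\pi_1(W\setminus G)$ is induced because attaching handles of index $\ge 2$ kills only relations and (for index $\ge 3$) nothing, while for the sphere-like case in $S^4$ one invokes Alexander duality to see $H_1(S^4\setminus G)\cong\Z$, forcing $\pi_1(S^4\setminus G)\cong\Z$ once it is abelian or once one knows it is a quotient of $\Z$. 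So the whole content is the homeomorphism statement $E_G\cong E_T\cup(\text{handles of index}\ge 2)$, and symmetrically $E_T\cong E_G\cup(\text{handles of index}\ge 2)$.

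First I would recall, from \cite{Schneiderman:2006-1}, the local form of the two constructions. Converting a Whitney tower to a capped grope is done Whitney-disk by Whitney-disk: a Whitney disk $D$ pairing two intersection points $p,q$ of sheets $A,B$ is replaced by a genus-one capped surface whose body surface is built from $D$ together with bands running along $A$ and $B$ near $p$ and $q$, and whose two dual caps are parallel copies of pieces of $A$ and $B$ (the "caps" of the new grope stage carrying the old intersection data of $D$). Conversely, the grope-to-tower construction replaces a genus-one capped surface stage by a Whitney disk plus lower-order data, reversing this. Both are local: they happen inside a regular neighborhood of the relevant piece, and leave the rest of the object untouched. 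The crucial observation, which I would isolate as the key lemma, is that in each direction the target object $G$ (or $T$) is obtained from the source $T$ (or $G$) by (i) possibly deleting some 2-dimensional pieces (a Whitney disk, or a cap/body surface that gets replaced) and (ii) attaching new 2-dimensional pieces that are attached along 1-complexes, so that by Lemma~\ref{lemma:subcomplex-exterior-handle-decomposition} and Lemma~\ref{lemma:cellular-expansion-exterior} the passage $E_T\rightsquigarrow E_G$ is: first collapse/expand cellularly inside $\nu(T\cup G)$ to see that $E_{T}$ and the exterior of some common 2-complex $L$ agree up to handles of index $\ge 2$, then do the same for $E_G$ against $L$. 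Concretely, I expect that $L = T\cup(\text{new grope stage})$ serves as the common refinement in part (1): $E_G\cong E_L\cup(\text{2-handles})$ since $G$ is obtained from $L$ by deleting the old Whitney disk $D$ (a 2-cell, with its immersed intersections adding further 2-cells), which is exactly the mechanism of Proposition~\ref{proposition:unknottedness-of-subtower}; and $E_L\cong E_T$ up to handles of index $\ge 2$ because the new grope stage is built from 2-cells attached along arcs and a curve on a sheet of $T$, plus a regular-neighborhood collapse argument handling the parallel-copy caps just as in the contraction proof. For part (2) one runs the same argument with the roles of $G$ and $T$ exchanged, using that the grope-to-tower move deletes a cap/body surface (2-dimensional) and attaches a Whitney disk and bands (attached along 1-complexes).

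The main obstacle I anticipate is bookkeeping, not conceptual: Schneiderman's construction is carried out one Whitney disk (resp. one grope stage) at a time, proceeding from the top of the tower/grope downward, and one must check that at each stage the local model genuinely sits inside a regular neighborhood disjoint from the parts already processed and the parts not yet processed, so that the handle attachments accumulate correctly and the "supported by a regular neighborhood of $T$ (resp. $G$)" clause holds. In particular I would need to verify that the new caps, being parallel copies of pieces of existing sheets, do not create intersections that obstruct the cellular collapse of their thickenings onto the original sheets — this is precisely the role played by the hatched-rectangle 2-cells in Figure~\ref{figure:contraction-exterior} and the cut-into-3-cells observation in the proof of Proposition~\ref{proposition:unknottedness-of-contraction}, and I expect the same device to work here. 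Once the single-stage local homeomorphism $E_{\text{stage-}T}\cong E_{\text{stage-}G}\cup(\text{handles of index}\ge 2)$ is established, an induction on the number of stages (equivalently, on the height) finishes both parts, and the $\pi_1$- and $\pi_1$-unknottedness conclusions drop out exactly as in the earlier propositions, using Alexander duality for the sphere-like case in $S^4$.
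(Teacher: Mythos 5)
Your overall architecture is the right one, and it is the same as the paper's: compare exteriors through auxiliary complexes using Lemma~\ref{lemma:subcomplex-exterior-handle-decomposition} and the cellular expansion/collapse Lemma~\ref{lemma:cellular-expansion-exterior}, and then let the $\pi_1$ and unknottedness statements follow formally (quotient of $\Z$ plus $H_1\cong\Z$ by Alexander duality). The genuine gap is in what you take Schneiderman's construction to be. It is not a one-shot, stage-by-stage replacement of each Whitney disk by a genus-one capped surface whose caps are parallel copies of pieces of the sheets. In the tower-to-grope direction the construction first performs Whitney-tower \emph{splitting} (finger moves on sheets, creating new intersections and new Whitney disks) and then alternates \emph{tri-sheet moves} (isotopies that trade an intersection $\langle (A,B),C\rangle$ for $\langle A,(B,C)\rangle$, replacing one Whitney disk by another) with the \emph{tubing-on-a-cap} operation, whose new caps are a meridional disk of the tube and the truncated Whitney disk; in the grope-to-tower direction it first \emph{pushes intersections down} and applies Krushkal's \emph{grope splitting}, and then alternates tri-sheet moves with \emph{surgery along a cap}. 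These preparatory and intermediate moves are not bookkeeping: each one changes the 2-complex by finger moves, isotopies, or tubings, and each needs its own exterior analysis. The paper isolates them as Propositions~\ref{proposition:tower-splitting-unknottedness}, \ref{proposition:tri-sheet-move-unknottedness}, \ref{proposition:pushing-intersection-down-unknottedness}, and~\ref{proposition:grope-splitting-unknotedness}, and then observes that the two genuinely new local moves (tubing on a cap, surgery along a cap) preserve the exterior \emph{on the nose} by an expansion/collapse through the solid tube or thickened cap. Nothing in your plan addresses splitting, tri-sheet moves, pushing down, or grope splitting, so the induction ``one Whitney disk at a time'' does not analyze the construction the theorem is actually about.

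Two further points of caution. First, the index-$\ge 2$ handles in the statement arise from the preparatory moves (splitting, pushing down, grope splitting), not from the conversion moves themselves, so your expectation of where handles appear, and your claim that the old Whitney disk gets deleted (it survives as a cap after tubing), would need revision. Second, the step you dismiss as reusable from the contraction proof is the hardest one: grope splitting doubles an entire dual subgrope, and showing its exterior changes only by handles of index $\ge 2$ requires the paper's most elaborate argument (an embedded tree $K$ in the dual subgrope, the complex $(G_2\times\{0,1\})\cup(K\times I)$, and product-expansion lemmas), not just the hatched-rectangle device of Figure~\ref{figure:contraction-exterior}. So while your strategy and toolkit are correct, as written the proposal is missing the decomposition of Schneiderman's construction into its actual moves and the analysis of the moves that carry most of the weight.
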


Applying Theorem~\ref{theorem:grope-whitney-tower-transformation} to a
grope of height $h$, we obtain the following:

\begin{corollary}
  \label{corollary:unknotted-tower-from-grope-height-h}
  Suppose $G$ is a $\pi_1$-unknotted sphere-like capped grope of
  height $h$ in $S^4$.  Then, in a regular neighborhood of $G$, there
  is a $\pi_1$-unknotted sphere-like Whitney tower of height~$h$ with the same boundary.
\end{corollary}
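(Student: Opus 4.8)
The plan is to obtain this as an immediate consequence of Theorem~\ref{theorem:grope-whitney-tower-transformation}(2) combined with Schneiderman's construction, so there is essentially nothing new to prove beyond assembling those two inputs. First I would invoke \cite[Corollary~2]{Schneiderman:2006-1}: applying Schneiderman's transformation to the given capped grope $G$ produces an $S$-like Whitney tower $T$ of the same height $h$, with the same boundary as $G$, and realized inside a regular neighborhood of $G$. Since $G$ is sphere-like, i.e.\ its base surface $S$ is a $2$-sphere, the resulting Whitney tower $T$ is sphere-like as well; and by construction $T$ has height $h$, has the same boundary as $G$, and lies in a regular neighborhood of~$G$. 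This already establishes every claim of the corollary except the $\pi_1$-unknottedness of~$T$.

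It then remains only to verify that $T$ is $\pi_1$-unknotted, and this is exactly what Theorem~\ref{theorem:grope-whitney-tower-transformation}(2) provides: since $G$ is sphere-like and $\pi_1$-unknotted in $W = S^4$, that part of the theorem gives $E_T \cong E_G \cup (\text{handles of index} \ge 2)$, hence an epimorphism $\pi_1(S^4\setminus G)\twoheadrightarrow \pi_1(S^4\setminus T)$. Because $\pi_1(S^4\setminus G)\cong\Z$ by hypothesis and $H_1(S^4\setminus T)\cong\Z$ by Alexander duality (as in Remark~\ref{remark:motivation-to-unknotted-definition}), we conclude $\pi_1(S^4\setminus T)\cong\Z$, i.e.\ $T$ is $\pi_1$-unknotted.

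Since all the genuine work has already been carried out in proving Theorem~\ref{theorem:grope-whitney-tower-transformation}, I do not expect any real obstacle here. The only point deserving a sentence of care is confirming that Schneiderman's construction keeps the boundary fixed, preserves the height, and stays within a regular neighborhood of~$G$ — all of which are recorded in \cite{Schneiderman:2006-1} — so that the output qualifies as a sphere-like Whitney tower of height $h$ with the prescribed boundary. With that in hand the corollary follows at once.
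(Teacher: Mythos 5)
Your proposal is correct and matches the paper's route exactly: the paper states this corollary as an immediate consequence of applying Theorem~\ref{theorem:grope-whitney-tower-transformation}(2) to a height $h$ grope, with Schneiderman's result \cite[Corollary~2]{Schneiderman:2006-1} supplying the height, boundary, and regular-neighborhood claims. Your additional remark that the epimorphism from $\Z$ together with $H_1(S^4\setminus T)\cong\Z$ forces $\pi_1(S^4\setminus T)\cong\Z$ is precisely the argument built into the theorem's statement, so nothing is missing.
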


\begin{remark}
  \label{remark:base-surface-transformation}
  If $P$ is a planar surface contained in the base surface of a
  Whitney tower $T$ and $P$ is disjoint from non-base Whitney disks,
  then Schneiderman's construction produces a capped grope $G$ such
  that the base surface of $G$ contains $P$ and $G\setminus P$ is
  contained in a regular neighborhood of $T\setminus P$.  Conversely,
  if $P$ is a planar surface contained in the base surface of a capped
  grope $G$ and $P$ is disjoint from non-base surfaces and caps, then
  for the Whitney tower $T$ obtained by Schneiderman's construction,
  the base surface of $T$ contains $P$ and $T\setminus P$ is contained
  in a regular neighborhood of $G\setminus P$.  In particular, this
  holds for the height $h$ Whitney tower obtained in
  Corollary~\ref{corollary:unknotted-tower-from-grope-height-h}.
\end{remark}

\subsection{Modifications of Whitney towers and unknottedness}
\label{subsection:modification-whitney-towers-unknottedness}

In the proof of
Theorem~\ref{theorem:grope-whitney-tower-transformation}, we will use
that $\pi_1$-unknottedness is preserved under certain modifications of
Whitney towers and capped gropes, which are discussed in this and next
subsections.

\begin{describe}[Splitting.] In~\cite[\S 3.7, \S
  3.8]{Schneiderman:2006-1}, a splitting procedure was introduced to
  separate intersections in a Whitney tower.  Suppose $D$ is a Whitney
  disk between two sheets $A$ and $B$, and $\beta$ is a properly
  embedded arc in $D$ which joins $A\cap \partial D$ to
  $B\cap \partial D$ and avoids the intersection of $D$ with other
  surfaces/disks.  Then a finger move on $A$ along $\beta$ introduces
  two additional intersections between $A$ and $B$, and divides $D$
  into two new Whitney disks.  See
  Figure~\ref{figure:whitney-disk-splitting} (for now ignore the
  disk~$\Delta$).  Repeated applying this splitting procedure, one
  eventually obtains a Whitney tower each of whose Whitney disks is
  embedded and has either a single unpaired intersection or two
  intersections paired by another Whitney disk.
\end{describe}

\begin{figure}[H]
  \labellist
  \footnotesize
  \pinlabel{$A$} at 10 45
  \pinlabel{$B$} at 31 3
  \pinlabel{$\Delta$} at 283 86
  \endlabellist
  \includegraphics{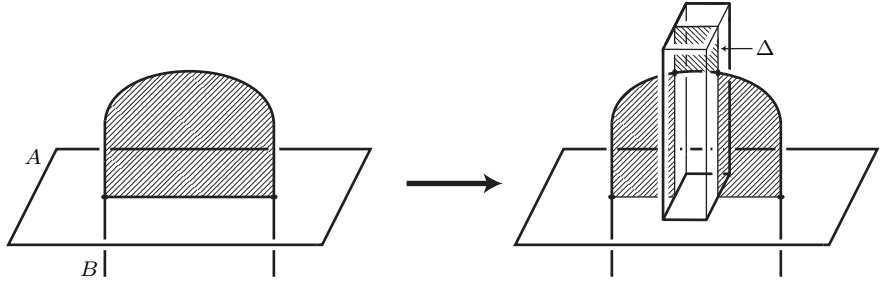}
  \caption{Splitting of a Whitney disk.}
  \label{figure:whitney-disk-splitting}
\end{figure}

\begin{proposition}
  \label{proposition:tower-splitting-unknottedness}
  Suppose a Whitney tower $T'$ is obtained from another Whitney tower
  $T$ by splitting in a 4-manifold~$W$.  Then
  $E_{T'} \cong E_T\cup(\text{$2$-handles})$.  Consequently there is an
  epimorphism of $\pi_1(W\setminus T)$ onto $\pi_1(W\setminus T')$
  supported by a regular neighborhood of~$T$.  In addition, splitting
  changes a $\pi_1$-unknotted sphere-like Whitney tower to a
  $\pi_1$-unknotted Whitney tower in~$S^4$.
\end{proposition}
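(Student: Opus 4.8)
The plan is to follow the template used for Proposition~\ref{proposition:unknottedness-of-contraction}. It suffices to treat a single splitting move, converting a Whitney tower $T$ containing a Whitney disk $D$ between sheets $A$ and $B$ into $T'$ by a finger move on $A$ guided by the arc $\beta\subset D$: here $D$ is replaced by the two pieces $D_1,D_2$ into which $\beta$ cuts $D$, together with the clean Whitney disk $\Delta$ introduced by the finger move and pairing the two new intersections of $A$ and~$B$. The first point is that the whole modification is local. Since $\beta$ lies in $D$ and avoids the intersections of $D$ with the other surfaces and disks of $T$, the finger of $A$ travels inside a regular neighborhood $\nu(D)$ and the disk $\Delta$ lies in $\nu(D)\cup\nu(B)$; hence, with the regular neighborhoods chosen small enough, $T'\subset\nu(T)$, so $\nu(T')\subset\nu(T)$ and $E_T\subset E_{T'}$. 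Moreover $D_1\cup D_2$ is just $D$ re-decomposed as two cells, so the inclusion $E_T\hookrightarrow E_{T'}$ is the identity outside $\nu(T)$, and everything below is therefore supported by a regular neighborhood of~$T$.

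Next I would identify the difference $E_{T'}=E_T\cup\overline{\nu(T)\setminus\nu(T')}$ as a union of $2$-handles, exactly as in Figure~\ref{figure:contraction-exterior}. The idea is to attach to $T'$ a properly embedded $2$-cell (or finitely many) --- the ``membrane'' filling the region swept by the finger, i.e.\ the core of the Whitney move across $\Delta$ that undoes the splitting, with boundary on $T'$ and interior in $E_{T'}$ --- obtaining a $2$-complex $L$ which, as in the contraction case, expands cellularly inside $\nu(T)$ to the complex obtained from $T$ by thickening its Whitney disks (the point being that each thickened Whitney disk, cut along the membranes, is a $3$-cell). Granting this local picture, Lemma~\ref{lemma:cellular-expansion-exterior} gives $\nu(L)=\nu(T)$ and hence $E_L\cong E_T$, while Lemma~\ref{lemma:subcomplex-exterior-handle-decomposition}, applied inside $E_{T'}$ to the $2$-cells $L\cap E_{T'}$, gives $E_{T'}\cong E_L\cup(\text{$2$-handles})$. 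Combining the two yields $E_{T'}\cong E_T\cup(\text{$2$-handles})$.

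The remaining assertions are then formal. Attaching a $2$-handle only takes a quotient of the fundamental group, so the inclusion $E_T\hookrightarrow E_{T'}$ induces an epimorphism $\pi_1(W\setminus T)\twoheadrightarrow\pi_1(W\setminus T')$, and by the first paragraph it is supported by a regular neighborhood of~$T$. For the last statement, a finger move does not change the diffeomorphism type of the base surface, so $T'$ is again sphere-like and $H_1(S^4\setminus T')\cong\Z$ by Alexander duality (Remark~\ref{remark:motivation-to-unknotted-definition}(1)); if $T$ is $\pi_1$-unknotted then $\pi_1(S^4\setminus T)\cong\Z$ surjects onto $\pi_1(S^4\setminus T')$, and since $\Z\cong H_1(S^4\setminus T')$ this forces $\pi_1(S^4\setminus T')\cong\Z$, as in the proof of Proposition~\ref{proposition:unknottedness-of-subtower}.

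I expect the main obstacle to be the middle step: making the local picture near $\beta$ precise enough that the complex $L$ and its cellular expansion to the thickened $T$ can be verified. This is the analogue of the picture underlying Figure~\ref{figure:contraction-exterior}, and it requires more care here because the finger move genuinely alters the isotopy class of $A$ rather than merely re-decomposing cells; one must check that the membranes can be chosen so that the collapse goes through and so that the attaching circles of the resulting $2$-handles lie in $\partial E_T$.
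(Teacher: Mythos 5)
Your proof follows the paper's general template (attach auxiliary $2$-cells to $T'$, identify the resulting exterior with $E_T$ via Lemma~\ref{lemma:cellular-expansion-exterior}, then apply Lemma~\ref{lemma:subcomplex-exterior-handle-decomposition}), and your final paragraph deducing the $\pi_1$-statements and the unknottedness from $H_1(S^4\setminus T')\cong\Z$ is fine. However, there is a genuine gap in the middle, compounded by a setup error. In the splitting operation the clean Whitney disk $\Delta$ produced by the finger move is \emph{not} part of $T'$: the split tower consists of $T$ with $D$ replaced by $D_1\cup D_2$ only (if $\Delta$ were included, each new intersection of $A$ and $B$ would be paired twice, once by $D_1$ or $D_2$ and once by $\Delta$; moreover one would then expect $E_{T'}\cong E_T$ with no $2$-handles at all, a different statement than the one being proved). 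This is not just bookkeeping, because it undercuts your construction of the auxiliary complex: the $2$-cell that does the job is precisely $\Delta$ itself, which has boundary on $A\cup B\subset T'$ and interior in the complement of $T'$; having declared $\Delta\subset T'$, you are forced to invoke an undefined ``core of the Whitney move'' as a membrane, and no such properly embedded $2$-cell filling the $3$-dimensional finger region is available. In addition, the cellular-expansion verification --- the only nontrivial content of the proposition --- is exactly the step you leave unproved (your ``main obstacle''), and the expansion target you propose ($T$ with all its Whitney disks thickened) is not the natural one and would be messy to verify.

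For comparison, the paper's argument is short once the right objects are chosen: let $\Delta$ be the finger-move Whitney disk (not in $T'$) and let $L$ be the union of $T'$ with the $3$-dimensional trace of the finger move. Then $T$ expands cellularly to $L$ across the trace, and $T'\cup\Delta$ also expands cellularly to $L$ (thicken $\Delta$ to fill the top of the trace, then stretch it down through the rest), so $E_T\cong E_{T'\cup\Delta}$ by Lemma~\ref{lemma:cellular-expansion-exterior}; since $T'\cup\Delta$ is $T'$ with $2$-cells attached, Lemma~\ref{lemma:subcomplex-exterior-handle-decomposition} gives $E_{T'}\cong E_{T'\cup\Delta}\cup(\text{$2$-handles})\cong E_T\cup(\text{$2$-handles})$. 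So the missing idea in your proposal is to use $\Delta$ itself as the attached $2$-cell and the trace of the finger move (not thickened Whitney disks of $T$) as the common expansion target; with the corrected convention for $T'$ this closes the gap.
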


\begin{proof}
  Let $\Delta$ be the Whitney disk shown in
  Figure~\ref{figure:whitney-disk-splitting}, which pairs the two new
  intersections of $T'$ introduced by the finger move.  Let $L$ be the
  union of $T'$ and the 3-dimensional trace of the finger move.  It is
  easily seen that $T$ expands cellularly to the 3-complex~$L$.  Also,
  the 2-complex $T' \cup \Delta$ expands cellularly to~$L$: first
  thicken $\Delta$ to fill in the top part of the trace of the finger
  move in Figure~\ref{figure:whitney-disk-splitting}, and then stretch
  it down to fill in the remaining part of the trace.  By
  Lemma~\ref{lemma:cellular-expansion-exterior}, it follows that the
  exterior $E_T$ is homeomorphic to~$E_{T'\cup \Delta}$.  By
  Lemma~\ref{lemma:subcomplex-exterior-handle-decomposition},
  $E_{T'} \cong E_{T'\cup \Delta}\cup(\text{2-handles}) \cong\
  E_T\cup(\text{2-handles})$.
\end{proof}

\begin{describe}[Tri-sheet move.] The following modification, which we
  call a \emph{tri-sheet move} in this paper, was introduced
  in~\cite[Lemma~3.6]{Schneiderman:2006-1}.  Suppose $A$, $B$, and $C$
  are surfaces/disks in a Whitney tower.  Suppose $D$ is a Whitney
  disk which pairs two intersections between $A$ and $B$, and $p$ is
  an intersection between $D$ and $C$, as illustrated in the left hand
  side of Figure~\ref{figure:tri-sheet-move}.  We assume that $p$ is
  the only intersection of~$D$.  (By splitting, we may assume this for
  any unpaired intersection~$p$.)  If we denote a Whitney disk between
  $X$ and $Y$ by $(X,Y)$ and denote an intersection between $X$ and
  $Y$ by $\langle X, Y\rangle$, then we may write $p = \langle
  (A,B),C\rangle$.  Apply an isotopy of $B$ in a regular neighborhood
  of $D$ and replace $D$ by another Whitney disk $D'$ as shown in the
  right hand side of Figure~\ref{figure:tri-sheet-move}.  This replaces
  $p=\langle (A,B),C\rangle$ by a new intersection $q=\langle
  A,(B,C)\rangle$.
\end{describe}

\begin{figure}[H]
  \labellist
  \footnotesize
  \pinlabel{$B$} at 3 8
  \pinlabel{$A$} at 37 3
  \pinlabel{$C$} at 15 68
  \pinlabel{$D$} at 85 69
  \pinlabel{$p$} at 75 80
  \pinlabel{$B$} at 186 8
  \pinlabel{$A$} at 220 3
  \pinlabel{$C$} at 198 68
  \pinlabel{$D'$} at 310 110
  \pinlabel{$q$} at 255 66
  \endlabellist
  \includegraphics{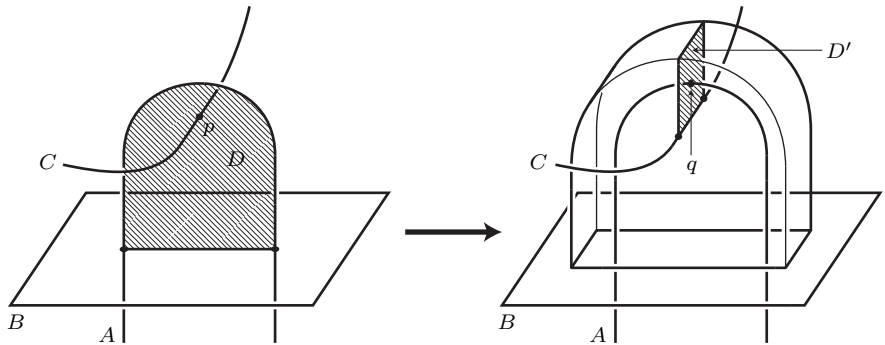}
  \caption{Tri-sheet move.}
  \label{figure:tri-sheet-move}
\end{figure}

\begin{proposition}
  \label{proposition:tri-sheet-move-unknottedness}
  If a Whitney tower $T'$ is obtained from another Whitney tower $T$
  by a tri-sheet move in a 4-manifold~$W$, then $E_{T'} \cong E_T$,
  and thus $\pi_1(W\setminus T')\cong \pi_1(W\setminus T)$.
  Consequently a tri-sheet move changes a $\pi_1$-unknotted
  (union-of-spheres)-like Whitney tower to a $\pi_1$-unknotted Whitney
  tower in~$S^4$.
\end{proposition}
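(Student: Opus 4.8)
The plan is to prove the single statement $E_{T'}\cong E_T$; everything else then follows formally. Since the tri-sheet move takes place inside a regular neighborhood $N$ of the Whitney disk $D$ — it consists of an isotopy of the sheet $B$ supported in $N$ together with the replacement of $D$ by a new Whitney disk $D'$ also lying in $N$ — we may shrink or enlarge $N$ so that $T$ and $T'$ coincide outside $N$ and $T\cap\partial N=T'\cap\partial N$. Thus it suffices to compare the two towers inside $N$, and the natural tool is cellular expansion together with Lemma~\ref{lemma:cellular-expansion-exterior}, exactly as in the proofs of Propositions~\ref{proposition:tower-splitting-unknottedness} and~\ref{proposition:unknottedness-of-contraction}.

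Concretely, I would build a single $3$-complex $L$ to which both $T$ and $T'$ expand cellularly. For $L$ I take $T'$ together with (a) the $3$-dimensional trace of the surface isotopy carrying $B$ to $B'$, a collar-like $I$-family of copies of the part of $B$ that moves, and (b) the $3$-ball $R$ cobounded inside $N$ by the old disk $D$ and the new disk $D'$; reading the passage from the left picture to the right picture in Figure~\ref{figure:tri-sheet-move}, one sees exactly such a region, with $D$ and $D'$ appearing as two faces of $\partial R$. The two things to check are: (i) $T'$ expands cellularly to $L$ — attach $R$ along the face $D'$, then the isotopy trace along the face $B'$ — and (ii) $T$ expands cellularly to the \emph{same} $L$ — attach $R$ along the face $D$, and the isotopy trace along $B$ — using that $A$ and $C$ are untouched and that $B$, $B'$, $D$, $D'$ together with the single intersection point, which sits on $D$ as $\langle (A,B),C\rangle$ before the move and on $A$ as $\langle A,(B,C)\rangle$ after it, all get swallowed into $R$ together with the isotopy trace in either order of attachment. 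Each attachment is an elementary cellular expansion in the sense of Definition~\ref{definition:cellular-expansion}, so by Lemma~\ref{lemma:cellular-expansion-exterior}, $E_T\cong E_L\cong E_{T'}$. Alternatively, one can argue directly that the ambient isotopy of $W$ realizing the isotopy of $B$, post-composed with a slide of $D$ to $D'$ inside $N$, carries $\nu(T)$ onto $\nu(T')$.

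Granting $E_{T'}\cong E_T$, the remaining assertions are immediate: since $\pi_1(W\setminus T)$ is the fundamental group of the exterior $E_T$, as used repeatedly above, we get $\pi_1(W\setminus T')\cong\pi_1(W\setminus T)$; and when $W=S^4$ and $T$ is a $\pi_1$-unknotted (union-of-spheres)-like tower, $\pi_1(S^4\setminus T')$ is isomorphic to $\pi_1(S^4\setminus T)$, hence free of rank equal to the (unchanged) number of base spheres, so $T'$ is $\pi_1$-unknotted. Note that, in contrast to the splitting case of Proposition~\ref{proposition:tower-splitting-unknottedness}, no auxiliary Whitney disk $\Delta$ is introduced and no $2$-handles appear — the content of the proposition is precisely that $T\mapsto T'$ is an equality of exteriors, not merely the addition of handles.

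The step I expect to be the main obstacle is (i)--(ii): isolating the correct common complex $L$ from the local picture in Figure~\ref{figure:tri-sheet-move} and verifying that the ball $R$ between $D$ and $D'$ and the trace of the $B$-isotopy really do fit together so that each stage of the expansion is the attachment of a ball along a codimension-one face, with no non-collapsible configuration created by the relocation of the intersection point from $D$ to $A$. Once the local model is drawn carefully this is routine PL bookkeeping, but that is where the geometric substance of the proposition lives.
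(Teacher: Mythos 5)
Correct, and essentially the paper's own argument: the paper also exhibits a common $3$-complex --- namely $T$ with thickened $D$, to which $T'$ (and, trivially, $T$) expands cellularly --- and then invokes Lemma~\ref{lemma:cellular-expansion-exterior}, so the exterior is preserved outright with no handles added, exactly as in your construction of~$L$. (The only shaky point is your parenthetical ``alternative'' via an ambient isotopy carrying $\nu(T)$ onto $\nu(T')$: the isotopy of $B$ changes its intersections with $A$ and $C$, so its ambient extension cannot fix the other sheets; but that aside is not needed for your main argument.)
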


\begin{proof}
  From Figure~\ref{figure:tri-sheet-move} it is seen that $T'$
  cellularly expands to $T$ with thickened~$D$.  By
  Lemma~\ref{lemma:cellular-expansion-exterior}, it follows that
  $E_{T'}$ is homeomorphic to~$E_T$.
\end{proof}

\subsection{Modification of capped gropes and unknottedness}
\label{subsection:modification-capped-gropes-unknottedness}

To prove Theorem~\ref{theorem:grope-whitney-tower-transformation}, we
also need the following observations on some modifications of capped
gropes.

\begin{describe}[Pushing an intersection down.]  Following \cite[\S
  2.5]{Freedman-Quinn:1990-1}, an intersection of a cap $D$ and
  another non-base surface/cap $S$ in an immersed capped grope can be
  changed to two intersections of $D$ with the surface $S'$ to which
  $S$ is attached, by performing a finger move to push $D$ off $S$ and
  through~$S'$.  See Figure~\ref{figure:pushing-intersection-down}
  (for now ignore the hatched rectangle~$\Delta$).  Repeatedly applying
  this, we may assume that each cap intersects only the base surfaces.
\end{describe}

\begin{figure}[H]
  \labellist
  \footnotesize
  \pinlabel{$D$} at 10 44
  \pinlabel{$S$} at 79 120
  \pinlabel{$S'$} at 13 24
  \pinlabel{$D$} at 199 44
  \pinlabel{$S$} at 268 120
  \pinlabel{$S'$} at 202 24
  \pinlabel{$\Delta$} at 222 13
  \endlabellist
  \includegraphics{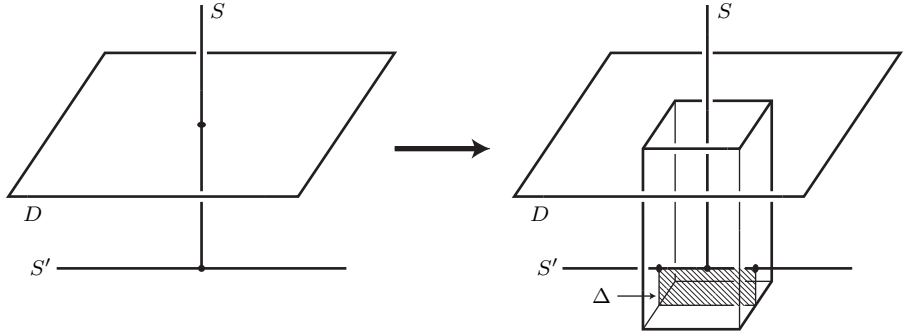}
  \caption{Pushing an intersection down.}
  \label{figure:pushing-intersection-down}
\end{figure}

\begin{proposition}
  \label{proposition:pushing-intersection-down-unknottedness}
  If a capped grope $G'$ is obtained from another capped grope $G$ by
  pushing an intersection down in a 4-manifold $W$, then
  $E_{G'} \cong E_{G}\cup(\text{$2$-handle})$, and thus there is an
  epimorphism of $\pi_1(W\setminus G)$ onto $\pi_1(W\setminus G')$
  supported by a regular neighborhood of~$G$.  Consequently, by
  pushing an intersection down, a $\pi_1$-unknotted sphere-like capped
  grope is changed to a $\pi_1$-unknotted capped grope in~$S^4$.
\end{proposition}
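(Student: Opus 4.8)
The plan is to follow the template of Proposition~\ref{proposition:tower-splitting-unknottedness}, since pushing an intersection down is, like splitting, carried out by a single finger move. Recall that the intersection $\langle D,S\rangle$ of a cap $D$ with a non-base surface/cap $S$ is pushed down by a finger move on $D$ along an arc running from a neighborhood of $D\cap S$ down across the surface $S'$ to which $S$ is attached; afterwards $D$ is disjoint from $S$ and meets $S'$ in two new points of opposite sign. Let $\Delta$ be the $2$-cell drawn as the hatched rectangle in Figure~\ref{figure:pushing-intersection-down}: the small disk sitting next to $S'$, with boundary running along $D$ and along $S'$, that pairs the two new intersections. Let $L$ be the union of $G'$ with the $3$-dimensional trace of the finger move. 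Since the arc of the finger move has both endpoints on $G$, the whole modification (hence $G'$, $\Delta$, and $L$) can be taken inside a regular neighborhood $\nu(G)$.

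First I would establish two cellular expansions, both visible in Figure~\ref{figure:pushing-intersection-down}: (i) $G$ expands cellularly to $L$, since $L$ is $G$ with the trace of the finger move attached, and this trace is a $3$-cell meeting $G$ in a $2$-ball in its boundary (contained in $D\cup S'$); and (ii) $G'\cup\Delta$ expands cellularly to $L$, by first thickening $\Delta$ to fill the part of the trace adjacent to $S$ and then stretching that thickening down through $S'$ to fill the remainder of the trace. By Lemma~\ref{lemma:cellular-expansion-exterior}, this gives $E_G\cong E_L\cong E_{G'\cup\Delta}$. Since $\Delta$ is a $2$-cell attached to $G'$ along its boundary, Lemma~\ref{lemma:subcomplex-exterior-handle-decomposition} then yields $E_{G'}\cong E_{G'\cup\Delta}\cup(\text{$2$-handle})\cong E_G\cup(\text{$2$-handle})$, which is the first assertion. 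Attaching a $2$-handle adds a single relator to $\pi_1$, so the inclusion $E_G\hookrightarrow E_{G'}$ induces an epimorphism of $\pi_1(W\setminus G)$ onto $\pi_1(W\setminus G')$; and since the modification takes place inside $\nu(G)$, this epimorphism is supported by $\nu(G)$.

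For the final assertion, assume $W=S^4$ and $G$ is sphere-like and $\pi_1$-unknotted, so $\pi_1(S^4\setminus G)\cong\Z$. Pushing an intersection down does not change the base surface, so $G'$ is again sphere-like, whence $H_1(S^4\setminus G')\cong\Z$ by Alexander duality. The epimorphism above shows $\pi_1(S^4\setminus G')$ is a quotient of $\Z$, hence cyclic, and a cyclic group with infinite abelianization is infinite cyclic; therefore $\pi_1(S^4\setminus G')\cong\Z$, i.e.\ $G'$ is $\pi_1$-unknotted. I expect the only real work to be in justifying the two cellular expansions from the figure, and in checking that the other (immersed) intersections carried by the cap $D$ stay away from the region of the finger move so that they do not interfere --- the same ``read it off the picture'' step that already occurs in the proofs of Propositions~\ref{proposition:tower-splitting-unknottedness} and~\ref{proposition:unknottedness-of-contraction}, and I would present it in the same style.
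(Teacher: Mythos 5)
Your argument is correct and is essentially the paper's own proof: the same Whitney-type disk $\Delta$ from the figure, the same $3$-complex $L$ built from the finger-move trace, the same two cellular expansions, and the same appeal to Lemmas~\ref{lemma:cellular-expansion-exterior} and~\ref{lemma:subcomplex-exterior-handle-decomposition}, followed by the standard $H_1\cong\Z$ argument for the unknottedness statement. One small caveat on your justification of expansion (i): the trace is not a single $3$-cell meeting $G$ only in a $2$-ball of its boundary, since the finger also crosses $S'$, so that expansion should be performed in two elementary steps (cut the trace along $S'$) --- but this is exactly the picture-level checking you already flag, and it is treated at the same informal level in the paper.
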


\begin{proof}
  Let $\Delta$ be the disk shown in the right hand side of
  Figure~\ref{figure:pushing-intersection-down}, and let $L$ be the
  union of $G$ and the trace of the finger move performed.  It is seen
  from Figure~\ref{figure:pushing-intersection-down} that both $G$ and
  $G'\cup \Delta$ expand cellularly to the 3-complex~$L$.  By
  Lemma~\ref{lemma:cellular-expansion-exterior}, it follows that
  $E_{G'\cup \Delta}$ is homeomorphic to~$E_G$.  By
  Lemma~\ref{lemma:subcomplex-exterior-handle-decomposition},
  $E_{G'} \cong E_{G'\cup\Delta}\cup(\text{2-handle}) \cong
  E_{G}\cup(\text{2-handle})$.
\end{proof}

\begin{describe}[Grope splitting.]  In \cite{Krushkal:2000-1},
  Krushkal introduced an operation which splits caps and body surfaces
  of a grope.  The cap splitting is described as follows.  Suppose
  $D_1$ and $D_2$ are dual caps attached to a body surface $S$ of a
  capped grope immersed in a 4-manifold.  Choose an embedded arc
  $\alpha$ in $D_1$, which is disjoint from intersection points and
  joins $\partial D_1\cap D_2$ and another point in~$\partial D_1$.
  Perform tubing (surgery) on $S$ along~$\alpha$.  The cap $D_1$ is
  divided into two disks which can be used as caps for the new surface
  obtained, and two parallel copies of $D_2$ can be used as their dual
  caps.  See Figure~\ref{figure:splitting-cap} (ignore the hatched
  rectangles $\Delta_i$ for now).  

  \begin{figure}[H]
    \labellist
    \footnotesize
    \pinlabel{$D_1$} at 68 5
    \pinlabel{$D_2$} at 88 102
    \pinlabel{$S$} at 118 8
    \pinlabel{$\Delta_1$} at 214 112
    \pinlabel{$\Delta_2$} at 214 102
    \endlabellist
    \includegraphics{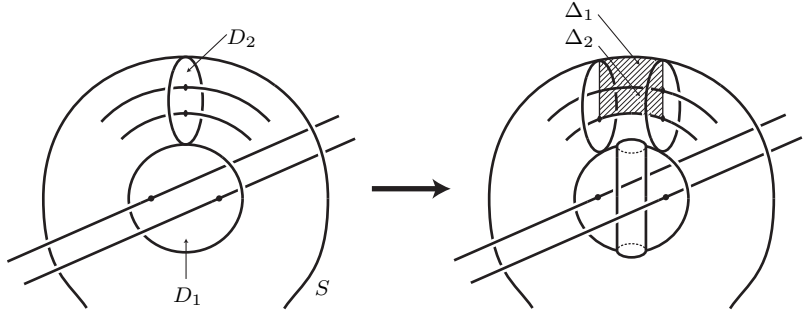}
    \caption{Splitting of a cap~$D_1$.}
    \label{figure:splitting-cap}
  \end{figure}

  Repeatedly applying this operation, one may assume that each cap has
  no self intersection and has at most one intersection point.  (We
  may further assume that each cap has exactly one intersection point
  since we can apply contraction to remove a cap which has no
  intersection points.)  Similarly to the cap case, one can split body
  surfaces: if $\Sigma_1$ and $\Sigma_2$ are dual surfaces attached to
  a previous stage surface $S$ in a capped grope $G$, then apply
  tubing to $S$ along an arc on $\Sigma_1$ and attach two parallel
  copies of the subgrope supported by~$\Sigma_2$.  If $\Sigma_1$ has
  genus greater than one, then by tubing along an appropriate arc,
  $\Sigma_1$ splits into two surfaces with genera less than that
  of~$\Sigma_1$.  Iterating this, we may assume that each non-base
  surface has genus one.  We call such a capped grope a \emph{dyadic
    capped grope}.
\end{describe}

In what follows we assume that a cap of a capped grope is embedded and
can intersect the base surface only, by pushing intersections down if
necessary.

\begin{proposition}
  \label{proposition:grope-splitting-unknotedness}
  If a capped grope $G'$ is obtained from another capped grope $G$ by
  grope splitting in a 4-manifold $W$, then
  $E_{G'} \cong E_G \cup ($handles of index $\ge2)$, and thus there is
  an epimorphism of $\pi_1(W\setminus G)$ onto $\pi_1(W\setminus G')$
  supported by a regular neighborhood of~$G$.  Consequently, grope
  splitting changes a $\pi_1$-unknotted sphere-like capped grope to a
  $\pi_1$-unknotted capped grope in~$S^4$.
\end{proposition}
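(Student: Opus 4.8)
The plan is to follow the same template as Propositions~\ref{proposition:pushing-intersection-down-unknottedness} and~\ref{proposition:tower-splitting-unknottedness}. First I would reduce to the two elementary operations that make up grope splitting: splitting a cap, and splitting a body surface (a genus-reducing surgery followed by attaching a parallel copy of a dual subgrope). In each case the goal is to produce a $3$-dimensional complex $L$ to which both $G$ and a suitable enlargement $G'\cup\Delta_1\cup\cdots\cup\Delta_r$ of $G'$ by $2$-cells expand cellularly in the sense of Definition~\ref{definition:cellular-expansion}. Then Lemma~\ref{lemma:cellular-expansion-exterior} gives $E_{G'\cup\Delta_1\cup\cdots\cup\Delta_r}\cong E_G$, and Lemma~\ref{lemma:subcomplex-exterior-handle-decomposition} gives $E_{G'}\cong E_{G'\cup\Delta_1\cup\cdots\cup\Delta_r}\cup(\text{$2$-handles})\cong E_G\cup(\text{handles of index $\ge2$})$. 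The assertion that the resulting $\pi_1$-epimorphism is supported by a regular neighborhood $\nu(G)$, and the $\pi_1$-unknottedness statement in $S^4$ (where $H_1(S^4\setminus G')\cong\Z$ by Alexander duality forces $\pi_1(S^4\setminus G')\cong\Z$), then follow verbatim as in those propositions.

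For cap splitting, the auxiliary $2$-cells are the hatched rectangles $\Delta_1,\Delta_2$ of Figure~\ref{figure:splitting-cap}, and $L$ is the union of $G$ with the $3$-dimensional trace of the surgery performed on the body surface $S$: cutting the thickened cap $\nu(D_1)$ along $\Delta_1\cup\Delta_2$ decomposes it into $3$-cells, which simultaneously exhibits $G$ and $G'\cup\Delta_1\cup\Delta_2$ as cellular collapses of $L$; only $2$-handles arise here. This case is essentially identical to the proof of Proposition~\ref{proposition:pushing-intersection-down-unknottedness}.

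For body surface splitting I would first treat the genus-reducing surgery by the same kind of picture, with analogous auxiliary $2$-cells $\Delta_i$ lying in thickened body surfaces and with $L$ the union of $G$ with the (again $3$-ball) trace of the surgery; this yields $E_{G''}\cong E_G\cup(\text{$2$-handles})$, where $G''$ is the intermediate capped grope obtained after the genus reduction but before the dual subgrope is duplicated. It then remains to pass from $G''$ to $G'=G''\cup N'$, where $N'$ is the second parallel copy of the dual subgrope, attached to $G''$ along a single curve. Here I would argue as in Proposition~\ref{proposition:unknottedness-of-subtower}: view $E_{G''}\cap N'$ as a $2$-complex embedded in $E_{G''}$ (after pushing intersections down, as is permitted by the convention adopted before the statement, the caps of $N'$ meet only the base surface, so this really is a $\le2$-dimensional subcomplex); since $E_{G'}$ is the exterior of $E_{G''}\cap N'$ in $E_{G''}$ and a grope, cut along the surfaces and caps of $G''$, is built from cells of dimension $\le2$, Lemma~\ref{lemma:subcomplex-exterior-handle-decomposition} gives $E_{G'}\cong E_{G''}\cup(\text{handles of index $\ge2$})$. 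Composing the two steps finishes the proof.

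The step I expect to be the main obstacle is exactly the bookkeeping for body surface splitting: verifying that, after pushing intersections down, the duplicated subgrope $N'$ genuinely sits as a $\le2$-dimensional subcomplex of $E_{G''}$ meeting $G''$ only along its attaching curve, and that the framing conventions for the surgery arcs (using that our gropes are framed) make the various surgery traces honest $3$-balls, so that the claimed cellular expansions hold on the nose rather than merely up to homotopy. The cap-splitting case, by contrast, should be routine given the earlier propositions.
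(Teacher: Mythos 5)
Your cap-splitting case follows the paper's proof essentially verbatim, but your treatment of body-surface splitting has a genuine gap, and it sits exactly at the step you flagged as the main obstacle: the duplicated dual subgrope. In your second step you set $G'=G''\cup N'$ and invoke the argument of Proposition~\ref{proposition:unknottedness-of-subtower}, but that argument runs in the opposite direction from what you need. Applying Lemma~\ref{lemma:subcomplex-exterior-handle-decomposition} with ambient manifold $E_{G''}$ and subcomplex $E_{G''}\cap N'$ (whose exterior is $E_{G'}$) yields $E_{G''}\cong E_{G'}\cup(\text{handles of index }\ge 2)$, not $E_{G'}\cong E_{G''}\cup(\text{handles of index }\ge 2)$ as you wrote: the lemma rebuilds the ambient manifold from the exterior, not the exterior from the ambient manifold. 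The surjection it produces is $\pi_1(W\setminus G')\twoheadrightarrow\pi_1(W\setminus G'')$, which composed with your step 1 gives nothing toward the required epimorphism $\pi_1(W\setminus G)\twoheadrightarrow\pi_1(W\setminus G')$. And this is not a repairable sign error in the citation: adding a $2$-dimensional piece such as the parallel copy $N'$ to a complex can in general \emph{enlarge} the fundamental group of the complement (new meridians of the sheets of $N'$ need not be expressible in terms of the old ones), so showing that it does not happen here is precisely the nontrivial content of the proposition, and no removal-type argument supplies it.

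The paper's proof handles exactly this point by attaching cells of dimension $\le 2$ to $G'$ itself: it chooses a tree $K$ in the dual subgrope $G_2$ (Assertion~\ref{assertion:expansion-of-tree-in-grope}), forms $L_0=(G_2\times\{0,1\})\cup(K\times I)$, and shows via Lemma~\ref{lemma:expansion-in-product} that $L_0$ expands cellularly to the product region $G_2\times I$ between the two parallel copies (Assertion~\ref{assertion:expansion-to-thickened-subgrope}); together with the tube $V$ this produces a complex $L\supset G'$, obtained from $G'$ by attaching only $1$- and $2$-cells, with $E_L\cong E_G$ by Lemma~\ref{lemma:cellular-expansion-exterior}, whence $E_{G'}\cong E_G\cup(\text{handles of index }\ge 2)$ in the correct direction. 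Your proposal contains no substitute for this filling-in of $G_2\times I$ (your step 1, which also needs some justification since the attaching curve of the surviving copy of $G_2$ is displaced by the tube, at best reduces to the same kind of problem), so as written the body-surface case does not go through.
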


\begin{proof}
  We assert that there is a 2-complex $L$ in $W$ which contains $G'$
  as a subcomplex and expands cellularly to a 3-complex $L'$ which
  collapses cellularly to~$G$.  From the assertion it follows that
  $E_G$ is homeomorphic to $E_L$ by
  Lemma~\ref{lemma:cellular-expansion-exterior}.  Since $L$ is
  obtained from $G'$ by attaching cells of dimension $\le 2$,
  $E_{G'} \cong E_L \cup ($handles of index $\ge2)$.  This completes
  the proof.

  The remaining part of this proof is devoted to showing the
  assertion.

  The idea is easier to see for the case of cap splitting.  If we
  split $G$ using a cap $D_1$ whose dual cap $D_2$ intersects other
  surfaces $k$ times, then it can be seen from
  Figure~\ref{figure:splitting-cap} (illustrated for $k=2$) that there
  are $k$ 2-cells $\Delta_1,\ldots,\Delta_k$ shown as hatched
  rectangles such that the 2-complex $L:=G'\cup(\bigcup \Delta_i)$
  expands cellularly to $L':=(G$ with thickened $D_2)\cup($solid
  tube$)$: first expand the disk $\bigcup \Delta_i$ to fill in the
  interior of thickened $D_2$, and then stretch down its bottom to
  fill in the solid tube.  It is obvious that the 3-complex $L'$
  collapses cellularly to~$G$.

  The body surface splitting case could also be understood in a
  similar way, but this would require a more complicated picture which
  does not fit into 3-space.  Instead, in what follows we present a
  more formal approach for both cap and body surface cases.

  Suppose $\Sigma_1$ and $\Sigma_2$ are dual surfaces (or caps) in $G$
  attached to the same body surface $S$, and $G'$ is obtained by
  splitting~$\Sigma_1$.  Let $G_i$ be the capped subgropes supported
  by~$\Sigma_i$ for $i=1,2$.  (If $\Sigma_i$ is a cap, then $G_i$ is
  $\Sigma_i$ itself.)  Each $G_i$ is embedded since the caps are
  embedded and can intersect the base surface of $G$  only.  Let
  $V\cong I^3$ be the solid tube used to perform tubing on~$S$.  See
  Figure~\ref{figure:grope-splitting-expansion}.  Recall that $G'$ has
  two parallel copies of~$G_2$ instead of~$G_2$. Choose a thickening
  $G_2\times I$ in such a way that the parallel copies are $G_2\times
  0$ and $G_2\times 1$ and $(G_2\times I) \cap V$ is a rectangle $R$
  of the form $($an interval in $\partial G_2)\times I$.  In
  Figure~\ref{figure:grope-splitting-expansion}, $R$ is shown as a
  hatched rectangle.

  \begin{figure}[H]
    \labellist
    \footnotesize
    \pinlabel{$S$} at 30 200
    \pinlabel{$G_2\times I$} at 70 140
    \pinlabel{$V\cong I^3$} at 170 58
    \pinlabel{$R = (G_2\times I) \cap V$} at 240 182
    \endlabellist
    \includegraphics[scale=.95]{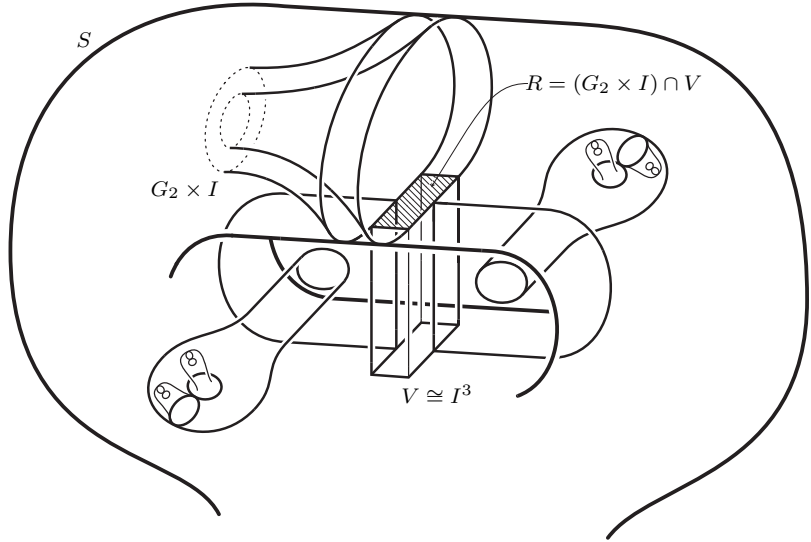}
    \caption{The solid tube and thickened subgrope used for grope
      splitting.}
    \label{figure:grope-splitting-expansion}
  \end{figure}

  We choose a tree $K$ embedded in $G_2$ as follows.  For brevity, for
  a body surface or cap $S_1$ dual to another body surface or cap
  $S_2$, we call the point $\partial S_1 \cap \partial S_2$ the
  \emph{basepoint} of $S_1$ (and~$S_2$).  For the base surface of
  $G_2$, choose a basepoint in~$\partial G_2$.  For each intersection
  $p$ of a cap $D$ in $G_2$ and a sheet in $G$, choose an arc on $D$
  which joins $p$ to the basepoint of~$D$.  For each basepoint $q$ of
  a surface/cap attached to a body surface $S$ in $G_2$, choose an arc
  on $S$ joining $q$ to the basepoint of~$S$.  We assume that the
  interiors of the arcs chosen above are pairwise disjoint and are
  disjoint from the boundary of any surface and cap.  Finally choose
  two arcs in $\partial G_2$ whose intersection is the basepoint of
  the base surface of~$G_2$.  The union of all the above arcs is a
  tree $K$ in~$G_2$ since $G_2$ is embedded.  We regard the above arcs
  as an edge of $K$, and their endpoints as vertices of~$K$.  See
  Figure~\ref{figure:tree-in-dual-grope} for an example.  In addition,
  we may assume that $\overline{\partial G_2\setminus K} \times I$ is
  equal to the rectangle~$R$.

  \begin{figure}[H]
    \labellist
    \footnotesize
    \pinlabel{$K$} at 100 30
    \pinlabel{$G_2$} at 5 100
    \endlabellist
    \includegraphics[scale=.95]{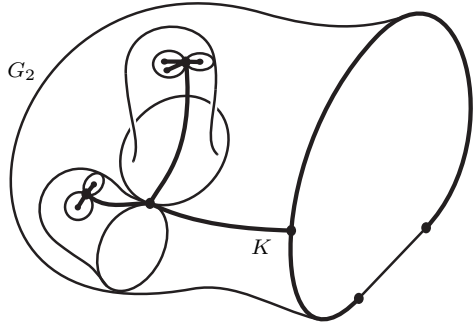}
    \caption{A tree $K$ which expands cellularly to~$G_2$.}
    \label{figure:tree-in-dual-grope}
  \end{figure}

  \begin{assertion}
    \label{assertion:expansion-of-tree-in-grope}
    The tree $K$ expands cellularly to~$G_2$.
  \end{assertion}

  Actually it is seen from Figure~\ref{figure:tree-in-dual-grope} that
  one can expand $K$ to fill in the caps, expand further to fill in
  the top stage body surfaces, and continue to the lower stages to
  eventually fill in the whole grope~$G$.  To make it rigorous, we
  will prove a generalized statement that for any capped subgrope $H$
  in $G_2$, $K\cap H$ expands cellularly to~$H$, by an induction on
  the number $n$ of surfaces/caps in~$H$.
  Assertion~\ref{assertion:expansion-of-tree-in-grope} is the case
  when $H=G_2$.  If $n=1$, then $H$ is a cap, and since $H$ cut along
  $K\cap H$ is a disk, $K\cap H$ expands cellularly to~$H$.  Suppose
  $n>1$, that is, $H$ consists of a body surface $S$ and subgropes
  $H_i$ attached to~$S$.  By induction, $H_i\cap K$ expands cellularly
  to~$H_i$.  Thus it suffices to show that $(\bigcup \partial H_i)
  \cup (K\cap S) \subset S$ expands cellularly to~$S$.  It is true
  since $S$ cut along $(\bigcup \partial H_i) \cup (K\cap S)$ is a
  disk.  This completes the induction and hence the proof of Aseertion~\ref{assertion:expansion-of-tree-in-grope}.

  Let $A$ be the intersection of $G_2\times I$ with sheets in the
  initial capped grope~$G$.  We may assume that each component of $A$
  is a straight arc of the form $v\times I \subset G_2\times I$ for
  some vertex $v$ of~$K$, since the intersection of a cap of $G_2$
  with a sheet is always a vertex of~$K$.

  Let $L_0 := (G_2\times\{0,1\}) \cup (K\times I)$.  Observe that $L_0$
  is the 2-complex obtained from the 2-complex $(G_2\times\{0,1\})
  \cup A$ by attaching 1-cells of the form $v\times I$ for vertices
  $v$ of $K$ not intersecting any sheet, and attaching 2-cells of the
  form $e\times I$ for edges $e$ of~$K$.

  \begin{assertion}
    \label{assertion:expansion-to-thickened-subgrope}
    The 2-complex $L_0$ expands cellularly to $G_2 \times I$.
  \end{assertion}

  Assertion~\ref{assertion:expansion-to-thickened-subgrope} follows
  from Assertion~\ref{assertion:expansion-of-tree-in-grope} by
  applying the following, which must be regarded as a known fact:

  \begin{lemma}
    \label{lemma:expansion-in-product}
    If a subcomplex $K$ of $X$ expands cellularly to $X$, then the
    subcomplex $(X\times\{0,1\}) \cup (K\times [0,1])$ expands
    cellularly to $X\times[0,1]$.
  \end{lemma}

  \begin{proof}
    For an elementary cellular expansion of a complex $K$ in $X$
    across an $n$-disk $\Delta$ such that $\partial_0\Delta :=
    \Delta\cap K$ is an $(n-1)$-disk embedded in $\partial \Delta$,
    $\Delta\times [0,1]$ is an $(n+1)$-disk in $X\times I$ such that
    \[
      (\Delta\times [0,1]) \cap \big((X\times\{0,1\})\cup
      (K\times[0,1])\big) = (\Delta\times\{0,1\}) \cup
      (\partial_0\Delta \times[0,1])
    \]
    is an $n$-disk embedded in $\partial(\Delta\times[0,1])$.  Thus
    there is an elementary cellular expansion of
    $(X\times\{0,1\})\cup (K\times[0,1])$ across $\Delta\times [0,1]$.
    By repeatedly applying this, the desired conclusion is obtained.
  \end{proof}

  Let $G''$ be $G'$ with $G_2\times \{0,1\}$ removed.  Let
  $L:= G''\cup L_0$ and $L':=G'' \cup (G_2\times I) \cup V$.  We will
  verify that $L$ and $L'$ satisfy the properties promised at the
  beginning of the proof.  Obviously $L'$ collapses cellularly to~$G$;
  see Figure~\ref{figure:grope-splitting-expansion}.  Since
  $G_2\times\{0,1\} \subset L_0$, $L$ is a 2-complex containing $G'$
  as a subcomplex.  Since
  $R = \overline{\partial G_2\setminus K} \times I$, we have
  \[
    (G_2\times I) \cap G'' = \overline{(\partial G_2\times I)
      \setminus R} = (K\cap \partial G_2)\times I= L_0\cap G''\subset L_0.
  \]
  See Figures~\ref{figure:grope-splitting-expansion}
  and~\ref{figure:tree-in-dual-grope}.  It follows that the cellular
  expansion of $L_0$ to $G_2\times I$ in
  Assertion~\ref{assertion:expansion-to-thickened-subgrope} is indeed
  a cellular expansion of $L=G''\cup L_0$ to $G''\cup (G_2\times I)$.
  Since
  \[
  \big(G''\cup (G_2\times I)\big) \cap V =
  \partial V\setminus(\text{bottom face of }V)
  \]
  is a disk (see Figure~\ref{figure:grope-splitting-expansion}), there
  is a cellular expansion of $G''\cup (G_2\times I)$ to $L'$ across
  the cube~$V$.  Consequently $L$ expands cellularly to~$L'$.  This
  completes the proof.
\end{proof}

\subsection{Proof of
  Theorem~\ref{theorem:grope-whitney-tower-transformation}}

\subsubsection*{From Whitney towers to capped gropes}

Suppose $T$ is a Whitney tower in a 4-manifold~$W$.  In
\cite[Section~5.1]{Schneiderman:2006-1}, a capped grope $G$ is
constructed from $T$ by first applying Whitney tower splitting and
then applying a sequence of tri-sheet moves and the following
operation:

\begin{describe}[Tubing on a cap.]
  Suppose a dyadic capped grope immersed in a 4-manifold is given.
  Suppose no cap has self intersections while a cap may intersect base
  surfaces.  Suppose the union of caps and base surfaces supports a
  Whitney tower disjoint from non-base body surfaces.  We regard the
  union of the capped grope and the Whitney tower as a 2-complex,
  say~$P$.  (In particular a \emph{split grope subtower} introduced in
  \cite[Section~4.2]{Schneiderman:2006-1} is such a 2-complex; we will
  not need its precise definition.)  Suppose $D$ is a cap of the
  capped grope which has two intersections paired by a Whitney
  disk~$\Delta$.  Perform tubing on $D$ along
  $\partial\Delta \setminus D$ and attach, as new caps, a meridional
  disk of the tube and $\Delta$ with a collar neighborhood removed.
  See Figure~\ref{figure:tubing-on-cap}.  This gives us a new
  2-complex, say $P'$, which consist of a capped grope and a Whitney
  tower.
\end{describe}

\begin{figure}[H]
  \labellist
  \footnotesize
  \pinlabel{$D$} at 90 54
  \pinlabel{$\Delta$} at 79 95
  \pinlabel{new caps} at 144 86
  \endlabellist
  \includegraphics{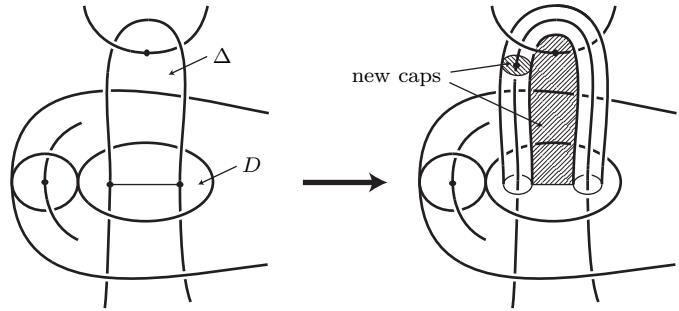}
  \caption{Tubing on a cap $D$ which has two intersections paired by a
    Whitney disk~$\Delta$.}
  \label{figure:tubing-on-cap}
\end{figure}

Observe that the above tubing operation preserves the homeomorphism
type of the 2-complex exterior.  For, from
Figure~\ref{figure:tubing-on-cap} it is seen that the new 2-complex
$P'$ expands cellularly to the union $P''$ of the solid tube and the
initial 2-complex $P$, and then $P''$ collapses cellularly to~$P$.  By
Lemma~\ref{lemma:cellular-expansion-exterior} the exterior is
preserved.

From this and
Propositions~\ref{proposition:tower-splitting-unknottedness}
and~\ref{proposition:tri-sheet-move-unknottedness}, it follows that
$\pi_1(W\setminus G)$ is a quotient of $\pi_1(W\setminus T)$.  This
proves Theorem~\ref{theorem:grope-whitney-tower-transformation}(1).

\subsubsection*{From capped gropes to Whitney towers}

Suppose $G$ is a capped grope in a 4-manifold~$W$.  In
\cite[Section~5.2]{Schneiderman:2006-1}, a Whitney tower $T$ is
constructed from $G$ by first pushing intersections down and applying
grope splitting, and then applying a sequence of tri-sheet moves and
the following operation:

\begin{describe}[Surgery along a cap]
  Suppose $P$ is a 2-complex consisting of a dyadic capped grope and a
  Whitney tower supported by caps and base surfaces as in the case of
  tubing on a cap.  Suppose $D_1$ and $D_2$ are dual caps attached to
  the same body surface $S$, and $D_1$ intersects exactly one sheet
  $A$ of the Whitney tower.  Replace the subgrope $S\cup D_1\cup D_2$
  by a cap obtained by ambient surgery on $S$ using~$D_1$.  Note that
  this new cap has two intersections with the sheet~$A$, which are
  paired by a Whitney disk obtained by attaching to $D_2$ a band
  contained in $\nu(D_1)$, as shown in
  Figure~\ref{figure:surgery-along-cap}.  Add this disk to the Whitney
  tower.  This gives us a new 2-complex $P'$ which consists of the
  modified capped grope and the modified Whitney tower.
\end{describe}

Although we start with a capped grope, this surgery operation produces
a hybrid 2-complex of a capped grope and a nontrivial Whitney tower.
Thus the use of a tri-sheet move, which is defined for a Whitney
tower, makes sense in the above conversion process.

\begin{figure}[H]
  \labellist
  \footnotesize
  \pinlabel{$A$} at 32 68
  \pinlabel{$D_1$} at 8 6
  \pinlabel{$D_2$} at 83 28
  \pinlabel{new Whitney disk} at 260 28
  \endlabellist
  \includegraphics{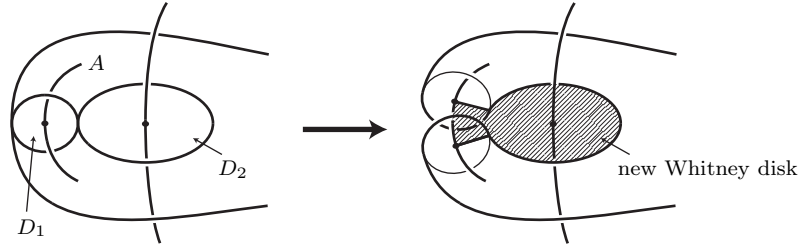}
  \caption{Surgery along a cap which intersects a sheet.}
  \label{figure:surgery-along-cap}
\end{figure}

From Figure~\ref{figure:surgery-along-cap} it is seen that the
resulting 2-complex $P'$ expands cellularly to $P$ with the cap $D_1$
thickened, which then collapses cellularly to~$P$.  By
Lemma~\ref{lemma:cellular-expansion-exterior}, the homeomorphism type
of the 2-complex exterior is preserved by the above surgery operation.

From this and
Propositions~\ref{proposition:pushing-intersection-down-unknottedness}
and~\ref{proposition:grope-splitting-unknotedness}, it follows that
$\pi_1(W\setminus T)$ is a quotient of $\pi_1(W\setminus G)$.  This
proves Theorem~\ref{theorem:grope-whitney-tower-transformation}(2).

\section{Handle decomposition of grope exteriors}
\label{section:handle-decomposition-grope-exterior}

In the literature constructions of gropes in a 4-manifold bounded by
knots were developed for the study of concordance and related
4-dimensional equivalence relations.  For instance see
\cite{Cochran-Teichner:2003-1, Horn:2010-1, Cha:2012-1,
  Cha-Powell:2013-1}.  In this section we study the handle
decomposition of the exterior of such gropes.  We will use this in the
next sections to construct certain explicit unknotted gropes and
Whitney towers in~$S^4$.

\subsection{Satellite capped gropes and capped grope concordances}
\label{subsection:product-of-gropes}

We begin by presenting some definitions, which appeared or are
influenced by earlier papers \cite{Cha:2012-1,
  Cochran-Teichner:2003-1, Horn:2010-1}.  From now on all immersed
capped gropes are assumed to satisfy that each cap can intersect the
base surface only.  (Push intersections down as described in
Section~\ref{subsection:modification-capped-gropes-unknottedness} if
necessary.)

\begin{definition}[Satellite capped
  grope~{\cite[Definition~4.2]{Cha:2012-1}}]
  \label{definition:satellite-capped-grope}
  Suppose $L$ is a link in $S^3$ and $\alpha$ is an unknotted simple
  closed curve in $S^3$ which is disjoint from~$L$.  Let $E_\alpha$ be
  the exterior of $\alpha\subset S^3$.  Let $\lambda_\alpha$ be a
  zero linking longitude in~$\partial E_\alpha$.  A \emph{satellite
    capped grope} for $(L,\alpha)$ is a disk-like capped grope $G$
  immersed in the 4-manifold $E_\alpha\times I$ such that $G$ is
  bounded by $\lambda_\alpha\times 0$, the body of $G$ is disjoint
  from $L\times I$, and the caps are transverse to $L\times I$.
\end{definition}

If $(L,\alpha)$ is as above, $L$ may be viewed as a pattern in
$E_\alpha \cong D^2\times S^1$ for a satellite construction: for a
knot $K$, by attaching $(E_\alpha,L)$ to $E_K$ along an orientation
reversing homeomorphism $\partial E_\alpha \cong \partial E_K$ which
identifies a meridian and a zero linking longitude of $\alpha$ with a
zero linking longitude and a meridian of $K$ respectively, we obtain a
satellite link in $E_\alpha\cup_\partial E_K \cong S^3$.  We denote
this link by $L(\alpha, K)$ or $L(\alpha;K)$.

\begin{definition}[Product of satellite capped gropes]
  \label{definition:composition-satellite-capped-gropes}
  Suppose $G\subset E_\alpha\times I$ and $H\subset E_\eta\times I$
  are satellite capped gropes for $(L,\alpha)$ and $(K,\eta)$
  respectively.  View $\eta$ as a curve in $E_K \subset E_{L(\alpha,
    K)}$.  The \emph{product} $G\cdot H$ of $G$ and $H$ is a satellite
  capped grope for $(L(\alpha,K),\eta)$ described below.  Note $E_\eta
  = E_{\eta\cup K} \cup E_\alpha$.  By an isotopy of caps of $H$ if
  necessary, we may assume that $H\cap (\partial E_{\eta\cup K}\times
  I)$ consists of circles of the form $\mu_K\times t$ where $\mu_K$ is
  a meridian of $K$ and $0<t<1$.  For each $\mu_K\times t$, take a
  copy $G_t$ of $G$ in $E_\alpha \times [t, t+\epsilon] \cong
  E_\alpha\times I$ and attach $G_t$ to $H\cap (E_{\eta\cup K}\times
  I)$ along $\mu_K\times t = \partial G_t$.  This gives a desired
  satellite capped grope for $(L(\alpha, K),\eta)$ in $E_\eta \times I
  = (E_{\eta\cup K} \cup E_\alpha) \times
  I$~\cite[Proposition~4.3]{Cha:2012-1}.
\end{definition}

The product $G\cdot H$ is obtained by removing disjoint disks embedded
in caps of $H$ and then fill in it with copies of~$G$.  It follows
that if $G$ and $H$ have height $h$ and $k$ respectively, then
$G\cdot H$ has height $h+k$.

\begin{definition}[Capped grope concordance]
  \label{definintion:grope-concordance}
  A \emph{capped grope concordance} between two links $L$ and $L'$ is
  a (union-of-annuli)-like capped grope immersed in $S^3\times I$ such
  that the $i$th component of the base surface is cobounded by that of
  $L\times 0$ and that of $L'\times 1$.
\end{definition}

In the literature, a grope concordance without caps is often
considered.  A grope concordance can be promoted to a capped grope
concordance since $S^3\times I$ is simply connected.

The following is motivated by
\cite[Theorem~3.8]{Cochran-Teichner:2003-1},
\cite[Theorem~3.4]{Horn:2010-1}.

\begin{definition}[Product of a satellite capped grope and a capped
  grope concordance]
  \label{definition:composition-satellite-grope-and-grope-concordance}
  Suppose $G$ is a satellite capped grope for $(L,\alpha)$ and $H$ is
  a capped grope concordance between two knots $J$ and~$J'$.  The
  \emph{product} $G\cdot H$ is a capped grope concordance between
  $L(\alpha,J)$ and $L(\alpha,J')$ described below.  Fix closed
  intervals $U\subset S^1$, $V\subset (0,1)\subset I$, and regard $H$
  as the annulus $S^1\times I$ with the disk $U\times V$ replaced by a
  disk-like capped grope $B$ with $\partial B=\partial (U\times V)$ :
  \[
  H=\overline{(S^1\times I) \setminus (U\times V)} \cupover{\partial
    (U\times V)=\partial B} B \subset S^3\times I.
  \]
  Here caps of $B$ may be plumbed with $(S^1\times I) \setminus
  (U\times V)$.  A regular neighborhood of $H$ can be written as
  \[
  \nu(H) = \overline{(S^1\times D^2\times I) \setminus (U\times
    D^2\times V)} \cupover{S^1\times D^2} \nu(B) \subset S^3\times I
  \]
  with plumbings performed.  By an isotopy of $L\subset
  E_\alpha=S^1\times D^2$, we may assume $L\cap (U\times D^2)=U\times
  \{p_1,\ldots,p_r\}$ for some $p_i\in D^2$.  Then $L\times I\subset
  S^1\times D^2\times I$ intersects the 4-ball $U\times D^2\times V$
  at disks $U\times \{p_1,\ldots,p_r\}\times V$.  Choose $r$ parallel
  copies $B_1,\ldots, B_r$ of $B$ such that $\partial B_i
  = \partial(U\times p_i\times V)$, and consider the capped grope
  \[
  \big((L\times I) \setminus U\times \{p_1,\ldots,p_r\}\times V\big)
  \cupover{\partial(U\times p_i\times V)=\partial B_i} \Big(\bigcup_i
  B_i\Big)
  \]
  in~$\nu(G)$.  By isotopy, we may assume that caps of $B_i$ intersect
  $S^1\times D^2\times I$ at disks of the form $z_j\times D^2\times
  t_j$ where $z_j\in S^1$, $t_j\in (0,1)\setminus V$.  Choose
  sufficiently small $\epsilon>0$, and replace each disk $z_j\times
  D^2\times t_j$ in caps of $B_i$ with a copy of the satellite capped
  grope $G$ in $S^1\times D^2\times[t_j,t_j+\epsilon] \cong
  E_\alpha\times I$.  This gives a promised capped grope concordance
  $G\cdot H$ between $L(\alpha,J)$ and $L(\alpha,J')$.
\end{definition}

Similarly to the case of
Definition~\ref{definition:composition-satellite-capped-gropes}, if
$G$ and $H$ have height $h$ and $k$ respectively, then the product
$G\cdot H$ in
Definition~\ref{definition:composition-satellite-grope-and-grope-concordance}
has height $h+k$.

\begin{remark}
  \label{remark:associativity-of-product}
  The product operations defined above are associative.  Precisely, if
  $G_i$ is a satellite capped grope for $(K_i, \alpha_i)$ for
  $i=1,2,3$, then $(G_1\cdot G_2)\cdot G_3$ and
  $G_1\cdot (G_2\cdot G_3)$ are isotopic satellite capped gropes for
  \[
    ((K_1(\alpha_1,K_2))(\alpha_2,K_3),\alpha_3) \approx
    (K_1(\alpha_1, K_2(\alpha_2,K_3)),\alpha_3).
  \]
  Also, if $H$ is a capped grope concordance between $J$ and $J'$,
  then $(G_1\cdot G_2)\cdot H$ and $G_1\cdot (G_2\cdot H)$ are
  isotopic capped grope concordances between
  \[
    (K_1(\alpha_1,K_2))(\alpha_2,J) \approx
    K_1(\alpha_1,K_2(\alpha_2,J))
  \]
  and
  \[
    (K_1(\alpha_1,K_2))(\alpha_2,J') \approx
    K_1(\alpha_1,K_2(\alpha_2,J')).
  \]
  The proofs are straightforward.  Since we do not use this in this
  paper, we omit details.
\end{remark}

\subsection{Handle structure of grope exteriors}
\label{subsection:handle-structure-grope-exterior}

In the literature, satellite capped gropes and grope concordances in a 4-manifold are
often obtained by pushing capped gropes in the boundary 3-manifold.  For instance see
\cite{Cochran-Teichner:2003-1,Horn:2010-1,Cha:2012-1,Cha-Powell:2013-1}.
In this subsection we will investigate handle decomposition of the
4-dimensional exteriors of such capped gropes and their iterated
products obtained by the constructions described in
Definitions~\ref{subsection:product-of-gropes}
and~\ref{definition:composition-satellite-capped-gropes}.

To state the result rigorously, we use the following definitions.

\begin{definition}
  \label{definition:capped-gropes-in-3D}
  \begin{enumerate}
  \item We say that $G$ is a \emph{3D satellite capped grope}
    for~$(L,\alpha)$ if $G$ is a capped grope embedded in $S^3$
    bounded by an unknotted circle $\alpha$, and $L$ is a link in
    $S^3$ disjoint from the body of $G$ and transverse to caps of~$G$.
    By pushing $G\subset S^3=S^3\times0$ into $S^3\times (0,1)$, a
    satellite capped grope for $(L,\alpha)$ is obtained.
  \item We say that $G$ is a \emph{3D capped grope concordance}
    between two links $L$ and $L'$ in $S^3$ if the following hold: (i)
    $L'$ is obtained from $L$ by replacing disjoint arcs
    $\gamma_1,\ldots,\gamma_k \subset L$ with arcs
    $\gamma'_1,\ldots,\gamma'_k \subset S^3$ such that
    $\partial\gamma_i = \partial\gamma'_i$, that is, letting
    $C:=L\setminus (\gamma_1\cup\cdots\cup\gamma_k)$,
    $L'=C\cup \gamma'_1 \cup \cdots\cup \gamma'_k$; (ii) $G$ is an
    embedded (union-of-disks)-like capped grope in $S^3$ bounded by
    the circles $\gamma_i\cup\gamma'_i$ such that the body is disjoint
    from $C$ and the caps are transverse to~$C$.  Note that the capped
    grope
    \[
      (L\times[0,\tfrac12]) \cup (G\times\tfrac12) \cup
      (L'\times[\tfrac12,1])
    \]
    is a capped grope concordance in $S^3\times I$ between $L$
    and~$L'$.  We say that it is obtained by pushing $G$ into
    $S^3\times I$.
  \end{enumerate}
\end{definition}

\begin{theorem}
  \label{theorem:composition-grope-handle-decomposition}
  Suppose $H$ is a capped grope concordance which is obtained by
  pushing a 3D capped grope concordance between two links $J$ and $J'$
  into $S^3\times I$.  For $i=1,\ldots, n$, suppose $G_i$ is a
  satellite capped grope obtained by pushing a 3D satellite capped
  grope for~$(K_i,\alpha_i)$ into $S^3\times I$, where $K_i$ is a
  knot.  Let $G$ be the iterated product
  \[
    G = G_1\cdot G_2\cdots G_n \cdot H
  \]
  which is a capped grope concordance between
  \[
    K:=K_1(\alpha_1, \cdots, K_n(\alpha_n,J)\cdots)
    \quad\text{and}\quad K':=K_1(\alpha_1, \cdots,
    K_n(\alpha_n,J')\cdots).
  \]
  Then the exterior $E_G$ in $S^3\times I$ has a handle decomposition
  \[
    E_G \cong (E_K \times I) \cup (\text{$2$-handles}).
  \]
\end{theorem}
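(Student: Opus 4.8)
The plan is to build the handle decomposition of $E_G$ by induction on the number $n$ of satellite factors, using the cellular-expansion technology from Section~\ref{subsection:modification-capped-gropes-unknottedness} together with Lemma~\ref{lemma:subcomplex-exterior-handle-decomposition}, reducing everything to the assertion that each local building block — a 3D capped grope pushed into a collar of the boundary — contributes only $2$-handles relative to the exterior of the link it sits over. The base case is the statement for $H$ alone: $H$ is the push-in of a 3D capped grope concordance $G_H$ between $J$ and $J'$ in $S^3$. I would first show that, inside $S^3\times I$, the union $(J\times[0,\tfrac12])\cup(G_H\times\tfrac12)\cup(J'\times[\tfrac12,1])$ can be thickened and that $E_H$ is homeomorphic to the exterior of a $2$-complex obtained from $E_J\times I$ by attaching one $2$-cell for each cap of $G_H$ (plus arcs/cells accounting for the cap--$C$ intersections, which by Lemma~\ref{lemma:cellular-expansion-exterior} can be absorbed by a cellular expansion back to a thickened $G_H$). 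Since $G_H$ is a $2$-complex and $E_H$ is the exterior of $G_H$ relative to $E_J\times I$, Lemma~\ref{lemma:subcomplex-exterior-handle-decomposition} gives $E_H\cong(E_J\times I)\cup(\text{handles of index}\ge2)$; the fact that $G_H$'s body surfaces and caps are all of dimension exactly $2$, and that each $1$-skeleton piece is already present in $E_J\times I$ up to cellular expansion, upgrades ``index $\ge 2$'' to ``$2$-handles only''.

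**Next I would handle the inductive step.** Suppose the claim holds for $G_2\cdot\cdots\cdot G_n\cdot H$, a capped grope concordance over $E_{K_2(\alpha_2,\ldots)}$ whose exterior is $(E_{K_2(\alpha_2,\ldots)}\times I)\cup(\text{$2$-handles})$. The product $G_1\cdot(G_2\cdots H)$ is, by Definition~\ref{definition:composition-satellite-capped-gropes} (and its concordance analogue), obtained by removing disjointly embedded disks from the caps of $G_2\cdots H$ — one disk for each point where a cap meets the satellite solid torus $E_{\alpha_1}\times I$ — and filling in with parallel copies of $G_1$. Because each $G_1$-copy is itself the push-in of a 3D capped grope $G_{K_1}$ for $(K_1,\alpha_1)$, and because $E_{K_1(\alpha_1,K')}\times I = (E_{K_1\cup\alpha_1}\times I)\cup(E_{K'}\times I)$ glued along a thickened torus, I would argue that inserting a copy of $G_1$ in place of a cap-disk replaces a ``trivial'' $2$-cell (the removed disk, whose attachment contributed one $2$-handle) by the $2$-complex $G_{K_1}$ sitting over $E_{K_1\cup\alpha_1}$, and that up to cellular expansion the net effect on the exterior is again to attach only $2$-handles (the caps and body surfaces of $G_{K_1}$) over the correspondingly enlarged product $E_K\times I$. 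Formally, I would identify a $2$-complex $L$ containing $G$ which expands cellularly to a $3$-complex $L'$ that collapses cellularly to $E_K\times I$ thickened, mirroring the argument in the proof of Proposition~\ref{proposition:grope-splitting-unknotedness}: the trees-in-subgropes trick from that proof is exactly what lets one see a pushed-in 3D capped grope as a thickened-tree-plus-$2$-cells gadget whose exterior is computed cell-by-cell.

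**The main obstacle I expect** is bookkeeping the 4-dimensional picture at the seams where copies of $G_i$ are attached to caps of the next block along meridional circles $\mu\times t$: one must check that the thickenings can be chosen compatibly so that the product's regular neighborhood is literally a union of the pieces' regular neighborhoods along solid tori, with no hidden $1$-handles or $3$-handles appearing from the gluing. This is precisely where the hypothesis that each $G_i$ and $H$ comes from a \emph{3D} capped grope pushed into a collar is essential — it guarantees that in a collar $S^3\times[t,t+\epsilon]$ each block is a product-with-an-interval of a genuinely $3$-dimensional object, so its exterior there collapses onto $(E_{\text{link}})\times[t,t+\epsilon]$ after attaching $2$-cells dual to caps, with no cells of index $0$ or $1$ forced. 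I would make this precise by a collar-by-collar analysis: order the finitely many insertion levels $t_1<\cdots<t_N$ in $I$, and at each level verify (via Lemma~\ref{lemma:cellular-expansion-exterior}) that passing that level only attaches $2$-handles; composing over all levels yields the stated decomposition $E_G\cong(E_K\times I)\cup(\text{$2$-handles})$. The remaining checks — that the $2$-handles' attaching regions are $D^1\times D^1$ in the correct boundary, and that the identification $E_K\times I$ (rather than some other product) appears on the $K\times 0$ side — are routine consequences of the construction in Definition~\ref{definition:composition-satellite-grope-and-grope-concordance} once the collar analysis is in place.
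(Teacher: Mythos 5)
Your collar-by-collar plan in the last paragraph is the right skeleton (it is essentially what the paper does), but the two steps on which you hang the actual handle count do not work as stated, and the content that should replace them is missing. First, you cannot compare $E_H$ with $E_J\times I$ via Lemma~\ref{lemma:subcomplex-exterior-handle-decomposition}: that lemma relates the exteriors of \emph{nested} complexes in a fixed ambient manifold, whereas $H$ and the product $J\times I$ are not nested in either direction (H contains $J\times[0,\frac12]$ and $J'\times[\frac12,1]$ but not $J\times I$, and $J\times I$ is not contained in a neighborhood of $H$). Second, even where that lemma does apply it only yields handles of index $\ge 2$, and your ``upgrade'' to $2$-handles only --- ``each $1$-skeleton piece is already present in $E_J\times I$ up to cellular expansion, with no cells of index $0$ or $1$ forced'' --- is an assertion, not an argument; it is exactly the nontrivial claim of the theorem. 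The count of $2$-handles is not ``one per cap'' either: it is governed by the genera of the surface stages and the numbers of punctures of caps, which your bookkeeping never sees.

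What is missing is the local analysis at each critical level, which is where all the geometric content of the paper's proof lives. The paper isolates four local models (Definition~\ref{definition:admissible-complex}): a surface stage with collars of the dual curves, a plumbing point, a horizontal punctured planar surface, and a vertical punctured sheet. For each it computes the exterior: Proposition~\ref{proposition:surface-stage-regnbd-complement} shows a genus $g$ stage contributes exactly $2g-1$ $2$-handles, by exhibiting the regular neighborhood of the stage as a collar of its boundary plus three-dimensional $2$-handles (the spine $\bigcup(\alpha_i\cup\beta_i\cup\gamma_i)$ of Figure~\ref{figure:surface-level-reg-nbhd}) and then converting this into four-dimensional handles on the exterior via Lemma~\ref{lemma:general-handles-at-critical-level}; Proposition~\ref{proposition:plumbed-disk-regnbd-complement} shows plumbings contribute nothing; Propositions~\ref{proposition:punctured-disk-regnbd-complement} and~\ref{proposition:punctured-vertical-sheet-regnbd-complement} handle the punctured sheets by embedded Morse theory. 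Between critical levels the complex is an isotopy of a link, so the exterior is a product, and one checks (the paper's list of observations after Definition~\ref{definition:admissible-complex}) that pushing in a 3D grope and forming the products of Definitions~\ref{definition:composition-satellite-capped-gropes} and~\ref{definition:composition-satellite-grope-and-grope-concordance} only ever create these four local models. Without these level-by-level regular-neighborhood computations (or an equivalent substitute), your induction on the number of satellite factors has nothing to induct with: the inductive step again reduces to the unproved claim that inserting a pushed-in grope ``only attaches $2$-handles.''
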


In the proof of
Theorem~\ref{theorem:composition-grope-handle-decomposition}, we will
show that the conclusion holds for any choice of parenthesization for
the product $ G_1\cdots G_n \cdot H$.  (Indeed, the product is well
defined up to isotopy by
Remark~\ref{remark:associativity-of-product}.)

Our approach may be compared with the standard methods in embedded
Morse theory, which is used to construct a handle decomposition of the
exterior of an \emph{embedded submanifold} in $M\times I$ from the
critical points of the submanifolds.  We will present an analog for
the gropes in
Theorem~\ref{theorem:composition-grope-handle-decomposition}, which
are not submanifolds but 2-complexes.

\subsubsection*{Near a surface stage (critical level of type $A$)}

First we consider a surface stage of a capped grope together with a
collar neighborhood of the boundary of next stages attached to it.
This is explicitly described as follows.  Suppose $\Sigma$ is a
surface of genus $g$ with connected nonempty boundary, which is
embedded in a 3-manifold~$M$.  Suppose
$\alpha_1,\beta_1,\ldots,\alpha_g,\beta_g$ are standard symplectic
basis curves on $\Sigma$, that is, they are simple closed curves such
that any two of them are disjoint except $\alpha_i \cap \beta_i=\{$one
point$\}$.  Choose a bicollar $\Sigma\times[-\epsilon,\epsilon]$ of
$\Sigma=\Sigma\times 0$ in $M$, and let
\begin{alignat*}{2}
  \Sigma' & := \Sigma\cup \big(\bigcup \alpha_i\times[0,\epsilon]
  \big) \cup \big(\bigcup \beta_i\times[-\epsilon,0]\big) &&\subset M,
  \\
  \partial_-\Sigma' &:= \partial\Sigma &&\subset \Sigma',
  \\
  \partial_+\Sigma' &:= \big(\bigcup \alpha_i\times \epsilon \big) \cup
  \big(\bigcup \beta_i\times(-\epsilon)\big) &&\subset \Sigma'.  
\end{alignat*}
Now, in the 4-manifold $M\times I$, let
\[
A := (\partial_-\Sigma'\times[0,\tfrac12]) \cup (\Sigma'\times \tfrac12)
\cup (\partial_+\Sigma'\times[\tfrac12,1]).
\]
Let $E_A = M\times I \setminus \nu(A)$ and $E_{\partial_-\Sigma'} =
M\setminus \nu(\partial_-\Sigma')$ be the exteriors.

\begin{proposition}
  \label{proposition:surface-stage-regnbd-complement}
  The 4-manifold $E_A$ has a handle decomposition
  \[
    E_A \cong (E_{\partial_-\Sigma'}\times I) \cup \big(\text{$(2g-1)$
      $2$-handles}\big).
  \]
\end{proposition}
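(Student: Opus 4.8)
The plan is to treat the projection $h\colon M\times I\to I$ as a Morse‑type function on $E_A$, localize the whole computation at the single ``critical level'' $t=\tfrac12$ where the surface stage $\Sigma'$ sits, and then read off the handles from a handle decomposition of the surface $\Sigma$ itself. (Throughout we assume $g\ge 1$; for $g=0$ there is no symplectic basis and the surface stage is trivial.)

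\textbf{Reduction to a slab.} First I would choose $\delta>0$ small enough that $\nu(\Sigma'\times\tfrac12)$ is contained in $M\times(\tfrac12-\delta,\tfrac12+\delta)$. Then $A\cap h^{-1}\big([0,\tfrac12-\delta]\big)$ is exactly the product annulus $\partial_-\Sigma'\times[0,\tfrac12-\delta]$, with product regular neighbourhood, so $E_A\cap h^{-1}\big([0,\tfrac12-\delta]\big)\cong E_{\partial_-\Sigma'}\times[0,\tfrac12-\delta]$; likewise $A\cap h^{-1}\big([\tfrac12+\delta,1]\big)=\partial_+\Sigma'\times[\tfrac12+\delta,1]$ and $E_A\cap h^{-1}\big([\tfrac12+\delta,1]\big)\cong E_{\partial_+\Sigma'}\times[\tfrac12+\delta,1]$. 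Thus $E_A$ is two product collars glued to the slab $E_A\cap h^{-1}\big([\tfrac12-\delta,\tfrac12+\delta]\big)$, and it suffices to give the slab a relative handle decomposition, over its bottom copy of $E_{\partial_-\Sigma'}$, with exactly $2g-1$ two‑handles and no handles of any other index. As an orientation check: applying Lemma~\ref{lemma:subcomplex-exterior-handle-decomposition} to any cellulation of $A$ relative to $A\cap\partial(M\times I)$ and computing Euler characteristics gives $\chi(E_A)=\chi(M)+2g-1$, which matches $E_{\partial_-\Sigma'}\times I$ with $2g-1$ two‑handles attached; so the content is precisely that only two‑handles survive, in the correct number.

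\textbf{Analyzing the surface stage.} To build the slab I would fix a handle decomposition of the surface $\Sigma$ relative to a collar of $\partial\Sigma$ --- a collar annulus of $\partial\Sigma$, then $2g$ one‑handles of the surface, then one two‑handle --- chosen so that the cocores of the one‑handles are dual to a standard symplectic basis $\alpha_1,\beta_1,\dots,\alpha_g,\beta_g$ (so that $\alpha_i$, $\beta_i$ each run over exactly one one‑handle). Transporting this decomposition into $M\times I$ together with the fins $\alpha_i\times[0,\epsilon]$, $\beta_i\times[-\epsilon,0]$ and the vertical collars $\partial_\pm\Sigma'\times[\,\cdot\,]$, I would drill $\nu(A)$ out of the slab in stages following this decomposition, using Lemma~\ref{lemma:subcomplex-exterior-handle-decomposition} (and, wherever possible, replacing explicit handle slides by cellular collapses via Definition~\ref{definition:cellular-expansion} and Lemma~\ref{lemma:cellular-expansion-exterior}). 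The expected mechanism is: the collar of $\partial\Sigma$ together with the vertical collar $\partial_-\Sigma'\times[\tfrac12-\delta,\tfrac12]$ produces the product region $E_{\partial_-\Sigma'}\times I$; each one‑handle of $\Sigma$, drilled together with its fin and the matching vertical collar of $\partial_+\Sigma'$, contributes a single four‑dimensional two‑handle (the dual three‑handle of the one‑handle's core cancelling against the two‑handle produced by that vertical collar, since $\alpha_i$, hence $\alpha_i^{+}=\alpha_i\times\epsilon$, runs over the one‑handle exactly once); and the capping two‑handle of $\Sigma$, once drilled, cancels one of these two‑handles, its cocore meeting the relevant belt sphere once --- this is where the relation $[\partial\Sigma]=\prod_i[\alpha_i,\beta_i]$ enters. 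The fact that the $\alpha$‑fins and the $\beta$‑fins are pushed to opposite sides of $\Sigma$ keeps these local pictures disjoint, so the cancellations are independent; the net effect is $2g-1$ two‑handles.

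\textbf{Main obstacle.} The hard part is exactly this cancellation bookkeeping: one must follow the attaching circles and framings of the handles created at each stage --- most delicately the interaction between the two‑handle produced by a fin's vertical collar and the three‑handle produced by the core of the associated one‑handle of $\Sigma$, and then the interaction of $\Sigma$'s capping two‑handle with the handle it kills --- and verify that no one‑ or three‑handles remain and that the surviving two‑handles number $2g-1$ rather than $2g$ or $2g+1$. This is the grope analogue of the elementary fact in embedded Morse theory that a genus‑$g$ critical level of an embedded surface in $M\times I$ contributes only two‑handles to the exterior; the novelty, and the source of the difficulty, is that $A$ is a $2$‑complex rather than a submanifold, so the critical level has to be resolved cell by cell with the lemmas above instead of by one index‑theoretic computation.
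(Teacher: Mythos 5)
Your set-up (localizing at the critical level, reading the answer off a decomposition of $\Sigma$, the Euler characteristic check) is reasonable, but the step you yourself flag as ``the hard part'' is not a detail to be deferred --- it is the entire content of the proposition, and as sketched it does not work. Two concrete problems. First, the indices in your cancellation scheme do not come out right: your final move asks the handle coming from the capping $2$-cell of $\Sigma$ to cancel one of the $2g$ two-handles, but that is a $2$-handle against a $2$-handle, and handles of equal index cannot cancel; to drop from $2g$ to $2g-1$ you would need a $1$-/$2$- or $2$-/$3$-pair, which your cell-by-cell accounting does not produce. Second, Lemma~\ref{lemma:subcomplex-exterior-handle-decomposition} runs in the wrong direction for your purpose: drilling cells of $A$ out of the slab expresses the \emph{slab} (or the exterior of a smaller subcomplex) as $E_A$ together with handles of index $4-p_i$; converting that into the statement you want --- $E_A$ built from $E_{\partial_-\Sigma'}\times I$ by attaching handles --- requires turning a relative decomposition upside down inside a cobordism structure, and your proposal neither sets this up nor tracks how the indices dualize, which is precisely where your bookkeeping goes astray.

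For comparison, the paper's proof avoids all cancellation. It replaces the sharp critical level by the triple of $3$-dimensional neighborhoods $Y_-=\nu(\partial_-\Sigma')$, $Y=\nu(\Sigma')$, $Y_+=\nu(\partial_+\Sigma')$ in $M$, observes that $\Sigma'$ collapses cellularly to the $1$-complex $\bigcup_i(\alpha_i\cup\beta_i\cup\gamma_i)$, so that $Y$ is a genus-$2g$ handlebody in which $\overline{Y\setminus Y_+}$ is visibly a collar of $\partial Y$ with exactly $2g-1$ three-dimensional $2$-handles attached (Figure~\ref{figure:surface-level-reg-nbhd}), and then applies Lemma~\ref{lemma:general-handles-at-critical-level}, which converts a $3$-dimensional handle structure on $\overline{Y\setminus Y_+}$ rel $\overline{\partial Y\setminus\partial Y_+}$ into $4$-dimensional handles of the \emph{same} index attached to $\overline{M\setminus Y_-}\times I$. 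If you want to salvage your route, the missing idea is exactly this reduction to a single $3$-dimensional statement (or an equivalent device that yields only index-$2$ handles outright), rather than a cascade of cancellations whose attaching data you have not verified.
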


\begin{proof}
  Choose a regular neighborhood $Y:=\nu(\Sigma')$ in $M$ and choose
  regular neighborhoods $Y_+ := \nu(\partial_+\Sigma')$ and
  $Y_- := \nu(\partial_-\Sigma')$ lying in the interior of~$Y$.  The
  exterior $E_A$ is homeomorphic to
  \[
    \overline{M\times I \setminus \big(Y_-\times[0,\tfrac13] \cup
      Y\times[\tfrac13,\tfrac23] \cup Y_+\times[\tfrac23,1]\big)}.
  \]
  Fix a point $*$ on~$\partial\Sigma$.  Choose arcs $\gamma_i$
  ($i=1,\ldots,g$) on $\Sigma$ joining $*$ to $\alpha_i\cap\beta_i$ in
  such a way that $\Sigma$ collapses cellularly (and thus is homotopy
  equivalent) to $K:=\bigcup_i(\alpha_i\cup\beta_i\cup\gamma_i)$.  See
  Figure~\ref{figure:surface-level-reg-nbhd}.  Since $\Sigma'$
  collapses cellularly to $\Sigma$, $Y=\nu(\Sigma')$ is isotopic to a
  regular neighborhood of~$K$.

  \begin{figure}[H]
    \begin{tikzpicture}[x=1bp,y=1bp]
      \small
      \node [anchor=south west, inner sep=0mm] {\includegraphics{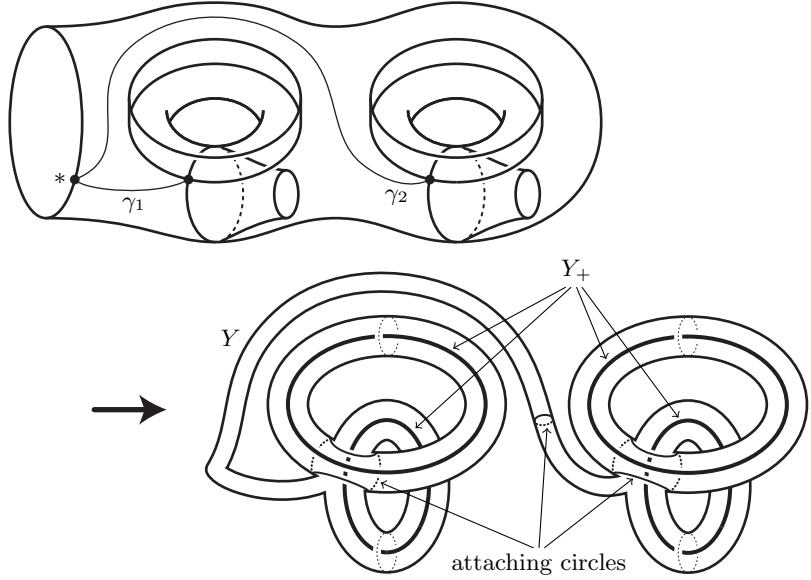}};
      \node [left] at (25,150) {$*$};
      \node [below] at (47,146) {$\gamma_1$};
      \node [below] at (145,149) {$\gamma_2$};
      \node [left] at (90,90) {$Y$};
      \draw [->] (208,109)--(166,86);
      \draw [->] (210,109)--(152,56);
      \draw [->] (212,109)--(223,82);
      \draw [->] (214,109)--(250,60);
      \node [above] at (212,108) {$Y_+$};
      \draw [->] (196,11)--(139,35.5);
      \draw [->] (198,11)--(201,56);
      \draw [->] (200,11)--(233,37);
      \node [below] at (198,12) {attaching circles};
    \end{tikzpicture}
    \caption{A regular neighborhood of the 2-complex~$\Sigma'$.}
    \label{figure:surface-level-reg-nbhd}
  \end{figure}

  From this it is seen that $\overline{Y\setminus Y_+}$ is obtained by
  attaching $(2g-1)$ 2-handles to a collar of
  $\partial Y = \overline{\partial Y\setminus \partial Y_+}$ (in this case $\partial Y\cap \partial Y_+ =\emptyset$); the
  attaching circles are shown in
  Figure~\ref{figure:surface-level-reg-nbhd}.  The proof is completed
  by applying the following fact, which we state as a lemma for later
  use as well.
\end{proof}

\begin{lemma}
  \label{lemma:general-handles-at-critical-level}
  Suppose $Y_+$, $Y_-$, $Y$ are compact codimension zero submanifolds
  in a 3-manifold $M$ such that $Y_+, Y_- \subset Y$.  Suppose
  $\overline{Y\setminus Y_+}$ is obtained from a regular neighborhood of
  $\overline{\partial Y\setminus \partial Y_+}$ in $Y$ by attachments of
  handles $h_1^{p_1},\ldots,h_k^{p_k}$ of index~$p_i$.  That is, there
  is a decomposition
  \[
  Y = \nu(\overline{\partial Y\setminus \partial Y_+})\cup h_1^{p_1}\cup \cdots
  \cup h_k^{p_k} \cup Y_+.
  \]
  Then the 4-manifold 
  \begin{equation}
    \label{equation:exterior-near-critical-level}
    W := \overline{M\times I \setminus \big(Y_-\times[0,\tfrac13] \cup
      Y\times[\tfrac13,\tfrac23] \cup Y_+\times[\tfrac23,1]\big)}    
  \end{equation}
  has a handle decomposition
  \[
  W\cong \overline{M\setminus Y_-}\times I \cup H_1^{p_1} \cup\cdots
  \cup H_k^{p_k}.
  \]
  where $H_i^{p_i}$ is a handle of index~$p_i$.
\end{lemma}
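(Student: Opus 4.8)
The plan is to decompose the vertical interval $[0,1]$ in the product $M\times I$ into the three pieces $[0,\tfrac13]$, $[\tfrac13,\tfrac23]$, $[\tfrac23,1]$ and build up $W$ from the bottom, tracking at each horizontal slice which codimension-zero submanifold of $M$ has been removed. Over $[0,\tfrac13]$ we have removed $Y_-\times[0,\tfrac13]$, so this portion of $W$ is $\overline{M\setminus Y_-}\times[0,\tfrac13]$, a collar; this gives the factor $\overline{M\setminus Y_-}\times I$ in the conclusion. The content of the lemma is that passing from level $\tfrac13$ to level $\tfrac23$, where the removed region jumps from $Y_-\times\{\tfrac13\}$ up to $Y\times[\tfrac13,\tfrac23]$ and back down to $Y_+\times\{\tfrac23\}$, attaches exactly the handles $H_1^{p_1},\dots,H_k^{p_k}$ dual in an appropriate sense to the given handle decomposition $Y=\nu(\overline{\partial Y\setminus\partial Y_+})\cup h_1^{p_1}\cup\cdots\cup h_k^{p_k}\cup Y_+$.

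First I would reduce to the case $k=1$ by an induction on the number of handles, building the handle decomposition of $Y$ over $\overline{\partial Y\setminus\partial Y_+}$ one handle at a time and arguing that each handle contributes one handle to $W$. For the single-handle step, the key observation is that attaching the handle $h^{p}$ to $Y$ to pass from an intermediate submanifold $Y_-'$ to $Y'$ means that $Y'$ is obtained from $Y_-'$ by attaching a $p$-cell (the core of $h^p$), up to cellular collapse inside $Y$. Correspondingly, inside the slab $M\times[\tfrac13,\tfrac23]$, the region we must carve out to pass from removing $Y_-'$ (from the boundary level) to removing $Y'$ is a regular neighborhood of a single $p$-cell $e^p$ properly attached to the current exterior along a $(p-1)$-ball in its boundary. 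By Lemma~\ref{lemma:subcomplex-exterior-handle-decomposition} (applied to the exterior as the ambient manifold), carving out $\nu(e^p)=D^p\times D^{4-p}$ along $D^p\times\partial D^{4-p}$ is the same as attaching a $(4-p)$-handle\,--\,wait, one must be careful here about the index bookkeeping, and this is exactly the point where the statement's index $p_i$ (not $4-p_i$) must be justified: the handle $h_i^{p_i}$ attached to $Y\subset M$ has cocore of dimension $3-p_i$, and when we remove its thickening times a vertical interval from $M\times I$ we are filling in along a $p_i$-dimensional attaching region of a $4$-dimensional handle, so the handle added to $W$ has index $p_i$. I would make this precise by writing $\nu(h_i^{p_i})\times[\tfrac13,\tfrac23]$ as $(D^{p_i}\times D^{3-p_i})\times[\tfrac13,\tfrac23]=D^{p_i}\times D^{4-p_i}$ and identifying which part of its boundary meets the already-constructed part of $W$.

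The main obstacle I anticipate is precisely this index computation together with the bookkeeping of \emph{where} each new handle attaches\,--\,in particular verifying that the attaching region of $H_i^{p_i}$ lies in the part of $\partial W$ built from the previous stage (so that the $H_i$ can indeed be attached in sequence) and does not interfere with the $Y_+\times[\tfrac23,1]$ collar at the top. Once the $k=1$ case and the attaching-region claim are nailed down, the induction over the handles of $Y$ is routine, as is reassembling the top collar $\overline{M\setminus Y_+}\times[\tfrac23,1]$, which together with the bottom collar and the handles yields the stated decomposition $W\cong\overline{M\setminus Y_-}\times I\cup H_1^{p_1}\cup\cdots\cup H_k^{p_k}$. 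The proof of Proposition~\ref{proposition:surface-stage-regnbd-complement} then follows by the stated observation that $\overline{Y\setminus Y_+}$ with $Y=\nu(\Sigma')\simeq\nu(K)$, $K=\bigcup_i(\alpha_i\cup\beta_i\cup\gamma_i)$, is a regular neighborhood of $\overline{\partial Y\setminus\partial Y_+}$ with $2g-1$ two-handles attached (one for each of the $2g$ loops $\alpha_i,\beta_i$ minus one, because one loop together with the arcs is already captured by the collar of $\partial Y$), so $p_i=2$ for all $i$ and $k=2g-1$.
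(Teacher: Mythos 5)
Your global picture---sweep the $I$-coordinate, obtain the product $\overline{M\setminus Y_-}\times I$ from the bottom slab, and get one $4$-dimensional handle modeled on $h_i^{p_i}\times[\text{interval}]\cong D^{p_i}\times D^{4-p_i}$ for each $3$-dimensional handle---is the same as the paper's, but the step you yourself flag as ``the main obstacle'' (identifying the attaching region, hence the index) is exactly the content of the lemma, and the mechanisms you sketch toward it would not close it. The subsets $h_i^{p_i}\times[\tfrac13,\tfrac23]$ lie in $Y\times[\tfrac13,\tfrac23]$ and hence are \emph{removed} from $W$, so they cannot literally serve as handles of $W$ whose contact with ``the already-constructed part of $W$'' you could read off; and the carving viewpoint via Lemma~\ref{lemma:subcomplex-exterior-handle-decomposition} is the wrong comparison rather than a fixable sign issue: it expresses the manifold with \emph{less} removed as the smaller exterior union handles of index $4-p_i$, which is why that computation produced $4-p_i$. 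The handle-by-handle induction is also shakier than ``routine'': if you grow the middle removed region through intermediate stages $\nu(\overline{\partial Y\setminus \partial Y_+})\cup h_1\cup\cdots\cup h_j\cup Y_+$, these stages need not contain $Y_-$ (so the base case is not the product you assert), and the difference chunk $h_j\times[\tfrac13,\tfrac23]$ meets the exterior along a complicated region (its entire top face, lateral faces, and part of its bottom face depending on where $Y_-$ sits), so consecutive stages are not related by one clean handle attachment. Finally, you never justify that the bottom critical level contributes nothing; that is precisely where the hypothesis $Y_-\subset Y$ enters.

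The paper closes the gap in one pass, with no induction: set $W_t=W\cap (M\times[0,t])$. Crossing $t=\tfrac13$ changes nothing because $Y_-\subset Y$ makes the slab $\overline{M\setminus Y}\times[\tfrac13,t]$ glue onto $\overline{M\setminus Y_-}\times[0,\tfrac13]$ along its entire bottom face (the paper phrases this as pushing $\overline{Y\setminus Y_-}\times[\tfrac13,t]$ into its complement, justified by elementary cellular collapses), so $W_t\cong \overline{M\setminus Y_-}\times I$ for all $t<\tfrac23$. Crossing $t=\tfrac23$ adds $\overline{M\setminus Y_+}\times[\tfrac23,1]$; the pieces $\overline{M\setminus Y}\times[\tfrac23,1]$ and $\nu(\overline{\partial Y\setminus\partial Y_+})\times[\tfrac23,1]$ are glued along full faces and absorbed the same way, and what remains is exactly the pieces $h_i^{p_i}\times[\tfrac23,1]$, attached (in the order of the $h_i$) along $(\partial D^{p_i}\times D^{3-p_i})\times[\tfrac23,1]=\partial D^{p_i}\times D^{4-p_i}$. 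Since the attaching wall is vertical, the interval factor goes into the cocore, and the index is $p_i$---this, not a cell count, is the reason for the index in the statement; the paper records it as $W_t\cong W_{t_0}\cup \overline{Y\setminus Y_+}\times[\tfrac12,t_0]$ with $H_i^{p_i}:=h_i^{p_i}\times[\tfrac12,t_0]$. With that step in place, your closing remarks about Proposition~\ref{proposition:surface-stage-regnbd-complement} agree with the paper.
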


\begin{figure}[H]
  \begin{tikzpicture}[x=1pt,y=1pt]
    \small
    \node [anchor=south west, inner sep=0mm] {\includegraphics{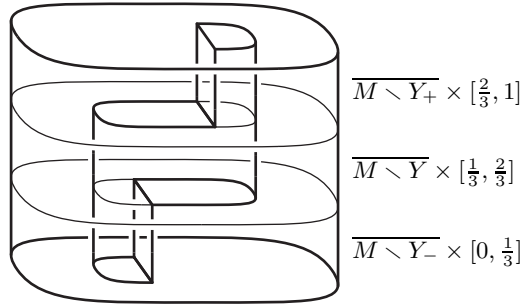}};
    \node [right] at (125,20) {$\overline{M\setminus Y_-}\times[0,\frac13]$};
    \node [right] at (125,50) {$\overline{M\setminus Y}\times[\frac13,\frac23]$};
    \node [right] at (125,80) {$\overline{M\setminus Y_+}\times[\frac23,1]$};
  \end{tikzpicture}
  \caption{The 4-manifold $W$ in Lemma~\ref{lemma:general-handles-at-critical-level}}
  \label{figure:critical-level-exterior}
\end{figure}

\begin{proof}
  Let $W_t = W\cap M\times [0,t]$ for $t\in I$.  We will investigate
  how $W_t$ changes as $t$ increases.  See the schematic picture in
  Figure~\ref{figure:critical-level-exterior}.  For $t\le \frac13$,
  $W_{t} = \overline{M\setminus Y_-}\times[0,t] \cong
  \overline{M\setminus Y_-}\times I$ obviously.  For $\frac13 < t <
  \frac23$, we still have $W_{t} \cong \overline{M\setminus Y_-}\times
  I$, since $W_{t}$ is obtained from $\overline{M\setminus Y_-}\times
  [0,t]$ by pushing the codimension zero submanifold
  $\overline{Y\setminus Y_-} \times[\frac13, t]$ into its complement.
  (This can be viewed as, for instance, repeatedly applying elementary
  cellular collapsing across the 4-balls $\Delta^3\times [\frac13,t]$
  where $\Delta^3$ is a 3-simplex of $\overline{Y\setminus Y_-}$.)
  For $t>\frac23$, fixing $t_0 \in (\frac12,\frac23)$, we have
  \[
    W_t \cong W_{t_0} \cup \overline{Y\setminus Y_+} \times
    [\tfrac12,t_0].
  \]
  The attachment of $\overline{Y\setminus Y_+} \times [\frac12,t_0]$
  to $W_t$ along
  $\overline{\partial Y\setminus \partial Y_+}\times [\frac12,t_0]$ is
  equivalent to attachments of handles
  $H_i^{p_i} := h_i^{p_i}\times[\frac12,t_0]$ of index $p_i$, since
  $\overline{Y\setminus Y_+}$ is obtained by attaching the handles $h_i^{p_i}$
  to a collar of $\overline{\partial Y\setminus \partial Y_+}$.  This shows
  that $W$ has the promised handle decomposition.
\end{proof}

\subsubsection*{Near a plumbing point (critical level of type $B$)}
Suppose $D$ is a 2-disk embedded in a 3-manifold $M$, and $L$ is a
$1$-submanifold in $M$ which is disjoint from $\partial D$ and meets
$D$ at a single transverse intersection point.  In the 4-manifold
$M\times I$, let
\[
  B := (L\times I) \cup (\partial D\times[0,\tfrac12]) \cup
  (D\times\tfrac12)
\]
Let $E_B=M\times I \setminus \nu(B)$ and
$E_{L\cup \partial D} = M\setminus \nu(L\cup \partial D)$ be the exteriors.

\begin{proposition}
  \label{proposition:plumbed-disk-regnbd-complement}
  The 4-manifold $E_B$ is diffeomorphic to $E_{L\cup \partial D}\times I$.
\end{proposition}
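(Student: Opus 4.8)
The plan is to first peel the vertical part $L\times I$ off $B$, reducing the statement to a comparison of two embedded annuli inside $E_L\times I$, and then to connect those annuli by elementary cellular expansions so that Lemma~\ref{lemma:cellular-expansion-exterior} applies.

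First I would write $\hat D:=(\partial D\times[0,\tfrac12])\cup(D\times\tfrac12)$, a properly embedded $2$-disk in $M\times[0,\tfrac12]$ with $\partial\hat D=\partial D\times 0$, so that $B=(L\times I)\cup\hat D$ and the only self-intersection of $B$ is the transverse point $p\times\tfrac12$, where $p=L\cap D$. Choosing $\nu(B)=\nu(L\times I)\cup\nu(\hat D)$ with $\nu(L\times I)=\nu_M(L)\times I$, one has $(M\times I)\setminus\nu(L\times I)=E_L\times I$, and in $E_L\times I$ the disk $\hat D$ is cut down to the embedded annulus
\[
  \hat A:=\hat D\setminus\nu(L\times I)=(\partial D\times[0,\tfrac12])\cup(A\times\tfrac12),\qquad A:=\overline{D\setminus\nu_D(p)},
\]
whose boundary is $(\partial D\times 0)\sqcup(\mu\times\tfrac12)$ with $\mu=\partial\nu_D(p)$ a meridian of $L$. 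Thus $E_B$ is the exterior $E_{\hat A}$ of $\hat A$ in $E_L\times I$. On the other hand, since $L$ and $\partial D$ are disjoint, $E_{L\cup\partial D}=E_L\setminus\nu_{E_L}(\partial D)$, and the product neighborhood $\nu_{E_L}(\partial D)\times I$ realizes $E_{L\cup\partial D}\times I$ as the exterior $E_{\partial D\times I}$ of the vertical annulus $\partial D\times I$ in $E_L\times I$. So it is enough to show $E_{\hat A}\cong E_{\partial D\times I}$ in $E_L\times I$.

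For this I would introduce $L^{\star}:=(\partial D\times I)\cup(A\times\tfrac12)\subset E_L\times I$, which contains both $\hat A$ and $\partial D\times I$, and check that each inclusion is a composite of elementary cellular expansions. Indeed $L^{\star}=\hat A\cup(\partial D\times[\tfrac12,1])$ and $L^{\star}=(\partial D\times I)\cup(A\times\tfrac12)$, and in both cases one is attaching an annulus to the smaller complex along exactly one of the annulus's boundary circles, namely $\partial D\times\tfrac12$. It then remains only to invoke the routine fact that attaching an annulus to a complex $K$ along one boundary circle $c\subset K$ is the composite of two elementary cellular expansions: cut the annulus into two squares $R_a,R_b$ meeting in two rungs, with $R_a\cap K=a$ and $R_b\cap K=b$ for the two arcs $a,b$ of $c$; then $K$ expands across $R_a$ (attaching region the arc $a$) and then across $R_b$ (attaching region the single arc made of $b$ together with the two rungs). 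By Lemma~\ref{lemma:cellular-expansion-exterior}, applied inside the codimension-zero submanifold $E_L\times I$ of $M\times I$, this gives $E_{\hat A}\cong E_{L^{\star}}\cong E_{\partial D\times I}$, hence $E_B\cong E_{L\cup\partial D}\times I$. (At the outset one fixes a triangulation of $E_L\times I$ making $\hat A$, $\partial D\times I$, $L^{\star}$ and the auxiliary squares subcomplexes, and chooses $\nu(B)$ so that $\nu(B)\setminus\nu(L\times I)$ is a regular neighborhood of $\hat A$ in $E_L\times I$; these are routine.)

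The crucial point, and the reason the passage to $E_L\times I$ is not optional, is the puncture $p\times\tfrac12$: it lies in the interior of the cap disk $D\times\tfrac12$, so in $M\times I$ itself $D\times\tfrac12$ cannot be attached to $(L\cup\partial D)\times I$ by a cellular expansion --- their intersection is a whole boundary circle together with an interior point --- and $D\times\tfrac12$ likewise has no free face there to collapse. Removing $\nu(L\times I)$ first excises a small disk around the puncture, replacing $D\times\tfrac12$ by the annulus $A\times\tfrac12$ with an innocuous new free boundary circle $\mu\times\tfrac12$, after which the expansions above present no difficulty.
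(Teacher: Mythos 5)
Your opening reductions are sound: $E_B$ is indeed the exterior of the annulus $\hat A$ in $E_L\times I$, the product $E_{L\cup\partial D}\times I$ is the exterior of $\partial D\times I$ there, and the combinatorial observation that attaching an annulus to a complex along one boundary circle is a composite of two elementary cellular expansions is correct. The gap is the appeal to Lemma~\ref{lemma:cellular-expansion-exterior} for these particular expansions. In both of them the attached annulus meets the boundary of the ambient manifold $E_L\times I$ along a circle that is \emph{not} in the smaller complex: $\partial D\times 1\subset E_L\times 1$ when you pass from $\hat A$ to $L^{\star}$, and $\mu\times\tfrac12\subset \partial\nu(L)\times I$ when you pass from $\partial D\times I$ to $L^{\star}$. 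Lemma~\ref{lemma:cellular-expansion-exterior} rests on uniqueness of regular neighborhoods under collapse, which requires the expansion not to change the intersection of the complex with $\partial X$ (and this is how the paper always uses it); when the boundary trace changes, the two regular neighborhoods cannot even be ambient isotopic, since they meet $\partial X$ in different numbers of pieces, and the exterior can genuinely change. (Toy example: in $X=D^2$, expanding an interior point across an arc reaching $\partial D^2$ is an elementary cellular expansion, but it changes the exterior from an annulus to a disk.)

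In your situation the needed conclusion is in fact false: $E_{L^{\star}}$ is not homeomorphic to the other two exteriors. Take $M=S^3$, $L$ the unknot, $D$ a meridian disk. Deleting the vertical tube $\nu(\partial D)\times I$ first, the exterior of $L^{\star}$ becomes the complement in $N\times I$ (where $N\cong T^2\times I$ is the Hopf link exterior) of the horizontal annulus $A'\times\{\tfrac12\}$ with $A'=c\times I$, $c$ a meridian curve of $L$; this is an interval bundle over the complement of the circle $c\times\{\tfrac12\}$ in $T^2\times I$ and has $H_1\cong\Z^3$ (the meridian of $\partial D\times I$ read below level $\tfrac12$ and the one read above it become independent), whereas $H_1(E_{\hat A})\cong H_1(E_{\partial D\times I})\cong\Z^2$. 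The defect is not repaired by a cleverer common complex: $\hat A$ and $\partial D\times I$ meet $\partial(E_L\times I)$ differently, so any comparison must move the boundary trace of the complex, which cellular expansion plus Lemma~\ref{lemma:cellular-expansion-exterior} cannot do. That boundary bookkeeping is exactly what the paper's Lemma~\ref{lemma:general-handles-at-critical-level} handles: with $Y=\nu(L\cup D)$, $Y_+=\nu(L)$, $Y_-=\nu(L\cup\partial D)$ in $M$, one notes $Y\cong\nu(L)$ because $D$ meets $L$ in a single point, so $\overline{Y\setminus Y_+}$ is a collar of $\partial Y$ and the critical-level lemma gives $E_B\cong E_{L\cup\partial D}\times I$ with no handles; some argument of this type (a vertical push at the critical level, not a cellular expansion across $\partial X$) is what your proof still needs.
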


\begin{proof}
  We proceed similarly to the proof of
  Proposition~\ref{proposition:surface-stage-regnbd-complement}.  Let
  $Y=\nu(L\cup D)$ be a regular neighborhood in $M$ and choose regular
  neighborhoods $Y_+ = \nu(L)$ and $Y_-:=\nu(L\cup \partial D)$ which
  are contained in the interior of~$Y$.  Then $E_B$ is diffeomorphic
  to the 4-manifold described in
  \eqref{equation:exterior-near-critical-level} of
  Lemma~\ref{lemma:general-handles-at-critical-level}.  Since $D$
  intersects $L$ at one point, $Y\cong \nu(L)$ and
  $\overline{Y\setminus Y_+}$ is just a collar of $\partial Y=
  \overline{\partial Y \setminus \partial Y_+}$.  By
  Lemma~\ref{lemma:general-handles-at-critical-level}, it follows that
  $E_B \cong E_{L\cup \partial D}\times I$, without adding any handle.
  %
\end{proof}

\subsubsection*{Near punctured surfaces (critical levels of type $R$ and $S$)}

We also need the following two cases.  First, we consider a
``horizontal'' punctured disk described as follows.  Suppose $\Gamma$
is a planar surface embedded in a 3-manifold~$M$.  Let
$\partial_-\Gamma$ be a boundary component of $\Gamma$, and
$\partial_+ \Gamma:=\partial \Gamma\setminus \partial_-\Gamma$.  We
assume both $\partial_+\Gamma$ and $\partial_-\Gamma$ are nonempty.
Consider the surface
\[
  R:=(\partial_-\Gamma\times[0,\tfrac12]) \cup (\Gamma\times\tfrac12)
  \cup (\partial_+\Gamma\times[\tfrac12,1])
\]
embedded in $M\times I$.  Let $E_R=M\times I \setminus \nu(R)$ and
$E_{\partial_-\Gamma} = M\setminus \nu(\partial_-\Gamma)$ be the
exteriors.

\begin{proposition}
  \label{proposition:punctured-disk-regnbd-complement}
  The 4-manifold $E_R$ has a handle decomposition
  \[
  E_R \cong (E_{\partial_-\Gamma}\times I) \cup \big(\text{$(k-1)$ $2$-handles}\big)
  \]
  where $k$ is the number of components of $\partial_+\Gamma$.
\end{proposition}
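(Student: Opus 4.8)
The plan is to follow the proof of Proposition~\ref{proposition:surface-stage-regnbd-complement} essentially verbatim, with the $2g$ symplectic basis curves there replaced by the $k$ components $C_1,\dots,C_k$ of $\partial_+\Gamma$. First I would choose a closed regular neighborhood $Y:=\nu(\Gamma)$ in $M$, large enough that closed regular neighborhoods $Y_+:=\nu(\partial_+\Gamma)$ and $Y_-:=\nu(\partial_-\Gamma)$ can be taken to lie in its interior with $Y_-\cap Y_+=\emptyset$ (possible since $\partial_-\Gamma$ and $\partial_+\Gamma$ are disjoint). After reparametrizing the $I$-coordinate, $\nu(R)$ may be taken to be $(Y_-\times[0,\tfrac13])\cup(Y\times[\tfrac13,\tfrac23])\cup(Y_+\times[\tfrac23,1])$, so that $E_R$ is homeomorphic to the $4$-manifold $W$ of \eqref{equation:exterior-near-critical-level} in Lemma~\ref{lemma:general-handles-at-critical-level}. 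It then remains to describe $\overline{Y\setminus Y_+}$ as a regular neighborhood of $\overline{\partial Y\setminus\partial Y_+}$ in $Y$ with handles attached, and to quote that lemma.

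For the second step I would observe that $Y=\nu(\Gamma)$ collapses onto $\Gamma$, which deformation retracts onto a spine $\ell_1\cup\dots\cup\ell_k\cup\tau$ consisting of a loop $\ell_j$ parallel to $C_j$ and a tree $\tau$ joining the $\ell_j$; hence $Y$ is a genus-$k$ handlebody, $Y\cong\natural_{j=1}^{k}(S^1\times D^2)$ with the $\ell_j$ as cores. Since each $C_j$ is isotopic in $\Gamma$, and so in $Y$, to $\ell_j$, an ambient isotopy of $M$ fixing $Y_-$ carries $Y_+=\bigsqcup_j\nu(C_j)$ onto a regular neighborhood of the core system $\bigsqcup_j\ell_j$. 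Deleting the cores from the summands then gives
\[
  \overline{Y\setminus Y_+}\ \cong\ \natural_{j=1}^{k}\Bigl(\overline{(S^1\times D^2)\setminus\nu(S^1\times 0)}\Bigr)\ \cong\ \natural_{j=1}^{k}(T^2\times I).
\]
Viewed from the opposite boundary $\partial Y$ -- the genus-$k$ surface that is the connected sum of the $k$ tori -- each factor $T^2\times I$ is a product collar and the $k-1$ boundary connected sums are $2$-handles, so $\overline{Y\setminus Y_+}$ is obtained from a collar of $\partial Y=\overline{\partial Y\setminus\partial Y_+}$ (equality because $Y_+\subset\operatorname{int}Y$) by attaching $k-1$ two-handles; this can be displayed by a picture analogous to Figure~\ref{figure:surface-level-reg-nbhd}, now with $k-1$ attaching circles.

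Applying Lemma~\ref{lemma:general-handles-at-critical-level} with these $k-1$ index-$2$ handles yields $W\cong(\overline{M\setminus Y_-})\times I\cup(\text{$k-1$ two-handles})=E_{\partial_-\Gamma}\times I\cup(\text{$k-1$ two-handles})$, which, since $E_R\cong W$, is the assertion.

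The step I expect to be the real work is the middle one: verifying that drilling regular neighborhoods of $k$ core loops out of a genus-$k$ handlebody leaves a collar of the genus-$k$ boundary surface together with exactly $k-1$ two-handles -- the count being $k-1$ rather than $k$ because one handle cancels against the $3$-handle dual to the $0$-handle of the handlebody. This is of the same nature as the analogous point in the proof of Proposition~\ref{proposition:surface-stage-regnbd-complement}, and, exactly as there, is cleanest to carry out through an explicit handle picture rather than through formulas.
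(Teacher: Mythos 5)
Your argument is correct, and it is essentially the alternative proof that the paper only sketches: the paper's proof of this proposition handles it together with Proposition~\ref{proposition:punctured-vertical-sheet-regnbd-complement} and, as its primary route, simply observes that $R$ is an \emph{embedded} surface which, as a cobordism from $\partial_-\Gamma$ to $\partial_+\Gamma$, is $(\partial_-\Gamma\times I)\cup\big((k-1)\text{ 1-handles}\big)$, and then quotes standard embedded Morse theory (Gompf--Stipsicz) to convert each $p$-handle of the submanifold into a $(p+1)$-handle of the exterior -- a one-line argument available here precisely because $R$, unlike the type $A$ and $B$ complexes, is a genuine submanifold. The paper then remarks that one may instead argue via Lemma~\ref{lemma:general-handles-at-critical-level} as in Proposition~\ref{proposition:surface-stage-regnbd-complement}, indicating only the attaching circles in Figure~\ref{figure:punctured-sheet-level-reg-nbhd}; your proposal carries out exactly this route and supplies the missing details: $\nu(\Gamma)$ is a genus-$k$ handlebody with core system isotopic to the components of $\partial_+\Gamma$, so $\overline{Y\setminus Y_+}$ is $(k-1)$ connecting 1-handles on a collar of $\partial Y_+$, which read from $\partial Y$ is a collar union $(k-1)$ 2-handles, and Lemma~\ref{lemma:general-handles-at-critical-level} finishes. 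Your closing heuristic about a 2-handle cancelling the 3-handle dual to the 0-handle is loose, but the core-system/boundary-connected-sum computation you actually give is the correct justification of the count $k-1$. In short: the paper's Morse-theoretic proof is shorter; yours buys uniformity with the treatment of the other critical-level types and makes the attaching-circle picture precise, and both are valid.
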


The proof will be given together with that of
Proposition~\ref{proposition:punctured-vertical-sheet-regnbd-complement},
which treats a ``vertical'' punctured sheet described as follows.
Suppose $C_-$ is a 1-submanifold in a 3-manifold $M$ and $A$ is an
annulus embedded in $M$ with two boundary circles $\partial_0 A$,
$\partial_1 A$ such that $J_- := \partial_0 A \cap C_-$ is an arc and
$A\setminus J_-$ is disjoint from~$C_-$.  Let
$J_+=\overline{\partial_0 A \setminus J_-}$ and
$C_+= (C_-\setminus J_-)\cup J_+$.  Let
\[
  S = (C_-\times [0,\tfrac12]) \cup \big((C_-\cup A)\times\tfrac12\big)
  \cup \big((C_+ \cup \partial_1 A)\times[\tfrac12,1]) \subset M\times
  I.
\]
Note that $S$ is homeomorphic to $C_-\times I$ with a disk removed,
and embedded in $M\times I$ in such a way that the boundary of the
removed disk appears as a circle $\partial_1 A\times 1$.  See the
schematic picture in Figure~\ref{figure:punctured-sheet}.  Let
$E_S=M\times I \setminus \nu(S)$ and $E_{C_-} = M\setminus \nu(C_-)$
be the exteriors.

\begin{figure}[H]
  \begin{tikzpicture}[x=1pt, y=1pt]
    \small
    \node[anchor=south west, inner sep=0]
      {\includegraphics[scale=1]{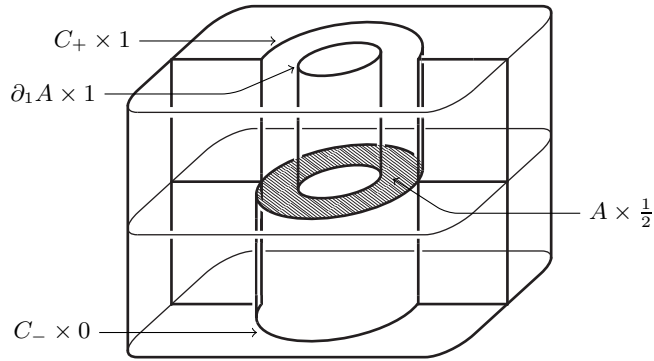}};
    \draw [->] (-10,10)--(49,10);
    \node [left] at (-10,10) {$C_-\times 0$};
    \draw [->] (5,120)--(56,120);
    \node [left] at (5,120) {$C_+\times 1$};
    \draw [->, rounded corners] (170,55)--(120,55)--(100,68);
    \node [right] at (170,55) {$A\times\tfrac12$};
    \draw [->, rounded corners] (-8,100)--(40,100)--(63,110);
    \node [left] at (-8,100) {$\partial_1 A\times 1$};
  \end{tikzpicture}
  \caption{An embedded punctured sheet.}
  \label{figure:punctured-sheet}
\end{figure}

\begin{proposition}
  \label{proposition:punctured-vertical-sheet-regnbd-complement}
  The 4-manifold $E_S$ has a handle decomposition
  \[
    E_S \cong (E_{C_-}\times I) \cup (\text{$2$-handle}).
  \]
\end{proposition}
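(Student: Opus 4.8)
The plan is to deduce Proposition~\ref{proposition:punctured-vertical-sheet-regnbd-complement} --- and, by the same method, Proposition~\ref{proposition:punctured-disk-regnbd-complement} --- from Lemma~\ref{lemma:general-handles-at-critical-level}, following the proof of Proposition~\ref{proposition:surface-stage-regnbd-complement} at the top level. Pick a regular neighborhood $Y:=\nu(C_-\cup A)$ in $M$, and inside its interior pick regular neighborhoods $Y_-:=\nu(C_-)$ and $Y_+:=\nu(C_+\cup\partial_1 A)$; this is possible because $C_-$ and $C_+\cup\partial_1 A$ are both subcomplexes of $C_-\cup A$. As in the proof of Proposition~\ref{proposition:surface-stage-regnbd-complement}, a rescaling in the $I$-direction identifies $E_S$ with the $4$-manifold $W$ of \eqref{equation:exterior-near-critical-level}, and since $Y_+$ lies in the interior of $Y$ we have $\overline{\partial Y\setminus\partial Y_+}=\partial Y$. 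So by Lemma~\ref{lemma:general-handles-at-critical-level} it suffices to show that $\overline{Y\setminus Y_+}$ is a collar $\partial Y\times I$ with a single $2$-handle attached; the conclusion then reads $E_S\cong(\overline{M\setminus Y_-}\times I)\cup(\text{$2$-handle})=(E_{C_-}\times I)\cup(\text{$2$-handle})$.

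The key step is a cellular collapse of the $2$-complex $C_-\cup A$. Subdividing $\partial_0 A=J_-\cup J_+$ at the two points of $\partial J_-$, the arc $J_-$ becomes a free face of the single $2$-cell of the annulus $A$ in the complex $C_-\cup A$: it lies on the boundary of that $2$-cell and, because $A\setminus J_-$ is disjoint from $C_-$, on no other cell. Collapsing that $2$-cell across $J_-$ deletes the open arc $J_-$ together with the open $2$-cell, leaving the arcs $J_+$, $\delta$ and the circle $\partial_1 A$, where $\delta$ is a spanning arc of $A$ running from an endpoint of $J_-$ to $\partial_1 A$. Hence $C_-\cup A$ collapses cellularly to $C_+\cup\partial_1 A$ with the single extra arc $\delta$ attached at both of its endpoints. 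By Lemma~\ref{lemma:cellular-expansion-exterior}, applied inside $M$, it follows that $Y=\nu(C_-\cup A)$ is isotopic to $\nu\big((C_+\cup\partial_1 A)\cup\delta\big)=Y_+\cup\nu(\delta)$, i.e.\ $Y$ is obtained from $Y_+$ by attaching a single $1$-handle $\nu(\delta)$ along two meridian disks of $\delta$ in $\partial Y_+$. Turning the $3$-dimensional cobordism $\overline{Y\setminus Y_+}$ from $\partial Y$ to $\partial Y_+$ upside down, this $1$-handle becomes a single $2$-handle attached to the collar $\partial Y\times I$, with attaching circle a meridian of $\delta$. This is exactly the hypothesis of Lemma~\ref{lemma:general-handles-at-critical-level} with $k=1$ and $p_1=2$, which finishes the proof.

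Proposition~\ref{proposition:punctured-disk-regnbd-complement} is handled identically, now with $Y=\nu(\Gamma)$, $Y_-=\nu(\partial_-\Gamma)$, $Y_+=\nu(\partial_+\Gamma)$: the planar surface $\Gamma$ collapses cellularly to the spine obtained from the $k$ circles of $\partial_+\Gamma$ by joining them with $k-1$ arcs, so $Y$ is $Y_+$ with $k-1$ $1$-handles attached, and dually $\overline{Y\setminus Y_+}$ is $\partial Y\times I$ with $k-1$ $2$-handles; Lemma~\ref{lemma:general-handles-at-critical-level} then yields $E_R\cong(E_{\partial_-\Gamma}\times I)\cup(\text{$(k-1)$ $2$-handles})$. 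The delicate points, which must be checked carefully, are: (i)~that the chosen collapses genuinely are cellular collapses of the ambient $2$-complex, so that Lemma~\ref{lemma:cellular-expansion-exterior} applies --- this is exactly where one uses that $A\setminus J_-$ (respectively the boundary circles other than $\partial_-\Gamma$) is disjoint from $C_-$ (respectively from $\partial_-\Gamma$); and (ii)~the upside-down handle bookkeeping converting the $1$-handles that build $Y$ from $Y_+$ into the $2$-handles of $\overline{Y\setminus Y_+}$ relative to $\partial Y$, with the stated index and count ($1$ for the vertical sheet, $k-1$ for the punctured disk). The same argument covers both possible shapes of $C_-$ (an arc containing $J_-$ in its interior, or a circle) and of $\Gamma$, with no change.
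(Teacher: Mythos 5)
Your proof is correct and coincides with the route the paper itself offers as its alternative argument: identify $E_S$ with the model $W$ of Lemma~\ref{lemma:general-handles-at-critical-level} using $Y=\nu(C_-\cup A)$, $Y_-=\nu(C_-)$, $Y_+=\nu(C_+\cup\partial_1 A)$, and extract a single dual $2$-handle from the cellular collapse of $C_-\cup A$ onto $C_+\cup\partial_1 A\cup\delta$ together with Lemma~\ref{lemma:cellular-expansion-exterior}; the attaching circle you describe (a meridian of $\delta$) is exactly the one recorded in Figure~\ref{figure:punctured-sheet-level-reg-nbhd}. The paper's primary proof is even shorter---since $S$ is an embedded surface with $S\cong(C_-\times I)\cup(\text{$1$-handle})$, standard embedded Morse theory (\cite[Proposition~6.2.1]{Gompf-Stipsicz:1999-1}) turns the $1$-handle of $S$ directly into a $2$-handle of the exterior---but your fleshed-out version of the Lemma-based alternative is complete and matches the paper's intended argument.
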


\begin{proof}[Proofs of
  Propositions~\ref{proposition:punctured-disk-regnbd-complement}
  and~\ref{proposition:punctured-vertical-sheet-regnbd-complement}]

  Since $R$ and $S$ are \emph{embedded submanifolds}, we can apply the
  standard embedded Morse theory to obtain a handle decomposition of
  the exteriors (e.g.,
  see~\cite[Proposition~6.2.1]{Gompf-Stipsicz:1999-1}): briefly, in
  our case, a $p$-handle of the submanifold corresponds to a
  $(p+1)$-handle of the exterior.  Since
  $R\cong (\partial_-\Gamma\times I) \cup \big((k-1)$
  $1$-handles$\big)$, $E_R$ has a handle decomposition with $(k-1)$
  $2$-handles as claimed.  Since
  $S\cong (C_-\times I)\cup (1$-handle$)$, $E_S$ has a handle
  decomposition with a $2$-handle as claimed.

  Alternatively, one may use
  Lemma~\ref{lemma:general-handles-at-critical-level} similarly to the
  proofs of
  Propositions~\ref{proposition:surface-stage-regnbd-complement}
  and~\ref{proposition:plumbed-disk-regnbd-complement}.  The attaching
  circles for each case are shown in
  Figure~\ref{figure:punctured-sheet-level-reg-nbhd}.
\end{proof}

\begin{figure}[H]
  \begin{tikzpicture}[x=1bp,y=1bp]
    \small
    \node[anchor=south west, inner sep=0mm]
    {\includegraphics{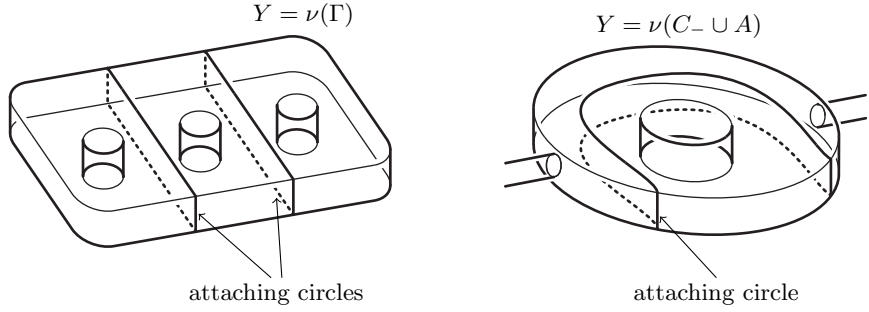}};
    \node [above] at (110,80) {$Y = \nu(\Gamma)$};
    \draw [->] (98,-11)--(71,13);
    \draw [->] (102,-11)--(100,19);
    \node [below] at (100,-10) {attaching circles};
    \node [above] at (250,76) {$Y=\nu(C_-\cup A)$};
    \draw [->] (264,-11)--(243,12);
    \node [below] at (264,-10) {attaching circle};
  \end{tikzpicture}
  \caption{Regular neighborhoods of $\Gamma$ and $C_-\cup A$.}
  \label{figure:punctured-sheet-level-reg-nbhd}
\end{figure}

We are now almost ready for the proof of
Theorem~\ref{theorem:composition-grope-handle-decomposition}.  For
clarity in the proof we will use the following definition.

\begin{definition}
  \label{definition:admissible-complex}
  Suppose $M$ is a 3-manifold.  We say that a 2-complex $G$ in
  $M\times I$ is \emph{$ABRS$-admissible} if for some
  $0<t_1<\cdots<t_r<1$ and $\epsilon >0$ the following hold.
  \begin{enumerate}
  \item $G\cap (M\times J)$ is an isotopy of a 1-submanifold in $M$
    for each subinterval $J=[0,t_1-\epsilon]$,
    $[t_1+\epsilon,t_{2}-\epsilon],\ldots,[t_{r-1}+\epsilon,t_{r}-\epsilon]$,
    and $[t_r+\epsilon,1]$.
  \item For each $i$, $G\cap (M\times [t_i-\epsilon,t_i+\epsilon])$ is
    the disjoint union of $L\times [t_i-\epsilon,t_i+\epsilon]$ and a
    2-complex $Z$, where $L$ is a link in $M$ and $Z$ is of the form
    of either $A$, $B$, $R$, or $S$ in
    Propositions~\ref{proposition:surface-stage-regnbd-complement},
    \ref{proposition:plumbed-disk-regnbd-complement},
    \ref{proposition:punctured-disk-regnbd-complement}
    and~\ref{proposition:punctured-vertical-sheet-regnbd-complement}.
  \end{enumerate}
  We call each $M\times t_i$ a \emph{critical level of type $A$, $B$,
    $R$, or~$S$}.
\end{definition}

The following are basic observations on the $ABRS$-admissibility of
capped gropes.

\begin{enumerate}
\item For a height $n$ satellite capped grope $G$ in $S^3\times I$
  obtained by pushing a 3D satellite capped grope $G_0$ for
  $(K,\alpha)$, we may assume that
  $G\cup (K\times I) \subset E_{\alpha}\times I$ is $ABRS$-admissible;
  for instance, choose sufficiently small $\epsilon>0$, and push a
  height $k$ surface $S\subset E_\alpha=E_\alpha\times 0$ in $G_0$ to
  $(\partial S\times[(k-1)\epsilon,k\epsilon]) \cup (S\times
  k\epsilon) \subset E_\alpha\times I$, $1\le k \le n$. For a cap $C$
  in $G_0$, which intersects $m$ times $K$ transversely, let $\Gamma$
  be the corresponding punctured cap. That is, $\Gamma$ is a planar
  surface contained in $C$ such that $\Gamma$ does not intersect $K$
  and $\partial\Gamma=\partial_-\Gamma\cup \partial_+\Gamma$ where
  $\partial_-\Gamma= \partial C$ and $\partial_+\Gamma$ are $m$
  meridional curves $\cup_{i=1}^m\mu_i$ of $K$. Then push $C$ to
  \[
  (\partial_-\Gamma\times[n\epsilon,(n+1)\epsilon]) \cup (\Gamma\times (n+1)\epsilon) \cup  (\partial_+\Gamma\times [(n+1)\epsilon,(n+2)\epsilon])\cup A  
  \]
 in $ E_\alpha\times I$ where 
 \[
 A=\bigcup_{i=1}^m ((\mu_i\times[(n+2)\epsilon, (n+2+i)\epsilon])\cup (D_i\times (n+2+i)\epsilon)))
 \]
 for a (planar) meridional disk $D_i$ with boundary $\mu_i$. Push the remaining caps further in a similar way. In this case, we use types $A$, $B$, and $R$.
\item We may assume that a capped grope concordance in $S^3\times I$
  obtained by pushing a 3D capped grope concordance is
  $ABRS$-admissible, by a similar isotopy.  Critical levels have types
  $A$, $B$, $R$, and~$S$.
\item If $G$ and $H$ are satellite capped gropes for $(K,\alpha)$ and
  $(J,\beta)$ such that
  \[
    G\cup (K\times I) \subset E_{\alpha}\times I \text{\quad and\quad}
    H\cup (J\times I) \subset E_{\beta}\times I
  \]
  are $ABRS$-admissible,
  then
  \[
    (G\cdot H) \cup (K(\alpha,J)\times I) \subset E_{\beta}\times I
  \]
  is $ABRS$-admissible.  It is verified straghtforwardly by inspecting
  Definition~\ref{definition:composition-satellite-capped-gropes}.  In
  this case, we use type $R$ when we attach gropes to a punctured cap
  of another grope.
\item If $G$ is a satellite capped grope for $(K,\alpha)$ such that $G\cup
  (K\times I) \subset E_{\alpha}\times I$ is $ABRS$-admissible and $H$
  is an $ABRS$-admissible capped grope concordance in $S^3\times I$,
  then $G\cdot H$ is an $ABRS$-admissible capped grope concordance.  It
  is verified straghtforwardly by inspecting
  Definition~\ref{definition:composition-satellite-grope-and-grope-concordance}.
  In general, we need all the types $A$, $B$, $R$, and~$S$.
\end{enumerate}

\begin{proof}[Proof of
  Theorem~\ref{theorem:composition-grope-handle-decomposition}]
  
  Repeatedly applying the above observations, it follows that the
  given product $G=G_1\cdots G_n\cdot H$ is an $ABRS$-admissible capped
  grope concordance in $S^3\times I$.  This is true for any choice of
  parenthesization of the product.  Near each critical level
  $S^3\times t_i$, apply one of
  Propositions~\ref{proposition:surface-stage-regnbd-complement},
  \ref{proposition:plumbed-disk-regnbd-complement},
  \ref{proposition:punctured-disk-regnbd-complement},
  and~\ref{proposition:punctured-vertical-sheet-regnbd-complement} to
  obtain 2-handles.  (Here, writing $G\cap (S^3
  \times[t_i-\epsilon,t_i+\epsilon]) =
  (L\times[t_i-\epsilon,t_i+\epsilon]) \cup Z$ as in
  Definition~\ref{definition:admissible-complex}, we apply the
  proposition to $Z$ in the product
  $E_L\times[t_i-\epsilon,t_i+\epsilon]$.)  Stacking them, we obtain a
  desired handle decomposition of $E_G$ with only 2-handles added to $E_K\times I$.
\end{proof}

\section{Knots in $S^3$ sliced by unknotted Whitney towers and gropes}
\label{section:doubly-slicing-by-knots}

\subsection{Whitney tower and grope bi-filtrations}

We begin by recalling the classical notion of doubly slice knots.  In
what follows, regard $S^3\subset S^4$ in the standard way.

\begin{definition}
  A knot $K$ in $S^3$ is \emph{doubly slice} if there is an unknotted
  flat 2-sphere embedded in $S^4$ which intersects $S^3$
  transversely at~$K$.
\end{definition}

Considering our Whitney tower and grope generalizations of unknotted
2-spheres, we are naturally led to the following generalization.
Denote by $D^4_+$ and $D^4_-$ the upper and lower hemispheres of $S^4$
bounded by $S^3$.

\begin{definition}\label{definition:Whitney tower bi-filtration}

  \begin{enumerate}
  \item We say that a knot $K$ in $S^3$ is a \emph{slice of a Whitney
      tower $T$ in $S^4$} if the base sphere of $T$ intersects $S^3$
    transversely and $K=T\cap S^3$.  In this case, each
    $T_\pm := T\cap D^4_\pm$ is a disk-like Whitney tower bounded
    by~$K$.
  \item For half-integers $m,n\ge 1$, a knot $K$ in $S^3$ is a
    \emph{height $(m,n)$ Whitney slice} if it is a slice of a
    $\pi_1$-unknotted sphere-like Whitney tower $T$ in $S^4$ such that
    $T_+$ and $T_-$ have height $m$ and $n$ respectively.  We denote
    by $\cW_{m,n}$ the set of height $(m,n)$ Whitney slice knots.  A
    knot $K$ is \emph{height $m$ Whitney doubly slice} if
    $K\in \cW_{m,m}$.
  \end{enumerate}
  Define a \emph{height $(m,n)$ grope slice knot} and a \emph{height
    $m$ grope doubly slice knot} by replacing Whitney towers with
  capped gropes.  Denote by $\cG_{m,n}$ the set of height $(m,n)$
  grope slice knots.
\end{definition}

Using a Seifert-van Kampen argument one can show that $\cW_{m,n}$ and
$\cG_{m,n}$ are closed under connected sum, that is, $\cW_{m,n}$ and
$\cG_{m,n}$ are submonoids of the monoid of knots.  By
Propositions~\ref{proposition:unknottedness-of-subtower} and
\ref{proposition:unknottedness-of-contraction}, we have
$\cW_{k,\ell}\subset \cW_{m,n}$ and $\cG_{k,\ell}\subset \cG_{m,n}$
for $k\ge m$ and $\ell\ge n$. Therefore we obtain bi-filtrations
$\{\cW_{m,n}\}$ and $\{\cG_{m,n}\}$ of the monoid of knots, which we
call the \emph{Whitney tower bi-filtration} and the \emph{grope
  bi-filtration}, respectively.

The Whitney tower and grope cases are related as follows in this
context.

\begin{theorem}
  \label{theorem:grope-bifiltration-contained-in-whitney-bifiltration}
  A height $(m,n)$ grope slice knot is height $(m,n)$ Whitney slice.
  That is, $\cG_{m,n} \subset \cW_{m,n}$.
\end{theorem}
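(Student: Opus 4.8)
The plan is to invoke Schneiderman's grope-to-Whitney-tower transformation in the form of Theorem~\ref{theorem:grope-whitney-tower-transformation}(2), and then to check that this transformation respects the decomposition $S^4 = D^4_+ \cup_{S^3} D^4_-$, so that the heights of the two hemisphere pieces are unchanged. First I would fix, for a given $K\in\cG_{m,n}$, a $\pi_1$-unknotted sphere-like capped grope $G\subset S^4$ with $G\cap S^3 = K$ such that $G_\pm := G\cap D^4_\pm$ is a disk-like capped grope bounded by $K$ of height $m$ and $n$ respectively. After a small isotopy supported near $S^3$ (and using that, by definition of grope slice, all non-base surfaces and caps of $G$ already lie in $\inte D^4_+\cup\inte D^4_-$) I may assume that a collar $S^3\times[-1,1]$ of $S^3=S^3\times 0$ meets $G$ in the product annulus $P:=K\times[-1,1]$ lying in the base sphere and disjoint from every non-base surface and cap of $G$; thus $G\setminus P$ is the disjoint union of $G_+\setminus P\subset\inte D^4_+$ and $G_-\setminus P\subset\inte D^4_-$.

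Next I would run Schneiderman's construction~\cite[Section~5.2]{Schneiderman:2006-1} on $G$ to obtain a Whitney tower $T$ in $S^4$. By Theorem~\ref{theorem:grope-whitney-tower-transformation}(2), $\pi_1(S^4\setminus T)$ is a quotient of $\pi_1(S^4\setminus G)\cong\Z$, and since $H_1(S^4\setminus T)\cong\Z$ by Alexander duality (Remark~\ref{remark:motivation-to-unknotted-definition}(1)), it follows that $\pi_1(S^4\setminus T)\cong\Z$, so $T$ is $\pi_1$-unknotted and sphere-like. To identify the hemisphere pieces, I would apply Remark~\ref{remark:base-surface-transformation} with the planar surface $P$: the base sphere of $T$ contains $P$ (in particular $T\cap S^3=K$) and $T\setminus P$ lies in a regular neighborhood of $G\setminus P$. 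Since Schneiderman's construction is supported in a regular neighborhood of its input complex, it acts independently on the two components $G_\pm\setminus P$; hence $T_\pm:=T\cap D^4_\pm$ is precisely the Whitney tower produced from $G_\pm$ by Schneiderman's construction, rel $\partial=K$. Because that construction preserves height (\cite[Corollary~2]{Schneiderman:2006-1}; cf.\ Corollary~\ref{corollary:unknotted-tower-from-grope-height-h}), $T_+$ has height $m$ and $T_-$ has height $n$, so $T$ witnesses $K\in\cW_{m,n}$.

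The step I expect to require the most care is the locality claim used above: one must verify that each of the moves comprising Schneiderman's transformation from a capped grope to a Whitney tower (pushing intersections down, grope splitting, tri-sheet moves, surgery along a cap) can be carried out inside a regular neighborhood of the appropriate hemisphere piece $G_\pm$ without disturbing the collar $P$, so that $T_\pm$ is genuinely the transform of $G_\pm$ and inherits its height. This is built into the construction of~\cite{Schneiderman:2006-1} and into the proof of Theorem~\ref{theorem:grope-whitney-tower-transformation}, but I would spell it out. As an alternative that sidesteps this point, one could instead apply Theorem~\ref{theorem:grope-whitney-tower-transformation}(2) separately to $G_+\subset D^4_+$ and $G_-\subset D^4_-$ rel $K$, glue the resulting disk-like Whitney towers $T_\pm$ along $K$, and deduce that the union $T$ is $\pi_1$-unknotted from a Seifert--van Kampen argument as in the discussion following Definition~\ref{definition:Whitney tower bi-filtration}, using that the epimorphisms $\pi_1(D^4_\pm\setminus G_\pm)\to\pi_1(D^4_\pm\setminus T_\pm)$ are supported near $G_\pm$ and hence restrict to the identity on $\pi_1(S^3\setminus K)$.
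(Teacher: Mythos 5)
Your proposal is correct and follows essentially the same route as the paper: apply Theorem~\ref{theorem:grope-whitney-tower-transformation}(2) to the grope $G$, deduce $\pi_1$-unknottedness of the resulting tower $T$, and use Remark~\ref{remark:base-surface-transformation} (with the collar annulus in the base sphere playing the role of the planar surface $P$) to see that $T_\pm$ arise from $G_\pm$ and hence have heights $m$ and $n$, as in Corollary~\ref{corollary:unknotted-tower-from-grope-height-h}. Your extra discussion of the locality of Schneiderman's moves, and the alternative hemisphere-by-hemisphere gluing argument, are just more explicit versions of what the paper leaves to that remark.
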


\begin{proof}
  Suppose $K$ is a slice of a $\pi_1$-unknotted capped grope $G$ in
  $S^4$ and $G_+ := G\cap D^4_+$ and $G_-:=G\cap D^4_-$ have height $m$
  and $n$ respectively.  Apply
  Theorem~\ref{theorem:grope-whitney-tower-transformation} to
  transform $G$ to a $\pi_1$-unknotted Whitney tower~$T$.  Since the
  disk-like Whitney towers $T_\pm$ are obtained from the capped gropes
  $G_\pm$ (see Remark~\ref{remark:base-surface-transformation}), it
  follows that $T_+$ and $T_-$ have height $m$ and $n$ respectively,
  as in Corollary~\ref{corollary:unknotted-tower-from-grope-height-h}.
\end{proof}

\subsection{Construction of examples}\label{subsection:construction-of-examples}

In this subsection, using the results of
Section~\ref{section:handle-decomposition-grope-exterior} we construct certain examples of knots which are
height $(m,n)$ grope slice, and consequently height $(m,n)$ Whitney
slice by
Theorem~\ref{theorem:grope-bifiltration-contained-in-whitney-bifiltration}.  In the
next section, a particular subfamily of those examples will be shown
to be not doubly slice.  Furthermore the examples will be used to
exhibit the rich structure of the Whitney tower and grope
bi-filtrations.

We start with the following input data.  Fix nonnegative integers
$m\ge n$.  Suppose $J_0$ is a knot, and for each $k=0,\ldots,m-2$,
suppose $(K_k,\eta_k)$ is a pair of a knot $K_k$ and a simple closed
curve $\eta_k$ in the exterior $E_{K_k}$ which is unknotted in~$S^3$.
Suppose the following:

\begin{enumerate}[label=(G\arabic{*})]
\item\label{item:height-2-grope-for-J_0} There is a 3D capped grope
  concordance of height $2$ between $J_0$ and the unknot.
\item\label{item:height-1-grope-for-K_k-eta_k} The knot $K_k$ is
  ribbon and there is a 3D satellite capped grope of height $1$ for
  $(K_k,\eta_k)$ for each~$k$.
\end{enumerate}

We remark that there are numerous examples satisfying the above.  We
will specify explicit choices in later sections.

Let $\cR$ be the knot $9_{46}$, and let $\alpha'$ and $\beta'$ be the
curves depicted in Figure~\ref{figure:knot 9_46_2} (see \cite[Figure~7]{Horn:2010-1}).
Define a  knot $J_{k+1}$ inductively for $k\ge 0$ by
$J_{k+1} := K_k(\eta_k;J_k)$.  Finally define
\[
  J_{m,n} = \cR(\alpha',\beta';J_{m-1},J_{n-1}) :=
  \cR(\alpha',J_{m-1})(\beta',J_{n-1})
\]
to be the knot obtained by applying the satellite construction twice,
once along $\alpha'$ using $J_{m-1}$ as the companion and then along
$\beta'$ using $J_{n-1}$ as the companion.

\begin{figure}[t]
  \begin{tikzpicture}[x=1pt, y=1pt, scale=.95]
    \small
    \node[anchor=south west, inner sep=0, transform shape] {\includegraphics{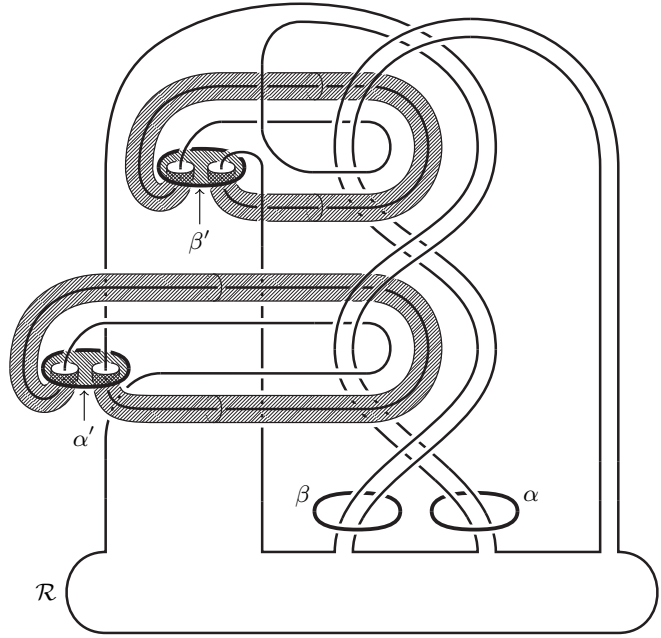}};
    \path (24,17) node[left]{$\cR$} (124,55) node[left]{$\beta$} (201,55) node[right]{$\alpha$};
    \draw[->, thin] (31.5,85) node[below,yshift=1]{$\alpha'$} -- (31.5,97);
    \draw[->, thin] (77,163) node[below,yshift=1]{$\beta'$} -- (77,177); 
  \end{tikzpicture}
  \caption{The knot $\cR=9_{46}$ and 3D satellite capped gropes
    bounded by $\alpha'$ and~$\beta'$.
    The curves $\alpha$ and $\beta$
    are dual to the left and right handles of the genus one Seifert
    surface.
  }
  \label{figure:knot 9_46_2}
\end{figure}

\begin{theorem}\label{theorem:grope-doubly-slice-construction} 
  If \ref{item:height-2-grope-for-J_0} and
  \ref{item:height-1-grope-for-K_k-eta_k} hold, then the knot
  $J_{m,n}$ is height $(m+2,n+2)$ grope slice, and consequently
  height $(m+2,n+2)$ Whitney slice by
  Theorem~\ref{theorem:grope-bifiltration-contained-in-whitney-bifiltration}.
\end{theorem}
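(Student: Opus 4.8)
The plan is to realize the required $\pi_1$-unknotted sphere-like capped grope as a union $G=G_+\cup_{J_{m,n}}G_-$ in which $G_\pm=G\cap D^4_\pm$ is a disk-like capped grope bounded by $J_{m,n}$ of height $m+2$ and $n+2$ respectively, and then to compute $\pi_1(S^4\setminus G)$ from the handle decompositions of Section~\ref{section:handle-decomposition-grope-exterior}. The two hemispheres are built in the same way, interchanging the roles of $(\alpha',J_{m-1})$ and $(\beta',J_{n-1})$ (and of $m$ and $n$), so I describe $G_+$.

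First I would record capped grope concordances to the unknot. Using~\ref{item:height-2-grope-for-J_0}, push a height $2$ 3D capped grope concordance into $S^3\times I$ to obtain a height $2$ capped grope concordance $C_0$ from $J_0$ to the unknot. Inductively, if $C_{k-1}$ has height $k+1$, use~\ref{item:height-1-grope-for-K_k-eta_k} to push a height $1$ 3D satellite capped grope for $(K_{k-1},\eta_{k-1})$ into $S^3\times I$, and form the product of Definition~\ref{definition:composition-satellite-grope-and-grope-concordance}; this is a height $k+2$ capped grope concordance from $K_{k-1}(\eta_{k-1};J_{k-1})=J_k$ to $K_{k-1}(\eta_{k-1};\textup{unknot})=K_{k-1}$, and since $K_{k-1}$ is ribbon (hence slice) one may stack onto it a ribbon concordance from $K_{k-1}$ to the unknot to obtain a height $k+2$ capped grope concordance $C_k$ from $J_k$ to the unknot. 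Thus $J_{m-1}$ and $J_{n-1}$ cobound with the unknot capped grope concordances of heights $m+1$ and $n+1$. Since infections along the disjoint curves $\alpha'$ and $\beta'$ commute, $J_{m,n}=\cR(\alpha',J_{m-1})(\beta',J_{n-1})=(\cR(\beta',J_{n-1}))(\alpha',J_{m-1})$. The height $1$ 3D satellite capped grope bounded by $\alpha'$ in Figure~\ref{figure:knot 9_46_2}, pushed in and isotoped off the $\beta'$-region, serves as a satellite capped grope for $(\cR(\beta',J_{n-1}),\alpha')$; its product with $C_{m-1}$ is a height $1+(m+1)=m+2$ capped grope concordance from $J_{m,n}$ to $\cR(\beta',J_{n-1})$. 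The knot $\cR(\beta',J_{n-1})$ is slice --- infecting $9_{46}$ along $\beta'$ with any companion is slice, $\beta'$ bounding an embedded $0$-framed disk in the exterior of a ribbon disk of $9_{46}$ --- so capping that end with such a slice disk in the complementary $D^4$ and pushing everything into $D^4_+$ yields a disk-like capped grope $G_+$ bounded by $J_{m,n}$ of height $\min(m+2,\infty)=m+2$. Symmetrically one obtains $G_-$ of height $n+2$, and sets $G=G_+\cup_{J_{m,n}}G_-$; then $G\cap S^3=J_{m,n}$.

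To obtain $\pi_1(S^4\setminus G)\cong\Z$, I would apply van Kampen to $S^4\setminus G=(D^4_+\setminus G_+)\cup_{E_{J_{m,n}}}(D^4_-\setminus G_-)$. Iterating Theorem~\ref{theorem:composition-grope-handle-decomposition} together with the admissibility bookkeeping of Section~\ref{subsection:handle-structure-grope-exterior} (the stacked ribbon concordances fit the framework, their critical levels being of types $R$ and $B$ in the sense of Definition~\ref{definition:admissible-complex}, as does a ribbon disk exterior) shows that $D^4_\pm\setminus G_\pm$ is obtained from $E_{J_{m,n}}\times I$ by attaching handles of index $\ge 2$ only; hence $\pi_1(E_{J_{m,n}})\to\pi_1(D^4_\pm\setminus G_\pm)$ is onto and factors through $H_1(E_{J_{m,n}})=\Z$, and it suffices to prove $\pi_1(D^4_\pm\setminus G_\pm)\cong\Z$. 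The inductive heart of this is the claim that $E_{C_k}\cong (E_U\times I)\cup$ (handles of index $\ge 2$): this follows by turning the relevant decomposition of Theorem~\ref{theorem:composition-grope-handle-decomposition} upside down onto the unknot end and stacking it with the corresponding decompositions of $P_{k-1}$ and of the ribbon concordance. As $E_U=S^1\times D^2$ has abelian fundamental group, $\pi_1(E_{C_k})$ is then a quotient of $\Z$ surjecting onto $H_1=\Z$, so equals $\Z$.

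The remaining --- and hardest --- step is the passage to $G_\pm$ itself, where the grope concordance is capped off not at the unknot but at the slice knot $\cR(\beta',J_{n-1})$ (resp.\ $\cR(\alpha',J_{m-1})$). Here $D^4_+\setminus G_+\cong E_{\Delta_+}\cup$ ($2$-handles) for the chosen slice disk $\Delta_+$ of $\cR(\beta',J_{n-1})$, so $\pi_1(D^4_+\setminus G_+)$ is $\pi_1(E_{\Delta_+})$ modulo the normal closure of the attaching circles of those $2$-handles; since $\pi_1(E_{\Delta_+})$ need not be abelian, one must identify these circles explicitly from the critical levels of the $3$D gropes entering the constructions of $J_0$, of the $(K_k,\eta_k)$, and of $\cR=9_{46}$ (using the figures of Section~\ref{subsection:handle-structure-grope-exterior}), and verify --- after a convenient choice of ribbon slice disk $\Delta_+$ --- that killing them collapses $\pi_1(E_{\Delta_+})$ to $\Z$. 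This computation is the crux of the argument. Granting it and its symmetric counterpart for $G_-$, the pushout $\Z*_{\pi_1(E_{J_{m,n}})}\Z$ collapses to $\Z$, so $G$ is $\pi_1$-unknotted and $J_{m,n}\in\cG_{m+2,n+2}$; by Theorem~\ref{theorem:grope-bifiltration-contained-in-whitney-bifiltration} it is then also in $\cW_{m+2,n+2}$.
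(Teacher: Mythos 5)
Your construction of $G_\pm$ is essentially the paper's (pushed-in 3D gropes multiplied together, ribbon concordances, and a ribbon disk for the residual satellite $\cR(\beta';J_{n-1})$ obtained from the $9_{46}$ Seifert surface), and your handle-decomposition bookkeeping via Theorem~\ref{theorem:composition-grope-handle-decomposition} is the right tool. The genuine gap is in the $\pi_1$ step, and you identify it yourself: you reduce everything to showing $\pi_1(D^4_\pm\setminus G_\pm)\cong\Z$, i.e.\ that the attaching circles of the $2$-handles kill the ribbon-disk group $\pi_1(E_{\Delta_+})$ down to $\Z$, and then you leave exactly that computation ("the crux") undone. As it stands the proposal therefore does not prove the theorem. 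Worse, the reduction itself is to a statement that is stronger than needed and quite dubious: the curves that the upper-half handles can plausibly kill are (parallels of) $\beta$ and $\beta'$ and curves carried by the grope stages, while nothing in the construction kills the $\alpha$-side; the ribbon-disk group of $\cR_{\beta'}$ is a Baumslag--Solitar-type group whose nonabelianness lives precisely on the $\alpha$-side of the Alexander module (the $(2t-1)$-summand generated by $\alpha$), so there is no reason to expect each half-exterior to have infinite cyclic fundamental group. (Also, your sentence that the surjection $\pi_1(E_{J_{m,n}})\to\pi_1(D^4_\pm\setminus G_\pm)$ "factors through $H_1(E_{J_{m,n}})$" presupposes the abelianness you are trying to prove.)

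The paper's proof avoids computing $\pi_1(E_{G_\pm})$ altogether. It establishes only two weaker facts: (i)$_\pm$ the exteriors $E_{G_\pm}$ are built from $E_J\times I$ by handles of index $\ge 2$ only, so $\pi_1 E_J\to\pi_1 E_{G_\pm}$ is onto; and (ii)$_+$ the curves $\beta,\beta'$ become null-homotopic in $D^4_+\setminus G_+$ (because they are isotopic through $E_{P_+}\cup E_{Q_+}$ to curves in $E_{\cR_{\beta'}}$ that die in the chosen ribbon disk exterior $E_{\Delta_+}$), while (ii)$_-$ kills $\alpha,\alpha'$ on the other side. Then Seifert--van Kampen applied to $S^4\setminus G$ shows $\pi_1(S^4\setminus G)$ is a quotient of $\pi_1 E_J/\langle\alpha,\alpha',\beta,\beta'\rangle$; since $\alpha',\beta'$ are the meridians of the companions $J_{m-1},J_{n-1}$, killing them collapses this to a quotient of $\pi_1 E_\cR/\langle\alpha,\alpha',\beta,\beta'\rangle$, which is a quotient of $\Z$ because $\pi_1 E_\cR/\langle\alpha,\beta\rangle\cong\Z$ for $\cR=9_{46}$; equality with $\Z$ then follows from $H_1(S^4\setminus G)\cong\Z$. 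To repair your argument you should replace the claim $\pi_1(D^4_\pm\setminus G_\pm)\cong\Z$ by exactly these two inputs — surjectivity from the index-$\ge2$ handle structure, plus the null-homotopy of the specific infection curves on each side — and perform the quotient computation globally rather than hemisphere by hemisphere.
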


\begin{proof}
  Let $J:=J_{m,n}$ for brevity.  First, we construct a disk-like
  capped grope of height $m+2$ with boundary $J$ in~$D^4_+$.

  In what follows $U$ designates a trivial knot.  Let
  \[
    \cR_{\beta'} := \cR(\alpha',\beta';U,J_{n-1}) = \cR(\beta';
    J_{n-1}).
  \]
  Then we can view $J$ as $J = \cR_{\beta'}(\alpha';J_{m-1})$.  Let
  $L_0=U$ and $L_{k+1}=K_k(\eta_k; L_k)$ for $k\ge 0$.  Let
  \[
    \cR_+:= \cR(\alpha',\beta';L_{m-1},J_{n-1}) =
    \cR_{\beta'}(\alpha';L_{m-1}).
  \]

  Figure~\ref{figure:knot 9_46_2} depicts a 3D satellite capped grope
  for~$(\cR,\alpha')$.  After the satellite operation along $\beta'$,
  it becomes a 3D satellite capped grope of height $1$ for
  $(\cR_{\beta'},\alpha')$.  Push this into $S^3\times I$ to obtain a
  satellite capped grope, which we denote by~$H_{m-1}$.  Push those
  given in~\ref{item:height-2-grope-for-J_0}
  and~\ref{item:height-1-grope-for-K_k-eta_k} to obtain a height $2$
  capped grope concordance $H$ between $J_0$ and $U$, and a height $1$
  satellite capped grope $H_k$ of $(K_k,\eta_k)$ for $k=0,\ldots,m-2$.
  Let
  \[
    P_+:=H_{m-1}\cdot H_{m-2}\cdots H_0\cdot H.
  \]
  This is a capped grope concordance of height $m+2$ between $J$
  and~$\cR_+$.  By
  Theorem~\ref{theorem:composition-grope-handle-decomposition}, the
  exterior $E_{P_+}$ has a handle decomposition
  $E_{P_+}\cong (E_J\times I) \cup \text{($2$-handles)}$.  Also, the
  construction of the product (see
  Section~\ref{subsection:product-of-gropes}) tells us that
  $\beta \subset E_J$ is isotopic, in $E_{P_+}$, to
  $\beta \subset E_{\cR_+}$. The same conclusion holds for (a parallel
  of) $\beta'$ as well.

  Since each $K_k$ is ribbon and $L_0$ is the unknot, it follows that
  each $L_k$ is ribbon by an induction.  Therefore, there is a ribbon
  concordance, say $Q_+$, from $\cR_+=\cR_{\beta'}(\alpha';L_{m-1})$
  to $\cR_{\beta'}=\cR_{\beta'}(\alpha';U)$.

  The obvious genus one Seifert surface for $\cR$ (see
  Figure~\ref{figure:knot 9_46_2}) becomes a genus one Seifert surface
  for $\cR_{\beta'}$ after the satellite operation.  If we cut the
  right band (whose linking circle is the curve $\beta$) of the Seifert
  surface by attaching a band, $\cR_{\beta'}$ becomes a 2-component
  trivial link.  This gives us a genus zero cobordism between
  $\cR_{\beta'}$ and a 2-component trivial link.  Cap off the latter
  to obtain a ribbon disk $\Delta_+$ in $D^4$ for $\cR_{\beta'}$.  (In
  this paper a ribbon disk in $D^4$ designates a slice disk obtained
  by pushing an immersed ribbon disk in~$S^3$.)  Note that (parallels
  of) $\beta$ and $\beta'$ in $E_{\cR_{\beta'}}$ are null-homotopic
  in~$E_{\Delta_+}$.

  Identify a collar of $S^3$ in the upper hemisphere $D^4_+$ with
  $S^3\times[0,2]$, so that $D^4_+= S^3\times[0,2] \cup D^4$ where
  $S^3\times 0$ is the boundary of~$D^4_+$.  View $P_+$, $Q_+$,
  and $\Delta_+$ as subsets of $S^3\times[0,1]$, $S^3\times[1,2]$ and
  $D^4$ respectively.  Let
  \[
    G_+:= P_+ \cupover{\cR_+} Q_+ \cupover{\cR_{\beta'}} \Delta_+ \subset 
    D^4_+.
  \]
  Then $G_+$ is a disk-like capped grope of height $m+2$ in $D^4_+$ with
  boundary~$J$.  The following hold:
  \begin{enumerate}[label=(\roman{*})$_+$]
  \item The exterior $E_{G_+}$ is obtained from $E_J\times I$ by
    attaching handles of index $\ge 2$.
  \item The curves $\beta$ and $\beta' \subset E_J$ are null-homotopic
    in~$D^4_+\setminus G_+$.
  \end{enumerate}
  The condition (i)$_+$ follows from that $E_{G_+}$ is obtained by
  stacking $E_{P_+}$, $E_{Q_+}$, and $E_{\Delta_+}$, each of which has
  handles of index $\ge 2$ only.  The condition (ii)$_+$ for $\beta$
  holds since $\beta \subset E_J$ is isotopic to
  $\beta \subset E_{\cR_{\beta'}}$ in $E_{P_+}\cup E_{Q_+}$ and
  $\beta \subset E_{\cR_{\beta'}}$ is null-homotopic
  in~$E_{\Delta_+}$.  The same argument works for $\beta'$ as well.
 
  Similarly to the above, construct a disk-like capped grope $G_-$ of height
  $n+2$ in $D^4_-$ with boundary $J$ which satisfies the following:
  \begin{enumerate}[label=(\roman{*})$_-$]
  \item The exterior $E_{G_-}$ is obtained from $E_J\times I$ by
    attaching handles of index $\ge 2$.
  \item The curves $\alpha$ and $\alpha' \subset E_J$ are
    null-homotopic in~$D^4_-\setminus G_-$.
  \end{enumerate}

  Finally, let $G:=G_-\cup_J G_+$ in $S^4=D^4_-\cup_{S^3} D^4_+$.
  Then $G$ is a sphere-like capped grope in $S^4$ which intersects
  $S^3$ at the knot~$J$.  The only remaining thing to show is that $G$
  is $\pi_1$-unknotted, that is, $\pi_1(S^4\setminus G)\cong \Z$.

  By Seifert-van Kampen,
  \[
    \pi_1(S^4\setminus G)\cong \pi_1(E_{G_-})
    \mathbin{\mathop{*}_{\pi_1(E_J)}} \pi_1(E_{G_+}).
  \]
  From~(i)$_\pm$, it follows that each inclusion-induced map
  $\pi_1 E_J \to \pi_1E_{G_\pm}$ is surjective.  Therefore
  $\pi_1(S^4\setminus G)$ is a quotient of $\pi_1E_J$.
  From~(ii)$_\pm$, it follows that $\alpha, \alpha', \beta$, and
  $\beta'$ are null-homotopic in $\pi_1(S^4\setminus G)$.  Therefore
  $\pi_1(S^4\setminus G)$ is a quotient of
  $\pi_1E_J/\langle \alpha, \alpha', \beta, \beta'\rangle$ where
  $\langle{}\cdots{}\rangle$ denotes the normal subgroup generated
  by~$\cdots$.

  Recall that $\alpha'$ and $\beta'$ are identified with the meridians
  of $J_{m-1}$ and $J_{n-1}$ respectively.  By Seifert-van Kampen, it
  follows that
  \[
    \pi_1E_J/\langle \alpha',\beta'\rangle \cong
    \pi_1E_J/\langle\Im\{\pi_1E_{J_{m-1}}, \pi_1E_{J_{n-1}}\}\rangle
    \cong \pi_1E_\cR/\langle \alpha',\beta'\rangle.
  \]
  Consequently
  $\pi_1E_J/\langle \alpha, \alpha', \beta, \beta'\rangle$ is
  isomorphic to
  $\pi_1E_\cR/\langle \alpha, \alpha', \beta, \beta'\rangle$, which is
  a quotient of $\Z$ since
  $\pi_1E_\cR/\langle \alpha, \beta\rangle\cong \Z$.

  Therefore, $\pi_1(S^4\setminus G)$ is a quotient of $\Z$.  Since
  $H_1(S^4\setminus G)\cong \Z$, it follows that
  $\pi_1(S^4\setminus G)\cong \Z$.
\end{proof}

\section{Obstructions to grope and Whitney doubly slicing}
\label{section:obstructions}

In this section, we give examples illustrating the rich structure of the grope and Whitney tower bi-filtrations. The main result of this section is the following:

\begin{theorem}
  \label{theorem:nontriviality-of-bi-filtrations}
  \leavevmode\Nopagebreak
  \begin{enumerate}
  \item For any integer $m\ge 3$, there exists a family of slice knots
    $\{J^i\}_{i=1,2\ldots}$ such that each $J^i$ is height $m$
    grope doubly slice but any nontrivial linear combination of the
    $J^i$ is not height $m.5$ Whitney doubly slice.
  \item For any integers $m, n\ge 3$, there exists a family of knots
    $\{J_{m,n}^i\}_{i=1,2,\ldots}$ such that each $J_{m,n}^i$ is height
    $(m,n)$ grope slice but any nontrivial linear combination of the
    $J_{m,n}^i$ is not height $(m.5,n)$ Whitney slice and not height
    $(m,n.5)$ Whitney slice.
  \end{enumerate}
\end{theorem}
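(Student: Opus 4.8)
The plan is to prove Theorem~\ref{theorem:nontriviality-of-bi-filtrations} by separating the argument into an \emph{existence} part and a \emph{non-existence} part, as outlined in the introduction, and (for definiteness) to deduce it from the stronger quantitative statement Theorem~\ref{theorem:infinite-rank}. For the existence part I would specialize the construction of Section~\ref{subsection:construction-of-examples}: fix a single knot $J_0$ admitting a $3$D capped grope concordance of height $2$ to the unknot, and fix ribbon knots $K_k$ together with unknotted curves $\eta_k$ admitting $3$D satellite capped gropes of height $1$, so that \ref{item:height-2-grope-for-J_0} and \ref{item:height-1-grope-for-K_k-eta_k} hold; then run the construction with satellite depths $m-2$ and $n-2$ and let the family $J^i$ (resp.\ $J^i_{m,n}$) be produced by varying the seed data used to build $J_0$ and the $K_k$, typically by infecting a fixed ribbon pattern with a growing sequence of seed knots. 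By Theorem~\ref{theorem:grope-doubly-slice-construction} together with Theorem~\ref{theorem:grope-bifiltration-contained-in-whitney-bifiltration} this yields $J^i\in\cG_{m,m}\subset\cW_{m,m}$ and $J^i_{m,n}\in\cG_{m,n}\subset\cW_{m,n}$; for part~(1) I would additionally arrange, using the fact that infection along a curve nullhomotopic in the relevant ribbon-disk complement preserves sliceness, that each $J^i$ is slice. Since $\cG_{m,n}$ and $\cW_{m,n}$ are submonoids (Seifert--van Kampen), every connected sum $\#a_iJ^i$ again lies in the claimed stage.

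For the non-existence part, suppose for contradiction that some $\#a_iJ^i$ with $(a_i)\ne 0$ is height $m.5$ Whitney doubly slice (in case~(2), height $(m.5,n)$ or $(m,n.5)$ Whitney slice). Using the containment $\cW_{a,b}\subset\F_{a-2,b-2}$ (Theorem~\ref{theorem:whitney-tower-to-solution-fundamental-group}), this knot is $((m-2).5,(m-2).5)$-solvable, resp.\ $((m-2).5,n-2)$- or $(m-2,(n-2).5)$-solvable, so by Definition~\ref{definition:(m,n)-solution} there are $4$-manifolds $U,V$ with $\partial U=\partial V=M(\#a_iJ^i)$, carrying the appropriate derived-series duality over $\Z[\pi_1(-)/\pi_1(-)^{(k)}]$ on each piece, and with $\pi_1(U\cup_M V)\cong\Z$. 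I would then feed this into the amenable signature theorem of~\cite{Cha-Orr:2009-1,Cha:2010-1}, combined with the bi-filtration bookkeeping of~\cite{Kim:2006-1}: one chooses an amenable quotient $\Gamma$ of $\pi_1(M(\#a_iJ^i))$ lying in the class to which that signature theorem applies and compatible simultaneously with the flag coming from $U$, the flag coming from $V$, and the cyclic group $\pi_1(U\cup_M V)$, and concludes that a prescribed linear combination of von Neumann--Cheeger--Gromov $\rhot$-invariants of $M(\#a_iJ^i)$, taken with respect to characters factoring through $\Gamma$, must vanish.

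It then remains to show that this $\rhot$-quantity is nonzero for every nontrivial $(a_i)$, which I expect to be the main obstacle. I would compute it via the satellite (infection) formula for $\rhot$: since the $J^i$ are built from a \emph{fixed} collection of doubling operators ($\cR=9_{46}$ infected along $\alpha',\beta'$, and the $K_k$ along $\eta_k$) with only the seeds varying, the relevant $\rhot$-invariant of $M(J^i)$ equals, up to a uniformly bounded contribution of the operators, a Levine--Tristram--type signature integral of the $i$th seed; choosing seeds realizing torsion characters of distinct prime-power order (or with super-additively growing integrals) forces the diagonal terms of $\#a_iJ^i$ to dominate all mixed and operator terms, so no cancellation can occur. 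The delicate points are: verifying that $\Gamma$ is large enough to detect the seeds through the $((m-2).5)$-th derived quotient yet still amenable of the admissible class; bounding the mixed and operator contributions to $\rhot(\#a_iJ^i)$; and --- this is what makes the double-slicing problem genuinely harder than the one-sided concordance case --- reconciling the requirement that the detecting characters persist into \emph{both} $U$ and $V$ with the constraint that $\pi_1(U\cup_M V)$ is merely $\Z$. Finally, since this argument uses only the solvable-bi-filtration containment, it simultaneously yields the analogous infinite-rank statement for $\{\F_{m,n}\}$, recovering and strengthening the main theorem of~\cite{Kim:2006-1}.
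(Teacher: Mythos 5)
Your existence half and the reduction to solvability follow the paper's route: the examples are exactly those of Section~\ref{subsection:construction-of-examples} with only the seed knots varying, grope sliceness comes from Theorem~\ref{theorem:grope-doubly-slice-construction} together with Theorem~\ref{theorem:grope-bifiltration-contained-in-whitney-bifiltration}, the seeds are chosen (as in Lemma~\ref{lemma:J_0^i}) so that signature sums over roots of unity of distinct primes dominate a universal bound for the fixed pattern knots, and $\cW_{m+2,n+2}\subset\F_{m,n}$ (Theorem~\ref{theorem:comparison of bi-filtrations}) converts a hypothetical Whitney slicing into an $(m,n)$-solution. All of that is consistent with the paper.

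The genuine gap is precisely at the point you flag as ``the main obstacle'' and then leave open: how the \emph{two-sidedness} of an $(m.5,n)$- or $(m,n.5)$-solution $(U,V)$ is exploited. Your formulation --- that the detecting characters must ``persist into \emph{both} $U$ and $V$'' while $\pi_1(U\cup_M V)\cong\Z$ --- is not how the argument can be made to work, and no mechanism is proposed. The paper's key idea is different: the Alexander module of $J^1_{m,n}$ splits as $\langle\alpha_1\rangle\oplus\langle\beta_1\rangle\cong R[t^{\pm1}]/(2t-1)\oplus R[t^{\pm1}]/(t-2)$ (inherited from $\cR=9_{46}$), and since any $(m,n)$-solution with $m,n\ge1$ is in particular a double $1$-solution, Proposition~\ref{proposition:homology-splitting} plus the self-annihilating-kernel property of $1$-solutions forces the dichotomy of Lemma~\ref{lemma:nonvanishing-of-infection-curves}: either $\alpha_1$ survives in (the cobordism built from) $U$ and $\beta_1$ in $V$, or vice versa. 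This is then combined with the asymmetric Proposition~\ref{proposition:non-solvability} --- survival of $\alpha_1$ obstructs being an $m.5$-solution, survival of $\beta_1$ obstructs being an $n.5$-solution --- whose proof requires the specific cobordisms $W_k$ (stacking $C$, the solutions of Proposition~\ref{proposition:m-or-n-solution}, the slice disk exterior $\cE_+$, and the standard infection cobordisms), the mixed-coefficient commutator series of Definition~\ref{definition:commutator-series}, and Lemma~\ref{lemma:nonvanishing-of-the-meridian} controlling the order of the meridian in the relevant quotient before Theorem~\ref{theorem:obstruction} is applied. In particular the detecting character only needs to extend over the \emph{one} piece claimed to be highly solvable; the other piece enters only homologically, at the level of $1$-solutions, via the splitting. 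A further small but necessary trick you omit is the reduction using $m\ge n$ (and $\F_{k,\ell}=\F_{\ell,k}$): when $i_*(\alpha_1)=0$ one gets $i_*(\beta_1)\ne0$, hence $U$ is not an $n.5$-solution, contradicting that an $m.5$-solution is an $n.5$-solution. Without this splitting/dichotomy mechanism the non-existence half of your proposal does not go through, so as written the proof is incomplete.
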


The above theorem is proved in a stronger form in Theorem~\ref{theorem:infinite-rank}. Since height $(m,n)$ grope slice knots are height $(m,n)$ Whitney
slice
(Theorem~\ref{theorem:grope-bifiltration-contained-in-whitney-bifiltration}),
it follows that both grope and Whitney tower bi-filtrations $\{\cG_{m,n}\}$
and $\{\cW_{m,n}\}$ are highly nontrivial.

The examples $J^i$ and $J_{m,n}^i$ in
Theorem~\ref{theorem:nontriviality-of-bi-filtrations} will be obtained
by the construction in
Section~\ref{subsection:construction-of-examples}.
Theorem~\ref{theorem:grope-doubly-slice-construction} will tell us the
promised grope/Whitney sliceness.  

To show that the examples and their linear combinations are \emph{not} grope/Whitney slice, we will first relate the grope/Whitney sliceness to the
\emph{$(m,n)$-solvability} stuided in \cite{Kim:2006-1} in Section~\ref{subsection: (m,n)-solvable knot}, and then use
\emph{amenable $\rhot$-invariant} techniques developed in
\cite{Cha-Orr:2009-1,Cha:2010-1,Cha:2012-1} in Sections \ref{subsection:amenable-signature}, \ref{subsection:nontriviality-bifiltration}, and \ref{subsection:proofs-of-lemma-and-proposition}.

\subsection{$(m,n)$-solvable knots}\label{subsection: (m,n)-solvable knot}

In \cite{Kim:2006-1}, the second author introduced the notion of
\emph{$(m,n)$-solvable} knots which is a generalization of a doubly
slice knot.  Since its definition is lengthy, we will first state the
main result of this subsection and then describe necessary terms.

\begin{theorem}
  \label{theorem:comparison of bi-filtrations}
  A height $(m+2,n+2)$ Whitney slice knot is $(m,n)$-solvable.
\end{theorem}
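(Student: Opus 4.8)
The plan is to realize each hemisphere of the $\pi_1$-unknotted Whitney tower as a one-sided solution, and then to use $\pi_1$-unknottedness to pin down the fundamental group of the glued-up $4$-manifold. So suppose $K$ is height $(m+2,n+2)$ Whitney slice, witnessed by a $\pi_1$-unknotted sphere-like Whitney tower $T$ in $S^4$ with $K=T\cap S^3$ and $T_\pm:=T\cap D^4_\pm$ disk-like Whitney towers of heights $m+2$ and $n+2$ bounded by~$K$. After an ambient isotopy I would arrange that $T$ meets a bicollar $S^3\times[-1,1]$ of $S^3\subset S^4$ in $K\times[-1,1]$, so that $S^4\setminus\nu(T)=(D^4_-\setminus\nu(T_-))\cup_{E_K}(D^4_+\setminus\nu(T_+))$, the two pieces glued along the equatorial knot exterior~$E_K$; and $\pi_1(S^4\setminus\nu(T))\cong\Z$ since $T$ is $\pi_1$-unknotted.

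First I would construct from the disk-like Whitney tower $T_+$ of height $m+2$ bounded by $K$ in $D^4_+$ a $4$-manifold $W_+$ with $\partial W_+=M(K)$ that is an $m$-solution in the sense of Definition~\ref{definition:(m,n)-solution}, and similarly $W_-$ from $T_-$, an $n$-solution. This is the Whitney-tower form of the Cochran--Orr--Teichner implication that a knot bounding a height-$h$ grope in $D^4$ is $(h-2)$-solvable~\cite{Cochran-Orr-Teichner:1999-1}; one may either run that argument directly for split Whitney towers, or first convert $T_\pm$ to a capped grope of height $m+2$ (resp.\ $n+2$) with the same boundary $K$ by Schneiderman's transformation (Theorem~\ref{theorem:grope-whitney-tower-transformation}) and quote the grope statement. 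Concretely, $W_\pm$ is obtained from the exterior $D^4_\pm\setminus\nu(T_\pm)$ by attaching $2$-handles along null-homologous circles in the frontier $\overline{\partial\nu(T_\pm)\setminus\nu(K)}$ dual to the Whitney disks (resp.\ caps), which replaces the boundary by $M(K)$ without changing $H_1$; the Whitney disks (resp.\ caps) then furnish the dual classes and the body sheets a Lagrangian for the $\Z[\pi_1 W_\pm/\pi_1 W_\pm^{(\cdot)}]$-twisted intersection form, the required vanishing being forced by the principle that the meridian of a tower stage of order $k$ lies in the $k$-th derived subgroup of $\pi_1$ of the exterior. This translation between geometric height and derived-series depth is the substance of the step and is the only place the heights $m+2$, $n+2$ enter.

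Granting the one-sided solutions, $W_+$ and $W_-$ are an $m$- and an $n$-solution with common boundary $M(K)$, so $(m,n)$-solvability will follow once $\pi_1(W_+\cup_{M(K)}W_-)\cong\Z$. Because $\partial W_\pm=M(K)$ is $0$-surgery, the $0$-framed longitude $\lambda_K$ bounds the meridian disk of the surgery solid torus and hence is null-homotopic in each $W_\pm$; therefore the inclusion-induced maps $\pi_1 E_K\to\pi_1 W_\pm$ factor through $\pi_1 M(K)=\pi_1 E_K/\langle\!\langle\lambda_K\rangle\!\rangle$, and van Kampen gives $\pi_1(W_+\cup_{M(K)}W_-)=\pi_1 W_-\ast_{\pi_1 M(K)}\pi_1 W_+=\pi_1 W_-\ast_{\pi_1 E_K}\pi_1 W_+$. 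Since each $W_\pm$ is built from $D^4_\pm\setminus\nu(T_\pm)$ by attaching $2$-handles, $\pi_1 W_\pm$ is a quotient of $\pi_1(D^4_\pm\setminus\nu(T_\pm))$, so $\pi_1(W_+\cup_{M(K)}W_-)$ is a quotient of $\pi_1(D^4_-\setminus\nu(T_-))\ast_{\pi_1 E_K}\pi_1(D^4_+\setminus\nu(T_+))=\pi_1(S^4\setminus\nu(T))\cong\Z$. On the other hand $H_1(W_+\cup_{M(K)}W_-)\cong\Z$ by a Mayer--Vietoris computation using $H_1 W_\pm\cong\Z$. A quotient of $\Z$ with infinite abelianization is $\Z$, so $\pi_1(W_+\cup_{M(K)}W_-)\cong\Z$ and $K$ is $(m,n)$-solvable.

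I expect the main obstacle to be the one-sided construction: producing $W_\pm$ with boundary exactly $M(K)$ and verifying the twisted-intersection-form conditions defining an $m$- (resp.\ $n$-) solution, which comes down to tracking precisely how the height of a split Whitney tower bounds the derived-series depth of the relevant elements of $\pi_1$ of the exterior, compatibly with the surgeries that correct the boundary. Once this (essentially Cochran--Orr--Teichner-style) bookkeeping is in place, the gluing step, which is where $\pi_1$-unknottedness is actually used, is short.
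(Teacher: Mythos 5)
Your skeleton is the paper's: split $T$ along the equator, produce from $T_\pm$ one-sided solutions $W_\pm$ whose fundamental groups are quotients of $\pi_1(D^4_\pm\setminus T_\pm)$ compatibly with $\pi_1(S^3\setminus K)$, then glue and combine van Kampen with $H_1\cong\Z$ to conclude $\pi_1(W_+\cup_{M(K)}W_-)\cong\Z$. Your gluing paragraph is correct and is essentially the paper's argument. The genuine gap is the one-sided step, which is exactly the content of the paper's Theorem~\ref{theorem:whitney-tower-to-solution-fundamental-group} and which you leave as a sketch. Neither of your proposed routes delivers it as stated: quoting the Cochran--Orr--Teichner result \cite{Cochran-Orr-Teichner:1999-1} (with or without first converting $T_\pm$ to capped gropes by Theorem~\ref{theorem:grope-whitney-tower-transformation}) only yields the \emph{existence} of an $m$-solution (resp.\ $n$-solution); it does not provide the epimorphism $\pi_1(D^4_\pm\setminus T_\pm)\to\pi_1(W_\pm)$ commuting with the maps from $\pi_1(S^3\setminus K)$, and that compatibility is precisely what your van Kampen step consumes. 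This is why the paper proves its own version with a construction it explicitly notes differs from the one in \cite{Cochran-Orr-Teichner:1999-1}.

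Your concrete recipe for $W_\pm$ -- attach $2$-handles to $D^4_\pm\setminus\nu(T_\pm)$ along circles in the frontier dual to the Whitney disks -- would fail. Attaching a $2$-handle along a meridian of a Whitney disk $D$ (with the normal framing) just produces the exterior of the tower with a $2$-cell deleted from the interior of $D$ (this is Lemma~\ref{lemma:subcomplex-exterior-handle-decomposition} read backwards); it does nothing about the double points of the immersed base disk, the resulting boundary is not $M(K)$, and it is unclear what ``Lagrangian of body sheets'' would mean when the base is a single immersed disk. What the paper actually does is: surger $D^4_\pm$ along parallel copies of the boundaries of the \emph{top-stage} Whitney disks with the disk-induced framing, use the surgery cores as embedded Whitney disks to perform Whitney moves down the tower until the base becomes an embedded disk $\Delta$, and set $W_\pm=V\setminus\nu(\Delta)$, so that $\partial W_\pm=M(K)$; then $H_2(W_\pm)$ is generated by spheres $S_i$ (Whitney disk plus surgery core) and Clifford tori $T_i$, the meridian of a height $k+1$ disk is shown inductively to lie in the $k$-th derived subgroup (giving the required lifts), and in the half-integer case the height $n+3$ disks, being disjoint from low stages, survive into $V$ and pair the intersections of the $S_i$ so that $\lambda_{n+1}$ vanishes; finally the $\pi_1$-control follows from Proposition~\ref{proposition:unknottedness-of-subtower}, a Seifert--van Kampen argument for the surgeries, and Lemma~\ref{lemma:finger-move-fundamental-group} applied to the Whitney moves. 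Until a construction of this kind is carried out (including the $m.5$/$n.5$ bookkeeping, which your sketch does not address), the theorem is not proved.
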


To define an $(m,n)$-solvable knot, we need to recall the notion of an
\emph{$n$-solvable knot} defined in \cite{Cochran-Orr-Teichner:1999-1}, from
which the former was inspired.  For a group $G$ and $n\ge 0$, the
\emph{$n$th derived subgroup $G^{(n)}$} is defined by
$G^{(0)}:=G$ and $G^{(n)}:=[G^{(n-1)},G^{(n-1)}]$ for $n\ge 1$.

\begin{definition}[{\cite{Cochran-Orr-Teichner:1999-1}}]
\label{definition:n-solution}
  Suppose $n$ is a nonnegative integer, $M$ is a closed 3-manifold
  with $H_1(M) \cong \Z$, and $W$ is a compact spin 4-manifold with
  boundary~$M$.  Let $\pi:=\pi_1(W)$.
  \begin{enumerate}
  \item $W$ is an \emph{$n$-solution} for $M$ if the inclusion
    induces an isomorphism $H_1(M)\cong H_1(W)$ and there exist
    $[x_1],\ldots, [x_r]$ and $[y_1],\ldots, [y_r]$ in
    $H_2(W;\Z[\pi/\pi^{(n)}])$ with $r=\frac12\rank_\Z H_2(W)$ such
    that the intersection form and self-intersection form
    \begin{gather*}
      \lambda_n\colon H_2(W;\Z[\pi/\pi^{(n)}])\times
      H_2(W;\Z[\pi/\pi^{(n)}]) \to \Z[\pi/\pi^{(n)}]
      \\
      \mu_n\colon H_2(W;\Z[\pi/\pi^{(n)}]) \to \Z[\pi/\pi^{(n)}]/\{a-\bar
      a\}
    \end{gather*}	
    have values $\lambda_n([x_i],[x_j])=\mu_n([x_i])=0$ and
    $\lambda_n([x_i],[y_j])=\delta_{ij}$ for all $i$ and~$j$. 
  \item $W$ is an \emph{$n.5$-solution} for $M$ if $W$ satisfies (1)
    and in addition the $[x_i]$ have lifts
    $[\tilde{x_i}]\in H_2(W;\Z[\pi/\pi^{(n+1)}])$ such that
    $\lambda_{n+1}([\tilde{x_i}],[\tilde{x_j}])=\mu_{n+1}([\tilde{x_i}])=0$
    for all $i$ and~$j$.  
  \item A knot $K$ is \emph{$h$-solvable}
    ($h\in \frac12 \Z_{\ge 0}$) if the zero surgery $M(K)$ has an
    $h$-solution.
  \end{enumerate}   
\end{definition}

We denote by $\F_n$ the subgroup of the concordance classes of
$n$-solvable knots.  Since an $m$-solvable knot is $n$-solvable
for $m\ge n$, the subgroups $\F_n$ with $n\in \frac12 \Z_{\ge 0}$ form a descending filtration of the knot concordance group.

\begin{definition}[{\cite[Definitions 2.2, 2.3, and 6.2]{Kim:2006-1}}]
\label{definition:(m,n)-solution}
  Suppose $m,n\in \frac12\Z_{\ge 0}$ and $M$ is a closed 3-manifold with
  $H_1(M)\cong \Z$.
  \begin{enumerate}
  \item A pair $(W_1,W_2)$ of compact 4-manifolds $W_i$ bounded by~$M$
    is an \emph{$(m,n)$-solution} for $M$ if $W_1$ is an
    $m$-solution for $M$, $W_2$ is an $n$-solution for $M$, and
    $\pi_1(W_1\cup_M~W_2)$ is an infinite cyclic group.
  \item A knot $K$ is \emph{$(m,n)$-solvable} if there is an
    $(m,n)$-solution $(W_1,W_2)$ for the zero surgery~$M(K)$.
  \item We call an $(m,m)$-solution a \emph{double $m$-solution}.
    We also call an $(m,m)$-solvable knot or 3-manifold \emph{doubly
      $m$-solvable}.
  \end{enumerate}
\end{definition}

We denote by $\F_{m,n}$ the set of (isotopy classes of)
$(m,n)$-solvable knots. By \cite[Proposition~2.6]{Kim:2006-1} each
$\F_{m,n}$ is a submonoid under connected sum, and since an
$m$-solution for a knot $K$ is an $n$-solution for $m\ge n$, it is
obvious that $\F_{k,\ell}\subset \F_{m,n}$ for $k\ge m$ and
$\ell\ge n$. Therefore $\{\F_{m,n}\}$ is a bi-filtration of the monoid
of knots. We call it \emph{the solvable bi-filtration of
  knots}. Theorems~\ref{theorem:grope-bifiltration-contained-in-whitney-bifiltration}
and~\ref{theorem:comparison of bi-filtrations} tell us that
$\cG_{m+2,n+2} \subset \cW_{m+2,n+2} \subset \F_{m,n}$.

Due to~\cite[Theorem~8.11]{Cochran-Orr-Teichner:1999-1}, a height
$h+2$ Whitney tower in $D^4$ bounded by a knot $K$ in $S^3$ can be
transformed to an $h$-solution for~$K$.  Here we show that this can be
done in such a way that the fundamental group does not grow.

\begin{theorem}
  \label{theorem:whitney-tower-to-solution-fundamental-group}
  Suppose $T$ is a disk-like Whitney tower of height $h+2$ in $D^4$
  bounded by a knot $K$ in~$S^3$.  Then there is an $h$-solution $W$
  for $K$ with an epimorphism
  $\phi\colon \pi_1(D^4\setminus T)\to \pi_1(W)$ making the following
  diagram commute:
  \[ \xymatrix{ 
    \pi_1(S^3\setminus K)	\ar[rd]^{j_*} \ar[d]_{i_*}\\
    \pi_1(D^4\setminus T) \ar[r]_{\phi}&
    \pi_1(W),    }
  \]
where $i_*$ and $j_*$ are homomorphisms induced from inclusions. 
\end{theorem}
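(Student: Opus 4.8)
The plan is to re-run the passage from a height $h+2$ Whitney tower to an $h$-solution behind \cite[Theorem~8.11]{Cochran-Orr-Teichner:1999-1}, keeping track of the effect on $\pi_1$ at every step. Write $E_T=\overline{D^4\setminus\nu(T)}$, so that $\pi_1(D^4\setminus T)\cong\pi_1(E_T)$ (a deformation retraction). First I would make two harmless reductions. After possibly splitting $T$ (Proposition~\ref{proposition:tower-splitting-unknottedness}, which replaces $T$ by a split Whitney tower of the same height and only quotients $\pi_1(D^4\setminus T)$), we may assume $T$ is split, so that the construction below is clean. Next, pass from $T$ to its base disk $\Delta_0$, which is a height one Whitney tower obtained from $T$ by deleting all the non-base Whitney disks: by Proposition~\ref{proposition:unknottedness-of-subtower}, $E_{\Delta_0}\cong E_T\cup(\text{handles of index }\ge 2)$, so the inclusion induces a surjection $\pi_1(D^4\setminus T)\twoheadrightarrow\pi_1(D^4\setminus\Delta_0)$. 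Hence it suffices to build, from $E_{\Delta_0}$ using only $4$-dimensional handle attachments of index $\ge 2$, a $4$-manifold $W$ with $\partial W=M(K)$ that is an $h$-solution, in such a way that $E_K=\overline{S^3\setminus\nu(K)}$ persists, undisturbed, into $\partial W$.

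To construct $W$, note that $\partial E_{\Delta_0}$ is $E_K$ glued along $\partial\nu(K)$ to the $3$-manifold $\overline{\partial\nu(\Delta_0)\setminus\nu(K)}$; since $\Delta_0$ is an immersed disk this is not a solid torus, and the standard construction surgers it---guided, stage by stage, by the Whitney disks of $T$---into the $0$-framed solid torus, so that the boundary becomes $M(K)$ (using that $\Delta_0$ is null-homologous in $D^4$) and these surgeries are all performed away from the $E_K$-part of the boundary. Organized via the regular neighborhood and cellular expansion machinery of Sections~\ref{section:Unknotted Whitney towers and gropes} and~\ref{section:schneiderman-transformation}, this sequence of boundary surgeries is realized as a finite sequence of $4$-dimensional handle attachments of index $\ge 2$ to $E_{\Delta_0}$. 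Since attaching a $2$-handle can only quotient $\pi_1$ and attaching a handle of index $\ge 3$ leaves $\pi_1$ unchanged (the attaching sphere is simply connected), the inclusion $E_T\hookrightarrow E_{\Delta_0}\hookrightarrow W$ induces a surjection $\phi\colon\pi_1(D^4\setminus T)\twoheadrightarrow\pi_1(W)$. For the commuting triangle, $i_*$, $j_*$ and $\phi$ are all induced by the inclusions $S^3\setminus K\supset E_K\hookrightarrow E_T\hookrightarrow W$, with $j_*$ realized as $E_K\hookrightarrow M(K)=\partial W\hookrightarrow W$ (homotopic to the composite $E_K\hookrightarrow E_T\hookrightarrow W$ because $E_K$ was left untouched); so $\phi\circ i_*=j_*$ by functoriality of $\pi_1$.

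It then remains to know that $W$ satisfies the conditions of Definition~\ref{definition:n-solution} for $h$, and this is exactly the content of \cite[Theorem~8.11]{Cochran-Orr-Teichner:1999-1}: the inclusion induces $H_1(M(K))\cong H_1(W)$, $H_2(W)$ is free of even rank, and it carries a half-basis of Lagrangian and dual classes whose intersection and self-intersection pairings vanish over $\Z[\pi_1(W)/\pi_1(W)^{(h)}]$ (with the additional lift when $h$ is a half-integer), precisely because the tower has height $h+2$, so the Whitney disks producing these classes lie deep enough in the derived series. The main obstacle is the previous paragraph: one must look inside the construction of \cite{Cochran-Orr-Teichner:1999-1} and confirm that, relative to $E_{\Delta_0}$, every modification needed to reach $W$ is a handle attachment of index $\ge 2$---equivalently, a surgery on the boundary $3$-manifold along a curve, with a framing dictated by the Whitney disks---so that no step adds a generator to $\pi_1$. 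Once this is verified, the $h$-solution properties are inherited from \cite{Cochran-Orr-Teichner:1999-1}, the fundamental-group statement is exactly what was recorded along the way, and the remaining points (that $\pi_1(D^4\setminus T)\cong\pi_1(E_T)$ and that the stated inclusions induce $i_*$, $j_*$, $\phi$) are routine.
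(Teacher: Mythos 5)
Your proposal sets up the right skeleton (an epimorphism coming from inclusion-induced maps, with $E_K$ left untouched so the triangle commutes), but the core of the theorem is exactly the step you defer: you never actually construct $W$, and you acknowledge this yourself (``one must look inside the construction of \cite{Cochran-Orr-Teichner:1999-1} and confirm \dots Once this is verified\dots''). That verification is not routine, and it cannot be outsourced to Cochran--Orr--Teichner for two reasons. First, the paper's construction is explicitly \emph{not} the COT one (see the remark right after the theorem), precisely because the COT construction is not organized as index $\ge 2$ handle attachments relative to a fixed exterior; asserting that ``the standard construction surgers the boundary into the $0$-framed solid torus via handles of index $\ge 2$'' is asserting the theorem, not proving it. Second, your first move --- discarding \emph{all} Whitney disks and working from $E_{\Delta_0}$ --- throws away the data needed to verify the $h$-solution conditions: the Lagrangian classes (Clifford tori), their dual spheres, and above all the fact that the relevant curves lie in $\pi_1(W)^{(n)}$ are extracted inductively from the tower, and since the conditions are stated over $\Z[\pi_1(W)/\pi_1(W)^{(n)}]$ for the \emph{new} $W$, they must be re-verified for whatever $W$ you build, not inherited from a different construction with a different fundamental group. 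Note also that higher Whitney disks are allowed to meet the base disk, so the attaching data you want to use need not even live in $E_{\Delta_0}$, and it is not clear that the paper's $W$ (or any $h$-solution) contains $E_{\Delta_0}$ as claimed.

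For comparison, the paper keeps most of the tower: it performs surgery on $D^4$ along parallel copies of the boundaries of the \emph{top-stage} Whitney disks (with the disk-induced framing) to get $V$, uses the surgery core disks to do Whitney moves that embed the lower stages and finally the base disk as $\Delta\subset V$, and sets $W=V\setminus\nu(\Delta)$. The $\pi_1$ control comes from three steps: $\pi_1(D^4\setminus T)\twoheadrightarrow\pi_1(D^4\setminus T_0)$ for the subtower $T_0$ of height $\le n+1$ disks (Proposition~\ref{proposition:unknottedness-of-subtower}), $\pi_1(D^4\setminus T_0)\cong\pi_1(V\setminus T')$ by Seifert--van Kampen (where $T'$ adds the surgery cores as new top disks), and $\pi_1(V\setminus T')\cong\pi_1(V\setminus\Delta)$ by repeated use of Lemma~\ref{lemma:finger-move-fundamental-group} for the Whitney moves. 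The $h$-solution conditions are then checked directly in this $W$: the Clifford tori $T_i$ and the spheres $S_i$ (Whitney disk plus surgery core) form a basis of $H_2(W)\cong\Z^{2k}$, an inductive argument through the tower shows the torus basis curves are $n$-fold commutators of meridians so the $T_i$ lift to the $\pi/\pi^{(n)}$-cover, and for $h=n.5$ the height $n+3$ disks survive into $V$ and pair the intersections of the $S_i$. If you want to salvage your outline, you would need to replace the reduction to $\Delta_0$ by something like this retained-subtower argument and carry out the verification of Definition~\ref{definition:n-solution} for your specific $W$.
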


Our construction of the $h$-solution in
Theorem~\ref{theorem:whitney-tower-to-solution-fundamental-group} is
slightly different from that in ~\cite[Theorem 8.12]{Cochran-Orr-Teichner:1999-1}.

\begin{proof}
  Write $h=n$ or $n.5$ with $n$ an integer.  The top stage disks of
  $T$ have height $n+2$ if $h=n$, height $n+3$ if $h=n.5$.

  For each Whitney disk $D_i$ of height $n+2$, choose a parallel copy
  of $\partial D_i$ in the interior of $D_i$, and perform surgery on
  $D^4$ along the parallel copy (using the framing induced by~$D_i$).
  Let $V$ be the 4-manifold obtained by surgery.  Using the surgery
  core disk as an embedded Whitney disk for the Whitney
  circle~$\partial D_i$, do Whitney moves to make height $n+1$ Whitney
  disks embedded in~$V$.  Repeating this inductively for height
  $n,\ldots,2,1$, the base disk of $T$ is changed to an embedded disk
  $\Delta$ in~$V$.  Let $W=V\setminus \nu(\Delta)$, the exterior
  of~$\Delta$ in $V$.

  Let $T_0$ be the subtower of $T$ which consists of height $\le n+1$
  Whitney disks, and let $T'$ be the Whitney tower of height $n+2$ in
  $V$ obtained from $T_0$ by adding the surgery core disks as height
  $n+2$ disks.  By
  Proposition~\ref{proposition:unknottedness-of-subtower},
  $\pi_1(D^4\setminus T_0)$ is a quotient of $\pi_1(D^4\setminus T)$.
  A Seifert-van Kampen argument shows that
  $\pi_1(D^4\setminus T_0) \cong \pi_1(V\setminus T')$.  By applying
  Lemma~\ref{lemma:finger-move-fundamental-group} repeatedly,
  $\pi_1(V\setminus T') \cong \pi_1(V\setminus \Delta)$.  It follows
  that $\pi_1(V\setminus \Delta)$ is a quotient of
  $\pi_1(D^4\setminus T)$.

  We claim that $W$ is an $h$-solution for~$K$.  The spin structure of
  $D^4$ gives rise to a spin structure on~$W$.  Since the base disk of
  $T$ induces the zero framing on its boundary $K$ and Whitney moves
  do not alter this framing, we have $\partial W=M(K)$, the zero surgery on
  $K$ in $S^3$.  Since the
  surgery on $D^4$ is performed along null-homotopic curves,
  $H_2(V)\cong \Z^{2k}$ where $k$ is the number of height $n+2$
  Whitney disks~$D_i$.  From the long exact sequence for $(V,W)$, it
  follows that $H_2(W)\cong H_2(V)\cong \Z^{2k}$.  The union of
  $D_i\setminus ($collar of $\partial D_i)$ and the surgery core disk
  bounded by $\partial D_i$ is an immersed sphere in $W$, which we
  denote by~$S_i$.  Choose a Clifford torus $T_i$ around one of the
  two intersections paired by the Whitney disk~$D_i$.  Then
  $\{S_i,T_i\}_{i=1,\ldots,k}$ is a basis for $H_2(W)$.  A basis curve
  on $T_i$ is a meridian $\mu$ of a height $n+1$ disk~$D$.  A Clifford
  torus $T$ around an intersection paired (with another intersection)
  by $D$ meets $D$ at one point, and the basis curves of $T$ are
  meridians of height $n$ disks.  It follows that $\mu$ is a
  commutator of meridians of height $n$ disks.  Repeating the same
  argument inductively for height $n,\ldots,2,1$, it follows that
  $\mu$ is an element in the $n$th derived subgroup~$\pi^{(n)}$, where
  $\pi=\pi_1(W)$.  Thus the Clifford torus $T_i$ lifts to the
  $\pi/\pi^{(n)}$-cover of $W$ and represents a homology class
  $[T_i]\in H_2(W;\Z[\pi/\pi^{(n)}])$.

  To complete the proof, we now consider the two cases of $h=n$ and
  $h=n.5$ separately.  For $h=n$, since the tori $T_i$ are framed and
  pairwise disjoint, $\mu_n([T_i]) = 0$ and $\lambda_n([T_i],[T_j])=0$
  for any $i, j$.  Since $S_i$ meets $T_i$ at one point and disjoint
  from $T_j$ for $j\ne i$, $\lambda_n([T_i],[S_j])=\delta_{ij}$. 
  
  For $h=n.5$, consider the homology classes
  $[S_i]\in H_2(W;\Z[\pi/\pi^{(n+1)}])$. Since the sphere $S_i$ are framed, $\mu_n([S_i])=0$. Recall that the
  intersections of the height $n+2$ disks are paired up by height
  $n+3$ disks in~$D^4$.  Since the height $n+3$ disks are disjoint
  from height $\le n+1$ disks, the height $n+3$ disks lie in $V$ and
  pair up intersections of the immersed spheres~$S_i$.  It follows
  that $\lambda_{n+1}([S_i], [S_j])=0$ for any $i,j$.  As above,
  $\lambda_n([S_i],[T_j])=\delta_{ij}$.  
\end{proof}

Now we are ready for the proof of Theorem~\ref{theorem:comparison of
  bi-filtrations}.

\begin{proof}[Proof of Theorem~\ref{theorem:comparison of bi-filtrations}]
  Suppose a knot $K$ in $S^3$ is sliced by a $\pi_1$-unknotted Whitney
  tower $T$ in~$S^4$ whose upper half $T_+=T\cap D^4_+$ and lower half
  $T_-=T\cap D^4_-$ have height $m+2$ and $n+2$ respectively.  Apply
  Theorem~\ref{theorem:whitney-tower-to-solution-fundamental-group} to
  $T_+$ and $T_-$ to obtain an $m$-solution $W_+$ and an $n$-solution
  $W_-$ such that $\pi_1(W_+)$ and $\pi_1(W_-)$ are homomorphic images
  of $\pi_1(D^4_+\setminus T_+)$ and $\pi_1(D^4_-\setminus T_-)$
  respectively.  Since
  \[
    \Z\cong \pi_1(S^4\setminus T) \cong \pi_1(D^4_+\setminus T_+)
    \mathbin{\mathop{*}_{\pi_1(S^3\setminus K)}}\pi_1(D^4_-\setminus
    T_-),
  \]
  it follows that
  \[
    \pi_1\Big(W_+\mathbin{\mathop{\cup}_{M(K)}} W_-\Big) \cong
      \pi_1(W_+) \mathbin{\mathop{*}_{\pi_1 M(K)}}\pi_1(W_-)
  \]
  is a quotient of~$\Z$.  Since
  $H_1(W_+\mathbin{\mathop{\cup}_{M(K)}} W_-)=\Z$, it follows that
  $\pi_1(W_+\mathbin{\mathop{\cup}_{M(K)}} W_-)=\Z$.  Therefore
  $(W_+,W_-)$ is an $(m,n)$-solution.
\end{proof}

\subsection{Amenable signature theorem and mixed commutator series}
\label{subsection:amenable-signature}

We recall some results on the von Neumann-Cheeger-Gromov
$\rhot$-invariant for a closed 3-manifold and mixed commutator series
in \cite{Cha:2010-1}. 

Let $M$ be a closed 3-manifold and $\phi\colon \pi_1(M)\to G$ a group
homomorphism. Suppose there is a 4-manifold $W$ bounded by $M$ such
that $\phi$ extends to $\psi\colon \pi_1(W)\to G$. Then the von
Neumann-Cheeger-Gromov $\rhot$-invariant associated to $(M,\phi)$ is
defined to be
\[
\rhot (M,\phi) = \sign^{(2)}_G(W) - \sign(W)\in \R
\]
where $\sign^{(2)}_G(W)$ and $\sign(W)$ denote the $\Lt$-signature of
$W$ associated to $\psi$ and the ordinary signature of $W$,
respectively. We refer the reader to \cite{Cha:2014-1} for more about
$\Lt$-signatures and $\rhot$-invariants.

The following lemma gives a universal bound on the $\rhot$-invariants. 

\begin{lemma}[{\cite[Theorem~1.9]{Cha:2014-1}}] \label{lemma:universal-bound}
  If $M$ is the zero surgery on a knot with crossing number $n$, then
  $|\rhot (M,\phi)|\le 69713280\cdot n$ for all homomorphisms $\phi$.
\end{lemma}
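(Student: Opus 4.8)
The plan is to deduce the bound from two ingredients, both of which are supplied by \cite{Cha:2014-1}: an explicit universal bound for the Cheeger--Gromov $\rhot$-invariant in terms of the number of simplices in a triangulation, and an efficient triangulation of the zero-surgery manifold built from a knot diagram.

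First I would recall the general simplicial bound: there is a universal constant---one may take $363090$---such that for every closed oriented $3$-manifold $M$ equipped with a triangulation having $N$ tetrahedra and for every homomorphism $\phi\colon\pi_1(M)\to G$ into an arbitrary group, $|\rhot(M,\phi)|\le 363090\cdot N$. The idea is to produce, in a \emph{controlled} way, a compact $4$-manifold $W$ over $G$ bounding (a multiple of) $M$ whose $\Lt$-signature defect $\lsign_G(W)-\sign(W)$ computes $\rhot(M,\phi)$, while the number of $2$-handles of $W$---which bounds both $|\lsign_G(W)|$ and $|\sign(W)|$---is linear in $N$. The genuinely hard input is that this can be done uniformly over all target groups $G$, including non-amenable ones, which rules out any naive geometric (Lipschitz) estimate and forces the topological controlled-surgery argument of \cite{Cha:2014-1}; I would simply invoke this as a black box.

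Second, given a knot $K$ with a diagram having $n$ crossings, I would build a triangulation of the zero-surgery manifold $M(K)$ with at most $192\,n$ tetrahedra. Starting from the diagram, the exterior $E_K$ has a handle decomposition read off from the Wirtinger presentation (one $0$-handle, $n$ $1$-handles, $n$ $2$-handles, one $3$-handle), which one converts into a triangulation of $E_K$ with $O(n)$ simplices; gluing in the surgery solid torus along the $0$-framed longitude---which, being determined by the diagram, has complexity bounded per crossing---and subdividing to match adds only a further bounded-per-crossing number of tetrahedra. Carrying out this bookkeeping carefully yields the bound of $192\,n$ tetrahedra, and hence
\[
  |\rhot(M(K),\phi)|\le 363090\cdot 192\,n = 69713280\cdot n
\]
for all $\phi$, as claimed.

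The main obstacle is the first ingredient: establishing an \emph{explicit} universal bound valid for all homomorphisms into all groups, which is exactly the content of \cite{Cha:2014-1} and requires the controlled-topology machinery there rather than the original (ineffective) Cheeger--Gromov existence argument. The second ingredient is routine but must be done with care, since the precise constant $69713280$ rests on an honest count of the simplices in the triangulation built from the diagram; once that count is in hand, no further estimates are needed.
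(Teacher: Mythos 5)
The paper does not prove this lemma at all: it is imported verbatim as a quoted result, with the citation to \cite{Cha:2014-1} standing in for the proof. Your outline correctly reconstructs how that reference establishes it --- the universal simplicial-complexity bound $|\rhot(M,\phi)|\le 363090\cdot N$ combined with an explicit triangulation of the zero surgery manifold with at most $192n$ tetrahedra built from an $n$-crossing diagram --- so your argument is essentially the proof of the cited theorem itself, and nothing further is needed beyond the careful simplex count you already acknowledge must come from that source.
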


The following is the $\rhot$-invariant obstruction
for a knot to being $n.5$-solvable which we will use in
Section~\ref{subsection:proofs-of-lemma-and-proposition}.  Although we
use the most general known term \emph{amenable group lying in $D(R)$} in the
statement, we do not define it here (precise definitions can be found
from \cite{Cha:2010-1} as well as
\cite{Paterson:1988-1,Strebel:1974-1}), since we will use
Theorem~\ref{theorem:obstruction} only for the class of groups
described in Lemma~\ref{lemma:target-group} below.

\begin{theorem}[{\cite[Theorem 3.2]{Cha:2010-1}}]
  \label{theorem:obstruction}
  Let $K$ be an $n.5$-solvable knot. Let $G$ be an amenable group
  lying in $D(R)$ for $R=\Z_p$ or $\Q$ with $G^{(n+1)} =
  \{e\}$.
  Suppose $\phi\colon \pi_1 M(K)\to G$ is a homomorphism which extends
  to an $n.5$-solution for $M(K)$ and sends a meridian to an
  infinite order element in $G$. Then $\rhot (M(K),\phi)=0$.
\end{theorem}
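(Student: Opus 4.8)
The plan is to evaluate $\rhot(M(K),\phi)$ through a bounding $4$-manifold and to reduce the computation to the amenable signature theorem for $4$-manifolds over a group in Strebel's class $D(R)$. Since $K$ is $n.5$-solvable, fix an $n.5$-solution $W$ for $M(K)$ and set $\pi:=\pi_1(W)$. Because $G^{(n+1)}=\{e\}$, the map $\phi$ and its extension $\psi\colon\pi\to G$ kill $\pi^{(n+1)}$ and hence factor through $\Gamma:=\pi/\pi^{(n+1)}$, so $\rhot(M(K),\phi)=\lsign_G(W)-\sign(W)$ can be analyzed through the $\Gamma$-coefficient intersection theory of $W$. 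The ordinary term vanishes at once: an $n.5$-solution is in particular a $0$-solution, so $H_2(W;\Z)\cong\Z^{2r}$ and the classes $[x_i]$ span a Lagrangian for the nonsingular $\Q$-valued intersection form, whence $\sign(W)=0$. It remains to show $\lsign_G(W)=0$.

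For this I would push the intersection form forward along $\psi$ into the group von Neumann algebra $\N G$. The defining data of an $n.5$-solution (Definition~\ref{definition:n-solution}) provides lifts $[\tilde x_i]\in H_2(W;\Z\Gamma)$ spanning a submodule on which the $\Z\Gamma$-valued intersection pairing vanishes and whose image in $H_2(W;\Z)$ has rank $r$. Applying $\psi$ and tensoring up to $\N G$ produces classes in the Hilbert $\N G$-module $H_2(W;\N G)$ that span a self-annihilating submodule $\overline{P}$. The crux is to prove (i) $\dim_{\N G}H_2(W;\N G)=b_2(W)=2r$ and (ii) $\dim_{\N G}\overline{P}=r$. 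Granting these, a Hermitian form over $\N G$ admitting a self-annihilating submodule of half the ambient von Neumann dimension has vanishing $L^2$-signature, so $\lsign_G(W)=0$ and hence $\rhot(M(K),\phi)=0$.

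Establishing (i) and (ii) is where the hypotheses on $G$ carry the weight, and I expect this to be the main obstacle. Two ingredients are needed. First, the coefficient change $\Z G\to\N G$ must not inflate $L^2$-dimensions of the homology of $W$; this is the content of the amenable signature theorem of Cha and Orr \cite{Cha-Orr:2009-1}, whose proof combines Strebel's defining property of $D(R)$ (injectivity of maps of projective $RG$-modules that stay injective after applying $-\otimes_{RG}R$) with a F\o lner-type approximation of $\N G$ by matrix algebras to compare ordinary and $L^2$ Betti numbers. Second, one needs the boundary homology $H_0(M(K);\N G)$ and $H_1(M(K);\N G)$ to vanish; this is exactly where the hypothesis that $\phi$ sends a meridian $\mu$ to an infinite-order element is used, since $\pi_1 M(K)$ is normally generated by $\mu$ and $H_1(M(K);\Z)=\Z\langle\mu\rangle$, so an infinite-order image kills these groups. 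Combining the two, Poincar\'e--Lefschetz duality together with a Mayer--Vietoris (``half-lives-half-dies'') computation over $\N G$ pins down $\dim_{\N G}H_2(W;\N G)=2r$ and forces $\dim_{\N G}\overline{P}=r$.

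In the write-up I would invoke the general $4$-manifold amenable signature theorem as a black box, so that the only verification specific to the present statement is that an $n.5$-solution, together with the infinite-order-meridian hypothesis, supplies a half-rank self-annihilating submodule over $\Z\Gamma$ and the required vanishing of boundary homology --- both of which follow from Definition~\ref{definition:n-solution} and a short lemma on zero-surgery manifolds. Assembling everything, $\rhot(M(K),\phi)=\lsign_G(W)-\sign(W)=0-0=0$.
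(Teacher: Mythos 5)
This statement is not proved in the paper at all: it is quoted verbatim as Theorem~3.2 of the cited reference (Cha, \emph{Amenable $L^2$-theoretic methods and knot concordance}), so there is no in-paper argument to compare yours against. Your outline is, in substance, the strategy of the original proof in that reference: compute $\rhot(M(K),\phi)$ as the signature defect of an $n.5$-solution $W$, kill $\sign(W)$ by the augmented Lagrangian coming from the $0$-solution data, and kill $\lsign_G(W)$ by pushing the lifted classes $[\tilde x_i]$ into $H_2(W;\N G)$ and verifying the half-dimension count, with the amenable-in-$D(R)$ hypothesis entering through the Cha--Orr homological machinery and the infinite-order meridian ensuring the vanishing of low-degree boundary homology with $\N G$-coefficients. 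The one place to be careful in a full write-up is the step you flag yourself: the assertion that a self-annihilating submodule of half the $\N G$-dimension forces $\lsign_G(W)=0$ requires the nondegeneracy/dimension bookkeeping over $\N G$ (items (i) and (ii) in your sketch), and that is precisely the content of the Cha--Orr results rather than a formality; also note that $H_1(M(K);\N G)$ does not vanish merely because the meridian normally generates and has infinite image --- this again comes from the $D(R)$/amenability machinery applied to the coefficient change. Since you invoke that machinery as a black box (which is what the original proof does), your proposal is a faithful reconstruction rather than a genuinely different route.
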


\begin{definition}[{\cite[Definition 4.1]{Cha:2010-1}}]
  \label{definition:commutator-series}
  Let $G$ be a group and $\cP=(R_0, R_1,\ldots)$ be a sequence of
  rings with unity. The \emph{$\cP$-mixed-coefficient commutator
    series} $\{\cP^kG\}$ is defined inductively by $\cP^0G:=G$
  and
  \[
    \cP^{k+1}G:=\Ker\left\{\cP^kG\to \frac{\cP^kG}{[\cP^kG,\cP^kG]}\to
      \frac{\cP^kG}{[\cP^kG,\cP^kG]}\otimesover\Z
      R_k=H_1(G;R_k[G/\cP^k G])\right\}.
  \]
\end{definition}

For instance, if $R_k=\Z$ for all $k$, then $\{\cP^kG\}$ is the
standard derived series of~$G$, that is, $\cP^kG=G^{(k)}$.  If
$R_k=\Q$ for all $k$, then $\{\cP^kG\}$ is the rational derived series
of~$G$.

The lemma below immediately follows from \cite[Lemma 4.3]{Cha:2010-1}.
\begin{lemma}
  \label{lemma:target-group}
  Let $n$ be a positive integer and $G$ a group. Let
  $\cP=(R_0, R_1,\ldots, R_n)$ with $R_k=\Q$ for $k<n$ and $R_n=\Z_p$
  for a prime $p$. Then, the group $G/\cP^{n+1}G$ is amenable and lies
  in $D(\Z_p)$.
\end{lemma}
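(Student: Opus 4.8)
The plan is to read the statement off from the structure of the $\cP$-mixed-coefficient commutator series, using only the closure properties of amenability and of Strebel's class $D(R)$ that are packaged in \cite[Lemma~4.3]{Cha:2010-1}. First I would unwind \emph{Definition~\ref{definition:commutator-series}}: since $\cP^{k+1}G$ is the kernel of the composite
\[
  \cP^k G \longrightarrow \frac{\cP^k G}{[\cP^k G,\cP^k G]} \longrightarrow \frac{\cP^k G}{[\cP^k G,\cP^k G]}\otimesover{\Z} R_k,
\]
the quotient $\cP^k G/\cP^{k+1}G$ is isomorphic to the image of this map, hence is a \emph{subgroup} of the abelian group $(\cP^k G)_{\mathrm{ab}}\otimesover{\Z} R_k$. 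For $k<n$ we have $R_k=\Q$, so this ambient group is a $\Q$-vector space and $\cP^k G/\cP^{k+1}G$ is torsion-free abelian; for $k=n$ we have $R_n=\Z_p$, so the ambient group is a $\Z_p$-module and $\cP^n G/\cP^{n+1}G$ is a $\Z_p$-module. Thus the subnormal series
\[
  \{1\} = \cP^{n+1}G/\cP^{n+1}G \,\triangleleft\, \cP^{n}G/\cP^{n+1}G \,\triangleleft\, \cdots \,\triangleleft\, \cP^{1}G/\cP^{n+1}G \,\triangleleft\, G/\cP^{n+1}G
\]
exhibits $G/\cP^{n+1}G$ as an iterated extension whose $k$-th successive quotient is $\cP^{k}G/\cP^{k+1}G$, $k=0,\dots,n$.

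Given this, both conclusions follow formally. Amenability is immediate, since all successive quotients are abelian and so $G/\cP^{n+1}G$ is solvable. For membership in $D(\Z_p)$ I would induct along the series, using that $D(\Z_p)$ contains every torsion-free abelian group and every $\Z_p$-module and is closed under extensions (if $N\triangleleft\Gamma$ with $N,\Gamma/N\in D(\Z_p)$ then $\Gamma\in D(\Z_p)$) --- all contained in \cite[Lemma~4.3]{Cha:2010-1} and, ultimately, in Strebel's work \cite{Strebel:1974-1}. The group $G/\cP^{1}G$ is torsion-free abelian, hence in $D(\Z_p)$; and if $G/\cP^{k}G\in D(\Z_p)$ for some $1\le k\le n$, then $G/\cP^{k+1}G$ is an extension of $G/\cP^{k}G$ by $\cP^{k}G/\cP^{k+1}G$, the latter being torsion-free abelian (when $k<n$) or a $\Z_p$-module (when $k=n$), hence in $D(\Z_p)$ either way; so $G/\cP^{k+1}G\in D(\Z_p)$. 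Taking $k=n$ finishes the argument.

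There is no serious obstacle here --- the statement really is a formal consequence of \cite[Lemma~4.3]{Cha:2010-1} --- and the only point that needs attention is the interplay of the two coefficient rings appearing in $\cP$: keeping the first $n$ quotients torsion-free is what lets the group sit in $D(R)$ for \emph{every} $R$, in particular for $R=\Z_p$, while a single $\Z_p$-coefficient step is exactly the amount of torsion one may add at the top and still remain in $D(\Z_p)$ (a $\Z_p$-module lies in $D(\Z_p)$ but not, in general, in $D(\Q)$). A second, minor, subtlety is that $\cP^k G/\cP^{k+1}G$ is a subgroup, not a quotient, of $(\cP^k G)_{\mathrm{ab}}\otimesover{\Z} R_k$, so for $k<n$ it need not be divisible; but it remains torsion-free, which is all that the $D(R)$-closure statements require.
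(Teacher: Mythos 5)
Your argument is correct. The only caveat worth recording is that the paper does not actually prove this lemma at all: it observes that the statement ``immediately follows from \cite[Lemma 4.3]{Cha:2010-1}'', which is (in slightly greater generality) exactly the assertion that a group admitting a subnormal series whose successive quotients are abelian with no torsion coprime to $p$ is amenable and lies in $D(\Z_p)$, applied to the mixed-coefficient commutator series. What you have written is, in effect, the proof of that cited lemma specialized to $\cP=(\Q,\dots,\Q,\Z_p)$: you unwind Definition~\ref{definition:commutator-series} to see that $\cP^kG/\cP^{k+1}G$ embeds in $(\cP^kG)_{\mathrm{ab}}\otimes_{\Z}R_k$, hence is torsion-free abelian for $k<n$ and a $\Z_p$-module for $k=n$; amenability then comes from solvability, and membership in $D(\Z_p)$ from Strebel's facts that torsion-free abelian groups and $\Z_p$-modules lie in $D(\Z_p)$ and that $D(\Z_p)$ is closed under extensions, applied inductively along the series $1\to \cP^kG/\cP^{k+1}G\to G/\cP^{k+1}G\to G/\cP^kG\to 1$. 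All of these ingredients are genuinely in \cite{Strebel:1974-1} (and are the content of the proof in \cite{Cha:2010-1}), and your two flagged subtleties — that the successive quotients are subgroups, not quotients, of the coefficient modules, and that only the last, $\Z_p$-coefficient, stage may introduce torsion — are exactly the right points to watch. So your route is a self-contained re-derivation where the paper simply cites; mathematically they coincide.
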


\subsection{Non-triviality of the bi-filtrations}
\label{subsection:nontriviality-bifiltration}

In this subsection, we prove the following:

\begin{theorem}
  \label{theorem:infinite-rank} Let $m$ and $n$ be positive
  integers.
  \begin{enumerate}
  \item There exists a family of slice knots $\{J^i\}_{i=1,2\ldots}$ in
    $\cG_{m+2,m+2}$ whose arbitrary nontrivial linear combinations are
    not in $\F_{m.5,m.5}$.
  \item There exists a family of knots $\{J_{m,n}^i\}_{i=1,2,\ldots}$
    in $\cG_{m+2,n+2}$ whose arbitrary nontrivial linear combinations
    are not in $\F_{m.5,n}\cup \F_{m,n.5}$.
  \end{enumerate}
\end{theorem}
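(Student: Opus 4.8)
The plan is to produce the families $\{J^i\}$ and $\{J^i_{m,n}\}$ via the satellite construction of Section~\ref{subsection:construction-of-examples}, so that Theorem~\ref{theorem:grope-doubly-slice-construction} immediately gives the positive (grope sliceness) statements, and then to obstruct membership in the solvable bi-filtration using $\rhot$-invariants. For the building blocks, I would take $J_0$ to be an appropriate knot with a height~$2$ 3D capped grope concordance to the unknot (e.g.\ one of Horn's or Cha's standard examples built from a genus-one algebraically slice knot with a nontrivial $\rhot$-signature on its derived covers, so that \ref{item:height-2-grope-for-J_0} holds), and take each $(K_k,\eta_k)$ to be a ribbon knot together with an unknotted curve admitting a height~$1$ 3D satellite capped grope, so that \ref{item:height-1-grope-for-K_k-eta_k} holds. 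Then $J_{m,n} = \cR(\alpha',\beta';J_{m-1},J_{n-1})$ lies in $\cG_{m+2,n+2}$. To get an \emph{infinite} linearly independent family rather than a single knot, one varies the innermost companion $J_0$ (equivalently, the generating germ of the construction) over an infinite family $\{J_0^i\}$ whose integral $L^2$-signature functions are linearly independent over $\Q$; this is a standard device (as in \cite{Cha:2012-1, Horn:2011-1, Jang:2017-1}) — for instance pull back $\rhot$ along characters of coprime prime-power order, or use knots whose signature integrals are linearly independent real numbers.

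For the negative direction, the first reduction is Theorem~\ref{theorem:comparison of bi-filtrations}: if a nontrivial linear combination $K = \#\, a_i J^i$ (or $\#\, a_i J^i_{m,n}$) were height $(m.5,m.5)$ Whitney slice (resp.\ height $(m.5,n)$ or $(m,n.5)$ Whitney slice), then it would be $(m-1.5, m-1.5)$-solvable (resp.\ $(m-1.5,n-2)$- or $(m-2,n-1.5)$-solvable), so it suffices to prove the knots and their combinations lie outside $\F_{m.5,m.5}$, $\F_{m.5,n}$, $\F_{m,n.5}$. To contradict $(m,n)$-solvability I would invoke the amenable signature theorem: given a putative $(m.5,\star)$-solution $(W_1,W_2)$, the $4$-manifold $W_1$ is an $m.5$-solution for $M(K)$, and $W_1\cup_{M(K)} W_2$ has infinite cyclic $\pi_1$; feeding a homomorphism $\phi\colon \pi_1 M(K)\to G$ with $G = \pi_1(W_1\cup W_2 \text{-complementary data})/\cP^{m+1}(-)$ amenable and lying in $D(\Z_p)$ (Lemma~\ref{lemma:target-group}) that extends over $W_1$ and sends a meridian to an infinite-order element, Theorem~\ref{theorem:obstruction} forces $\rhot(M(K),\phi)=0$. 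On the other hand, the iterated satellite structure — via the standard ``injectivity/functoriality'' bookkeeping of the mixed-coefficient commutator series along the pieces $\cR$, $K_k$, $J_0$ — lets one route $\phi$ so that its restriction to the innermost companion's zero surgery is nontrivial, whence $\rhot(M(K),\phi)$ picks up the (nonzero) contribution of the germ $J_0^i$'s metabelian $L^2$-signature, which survives after summing because of the linear independence arranged above and because the universal bound (Lemma~\ref{lemma:universal-bound}) keeps all the extraneous terms controlled. This contradiction completes the argument; the asymmetry between parts (1) and (2) is exactly the asymmetry between enlarging one side versus the other in the last satellite operation along $\alpha'$ versus $\beta'$, which is why $\F_{m.5,n}\cup\F_{m,n.5}$ (rather than $\F_{m.5,n.5}$) appears in part~(2).

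The main obstacle, which is where essentially all the real work goes, is the middle step: carefully choosing the coefficient system $G$ and the homomorphism $\phi\colon \pi_1 M(K)\to G$ so that (i) $\phi$ extends over the hypothetical solution $W_1$ and is nontrivial on meridians — this needs the mixed commutator series calibrated to the height $m.5$ so that the solution's derived-series duality is exactly consumed — while simultaneously (ii) the restriction of $\phi$ to the derived cover in which the innermost companion $J_0^i$ lives is \emph{nontrivial}, so that $\rhot$ detects its signature. Controlling both at once requires the injectivity lemmas of \cite[\S4]{Cha:2010-1} applied to each satellite stage (the curves $\eta_k$, $\alpha'$, $\beta'$ map to commutator elements of the appropriate depth), together with the additivity of $\rhot$ under the satellite and connected-sum operations and a careful genus/crossing-number count to apply Lemma~\ref{lemma:universal-bound}. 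I would organize this as a sequence of lemmas — one computing $\rhot$ of the relevant zero surgeries in terms of the germ data, one establishing the commutator-depth of the satellite curves, and one assembling the linear-independence argument over the family — deferring the routine verifications to \cite{Kim:2006-1, Cha:2012-1} where analogous computations already appear.
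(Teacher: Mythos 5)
Your outline matches the paper's general machinery (the satellite construction of Section~\ref{subsection:construction-of-examples} with Theorem~\ref{theorem:grope-doubly-slice-construction} for membership in $\cG_{m+2,n+2}$, amenable signature theorem plus mixed-coefficient commutator series for the obstruction, and a family $J_0^i$ calibrated at distinct primes so that linear combinations survive), but there is a genuine gap at the decisive step: nowhere do you use the defining feature of an $(m.5,n)$-solution, namely that there are \emph{two} 4-manifolds $U,V$ with $\pi_1(U\cup_{M(J)}V)\cong\Z$. As written, your argument feeds a homomorphism extending over the single solution $W_1$ and claims one can ``route'' it to be nontrivial on the innermost companion; if that worked for a single solution it would prove the knots are not $m.5$-solvable at all, which is false --- in part (1) the knots $J^i$ are slice, so a slice disk exterior is an $h$-solution for every $h$. (Your parenthetical guess that $G$ comes from $\pi_1(W_1\cup W_2)$ also cannot work: that union is closed with infinite cyclic fundamental group, so its commutator-series quotients are just $\Z$ and see no metabelian signatures.)

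The missing idea is the double-slice mechanism inherited from \cite{Kim:2006-1}: the pattern $\cR=9_{46}$ carries two infection curves whose images $\alpha_1,\beta_1$ generate complementary self-annihilating summands $P\oplus Q$ of $H_1(M(J^1_{m,n});R[t^{\pm1}])$, and Proposition~\ref{proposition:homology-splitting} applied to the double $1$-solution $(U,V)$ splits $H_1(M(J);R[t^{\pm1}])$ as $\Ker i_*^U\oplus\Ker j_*^V$. Combined with the self-annihilating-kernel argument for the auxiliary $1$-solution $U_1$, this yields Lemma~\ref{lemma:nonvanishing-of-infection-curves}: one of the two dichotomies $\{i_*(\alpha_1)\ne0,\ j_*(\beta_1)\ne0\}$ or $\{j_*(\alpha_1)\ne0,\ i_*(\beta_1)\ne0\}$ must hold. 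Only \emph{then} does the injectivity/commutator-depth bookkeeping (Lemma~\ref{lemma:nonvanishing-of-the-meridian}, fed by the hypothesis $inc_*(\alpha_1)\ne0$ or $inc_*(\beta_1)\ne0$) make the meridian of $J_0^1$ visible, and Proposition~\ref{proposition:non-solvability} converts survival of $\alpha_1$ (resp.\ $\beta_1$) into ``$W$ is not an $m.5$-solution'' (resp.\ ``not an $n.5$-solution''). The contradiction for $(m.5,n)$-solvability is then a two-step dance you do not have: since $U$ is an $m.5$-solution, $i_*(\alpha_1)=0$, so the lemma forces $i_*(\beta_1)\ne0$, so $U$ is not an $n.5$-solution --- contradicting $m\ge n$. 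Without Lemma~\ref{lemma:nonvanishing-of-infection-curves} and this case analysis, calibrating $G$ and $\phi$ as you propose cannot simultaneously achieve your conditions (i) and (ii), so the obstruction never gets off the ground.
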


From this it follows that there are infinitely many linearly
independent knots at each level of the grope, Whitney tower, and
solvable bi-filtrations of knots, since
$\cG_{m+2,n+2}\subset \cW_{m+2,n+2}\subset \F_{m,n}$ by
Theorems~\ref{theorem:grope-bifiltration-contained-in-whitney-bifiltration}
and~\ref{theorem:comparison of bi-filtrations}.  In particular,
Theorem~\ref{theorem:nontriviality-of-bi-filtrations} follows
immediately from Theorem~\ref{theorem:infinite-rank}.

We remark that Theorem~\ref{theorem:infinite-rank} generalizes
Theorem~1.1 in \cite{Kim:2006-1}.

\begin{proof}[Proof of Theorem~\ref{theorem:infinite-rank}]

  We will prove (2) first.  Since $\F_{k,\ell}=\F_{\ell,k}$, we may
  assume $m\ge n$.  

  For each $i\ge 1$, construct a knot $J_{m,n}^i$ as described in
  Section~\ref{subsection:construction-of-examples}, using the knot
  $\cR=9_{46}$ and the following choices for $K_k$, $\eta_k$, and
  $J_0=J_0^i$: in addition to \ref{item:height-2-grope-for-J_0} and
  \ref{item:height-1-grope-for-K_k-eta_k} in
  Section~\ref{subsection:construction-of-examples}, suppose the
  following.
  
  \begin{enumerate}[label=(N\arabic{*})]
  \item\label{item:cyclic-K} Each $K_k$ has a cyclic Alexander module
    generated by $\eta_k$. For convenience, we take the same
    $(K, \eta)$ as $(K_k,\eta_k)$ for all~$k$.
  \item\label{item:independent-J_0^i} For the knot $K$ in
    \ref{item:cyclic-K} above, let $a_K$ be the top coefficient of the
    Alexander polynomial of $K$ and let $C_K$ be a constant such that
    $|\rhot(M(K), \phi)|<C_K$ for all homomorphisms
    $\phi\colon \pi_1M(K)\to G$ given by
    Lemma~\ref{lemma:universal-bound}.  Then, $J_0^i$ are knots
    obtained by Lemma~\ref{lemma:J_0^i} below with $C_0=(m-1)C_K$ and
    $A=a_K$.
  \end{enumerate}
  \begin{lemma}[{\cite[Proposition 3.4]{Jang:2017-1}}]
    \label{lemma:J_0^i}
    For any constants $C_0$ and $A$, there exist knots
    $J_0^1, J_0^2, \ldots,$ and an increasing sequence of primes
    $A<p_1<p_2\ldots, $ which satisfy the following: letting
    $\omega_i = e^{2\pi\sqrt{-1}/p_i}$, a primitive $p_i$-th root of
    unity,
    \begin{enumerate}
    \item there is a 3D capped grope concordance of height 2 between
      $J_0^i$ and the unknot,
    \item $\sum_{r=0}^{p_i-1}\sigma_{J_0^i}(\omega_i^r) > C_0$, and
    \item $\sum_{r=0}^{p_j-1}\sigma_{J_0^i}(\omega_j^r) =0$ for $j<i$.
    \end{enumerate}  
  \end{lemma}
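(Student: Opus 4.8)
The plan is to reduce the statement to \cite[Proposition~3.4]{Jang:2017-1}, and I will sketch the construction that underlies it. Each $J_0^i$ will be assembled from a fixed supply of ``signature bump'' knots, with the primes $p_i$ and the multiplicities chosen inductively so that the signature sums $\sum_r\sigma_{J_0^i}(\omega_j^r)$ depend on $i$ in a triangular fashion: zero for $j<i$, arbitrarily large for $j=i$.

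First I would produce the basic building blocks. For a prescribed point $\theta\in(0,\tfrac12)$ and a prescribed small width $\delta>0$, I would exhibit a knot $E=E(\theta,\delta)$ with $\operatorname{Arf}(E)=0$ whose Levine--Tristram signature function $\sigma_E(e^{2\pi i t})$ vanishes outside the two arcs $(\theta,\theta+\delta)$ and $(1-\theta-\delta,1-\theta)$, where it equals $+2$, and such that $E$ bounds a 3D capped grope concordance of height $2$ to the unknot. The signature profile itself is routine to realize, for instance as a connected sum or iterated cable of torus knots with suitably interleaving Alexander roots (and replacing by a mirror to fix the sign); what takes work is to do this while simultaneously keeping $\operatorname{Arf}=0$ and, crucially, the height-$2$ grope-to-unknot property. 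Here one uses that the latter property is preserved under connected sum and that it holds for a suitable seed knot obtained, as in \cite{Jang:2017-1}, by infecting a ribbon knot along a curve bounding a capped grope of height one in its complement; note that a ``$3$D capped grope concordance'' is \emph{not} an actual concordance, so there is no algebraic-sliceness obstruction preventing $\sigma_E\neq 0$. One also keeps the jump points of $\sigma_E$ at roots of unity $e^{2\pi i r/q}$ with $q$ coprime to any prescribed finite set of primes, which is what makes the next step work.

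Second I would choose the data inductively. Having fixed $J_0^1,\dots,J_0^{i-1}$ and primes $A<p_1<\dots<p_{i-1}$, I would take $J_0^i$ to be a connected sum of copies of blocks $E(\theta,\delta)$ whose bump arcs all lie in one tiny interval positioned in the complement of the finite set $\bigcup_{j<i}\tfrac1{p_j}\mathbb Z$ (possible since that set is finite, $\delta$ may be taken arbitrarily small, and the torus-knot parameters may be chosen coprime to the $p_j$). Then $\sigma_{J_0^i}$ vanishes at every $p_j$-th root of unity for $j<i$, which gives (3); moreover $\operatorname{Arf}(J_0^i)=0$ and $J_0^i$ still bounds a $3$D capped grope concordance of height $2$ to the unknot, which gives (1). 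Finally, since $\sigma_{J_0^i}$ is a nonzero step function supported on arcs of positive total length, by equidistribution a definite proportion of the $p_i$-th roots of unity lie in its support once $p_i$ is large; choosing a prime $p_i>\max\{A,p_{i-1}\}$ large enough and taking sufficiently many connected summands, $\sum_{r=0}^{p_i-1}\sigma_{J_0^i}(\omega_i^r)$ exceeds $C_0$, which gives (2).

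The main obstacle is the simultaneous control in the first step: realizing an essentially arbitrary small symmetric Levine--Tristram signature bump by a knot that \emph{also} bounds a $3$D capped grope concordance of height $2$ to the unknot and has vanishing Arf invariant. This is precisely the geometric content of \cite[Proposition~3.4]{Jang:2017-1}; the remainder is the elementary number theory of placing the primes $p_i$ and the step-function bookkeeping behind (2) and (3).
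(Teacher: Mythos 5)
There is a genuine gap, and it sits exactly where you locate the ``main obstacle.'' Your bookkeeping for (2) and (3) is fine, but it rests on a supply of building blocks $E(\theta,\delta)$ whose existence you assert rather than establish: knots with a signature bump supported on an \emph{arbitrarily prescribed} small arc that \emph{simultaneously} cobound a 3D capped grope concordance of height $2$ with the unknot. The mechanism you offer does not produce them. Realizing the signature profile by (cables of) torus knots and then invoking ``the grope property is preserved under connected sum and holds for a suitable seed knot'' is incoherent as stated: connected sum preserves the property only when \emph{every} summand has it, and the summands carrying the nonzero signature must therefore themselves cobound height-$2$ 3D capped gropes with the unknot --- which torus knots and their cables are not known to do (and in general do not). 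Nor can you transfer the signature onto a grope-admitting seed by infection: infection along a null-homologous curve (winding number zero satellite) does not change the Levine--Tristram signature function at all, so in the Cochran--Teichner/Horn-type constructions the signature comes from the genus-one Seifert form of the seed itself, and the requirement that a derivative curve bound a height-one capped grope in the complement rigidly constrains where the jumps can occur. Concretely, Horn's family $P_m$ \cite{Horn:2010-1} has jumps only at angles $\theta_m$ with $\cos\theta_m=\frac{2\sqrt[3]{m}-1}{2\sqrt[3]{m}}$, a sequence accumulating at $0$; free placement of bumps in the complement of $\bigcup_{j<i}\frac{1}{p_j}\Z$ is precisely what is not available.

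The paper's proof (and the argument behind \cite[Proposition~3.4]{Jang:2017-1}) avoids needing that flexibility by adapting the primes to a fixed family instead of adapting the knots to the primes: it takes $J_0^i$ to be $N$ copies of $P_{m_{i+1}}\#(-P_{m_i})$, whose signature function is the characteristic bump $2$ on $\theta_{m_{i+1}}<|\theta|<\theta_{m_i}$, chooses primes interleaved so that $\theta_{m_{i+1}}<2\pi/p_i<\theta_{m_i}$, and notes that Horn's height-$2$ capped grope concordance to the unknot is in fact 3D. Then (2) holds because $\omega_i^{\pm1}$ lie in the bump and all contributions are nonnegative (so $N>C_0/4$ suffices, with no equidistribution needed), and (3) holds because for $j<i$ every nontrivial $p_j$-th root of unity has angle at least $2\pi/p_j>\theta_{m_{j+1}}\ge\theta_{m_i}$, hence lies entirely above the bump of $J_0^i$. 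If you want to keep your outline, you must either prove the arbitrary-bump realization result with the 3D height-$2$ capped grope property (this is the whole content, not a routine step), or restrict your blocks to a family like Horn's with bumps accumulating at $0$ and place the later bumps in the gap $(0,2\pi/p_{i-1})$ --- at which point you have reproduced the paper's argument.
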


  In Lemma~\ref{lemma:J_0^i}, Condition~(1) is needed to satisfy
  \ref{item:height-2-grope-for-J_0}, Condition~(2) is needed to make
  each $J_{m,n}^i$ nontrivial modulo $\F_{m.5,n}\cup \F_{m,n.5}$, and
  (3) is needed to have any nontrivial linear combination of
  $J_{m,n}^i$ not in $\F_{m.5,n}\cup \F_{m,n.5}$.

  For instance, take the knot $6_1$ (stevedore's knot) as $K=K_k$, and
  take the curve $\eta$ shown in Figure~\ref{figure:knot 6_1}
  as~$\eta=\eta_k$ for all $k$.  In this case $C_K=418279680$. Note
  that $K$ is a ribbon knot and has a cyclic Alexander module
  $\Z[t^{\pm 1}]/(2t^2-5t+2)$ generated by $\eta$, and there is a 3D
  satellite capped grope of height 1 for $(K,\eta)$ as in
  Figure~\ref{figure:knot 6_1}.

  \begin{figure}[H]
    \labellist
    \small\hair 0mm
    \pinlabel {$K$} at 120 5
    \pinlabel {$\eta$} at 90 110	
    \endlabellist
    \includegraphics{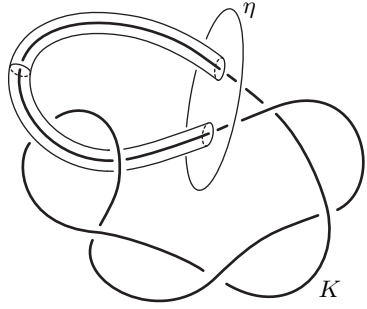}
    \caption{The knot $K=6_1$ and a satellite capped grope}
    \label{figure:knot 6_1}
  \end{figure}

  \begin{proof}[Proof of Lemma~\ref{lemma:J_0^i}]
    For each $m\ge 1$, let $P_m$ be the knot in
    \cite[p.756]{Horn:2010-1}, which generalizes the knot in
    \cite[Figure~3.6]{Cochran-Teichner:2003-1}.

    The Levine-Tristram signature function $\sigma_{P_m}$ of the knot
    $P_m$ is
    \[
      \sigma_{P_m}(e^{2\pi\sqrt{-1}\theta})=
      \begin{cases}
        2	&	\mbox{if }\theta_m < |\theta| \le \pi,\\[1ex]
        0	&	\mbox{if }0\le |\theta|<\theta_m,
      \end{cases}
    \]
    where $\theta_m$ is uniquely determined by the conditions
    $0< \theta_m<\pi$ and
    $\cos (\theta_m)=\frac{2\sqrt[3]{m}-1}{2\sqrt[3]{m}}$ (see
    \cite[Section 4]{Horn:2010-1}). Since $\{\theta_m\}$ is a
    decreasing sequence converging to 0, there are integers $m_i$ and
    primes $p_i$ such that $A< p_1$ and
    $\theta_{m_{i+1}} < 2\pi/p_i < \theta_{m_i}$ for all $i$. Choose
    an integer $N$ such that $N> C/4$ and let $J_0^i$ be a connected
    sum of $N$ copies of $P_{m_{i+1}}\# (-P_{m_i})$ for each
    $i$. Then, $J_0^i$ satisfy the conditions (2) and (3). In
    \cite[Proposition~3.1]{Horn:2010-1}, it is shown that in
    $S^3\times [0,1]$ there is a capped grope concordance of height 2
    between $P_m$ and the unknot. In fact, this capped grope
    concordance is a 3D capped grope concordance (see Figure~5 in
    \cite{Horn:2010-1}). From this observation, one can easily see
    that there is a 3D capped grope concordance of height 2 between
    $J_0^i$ and the unknot.
  \end{proof}

  Now we have knots $J_{m,n}^i$ which satisfy
  \ref{item:height-2-grope-for-J_0},
  \ref{item:height-1-grope-for-K_k-eta_k}, \ref{item:cyclic-K}, and
  \ref{item:independent-J_0^i}. By
  Theorem~\ref{theorem:grope-doubly-slice-construction}, we have
  $J_{m,n}^i\in \cG_{m+2,n+2}$ for each $i$.  Let
  $J=\#_i a_iJ_{m,n}^i$ ($a_i\in \Z$) be a nontrivial linear
  combination of~$J_{m,n}^i$.  We will show that $J$ is neither
  $(m.5,n)$-solvable nor $(m,n.5)$-solvable.  We may assume that there
  are finitely many summands $J_{m,n}^i$ and all coefficients $a_i$
  are nonzero.  By taking $-J$ if necessary, we may assume $a_1>0$.

  Let $C$ be the standard cobordism between $\cup_{a_i} M(J_{m,n}^i)$
  and $M(J)$ obtained by following the method in
  \cite[p.113]{Cochran-Orr-Teichner:2002-1}.  Let $(U,V)$ be an
  $(m,n)$-solution for $J$. We define
  \begin{align*}
    U_C &= C \amalgover{M(J)} U, \\
    V_C  &= C \amalgover{M(J)} V. 
  \end{align*}
  In particular,
  $\partial U_C = \partial V_C = \bigcup_{a_i} M(J_{m,n}^i)$, and we see
  $M(J_{m,n}^1)$ as a boundary component of $U_C$ and $V_C$.

  Recall that $\alpha'$ and $\beta'$ are the curves in the exterior of
  $\cR$ which are shown in Figure~\ref{figure:knot 9_46_2}.  Let
  $\alpha_1$ and $\beta_1$ be the curves in $M(J_{m,n}^1)$ which are
  the images of the parallel copies of $\alpha'$ and $\beta'$ under
  the satellite construction with the pattern~$\cR$.  For $R=\Q$ or
  $\Z_p$ with $p$ an odd prime, we have the following:
  \[
    H_1(M(J_{m,n}^1);R[t^{\pm 1}])\cong H_1(M(\cR);R[t^{\pm 1}])\cong
    P\oplus Q
  \]
  where $P = \langle \alpha_1\rangle\cong R[t^{\pm 1}]/(2t-1) $ and
  $Q = \langle\beta_1\rangle \cong R[t^{\pm 1}]/(t-2)$.  This can be
  verified by straightforward computation.  (Alternatively, one may
  invoke Proposition~\ref{proposition:homology-splitting} which is
  stated in Section~\ref{subsection:proofs-of-lemma-and-proposition}.)

  \begin{lemma}\label{lemma:nonvanishing-of-infection-curves}
    Let $R=\Q$ or $\Z_p$ with $p$ an odd prime. Let $(U,V)$ be an
    $(m,n)$-solution for~$J$.  Let $U_C$ and $V_C$ be defined as
    above. Let
    $i_*\colon H_1(M(J_{m,n}^1);R[t^{\pm 1}])\to H_1(U_C;R[t^{\pm
      1}])$ and
    $j_*\colon H_1(M(J_{m,n}^1);R[t^{\pm 1}])\to H_1(V_C;R[t^{\pm
      1}])$ be inclusion-induced homomorphisms.  Then, either $\left\{
      \begin{tabular}{@{}l@{}}
        $i_*(\alpha_1)\ne 0$ \\
        $j_*(\beta_1)\ne 0$
      \end{tabular}
    \right\}$
    or
    $\left\{
      \begin{tabular}{@{}l@{}}
        $j_*(\alpha_1)\ne 0$ \\
        $i_*(\beta_1)\ne 0$
      \end{tabular}
    \right\}$.
  \end{lemma}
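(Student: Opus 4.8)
The plan is to reformulate the statement in terms of the kernels of the maps on $R[t^{\pm1}]$-Alexander modules and then exploit the hyperbolic nature of the Blanchfield form of $\cR=9_{46}$ together with the hypothesis $\pi_1(U\cup_{M(J)}V)\cong\Z$. Write $\Lambda:=R[t^{\pm1}]$. First I would record the algebra. By assumption $H_1(M(J_{m,n}^1);\Lambda)=P\oplus Q$ with $P=\langle\alpha_1\rangle\cong\Lambda/(2t-1)$ and $Q=\langle\beta_1\rangle\cong\Lambda/(t-2)$; since $2t-1$ and $t-2$ are coprime in $\Lambda$ (which holds for $R=\Q$ and for all primes $p$ occurring in the construction) and each quotient is $1$-dimensional over $R$, the modules $P$ and $Q$ are simple and are the only proper nonzero $\Lambda$-submodules of $H_1(M(J_{m,n}^1);\Lambda)$. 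Because $\overline{(2t-1)}\doteq(t-2)$ is the conjugate of $(2t-1)$, which is itself not self-conjugate, the Blanchfield pairing $B\ell$ of $M(J_{m,n}^1)$ vanishes on $P$ and on $Q$ and restricts to a nonsingular pairing $P\times Q\to R(t)/\Lambda$; consequently the only isotropic $\Lambda$-submodules of $(H_1(M(J_{m,n}^1);\Lambda),B\ell)$ are $0$, $P$ and $Q$, and $P,Q$ are its only nonzero self-annihilating submodules. Next, using the standard structure of the cobordism $C$ (built from $(\bigsqcup_{a_i}M(J_{m,n}^i))\times I$ by adding one $1$-handle and one $2$-handle per connected-sum, so that the inclusions induce isomorphisms $H_1(\bigsqcup_{a_i}M(J_{m,n}^i);\Lambda)\cong H_1(C;\Lambda)\cong H_1(M(J);\Lambda)$ compatibly), a Mayer--Vietoris argument identifies $H_1(U_C;\Lambda)$ with $H_1(U;\Lambda)$ and $i_*$ with the composite $H_1(M(J_{m,n}^1);\Lambda)\hookrightarrow H_1(M(J);\Lambda)\to H_1(U;\Lambda)$, under which $H_1(M(J);\Lambda)=\bigoplus_i H_1(M(J_{m,n}^i);\Lambda)^{\oplus|a_i|}$ is an orthogonal sum for the Blanchfield form of $M(J)$; likewise for $j_*$ and $V$.

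Set $P_U:=\ker\bigl(H_1(M(J);\Lambda)\to H_1(U;\Lambda)\bigr)$ and $P_V:=\ker\bigl(H_1(M(J);\Lambda)\to H_1(V;\Lambda)\bigr)$, and put $P_U^1:=P_U\cap H_1(M(J_{m,n}^1);\Lambda)$ and $P_V^1:=P_V\cap H_1(M(J_{m,n}^1);\Lambda)$. Since $U$ and $V$ are in particular $1$-solutions, $P_U$ and $P_V$ are self-annihilating with respect to the Blanchfield form of $M(J)$ (for $R=\Q$ this is \cite[Theorem~4.4]{Cochran-Orr-Teichner:1999-1}, and the same Poincar\'e--Lefschetz duality argument gives the $\Z_p$ case, cf.~\cite{Cha:2010-1}); hence $P_U^1$ and $P_V^1$ are isotropic for $B\ell$ and so lie in $\{0,P,Q\}$ by the previous paragraph. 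As $P=\langle\alpha_1\rangle$ and $Q=\langle\beta_1\rangle$ are simple, $i_*(\alpha_1)=0\Leftrightarrow P_U^1=P$, $i_*(\beta_1)=0\Leftrightarrow P_U^1=Q$, and symmetrically $j_*(\alpha_1)=0\Leftrightarrow P_V^1=P$, $j_*(\beta_1)=0\Leftrightarrow P_V^1=Q$. A routine check over the nine possibilities for $(P_U^1,P_V^1)$ then shows that the conclusion of the lemma holds precisely when $P_U^1\cap P_V^1=0$; so it suffices to prove this.

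Proving $P_U^1\cap P_V^1=0$ is the main obstacle, and I would deduce it from the stronger statement $P_U\cap P_V=0$. Here the hypothesis $\pi_1(X)\cong\Z$ for $X:=U\cup_{M(J)}V$ is essential: the universal cover of $X$ is its infinite cyclic cover and is simply connected, so $H_1(X;\Lambda)=H_1(\widetilde X;R)=0$, and the Mayer--Vietoris sequence of $X$ over $\Lambda$ then gives a surjection $H_1(M(J);\Lambda)\twoheadrightarrow H_1(U;\Lambda)\oplus H_1(V;\Lambda)$ whose kernel is exactly $P_U\cap P_V$. Combining this with the fact that for a solution $W$ the inclusion induces a surjection $H_1(M(J);\Lambda)\twoheadrightarrow H_1(W;\Lambda)$ (equivalently $H_1(W,M(J);\Lambda)=0$, a standard consequence of Poincar\'e--Lefschetz duality together with the half-rank condition) forces $P_U+P_V=H_1(M(J);\Lambda)$; dualizing via the nonsingular Blanchfield form of $M(J)$ and $P_U=P_U^\perp$, $P_V=P_V^\perp$ yields $P_U\cap P_V=(P_U+P_V)^\perp=0$, hence $P_U^1\cap P_V^1=0$, and the lemma follows. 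The delicate points I anticipate are the precise hypotheses under which the self-annihilating and surjectivity statements can be quoted; should the latter be awkward to cite in the needed generality over $\Z_p$, I would instead carry out the Blanchfield-duality count directly inside the summand $H_1(M(J_{m,n}^1);\Lambda)$, using that it is an orthogonal summand of $H_1(M(J);\Lambda)$ on which the form is hyperbolic with Lagrangians $P$ and $Q$.
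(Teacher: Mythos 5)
Your proposal is correct in substance but follows a genuinely different route in its key step. The paper handles the two halves of the statement separately: it rules out $i_*(x)=j_*(x)=0$ for $x=\alpha_1,\beta_1$ by quoting the splitting of Proposition~\ref{proposition:homology-splitting} for the double $1$-solution $(U,V)$, and it rules out $i_*(\alpha_1)=i_*(\beta_1)=0$ (and the analogue for $j_*$) by capping off the extra boundary components of $U_C$ with $1$-solutions $W^i_r$ of the other summands $J^i_{m,n}$ to build a $1$-solution $U_1$ for $M(J^1_{m,n})$ alone, and then invoking the self-annihilating kernel of \cite[Theorem~4.4]{Cochran-Orr-Teichner:1999-1}. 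You instead (a) re-derive the content of Proposition~\ref{proposition:homology-splitting} directly ($\pi_1(U\cup_{M(J)}V)\cong\Z$ gives $H_1$ of the union with $R[t^{\pm1}]$-coefficients equal to zero, Mayer--Vietoris gives surjectivity onto $H_1(U)\oplus H_1(V)$, hence $P_U+P_V=H_1(M(J);R[t^{\pm1}])$, and isotropy plus nondegeneracy of the Blanchfield form gives $P_U\cap P_V=0$), and (b) replace the capping construction by the observation that $P_U\cap H_1(M(J^1_{m,n}))$ is isotropic for the Blanchfield form of the summand, hence proper. Your route is more economical: it needs no auxiliary $1$-solutions for the other $J^i_{m,n}$ and no verification that $U_1$ is a $1$-solution; what it buys this with is the (standard but unstated in the paper) compatibility of the Blanchfield form of $M(J)$ with the orthogonal summand identification through the cobordism $C$, which the paper's capping trick avoids.

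Two caveats. First, your trichotomy ``the only proper nonzero (isotropic) submodules are $P$ and $Q$'' and the claim that $(2t-1)$ is not self-conjugate rest on $(2t-1)$ and $(t-2)$ being coprime in $R[t^{\pm1}]$; this fails exactly for $p=3$, where the two ideals coincide and diagonal submodules appear, while the lemma is stated for every odd prime $p$. This is patchable: you only need that $P_U^1$ and $P_V^1$ are proper and intersect trivially, and then the four ways the conclusion could fail ($\alpha_1$ in both kernels, $\beta_1$ in both kernels, or both generators in one kernel) are each excluded directly, with no classification of submodules; note also that the primes actually produced by Lemma~\ref{lemma:J_0^i} satisfy $p_i\ge 7$, and the paper's own argument is insensitive to this issue. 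Second, the ``standard'' surjectivity $H_1(M(J);R[t^{\pm1}])\to H_1(W;R[t^{\pm1}])$ for an arbitrary solution $W$ is neither needed nor justified as stated (it is not a formal consequence of duality and the half-rank condition); fortunately it is redundant, since the Mayer--Vietoris surjection onto $H_1(U)\oplus H_1(V)$ already yields $P_U+P_V=H_1(M(J);R[t^{\pm1}])$ by lifting $(i_*x,0)$, so your argument survives with that sentence deleted.
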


  The proof of Lemma~\ref{lemma:nonvanishing-of-infection-curves} is
  postponed to
  Section~\ref{subsection:proofs-of-lemma-and-proposition}.
  
  The next proposition is a crucial technical result. Below we use the
  convention that
  $J_{m,0}^i:=\cR(\alpha',\beta';J_{m-1}^i,\mbox{unknot})$ and
  $J_{0,n}^i:=\cR(\alpha',\beta';\mbox{unknot},J_{n-1}^i)$.
  
  \begin{proposition}
    \label{proposition:non-solvability}
    Let $m,n\ge0$. Let $R=\Q$ or $\Z_{p_1}$. Let $W$ be a compact
    4-manifold with boundary $M(J)$. Let
    $W_C = C\mathbin{\amalg_{M(J)}} W$. Let
    \[
      inc_*\colon H_1(M(J_{m,n}^1);R[t^{\pm 1}])\to H_1(W_C;R[t^{\pm
        1}])
    \]
    be the inclusion-induced homomorphism. Then, the following
    hold:
    \begin{enumerate}
    \item If $m\ge 1$ and $inc_*(\alpha_1)\ne 0$, then $W$ is not an
      $m.5$-solution for $J$.
    \item If $n\ge 1$ and $inc_*(\beta_1)\ne 0$, then $W$ is not an
      $n.5$-solution for $J$.
    \end{enumerate}
  \end{proposition}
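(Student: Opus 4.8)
The plan is to argue by contradiction, producing a von Neumann $\rhot$-invariant that is nonzero yet is forbidden by the amenable signature theorem (Theorem~\ref{theorem:obstruction}). I would prove (1); statement (2) follows by interchanging the roles of $\alpha',\beta'$ and of $m,n$ (and of the conventions $J_{m,0}^i=\cR(\alpha',\beta';J_{m-1}^i,\text{unknot})$ and $J_{0,n}^i=\cR(\alpha',\beta';\text{unknot},J_{n-1}^i)$). So assume $m\ge1$, $inc_*(\alpha_1)\ne0$, and, contrary to the claim, that $W$ is an $m.5$-solution for $J$. Put $\Gamma:=\pi_1(W_C)$. Since $W$ is a solution and $C$ is the standard cobordism of~\cite[p.113]{Cochran-Orr-Teichner:2002-1}, $H_1(W_C)\cong\Z$ and every meridian maps to a generator.

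First I would set up the coefficient system. Let $\cP=(R_0,\dots,R_m)$ with $R_k=\Q$ for $k<m$ and $R_m=\Z_{p_1}$, and put $G:=\Gamma/\cP^{m+1}\Gamma$, with quotient map $q\colon\Gamma\to G$. By Lemma~\ref{lemma:target-group} $G$ is amenable and lies in $D(\Z_{p_1})$, and $G^{(m+1)}=\{e\}$ since $\cP^{m+1}\Gamma\supseteq\Gamma^{(m+1)}$. For the $3$-manifolds $M(J_{m,n}^i)$ and $M(J)$ sitting inside $W_C$, let $\phi_i$, resp.\ $\phi_J$, be the composite of the inclusion-induced map with $q$; since $H_1(W_C)\cong\Z$, each sends a meridian to an infinite order element. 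As $\phi_J$ extends over $W$, which is an $m.5$-solution, Theorem~\ref{theorem:obstruction} gives $\rhot(M(J),\phi_J)=0$; combining this with additivity of $\rhot$ and the vanishing of the signature defect of the standard cobordism $C$,
\[
  \sum_i a_i\,\rhot\bigl(M(J_{m,n}^i),\phi_i\bigr)=0 .
\]

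Next I would evaluate each term by running the satellite (infection) formula along the infection tree that builds $J_{m,n}^i$ out of the fixed pattern $\cR$, copies of the fixed ribbon knot $K$ ($=K_k$ for all $k$, by~\ref{item:cyclic-K}), and the bottom knot $J_0^i$. The hypothesis $inc_*(\alpha_1)\ne0$ guarantees that the restricted system descends non-trivially one level past $\cR$ along the $\alpha'$-companion, and the cyclic Alexander module condition~\ref{item:cyclic-K} propagates this non-triviality down every remaining level of the $J_{m-1}^1$-tower, so that the system restricted to $M(J_0^1)$ is the nontrivial abelian $\Z_{p_1}$-system; its contribution is therefore $\sum_{r=0}^{p_1-1}\sigma_{J_0^1}(\omega_1^r)$. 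For $i\ge2$ the corresponding bottom contribution is a sum of values $\sigma_{J_0^i}(\omega_1^r)$, which vanishes by Lemma~\ref{lemma:J_0^i}(3) because $p_1$ is adapted to $J_0^1$, while for $i=1$ Lemma~\ref{lemma:J_0^i}(2) gives $\sum_r\sigma_{J_0^1}(\omega_1^r)>(m-1)C_K$. Every remaining building-block term is a $\rhot$-invariant of the zero surgery on a copy of $K$ (or of $\cR$), hence bounded by $C_K$ (respectively a fixed constant) via Lemma~\ref{lemma:universal-bound}; as there are at most $m-1$ copies of $K$ relevant to the $i=1$ term and the $i\ge2$ terms are likewise bounded, the choice $C_0=(m-1)C_K$ and $a_1>0$ force $\sum_i a_i\,\rhot(M(J_{m,n}^i),\phi_i)\ne0$, contradicting the displayed identity.

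The technical heart — and the step I expect to be the main obstacle — is the claim in the third paragraph that $inc_*(\alpha_1)\ne0$ forces the coefficient system to stay non-abelian through all $m$ levels of the satellite tower down to $J_0^1$, and simultaneously that along the $\beta'$-companion the restricted system is trivial (consistent with the dichotomy of Lemma~\ref{lemma:nonvanishing-of-infection-curves}), so that only the $m-1$ copies of $K$ on the $\alpha'$-side contribute error terms. Establishing this requires tracking, level by level, the image of the successive infection curves ($\alpha_1$, then $\eta_{m-2},\eta_{m-3},\dots$) in the relevant $\cP^k$-cover and checking they do not die; here one uses the cyclicity of the Alexander module of $\cR$ (through the splitting $H_1(M(\cR);R[t^{\pm1}])\cong P\oplus Q$) together with~\ref{item:cyclic-K}, and must verify that the mixed series — rational at levels below $m$ and $\Z_{p_1}$ at level $m$ — is arranged so that no relevant module element vanishes prematurely. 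I would carry this out by induction on $m$, the satellite formula reducing the depth by one at each step; the $\rhot$-bookkeeping and the final numerical estimate are then routine.
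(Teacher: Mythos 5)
Your global strategy (contradiction, the amenable signature theorem, additivity over the standard cobordism $C$, and the special properties of the $J_0^i$) is the same as the paper's, but the two steps you defer are exactly where the proof lives, and as set up one of them would fail. First, fixing $G=\pi_1(W_C)/\cP^{m+1}\pi_1(W_C)$ and then ``running the satellite formula'' is not available: there is no formula computing $\rhot(M(J_{m,n}^i),\phi_i)$ for an arbitrary restricted system in terms of bottom-level signatures. To get such a decomposition one must first build a larger $4$-manifold over which a single coefficient system extends -- the paper forms $W_0$ from $W$, $C$, copies of $-U^i$ (the solutions of Proposition~\ref{proposition:m-or-n-solution}), the cobordism $E_{m-1}^+$, the slice-disk exterior $\cE_+$ that kills $\beta'$, and the infection cobordisms $E_k$ -- and only then sets $G=\pi_1 W_0/\cP^{m+1}\pi_1 W_0$ and applies Novikov additivity. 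The level-by-level nonvanishing you flag as the ``main obstacle'' is Lemma~\ref{lemma:nonvanishing-of-the-meridian}, and its proof is not the routine induction you describe: besides the cyclic Alexander modules, it needs $\beta_1\in\cP^2\pi_1 W_m$ so that gluing in $\cE_+$ does not collapse the relevant metabelian quotient, and this is extracted from the fact that $W_m$ is a $1$-solution, the self-annihilating kernel theorem \cite[Theorem~4.4]{Cochran-Orr-Teichner:1999-1}, and the $(2t-1)$-versus-$(t-2)$ torsion analysis forced by the hypothesis $inc_*(\alpha_1)\ne 0$. None of this appears in your sketch, and it cannot be replaced by an appeal to Lemma~\ref{lemma:nonvanishing-of-infection-curves}, which concerns $U_C$ and $V_C$, not your coefficient system.

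Second, your final estimate does not close. You only \emph{bound} the companion contributions by $C_K$ (and implicitly allow a bounded $\cR$-term), and you allow such bounded-but-possibly-nonzero terms to arise from every summand $J_{m,n}^i$ with $i\ge 2$ as well. Since the coefficients $a_i$ are arbitrary, errors of size up to $(m-1)C_K$ per summand cannot be beaten by the fixed constant $C_0=(m-1)C_K$ built into the choice of $J_0^1$. The paper avoids this by capping every boundary component other than one distinguished copy of $M(J_{m,n}^1)$ with the solutions $U^i_r$ of Proposition~\ref{proposition:m-or-n-solution}, whose signature defects are computed exactly (either $0$ or $-\sum_{r}\sigma_{J_0^i}(\omega_1^r)$, hence $0$ for $i\ge 2$ by Lemma~\ref{lemma:J_0^i}(3)), and by making the pattern level contribute exactly zero via $S_G(\cE_+)=S_G(E_{m-1}^+)=0$; as a result only the $m-1$ copies of $M(K)$ from the single surviving tower produce error terms, which is precisely what $C_0=(m-1)C_K$ was chosen to dominate. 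Without the capping manifolds of Proposition~\ref{proposition:m-or-n-solution} (which you never invoke) the inequality at the end of your argument is not valid.
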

  
  We postpone the proof of
  Proposition~\ref{proposition:non-solvability} to
  Section~\ref{subsection:proofs-of-lemma-and-proposition}.

  Suppose $J$ is $(m.5,n)$-solvable via~$(U,V)$.  We apply
  Proposition~\ref{proposition:non-solvability} for $W=U$ and
  $inc_* = i_*\colon H_1(M(J_{m,n}^1);R[t^{\pm 1}])\to
  H_1(U_C;R[t^{\pm 1}])$; since $U$ is an $m.5$-solution for~$J$, by
  Proposition~\ref{proposition:non-solvability}~(1) we have
  $i_*(\alpha_1)=0$. Then, by
  Lemma~\ref{lemma:nonvanishing-of-infection-curves}, we have
  $j_*(\alpha_1)\ne 0$ and $i_*(\beta_1)\ne 0$.  Since
  $i_*(\beta_1)\ne 0$, by
  Proposition~\ref{proposition:non-solvability}(2), $U$ is not an
  $n.5$-solution for~$J$. Since $m\ge n$, this implies that $U$ is not
  an $m.5$-solution for $J$, which is a contradiction.

  Next, we prove that $J$ is not $(m, n.5)$-solvable. Since
  $\F_{k,\ell} = \F_{\ell,k}$ and it has been shown that $J$ is not
  $(m.5, n)$-solvable, we may assume $m>n$. Suppose $J$ is
  $(m, n.5)$-solvable via~$(U,V)$.  By applying
  Proposition~\ref{proposition:non-solvability}~(2) for $W=V$ and
  \[
    inc_* = j_*\colon H_1(M(J_{m,n}^1);R[t^{\pm 1}])\to
    H_1(V_C;R[t^{\pm 1}]),
  \]
  we have $j_*(\beta_1) = 0$. Then, by
  Lemma~\ref{lemma:nonvanishing-of-infection-curves}, we have
  $i_*(\beta_1) \ne 0$. Then, by applying
  Proposition~\ref{proposition:non-solvability}(2) for $W=U$ and
  \[
    inc_* = i_*\colon H_1(M(J_{m,n}^1);R[t^{\pm 1}])\to
    H_1(U_C;R[t^{\pm 1}]),
  \]
  it follows that $U$ is not an $n.5$-solution for~$J$.  But since
  $m>n$, this contradicts that $U$ is an $m$-solution for $J$. This
  completes the proof of Theorem~\ref{theorem:infinite-rank}(2).
  
  To prove Theorem~\ref{theorem:infinite-rank}(1), we use an argument
  similar to the proof of Theorem~\ref{theorem:infinite-rank}(2).
  Choose $K_k$, $\eta_k$, and $J_0^i$ as in the proof of
  Theorem~\ref{theorem:infinite-rank}(2).  Let
  $J^i=\cR(\alpha'; J_{m-1}^i)$ where $\cR$, $\alpha'$, and
  $J_{m-1}^i$ are those given and defined in
  Section~\ref{subsection:construction-of-examples}. Then each $J^i$
  is ribbon, in particular slice, since it bounds a ribbon disk, say
  $\Delta$, obtained by cutting the band dual to $\alpha$. Let $G'$ be
  a capped grope of height $n+2$ in $D^4$ bounded by $J^i$ obtained by
  replacing a sufficiently small disk in the interior of $\Delta$ by a
  model disk-like capped grope of height $m+2$. Then there is an
  epimorphism $\pi_1(S^3\setminus J^i) \to \pi_1(D^4\setminus G')$
  induced from the inclusion since $\Delta$ and $G'$ have homeomorphic
  exteriors in $D^4$. Now by
  Theorem~\ref{theorem:grope-doubly-slice-construction} and its proof,
  where we use $G'\subset D^4_-$ instead of $G_-\subset D^4_-$, we can
  see $J^i\in \cG_{m+2, m+2}$.

  Suppose $J=\#_i a_iJ^i$ is a nontrivial linear combination.  We will
  show that $J$ is not doubly $m.5$-solvable. As before, we may assume
  that there are finitely many summands $J^i$ and all the coefficients
  $a_i$ are nonzero.  By taking $-J$ if necessary, we may assume
  $a_1>0$.

  Suppose $J$ is doubly $m.5$-solvable via $(U, V)$. Let $\alpha_1$ be
  the curve in $M(J^1)$ which is the image of a parallel copy of the
  curve $\alpha'$ under the satellite construction with the pattern
  $\cR$ . For $R=\Q$ or $\Z_{p_1}$, let $i_*$ and $j_*$ be the
  inclusion-induced homomorphisms as in the proof of the
  statement~(2). By
  Lemma~\ref{lemma:nonvanishing-of-infection-curves}, we have
  $i_*(\alpha_1)\ne 0$ or $j_*(\alpha_1)\ne 0$. By exchanging $U$ and
  $V$ if necessary, we may assume that $i_*(\alpha_1)\ne 0$.  Note
  that $J^i=\cR(\alpha',\beta'; J_{m-1}^i,\mbox{unknot})=J_{m,0}^i$
  with $m\ge 1$. Therefore, by
  Proposition~\ref{proposition:non-solvability}~(1), $U$ is not an
  $m.5$-solution, which is a contradiction.

  This completes the proof of Theorem~\ref{theorem:infinite-rank},
  modulo the proofs of
  Lemma~\ref{lemma:nonvanishing-of-infection-curves} and
  Proposition~\ref{proposition:non-solvability}.
\end{proof}

\subsection{Proofs of Lemma~\ref{lemma:nonvanishing-of-infection-curves} and Proposition~\ref{proposition:non-solvability}}
\label{subsection:proofs-of-lemma-and-proposition}

In this subsection, we prove
Lemma~\ref{lemma:nonvanishing-of-infection-curves} and
Proposition~\ref{proposition:non-solvability}.  For this purpose we
need the following splitting result for doubly $1$-solvable knots.
Let $R=\Q$ or $\Z_p$.  For a knot $K$, the \emph{Blanchfield form}
\[
  \Bl \colon H_1(M(K);R[t^{\pm 1}]) \times H_1(M(K);R[t^{\pm 1}]) \to
  R(t)/R[t^{\pm 1}]
\]
is defined~\cite{Blanchfield:1957-1}.  For a submodule $P$ of
$H_1(M(K); R[t^{\pm 1}])$, define
\[
P^\perp:=\{x\in H_1(M(K); R[t^{\pm 1}])\,\mid\, \Bl(x,y)=0\mbox{ for all }y\in P\}.
\]
We say that $P$ is \emph{self-annihilating} with respect to $\Bl$ if
$P=P^\perp$. We have the following proposition.

\begin{proposition}[{\cite[Proposition 2.10]{Kim:2006-1} for $R=\Q$ and $\Z$}]
  \label{proposition:homology-splitting}
  Let $R=\Q$ or $\Z_p$. Let $K$ be
  a knot which is doubly $1$-solvable via $(W_1,W_2)$.  Let $P_i$ be
  the kernel of the inclusion-induced homomorphism
  $H_1(M(K);R[t^{\pm 1}])\to H_1(W_i;R[t^{\pm 1}])$ for $i=1,2$.  Then
  \[
    H_1(M(K);R[t^{\pm 1}])\cong P_1\oplus P_2
  \]
  as $R[t^{\pm 1}]$-modules, and each $P_i$ is self-annihilating with
  respect to the Blanchfield form.
\end{proposition}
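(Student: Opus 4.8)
The plan is to follow \cite[Proposition~2.10]{Kim:2006-1} and \cite[\S4]{Cochran-Orr-Teichner:1999-1}, verifying that every step survives with $R=\Z_p$ in place of $\Q$ or $\Z$. Write $\Lambda:=R[t^{\pm1}]$ and $\mathcal{A}:=H_1(M(K);\Lambda)$. Since $R$ is a field, $\Lambda$ is a PID, and since $H_1(M(K);R)\cong R$, the module $\mathcal{A}$ is finitely generated and $\Lambda$-torsion; its order is the reduction of $\Delta_K$, which is nonzero in $\Lambda$ because $\Delta_K(1)=\pm1$. In particular $\mathcal{A}$ is finite-dimensional over $R$, and so are $P_1,P_2$ and each $H_1(W_i;\Lambda)$ (the latter being $\Lambda$-torsion since $H_1(W_i;\Z)\cong\Z$). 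The Blanchfield pairing $\Bl$ on $\mathcal{A}$ is nonsingular over $\Lambda$, obtained from the $\Z$-coefficient case \cite{Blanchfield:1957-1} by reduction of coefficients (using that $R(t)/\Lambda$ is an injective $\Lambda$-module); hence for any submodule $P\subseteq\mathcal{A}$ there is a short exact sequence $0\to P^\perp\to\mathcal{A}\to\overline{\Hom_\Lambda(P,R(t)/\Lambda)}\to0$, so that $\dim_R\mathcal{A}=\dim_R P^\perp+\dim_R P$.

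The heart of the argument is that each $P_i$ is self-annihilating. Running the Casson--Gordon/Cochran--Orr--Teichner argument with $\Lambda$-coefficients, the $1$-solution $W_i$ gives, via Poincar\'e--Lefschetz duality in $W_i$ together with the vanishing of the intersection and self-intersection forms on the prescribed half-rank family in $H_2(W_i;\Z[\pi_1W_i/\pi_1W_i^{(1)}])=H_2(W_i;\Z[t^{\pm1}])$, that $P_i\subseteq P_i^\perp$ and $\dim_R P_i=\tfrac12\dim_R\mathcal{A}$. To see this over $R=\Z_p$ one reduces coefficients along $\Z[t^{\pm1}]\to\Z_p[t^{\pm1}]$: the classes $[x_j],[y_j]$ and the values $\lambda_1([x_i],[x_j])=\mu_1([x_i])=0$, $\lambda_1([x_i],[y_j])=\delta_{ij}$ all reduce, yielding a half-rank isotropic family in $H_2(W_i;\Z_p[t^{\pm1}])$, after which the duality computation of \cite[\S4]{Cochran-Orr-Teichner:1999-1} proceeds verbatim. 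Given $P_i\subseteq P_i^\perp$ and $\dim_R\mathcal{A}=\dim_R P_i^\perp+\dim_R P_i=2\dim_R P_i$, we get $\dim_R P_i^\perp=\dim_R P_i$, hence $P_i=P_i^\perp$.

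It remains to establish the splitting. Put $X:=W_1\cup_{M(K)}W_2$, a closed oriented $4$-manifold with $\pi_1(X)\cong\Z$ by the definition of a double $1$-solution. The infinite cyclic cover $\wt X$ classified by the isomorphism $\pi_1(X)\to\Z$ is then the universal cover, hence simply connected, so $H_1(X;\Lambda)\cong H_1(\wt X;R)=0$. The Mayer--Vietoris sequence of $X=W_1\cup_{M(K)}W_2$ with $\Lambda$-coefficients therefore shows that $\psi=(i_{1*},-i_{2*})\colon\mathcal{A}\to H_1(W_1;\Lambda)\oplus H_1(W_2;\Lambda)$ is surjective with kernel $P_1\cap P_2$, so $\dim_R\mathcal{A}-\dim_R(P_1\cap P_2)=\dim_R H_1(W_1;\Lambda)+\dim_R H_1(W_2;\Lambda)$. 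Since $\mathcal{A}/P_i\hookrightarrow H_1(W_i;\Lambda)$ we have $\dim_R H_1(W_i;\Lambda)\ge\dim_R\mathcal{A}-\dim_R P_i=\tfrac12\dim_R\mathcal{A}$, whence $\dim_R(P_1\cap P_2)\le0$, i.e.\ $P_1\cap P_2=0$. Then $\dim_R(P_1+P_2)=\dim_R P_1+\dim_R P_2=\dim_R\mathcal{A}$ with $P_1+P_2\subseteq\mathcal{A}$, so $P_1+P_2=\mathcal{A}$ and $\mathcal{A}=P_1\oplus P_2$.

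The one genuinely new point, compared to the $\Q$- and $\Z$-coefficient cases in \cite{Kim:2006-1}, is the second paragraph: because $\Z_p$ is not flat over $\Z$, one must check that the $1$-solvability data (formulated with $\Z[t^{\pm1}]$-coefficients) still yields a half-rank isotropic family after reduction modulo $p$ --- the crux being that $\dim_{\Z_p[t^{\pm1}]}H_2(W_i;\Z_p[t^{\pm1}])=\tfrac12\rank_\Z H_2(W_i)$ --- or, equivalently, that one may rerun the duality argument of \cite[\S4]{Cochran-Orr-Teichner:1999-1} directly over $\Z_p[t^{\pm1}]$. Everything else is formal, given nonsingularity of the Blanchfield pairing over the PID $\Lambda$.
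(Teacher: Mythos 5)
Your proposal is correct, and it is essentially the argument the paper relies on: the paper itself gives no proof here beyond citing Kim's Proposition~2.10 (stated there for $R=\Q,\Z$), whose proof is exactly your two-step scheme --- the Cochran--Orr--Teichner Theorem~4.4-style duality argument showing each kernel $P_i$ is self-annihilating of half $R$-dimension, followed by the Mayer--Vietoris computation over $R[t^{\pm 1}]$ using $\pi_1(W_1\cup_{M(K)}W_2)\cong\Z$ to force $P_1\cap P_2=0$ and hence the splitting. Your identification of the only new point for $R=\Z_p$ (that the $1$-solvability data still produces a half-rank isotropic family after reduction, so the duality argument runs over $\Z_p[t^{\pm1}]$) is also consistent with how the paper uses \cite[Theorem~4.4]{Cochran-Orr-Teichner:1999-1} with $R[t^{\pm1}]$-coefficients elsewhere in Section~\ref{subsection:proofs-of-lemma-and-proposition}.
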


Now we are ready to prove Lemma~\ref{lemma:nonvanishing-of-infection-curves}.

\begin{proof}[Proof of Lemma~\ref{lemma:nonvanishing-of-infection-curves}]
  Recall that $R=\Q$ or $\Z_p$.  Using a Mayer-Vietoris sequence, we
  obtain
  \[
    H_1(C;R[t^{\pm 1}])\cong H_1(\partial_+ C;R[t^{\pm 1}])\cong
    H_1(\partial C_-;R[t^{\pm 1}])
  \]
  where $\partial_+ C = M(J) $,
  $\partial_- C = \cup_{a_i} M(J_{m,n}^i)$, and the isomorphisms are
  induced by the inclusions. Therefore, via the isomorphisms we can
  consider $H_1(M(J_{m,n}^1);R[t^{\pm 1}])$ as a submodule of
  $H_1(M(J);R[t^{\pm 1}])$. Using Mayer-Vietoris sequences, it follows
  that $H_1(U;R[t^{\pm 1}])\cong H_1(U_C;R[t^{\pm 1}])$ and
  $H_1(V;R[t^{\pm 1}])\cong H_1(V_C;R[t^{\pm 1}])$.

  Since $m,n\ge 1$, $(U,V)$ is a double $1$-solution for
  $M(J)$. Therefore, by
  Proposition~\ref{proposition:homology-splitting},
  $H_1(M(J);R[t^{\pm 1}])\cong \Ker i_*^U\oplus \Ker j_*^V$ where
  $i_*^U\colon H_1(M(J);R[t^{\pm 1}])\to H_1(U;R[t^{\pm 1}])$ and
  $j_*^V\colon H_1(M(J);R[t^{\pm 1}])\to H_1(V;R[t^{\pm 1}])$ are
  inclusion-induced homomorphisms.

  Suppose $i_*(\alpha_1)=j_*(\alpha_1)=0$. Considering
  $H_1(M(J_{m,n}^1);R[t^{\pm 1}])$ as a submodule of
  $H_1(M(J);R[t^{\pm 1}])$, this implies that
  $i_*^U(\alpha_1)=j_*^V(\alpha_1)=0$. Since
  $H_1(M(J);R[t^{\pm 1}])\cong \Ker i_*^U\oplus \Ker j_*^V$ and
  $\alpha_1$ is a nontrivial element of $H_1(M(J);R[t^{\pm 1}])$, this
  is a contradiction. Therefore we cannot have
  $i_*(\alpha_1)=j_*(\alpha_1)=0$. With a similar reason, we cannot
  have $i_*(\beta_1)=j_*(\beta_1)=0$.

  Since $m,n\ge 1$, the knots $J_{m,n}^i$ are $1$-solvable. Let $W^i$
  be a $1$-solution for $M(J_{m,n}^i)$ for each $i$. Let $W^i_r$ be a
  copy of $-W^i$.

  \begin{figure}[H]
    \begin{tikzpicture}[x=1pt, y=1pt, every node/.style=transform shape, scale=.9]
      \small
      \node[anchor=south west, inner sep=0]
      {\includegraphics[scale=1]{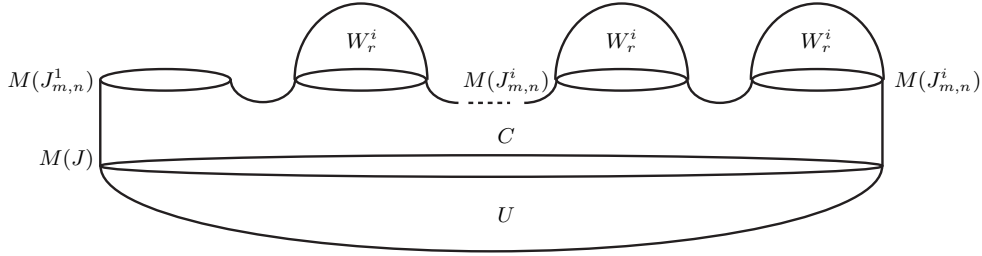}};
      \node [left] at (2,73) {$M(J_{m,n}^1)$};
      \node  at (110,90) {$W_r^i$};
      \node  at (218,90) {$W_r^i$};
      \node  at (299,90) {$W_r^i$};
      \node at (170, 73) {$M(J_{m,n}^i)$};
      \node [right] at (328, 73) {$M(J_{m,n}^i)$};
      \node at (170, 50) {$C$};
      \node [left] at (2,40) {$M(J)$};
      \node at (170, 17) {$U$};   
    \end{tikzpicture}
    \caption{The cobordism $U_1$}
    \label{figure:cobordism U_1}
  \end{figure}
  
  As in Figure~\ref{figure:cobordism U_1}, we define
  \[
    U_1= U\amalgover{M(J)} C \amalgover{(\partial_-C) \setminus
      M(J_{m,n}^1)} \Bigg((a_i-1)W_r^1\cup\bigg(\bigcup_{i\ge 2}a_i
        W^i_r\bigg)\Bigg).
  \]

  Then $\partial U_1=M(J_{m,n}^1)$. Let
  $i^1_*\colon H_1(M(J_{m,n}^1);R[t^{\pm 1}])\to H_1(U_1;R[t^{\pm
    1}])$ be the inclusion-induced homomorphism. Since $U$ is an
  $m$-solution, it is also a $1$-solution. Using Mayer-Vietoris
  sequences, one can show that $U_1$ is a $1$-solution for
  $M(J_{m,n}^1)$.

  It is known that $\Ker i^1_*$ is a self-annihilating submodule of
  $H_1(M(J_{m,n}^1);R[t^{\pm
    1}])$~\cite[Theorem~4.4]{Cochran-Orr-Teichner:1999-1}.  Therefore
  $\Ker i^1_*$ is a proper submodule.  Since
  $H_1(M(J_{m,n}^1);R[t^{\pm 1}])$ is generated by $\alpha_1$ and
  $\beta_1$, it follows that we cannot have
  $i_*(\alpha_1)=i_*(\beta_1)= 0$. Similarly, we cannot have
  $j_*(\alpha_1)=j_*(\beta_1)= 0$.

  Now suppose $i_*(\alpha_1)=0$. Since we can have neither
  $i_*(\alpha_1)=j_*(\alpha_1)=0$ nor $i_*(\alpha_1)=i_*(\beta_1)= 0$,
  it follows $j_*(\alpha_1)\ne 0$ and $i_*(\beta_1)\ne 0$. Next,
  suppose $j_*(\alpha_1)=0$. Using similar arguments we can easily see
  that $i_*(\alpha_1)\ne 0$ and $j_*(\beta_1)\ne 0$.

  Suppose the remaining case $i_*(\alpha_1)\ne 0$ and
  $j_*(\alpha_1)\ne 0$. Since we cannot have
  $i_*(\beta_1)=j_*(\beta_1)=0$, either $i_*(\beta_1)\ne 0$ or
  $j_*(\beta_1)\ne 0$. In either case, the conclusion of the theorem
  holds.
\end{proof}

In the proof of Proposition~\ref{proposition:non-solvability}, we will
use the following fact.

\begin{proposition}[{\cite[Proposition 4.10]{Cha:2010-1}}]
  \label{proposition:m-or-n-solution}
  Suppose the Arf invariant of $J_0$ vanishes, and let $J_{m,n}$ be
  the knot constructed in
  Section~\ref{subsection:construction-of-examples}.  Then there exist
  an $m$-solution $U$ and an $n$-solution $V$ for $J_{m,n}$
  satisfying the following:
  \begin{enumerate}
  \item If $\phi\colon \pi_1M(J_{m,n})\to G$ is a homomorphism which
    extends to $U$ and $G$ is an amenable group lying in $D(R)$ for
    some ring $R$ with unity, then
    $\rhot(M(J_{m,n},\phi)) = \rhot(M(J_0,\psi))$ where
    $\psi\colon \pi_1M(J_0)\to \Z_d$ is a surjection and $d$ is the
    order of the image of the meridian of $J_0$ under $\phi$ in~$G^{(m)}$.  The
    same statement holds when $U$ and $G^{(m)}$ are replaced by $V$
    and~$G^{(n)}$.
  \item For $R=\Q$ and $\Z_p$ with $p$ an odd prime,
    $H_1(U;R[t^{\pm 1}])\cong R[t^{\pm 1}]/(2t-1)$ generated by the
    curve $\alpha_1$ and $H_1(V;R[t^{\pm 1}])\cong R[t^{\pm 1}]/(t-2)$
    generated by the curve $\beta_1$.
  \end{enumerate}
\end{proposition}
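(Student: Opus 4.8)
The plan is to assemble $U$ and $V$ by the standard iterated-infection construction, following \cite{Cochran-Orr-Teichner:1999-1,Cha:2010-1} and the recipe of Section~\ref{subsection:construction-of-examples}, and then to deduce the two properties from (a) additivity of $\rhot$ under infection combined with the amenable signature theorem and (b) Mayer--Vietoris computations with $R[t^{\pm 1}]$-coefficients. Write $J_{m,n}=\cR_{\beta'}(\alpha';J_{m-1})$ with $\cR_{\beta'}=\cR(\beta';J_{n-1})$, where $J_{m-1}=K_{m-2}(\eta_{m-2};\cdots K_0(\eta_0;J_0)\cdots)$. Since $\cR=9_{46}$ has a ribbon disk obtained by cutting the band dual to $\beta$, and this ribbon disk survives the $\beta'$-infection, $\cR_{\beta'}$ bounds a ribbon disk $\Delta_+$ in $D^4$; let $E_{\Delta_+}$ be its exterior. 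Each $K_k$ is ribbon; let $E_{K_k}^{\mathrm{sl}}$ be a ribbon disk exterior. Since $\Arf(J_0)=0$, choose a $0$-solution $W_0$ for $M(J_0)$. Form $U$ by starting with $E_{\Delta_+}$ and successively gluing, along the curve carrying the $\alpha'$-companion and then along the curves $\eta_{m-2},\dots,\eta_0$ and a meridian of $J_0$, the exteriors $E_{K_{m-2}}^{\mathrm{sl}},\dots,E_{K_0}^{\mathrm{sl}}$ and finally $W_0$. Build $V$ symmetrically with $\alpha'$ and $\beta'$ interchanged, using the ribbon disk of $\cR_{\alpha'}=\cR(\alpha';J_{m-1})$ obtained by cutting the band dual to $\alpha$, and gluing the $\beta'$-tower down to a $0$-solution for $J_0$. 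Because $\alpha'$ is nullhomologous in the exterior of $\cR$ and each $\eta_k$ is nullhomologous in the exterior of $K_k$, each gluing raises the derived depth by one: in $U$ the meridian of the deepest $J_0$-piece lies at depth $m$ in $\pi_1M(J_{m,n})$, the Lagrangian half of $H_2(W_0)$ becomes a Lagrangian half of $H_2(U;\Z[\pi/\pi^{(m)}])$ for $\lambda_m,\mu_m$, and the ribbon-disk exteriors add no $H_2$; a Mayer--Vietoris argument gives $H_1(M(J_{m,n}))\cong H_1(U)\cong\Z$, and the spin structure on $D^4$ restricts to $U$ with $\partial U=M(J_{m,n})$. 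Hence $U$ is an $m$-solution and, symmetrically, $V$ is an $n$-solution.

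For (1), I would use additivity of the von Neumann--Cheeger--Gromov invariant under infection: for $\phi$ extending over $U$ into an amenable $G$ lying in $D(R)$, the infection formula of \cite{Cha:2010-1,Cha-Orr:2009-1} writes $\rhot(M(J_{m,n}),\phi)$ as the sum of the $\rhot$-invariants of $M(\cR_{\beta'})$, of the $M(K_k)$, and of the deepest copy of $M(J_0)$, each with its induced representation. Since $\cR_{\beta'}$ and the $K_k$ are slice and $\phi$ extends over their slice disk exteriors, with meridians sent to infinite-order elements where required, the amenable signature theorem (Theorem~\ref{theorem:obstruction} and its integral analogue in \cite{Cha:2010-1}) kills all those terms, leaving only $\rhot(M(J_0),\phi|)$. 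Finally, since the deepest $J_0$-piece sits at depth $m$, the group $\pi_1M(J_0)$ is normally generated by the meridian $\mu_{J_0}$ and its derived subgroup maps into $G^{(m+1)}$, so amenability of $G$ forces $\phi|_{\pi_1M(J_0)}$ to factor through a finite cyclic group $\Z_d$ with $d$ the order of $\phi(\mu_{J_0})\in G^{(m)}$; thus $\rhot(M(J_0),\phi|)=\rhot(M(J_0),\psi)$ for the surjection $\psi\colon\pi_1M(J_0)\to\Z_d$. The statement for $V$ is identical with $G^{(m)}$ replaced by $G^{(n)}$.

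For (2), I would compute $H_1(U;R[t^{\pm 1}])$ by Mayer--Vietoris over the pieces of $U$, for $R=\Q$ or $\Z_p$ with $p$ odd. The deep pieces are invisible to the classical Alexander module: each $E_{K_k}^{\mathrm{sl}}$ and $W_0$ is glued along a curve that is nullhomologous in the relevant knot exterior, so these gluings leave $H_1(-;R[t^{\pm 1}])$ of the assembled part unchanged, and $H_1(U;R[t^{\pm 1}])$ is carried by $E_{\Delta_+}$. Since $\Delta_+$ is the ribbon disk of $\cR_{\beta'}$ cutting the band dual to $\beta$, the kernel of $H_1(M(\cR_{\beta'});R[t^{\pm 1}])\to H_1(E_{\Delta_+};R[t^{\pm 1}])$ is the self-annihilating summand $\langle\beta'\rangle\cong R[t^{\pm 1}]/(t-2)$, hence $H_1(E_{\Delta_+};R[t^{\pm 1}])\cong R[t^{\pm 1}]/(2t-1)$, generated by the image of $\alpha'$; tracking this curve through the construction identifies the generator with $\alpha_1$. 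Therefore $H_1(U;R[t^{\pm 1}])\cong R[t^{\pm 1}]/(2t-1)$, generated by $\alpha_1$, and symmetrically $H_1(V;R[t^{\pm 1}])\cong R[t^{\pm 1}]/(t-2)$, generated by $\beta_1$.

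I expect the analytic input in (1) to be the main obstacle: establishing infection additivity of $\rhot$ and the vanishing of the slice-piece contributions in the amenable setting requires the amenable signature theorem together with a careful check that the relevant meridians map to infinite-order elements and that the representation restricted to the deepest $J_0$ becomes abelian after passing to the amenable quotient. The remaining steps --- verifying that $U$ and $V$ fulfil the axioms of an $m$- and an $n$-solution and carrying out the $R[t^{\pm 1}]$-homology calculations --- are routine Mayer--Vietoris bookkeeping with the derived series.
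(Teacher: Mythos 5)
First, note that the paper does not actually prove this proposition: both parts are quoted from \cite[Proposition 4.10]{Cha:2010-1} (the paper only remarks that (2) is implicit in the proof there), so your outline is being compared with that source's argument, which it follows in broad strokes --- assemble $U$ and $V$ from a ribbon disk exterior of the pattern, ribbon disk exteriors of the $K_k$, and a $0$-solution for $J_0$, then compute signature defects piecewise and Alexander modules by Mayer--Vietoris. Your part (2) and the verification that $U$ is an $m$-solution (depth of the infection curves in the derived series, $H_2(U)\cong H_2(W_0)$, lifting the $0$-Lagrangian) are the standard bookkeeping and are fine in outline.

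The genuine gap is in part (1), at the step where you assert that ``amenability of $G$ forces $\phi|_{\pi_1 M(J_0)}$ to factor through a finite cyclic group $\Z_d$.'' Amenability does not make the image of $\pi_1 M(J_0)$ abelian, and knowing that its commutator subgroup maps into $G^{(m+1)}$ gives nothing, since no hypothesis forces $G^{(m+1)}$ to be trivial here. Without cyclic image generated by $\phi(\mu_{J_0})$ you cannot identify $\rhot(M(J_0),\phi|)$ with $\rhot(M(J_0),\psi)$, and the stated equality fails. The factorization must be built into the construction: since $\Arf(J_0)=0$ one takes the bottom piece $W_0$ to be a $0$-solution with $\pi_1(W_0)\cong \Z$; then the restriction of any extension of $\phi$ over $U$ to $\pi_1 M(J_0)=\pi_1(\partial W_0)$ factors through $\Z$, its image is the cyclic group of order $d$ generated by $\phi(\mu_{J_0})\in G^{(m)}$, and the $L^2$-induction (subgroup) property yields $\rhot(M(J_0),\phi|)=\rhot(M(J_0),\psi)$. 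Two secondary problems: the amenable signature theorem (Theorem~\ref{theorem:obstruction}) is the wrong tool for killing the slice-piece contributions --- it requires a meridian to map to an infinite-order element, which can fail for the induced representations on the deep pieces (they may even be trivial); those terms vanish instead because ribbon/slice disk exteriors have $H_2=0$, so their signature defects vanish for \emph{any} coefficient group (\cite[Lemma~2.7]{Cha:2006-1}), while the connecting cobordisms contribute zero by \cite[Lemma~2.4]{Cochran-Harvey-Leidy:2009-1}. Finally, if you glue the pieces directly along solid tori, Novikov additivity does not literally apply (the gluing regions are $3$-manifolds with boundary, so Wall non-additivity must be dealt with); the standard formulation inserts the cobordisms $E_k$ so that all gluings are along closed $3$-manifolds, exactly as the paper does in the proof of Proposition~\ref{proposition:non-solvability}.
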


The statement~(1) follows immediately from \cite[Proposition
4.10]{Cha:2010-1}, and the statement~(2) is implicitly proven in the
proof of \cite[Proposition 4.10]{Cha:2010-1}. Now we give a proof of
Proposition~\ref{proposition:non-solvability}.

\begin{proof}[Proof of Proposition~\ref{proposition:non-solvability}]
  To prove (1), we will construct a certain 4-manifold with boundary
  $M(J_0^1)$.  The building blocks for the construction are as
  follows.

  \begin{enumerate}
  \item Let $U^i$ be the $m$-solution for $M(J_{m,n}^i)$ given in
    Proposition~\ref{proposition:m-or-n-solution}.
  \item Let $\cE_+$ be the exterior in $D^4$ of the slice disk for
    $\cR_{\beta'}:=\cR(\beta';J_{n-1}^1)$ in which $\beta'$ is
    null-homotopic, where the slice disk is obtained by cutting the
    band dual to $\beta$.
  \item Recalling $K_{k+1}=K_k(\eta_k;J_k)$, let $E_k$ be the standard
    cobordism between $M(J_k^1)\cup M(K_k)$ and $M(K_{k+1})$. That is,
    letting $E(J_k^1)$ denote the exterior of $J_k^1$ in $S^3$,
    $E_k = M(J_k^1)\times [0,1] \amalg M(K_k)\times [0,1]/\sim$ where
    the solid torus $M(J_k^1)\setminus E(J_k^1)$ in
    $M(J_k^1) = M(J_k^1)\times 0$ is identified with the tubular
    neighborhood of $\eta_k$ in $M(K_k)=M(K_k)\times 1$ in such a way
    that the meridian (resp. the 0-linking longitude) of $J_k^i$ is
    identified with the 0-linking longitude (resp. the meridian) of
    $\eta_k$ (see \cite[p.1429]{Cochran-Harvey-Leidy:2009-1}).
  \item Let $E_{m-1}^+$ be the standard cobordism between
    $M(J_{m-1}^1)\cup M(R_{\beta'})$ and $M(J_{m,n}^1)$ constructed
    similarly to (3), noting that
    $J_{m,n}^1 = R_{\beta'}(\alpha';J_{m-1}^1)$.
  \item Let $C$ be the standard cobordism from
    $\cup_{a_i} M(J_{m,n}^i)$ to $M(J)$ as defined in the proof of
    Theorem~\ref{theorem:infinite-rank}.
  \end{enumerate}

  Let $U^i_r$ be a copy of $-U^i$.  We define
  \[
    W_m= W\amalgover{M(J)} C \amalgover{(\partial_-C) \setminus
      M(J_{m,n}^1)} \Bigg((a_i-1)U_r^1\cup\bigg(\bigcup_{i\ge 2}a_i
        U^i_r\bigg)\Bigg).
  \]
  Then, $\partial W_m = M(J_{m,n}^1)$. We define
  \[
    W_{m-1} = W_m\amalgover{M(J_{m,n}^1)}
    E_{m-1}^+\amalgover{M(R_{\beta'})} \cE_+.
  \]
  Then, $\partial W_{m-1} = M(J_{m-1}^1)$.  Finally, for
  $k=m-2, m-3, \ldots, 0$, as in Figure~\ref{figure:cobordism W_k} we
  define
  \begin{align*}
    W_k & = E_k\amalgover{M(J_{k+1}^1)} W_{k+1}\\
	& = E_k\amalgover{M(J_{k+1}^1)} E_{k+1}\amalgover{M(J_{k+2}^1)}
          \cdots\amalgover{M(J_{m-2}^1)}E_{m-2}\amalgover{M(J_{m-1}^1)} W_{m-1}.
  \end{align*}
  Then
  $\partial W_k = M(J_k^1)\cup \big(\bigcup_{j=k}^{m-2} M(K_j)\big) =
  M(J_k^1)\cup (m-k-1) M(K)$.
  
  \begin{figure}[H]
    \begin{tikzpicture}[x=1bp,y=1bp,every node/.style=transform shape,scale=.78]
      \node [anchor=south west, inner sep=0mm] {\includegraphics[trim=0 10 0 0]{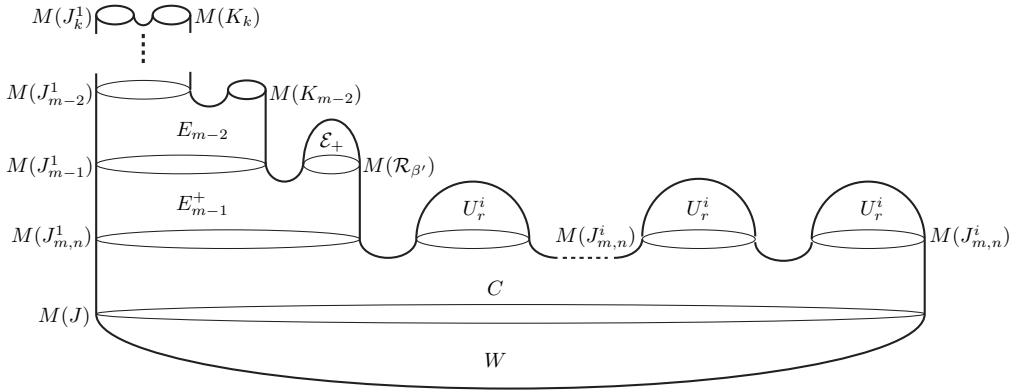}};
      \begin{scope}[yshift=-10]
        \node at (4,46) {$M(J)$};
        \node at (-2,84) {$M(J_{m,n}^1)$};
        \node at (-3,118) {$M(J_{m-1}^1)$};
        \node at (164,118) {$M(\cR_{\beta'})$};
        \node at (-3,153) {$M(J_{m-2}^1)$};
        \node at (3,190) {$M(J_k^1)$};
        \node at (82,190) {$M(K_k)$};
        \node at (124,153) {$M(K_{m-2})$};
        \node at (258,84) {$M(J_{m,n}^i)$};
        \node at (437,84) {$M(J_{m,n}^i)$};
        \node at (210,60) {$C$};
        \node at (210,25) {$W$};
        \node at (132,130) {$\cE_+$};
        \node at (70,100) {$E_{m-1}^+$};
        \node at (70,135) {$E_{m-2}$};
        \node at (307,98) {$U_r^i$};
        \node at (200,98) {$U_r^i$};
        \node at (390,98) {$U_r^i$};
      \end{scope}
    \end{tikzpicture}
    \caption{The cobordism $W_k$}
    \label{figure:cobordism W_k}
  \end{figure}

  Let $\cP = (R_0,R_1,\ldots, R_m)$ be a sequence of rings where
  $R_k = \Q$ for $0\le k\le m-1$ and $R_m = \Z_{p_1}$. Then for a
  group $G$, we get the $\cP$-mixed-coefficient commutator series
  $\cP^k G$ for $k = 0, 1, \ldots, m+1$ as in
  Definition~\ref{definition:commutator-series}. The following lemma
  is essentially due to \cite[Theorem~4.14]{Cha:2010-1}.

  \begin{lemma}[{\cite[Theorem 4.14]{Cha:2010-1}}]
    \label{lemma:nonvanishing-of-the-meridian}
    Rename $J_m^1 := J_{m,n}^1$ for brevity.  Suppose
    $inc_*(\alpha_1)\ne 0$ as in the hypothesis of
    Proposition~\ref{proposition:non-solvability}~(1). For
    $k=0,1,\ldots, m$, the homomorphism
    \[
      \phi_k\colon \pi_1 W_k\to \pi_1 W_k/\cP^{m-k+1}\pi_1 W_k
    \]
    sends the meridian of $J_k^1$ into the abelian subgroup
    $\cP^{m-k}\pi_1 W_k/\cP^{m-k+1}\pi_1W_k$. Furthermore, the image
    of the meridian of $J_k^1$ under $\phi_k$ has order $p_1$ if
    $k=0$, and is of infinite order if $k>0$.
  \end{lemma}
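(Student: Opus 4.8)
The plan is to prove Lemma~\ref{lemma:nonvanishing-of-the-meridian} by a descending induction on $k$, running from $k=m$ down to $k=0$, re‑using the depth‑counting mechanism of \cite[Theorem~4.14]{Cha:2010-1}. The first thing to record is that both order assertions are automatic once nonvanishing is known: by Definition~\ref{definition:commutator-series} the quotient $\cP^{m-k}\pi_1W_k/\cP^{m-k+1}\pi_1W_k$ is a subgroup of a first homology group with $R_{m-k}$‑coefficients, hence is an $R_{m-k}$‑module; for $k\ge 1$ we have $m-k<m$, so $R_{m-k}=\Q$ and this module is torsion free, forcing any nonzero element to have infinite order, while for $k=0$ we have $R_m=\Z_{p_1}$, so the module is a $\Z_{p_1}$‑vector space and every nonzero element has order exactly $p_1$. (This also supplies the ``abelian subgroup'' clause for free, since $[\cP^jG,\cP^jG]\subseteq\cP^{j+1}G$.) So the induction only has to produce, for each $k$, the fact that $\phi_k$ carries the meridian $\mu_{J_k^1}$ of $J_k^1$ into $\cP^{m-k}\pi_1W_k$ with nonzero image in $\cP^{m-k}\pi_1W_k/\cP^{m-k+1}\pi_1W_k$.

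For the base of the induction ($k=m$ and $k=m-1$) I would use the hypothesis $inc_*(\alpha_1)\ne0$. For $k=m$ the statement amounts to $\mu_{J_{m,n}^1}$ being non‑torsion in $H_1(W_m;\Q)$, which is forced by the mere well‑posedness of $H_1(W_C;\Q[t^{\pm1}])$ together with an argument as in the proof of Lemma~\ref{lemma:nonvanishing-of-infection-curves} propagating this from $W_C$ to $W_m$. For $k=m-1$, recall that $\alpha_1$, being the image of a parallel copy of $\alpha'$, is identified (through the infection cobordism $E_{m-1}^+$ and the slice‑disk exterior $\cE_+$, in which $\beta'$, not $\alpha'$, is killed) with the meridian of $J_{m-1}^1$; since $\alpha_1$ generates the simple $\Q[t^{\pm1}]$‑ (resp.\ $\Z_{p_1}[t^{\pm1}]$‑) module $\langle\alpha_1\rangle$, the condition $inc_*(\alpha_1)\ne0$ then amounts, after passing through $E_{m-1}^+$ and $\cE_+$, to the statement that $\mu_{J_{m-1}^1}$ is nonzero in $\cP^1\pi_1W_{m-1}/\cP^2\pi_1W_{m-1}$, which is the $k=m-1$ case.

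For the inductive step, assume the claim for $W_{k+1}$ and write $W_k=E_k\cup_{M(J_{k+1}^1)}W_{k+1}$ (with $E_{m-1}^+\cup\cE_+$ in place of $E_k$ when $k=m-1$). Analysing $\pi_1W_k$ by Seifert--van Kampen, the infection cobordism $E_k$ carries $\mu_{J_{k+1}^1}$ to the meridian $\mu_{K_k}$ of $K_k$ and carries $\mu_{J_k^1}$ to the $0$‑framed longitude $\lambda_{\eta_k}$ of $\eta_k$. Because $\pi_1(S^3\setminus K_k)$ is normally generated by $\mu_{K_k}$, the inductive hypothesis $\mu_{K_k}\in\cP^{m-k-1}\pi_1W_k$ forces the whole image of $\pi_1M(K_k)$ into $\cP^{m-k-1}\pi_1W_k$, hence $\lambda_{\eta_k}\in\pi_1(M(K_k))^{(1)}$ into $\cP^{m-k}\pi_1W_k$; thus $\mu_{J_k^1}$ lands at the required depth. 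Its nonvanishing in $\cP^{m-k}\pi_1W_k/\cP^{m-k+1}\pi_1W_k$ is exactly what \cite[Theorem~4.14]{Cha:2010-1} delivers: the inputs are that $\eta_k$ generates the Alexander module of $K_k$ (hypothesis~\ref{item:cyclic-K}), that this module is nonzero over $R_{m-k}$ (over $\Q$ always, and over $\Z_{p_1}$ because $p_1$ is odd and larger than $a_K=2$, hence does not divide the leading coefficient of $\Delta_{K_k}$), the inductive nonvanishing of $\mu_{K_k}$, and a homology computation on the relevant cover of $E_k$ showing the infection does not kill the class. The remaining work is to check that the concrete cobordisms $E_k$, $E_{m-1}^+$, $\cE_+$ and the gluings defining $W_k$ fall under the hypotheses of that theorem, and — crucially — that the single $\Z_{p_1}$‑entry of the sequence $\cP$ is placed so that level $m$ is reached precisely at the bottom step $k=0$, which is what pins the order there to $p_1$.

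The main obstacle is this inductive step: proving that passage through a single infection cobordism raises the $\cP$‑mixed‑coefficient commutator depth of the meridian by \emph{exactly} one — not more, which would spoil the $k=0$ count, and not less, which would destroy the eventual obstruction — while preserving nonvanishing at the new level. This ``one level per infection'' bookkeeping, with the $\Q$‑levels above and the single $\Z_{p_1}$‑level at the bottom, is the technical core that we import from \cite[Theorem~4.14]{Cha:2010-1} (itself refining the infection computations of \cite{Cochran-Harvey-Leidy:2009-1}); everything else is the routine matching of our cobordism tower $W_m\subset W_{m-1}\subset\cdots\subset W_0$ to that machinery.
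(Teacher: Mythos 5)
Your overall architecture (descending induction importing the one-level-per-infection bookkeeping from Cha's Theorem~4.14, plus the observation that the order statements are automatic because $\cP^{m-k}\pi_1 W_k/\cP^{m-k+1}\pi_1 W_k$ is a $\Q$- resp.\ $\Z_{p_1}$-module) matches the paper, which likewise defers all of that to the cited theorem. But the paper's written proof consists entirely of the one step where this construction differs from Cha's, namely the passage from $W_m$ to $W_{m-1}$ through $E_{m-1}^+\cup\cE_+$, and that is exactly the step your proposal leaves unproved. You assert that since $\cE_+$ kills $\beta'$ rather than $\alpha'$, and $\langle\alpha_1\rangle$ is a simple module, the hypothesis $inc_*(\alpha_1)\ne 0$ ``amounts to'' nonvanishing of $\mu_{J_{m-1}^1}$ in $\cP^1\pi_1 W_{m-1}/\cP^2\pi_1 W_{m-1}$. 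That implication is not automatic: $\pi_1 W_{m-1}\cong \pi_1 W'/\langle\beta'\rangle$ where $W'=W_m\cup_{M(J^1_{m,n})}E^+_{m-1}$, and killing the normal closure of $\beta'$ could a priori change $\cP^1/\cP^2$ and destroy the class of $\alpha_1$ there, unless one first shows $\beta'\in\cP^2\pi_1 W'$, equivalently $\beta_1\in\cP^2\pi_1 W_m$. Establishing this is the only place where the hypothesis $inc_*(\alpha_1)\ne 0$ does work beyond Cha's machinery, and your proposal contains no argument for it.

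The paper's argument for that missing step runs as follows: $W_m$ is a $1$-solution for $J^1_{m,n}$ (Mayer--Vietoris), so by \cite[Theorem~4.4]{Cochran-Orr-Teichner:1999-1} the kernel of $i_*\colon H_1(M(J^1_{m,n});R_1[t^{\pm1}])\to H_1(W_m;R_1[t^{\pm1}])$ is self-annihilating for the Blanchfield form, hence equals $\langle\alpha_1\rangle$ or $\langle\beta_1\rangle$; the map $H_1(W_C;R[t^{\pm1}])\to H_1(W_m;R[t^{\pm1}])$ is a surjection with $(t-2)$-torsion kernel (this uses $H_1(U^i_r;R[t^{\pm1}])\cong R[t^{\pm1}]/(2t-1)$ from Proposition~\ref{proposition:m-or-n-solution}(2)), so $inc_*(\alpha_1)\ne 0$ and the fact that $\alpha_1$ is $(2t-1)$-torsion force $i_*(\alpha_1)\ne0$; therefore $\Ker i_*=\langle\beta_1\rangle$, so $i_*(\beta_1)=0$, and since $\cP^1\pi_1W_m/\cP^2\pi_1W_m$ injects into $H_1(W_m;R_1[t^{\pm1}])$ this gives $\beta_1\in\cP^2\pi_1W_m$, whence $\cP^1/\cP^2$ is unchanged when $\beta'$ is killed. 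None of this (the $1$-solution property of $W_m$, the self-annihilating-kernel dichotomy, the $(2t-1)$ versus $(t-2)$ torsion analysis of $W_C\to W_m$) appears in your proposal, so the base case $k=m-1$ of your induction — the genuinely new content of the lemma — is a gap rather than a proof. The remaining inductive steps, as you say, can be quoted from Cha, and your treatment of those and of the order claims is fine.
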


  We postpone the proof of
  Lemma~\ref{lemma:nonvanishing-of-the-meridian} to the end of this
  section and finish the proof of
  Proposition~\ref{proposition:non-solvability}. Let
  $G=\pi_1 W_0/\cP^{m+1}\pi_1 W_0$. In particular,
  $G^{(m+1)} = \{e\}$. By Lemma~\ref{lemma:target-group}, the group
  $G$ is amenable and lies in $D(\Z_{p_1})$. Now we have the canonical
  homomorphism $\phi\colon \pi_1 W_0\to G$, and by abuse of notation,
  we denote by $\phi$ restrictions of $\phi$ to subspaces of $W_0$.

  For a 4-manifold $X$ with a homomorphism $\pi_1 X\to G$, we let
  $S_G(X)=\sign_G^{(2)}(X) - \sign(X)$, the $\Lt$-signature
  defect. Since
  $\partial W_0 = M(J_0^1)\cup \big(\bigcup_{k=0}^{m-2} M(K_k)\big)$,
  using the definition of $\rhot$-invariants in
  Section~\ref{subsection:amenable-signature}, we have
  \[
    \rhot(\partial W_0,\phi)=\rhot(M(J_0^1),\phi) +\sum_{k=0}^{m-2}
    \rhot(M(K_k),\phi) = S_G(W_0).
  \]
  
  On the other hand, using Novikov additivity, we have
  \[
    S_G(W_0) = S_G(W) + S_G(C) + \sum_{i}\sum_{r=1}^{b_i} S_G(U^i_r) +
    S_G(E_{m-1}^+) + S_G(\cE_+) + \sum_{k=0}^{m-2}S_G(E_k).
  \]
  Here we use the convention that
  \[
    \sum_{k=0}^{m-2} \rhot(M(K_k),\phi)=\sum_{k=0}^{m-2}S_G(E_k)=0
  \]
  if $m=1$. We compute each term of these equalities below.

  \begin{enumerate}
  \item
    $\rhot (M(J_0^1),\phi) = \sum_{r=0}^{p_1-1}
    \sigma_{J_0^1}(\omega_1^r)>0$ where
    $\omega_1=e^{2\pi \sqrt{-1}/p_1}$ and $\sigma_{J_0^1}$ is the
    Levine-Tristram signature function for $J_0^1$: by
    Lemma~\ref{lemma:nonvanishing-of-the-meridian} the image of the
    meridian of $J_0^1$ under $\phi$ has order $p_1$, and in this case
    it is known that
    $\rhot (M(J_0^1),\phi) = \sum_{r=0}^{p_1-1}
    \sigma_{J_0^1}(\omega_1^r)>0$ (see \cite[Proposition
    5.1]{Cochran-Orr-Teichner:2002-1} and \cite[Corollary
    4.3]{Friedl:2003-5}). The last inequality follows from
    Lemma~\ref{lemma:J_0^i}(2).
  \item $S_G(C)=0$ by \cite[Lemma~4.2]{Cochran-Orr-Teichner:2002-1}
  \item $S_G(U_r^1)$ is 0 or
    $-\sum_{r=0}^{p_1-1} \sigma_{J_0^1}(\omega_1^r)$: since
    $G^{(m+1)}=\{e\}$, the subgroup $G^{(m)}$ is abelian. By our
    choice of the ring $R_m=\Z_{p_1}$, the group $G^{(m)}$ is a vector
    space over $\Z_{p_1}$, and hence every element in $G^{(m)}$ has
    order either 1 or $p_1$. By
    Proposition~\ref{proposition:m-or-n-solution},
    $S_G(U_r^1) = S_G(-U^1)=- \rhot(M(J_0^1), \psi)$ where
    $\psi\colon \pi_1M(J_0^1)\to \Z_{p_1}$ is a homomorphism. If
    $\psi$ is trivial, then $\rhot(M(J_0^1), \psi)=0$, and otherwise
    $\rhot(M(J_0^1), \psi)= \sum_{r=0}^{p_1-1}
    \sigma_{J_0^1}(\omega_1^r)$ as calculated in (1) above.
  \item $S_G(U^i_r)=0$ for $i>1$. With the same reason in (3), we have
    $S_G(U^i_r)$ is 0 or
    $-\sum_{r=0}^{p_1-1} \sigma_{J_0^i}(\omega_1^r)$. But by
    Lemma~\ref{lemma:J_0^i}(3), we have
    $\sum_{r=0}^{p_1-1} \sigma_{J_0^i}(\omega_1^r)=0$ for $i>1$.
  \item $S_G(\cE_+)=0$: since $\cE_+$ is a slice disk exterior, we
    have $H_2(\cE_+)=0$. Therefore $\sign(\cE_+)=0$.  By
    \cite[Lemma~2.7]{Cha:2006-1},
    $|\sign_G^{(2)}(\cE_+)|\le \rank_\Z H_2(\cE_+)=0$. Therefore
    $\sign_G^{(2)}(\cE_+)=0$.
  \item $S_G(E_{m-1}^+)=S_G(E_k) = 0$ for all $k$ due to
    \cite[Lemma~2.4]{Cochran-Harvey-Leidy:2009-1}.
  \end{enumerate}
  From these computations, we conclude that
  \[
    \epsilon\cdot \sum_{r=0}^{p_1-1} \sigma_{J_0^1}(\omega_1^r) +
    \sum_{k=0}^{m-2} \rhot(M(K_k),\phi) = S_G(W)
  \]
  for some constant $\epsilon\ge 1$.

  Suppose $W$ is an $m.5$-solution for $J$. Then, by
  Theorem~\ref{theorem:obstruction}, we have $S_G(W)=0$. But this
  leads us to a contradiction: since $K_k=K$ for all $k$, and by our
  choice of $J_0^1$ we have
  $\epsilon\cdot \sum_{r=0}^{p_1-1} \sigma_{J_0^1}(\omega_1^r)>
  (m-1)C_K$. On the other hand,
  $|\sum_{k=0}^{m-2} \rhot(M(K_k),\phi)|\le (m-1)C_K$. This completes
  the proof of the statement~(1) modulo the proof of
  Lemma~\ref{lemma:nonvanishing-of-the-meridian}.

  The statement~(2) can be proved using arguments similar to the one
  for the statement~(1).
\end{proof}

Now we finish the proof of
Proposition~\ref{proposition:non-solvability} by proving
Lemma~\ref{lemma:nonvanishing-of-the-meridian}.

\begin{proof}[Proof of Lemma~\ref{lemma:nonvanishing-of-the-meridian}]
  The proof is essentially identical to the one of
  \cite[Theorem~4.14]{Cha:2010-1}. The only difference occurs at the
  first step where we use $E_{m-1}^+\amalgover{M(R_{\beta'})} \cE_+$
  instead of $E_{m-1}$ to construct $W_{m-1}$ from $W_m$, and we only
  need to show that
  \[
    \cP^1\pi_1 W_{m-1}/\cP^2\pi_1 W_{m-1} \cong \cP^1\pi_1 W_m/\cP^2\pi_1 W_m.
  \]
	
  Let $W' = W_m\amalgover{M(J_{m,n}^1)} E_{m-1}^+$. Then, using
  exactly the same argument as in the proof of Assertion~1 in
  \cite[p.4799]{Cha:2010-1}, one can show that
  \[
    \pi_1W'/\pi_1W'^{(2)} \cong \pi_1W_m/\pi_1W_m^{(2)}.
  \]
  Therefore, we have 
  \[
    \cP^1\pi_1 W'/\cP^2\pi_1 W' \cong \cP^1\pi_1 W_m/\cP^2\pi_1 W_m.
  \]
  
  Since $W_{m-1} = W'\amalgover{M(R_{\beta'})} \cE_+$ and
  $\pi_1 \cE_+\cong \pi_1(M(R_{\beta'}))/\langle \beta'\rangle$ where
  $\langle \beta'\rangle$ denotes the subgroup normally generated by
  $\beta'$, by Seifert-van Kampen we obtain
  $\pi_1 W_{m-1} \cong \pi_1 W'/\langle \beta'\rangle$.

  For the moment, suppose $\beta'\in \cP^2\pi_1 W'$. Then, since
  $\cP^2\pi_1W'$ maps into $\cP^2\pi_1W_{m-1}$, we have
  \[
    \cP^1\pi_1 W'/\cP^2\pi_1 W' \cong \cP^1\pi_1 W_{m-1}/\cP^2\pi_1 W_{m-1},
  \]
  and therefore
  \[
    \cP^1\pi_1 W_{m-1}/\cP^2\pi_1 W_{m-1} \cong \cP^1\pi_1 W_m/\cP^2\pi_1 W_m.
  \]
  
  Therefore, it suffices to show $\beta'\in \cP^2\pi_1 W'$. Note that
  $\beta'$ is isotopic to $\beta_1 \subset M(J_{m,n}^1)$ in
  $E_{m-1}^+$, and again it suffices to show
  $\beta_1\in \cP^2\pi_1W_m$.

  Using Mayer-Vietoris sequences one can show that $W_m$ is a
  $1$-solution for $J_{m,n}^1$. Noting $R_1=\Z_{p_1}$ if $m=1$ and
  $\Q$ if $m>1$, by \cite[Theorem~4.4]{Cochran-Orr-Teichner:1999-1} it
  is known that $\Ker i_*$ is a self-annihilating submodule of
  $H_1(M(J_{m,n}^1);R_1[t^{\pm 1}])$ where
  $i_*\colon H_1(M(J_{m,n}^1);R_1[t^{\pm 1}])\to H_1(W_m;R_1[t^{\pm
    1}])$ is the inclusion-induced homomorphism. Therefore
  $\Ker i_*=P=\langle \alpha_1\rangle$ or
  $\Ker i_*=Q=\langle \beta_1\rangle$. Recall that
  \[
    W_C= C\amalgover{M(J)} W \mbox{ and }
    W_m=W_C\amalgover{(\partial_-C) \setminus M(J_{m,n}^1)}
    \Bigg((a_i-1)U_r^1\cup\bigg(\bigcup_{i\ge 2}a_i
        U^i_r\bigg)\Bigg).
  \] 
  Since $U_r^i=-U^i$ where $U^i$ has been obtained using
  Proposition~\ref{proposition:m-or-n-solution}, we have
  $H_1(U_r^i;R[t^{\pm 1}])\cong R[t^{\pm 1}]/(2t-1)$. Therefore, using
  Mayer-Vietoris sequences, one can see that the inclusion-induced
  homomorphism $H_1(W_C;R[t^{\pm 1}])\to H_1(W_m;R[t^{\pm 1}])$ is a
  surjection whose kernel is $(t-2)$-torsion. Since $\alpha_1$ is
  $(2t-1)$-torsion in $H_1(M(J_{m,n}^1);R[t^{\pm 1}])$ and
  $inc_*(\alpha_1)\ne 0$ in $H_1(W_C;R[t^{\pm 1}])$ by the hypothesis,
  $inc_*(\alpha_1)$ is not contained in the kernel of the
  surjection. It follows that $i_*(\alpha_1)\ne 0$. Therefore
  $\Ker i_* = Q=\langle \beta_1 \rangle$, and hence $i_*(\beta_1)=0$.

  Since $\pi_1 W_m/\cP^1\pi_1 W_m\cong \Z=\langle t\rangle$, the
  quotient group $\cP^1\pi_1 W_m/\cP^2\pi_1 W_m$ injects into
  $H_1(W_m;R_1[t^{\pm 1}])$.  Since $i_*(\beta_1)=0$, it follows that
  $\beta_1=0$ in $\cP^1\pi_1 W_m/\cP^2\pi_1 W_m$. Therefore
  $\beta_1\in \cP^2\pi_1W_m$.
\end{proof}

\section{Bi-filtrations and classical obstructions}
\label{section:bi-filtrations and classical obstructions}

In this section, we discuss relationships between our bi-filtrations
and previously known double sliceness obstructions.  We show that the
double sliceness obstructions in \cite{Gilmer-Livingston:1983-1}
vanish for knots in $\F_{2,2}$ and hence for knots in $\cG_{4,4}$
and~$\cW_{4,4}$.  We also show that the obstructions in
\cite{Friedl:2003-4, Livingston-Meier:2015-1} vanish for knots
in~$\F_{1.5,1.5}$, and hence for knots in $\cG_{3.5,3.5}$
and~$\cW_{3.5,3.5}$.  Details are given below.

\subsection*{Algebraic double sliceness}

Due to Sumners~\cite{Sumners:1971-1}, a doubly slice knot has a
hyperbolic Seifert matrix, that is, a Seifert matrix of the form
$\big[\begin{smallmatrix}0 & * \\ * & 0\end{smallmatrix}\big]$ with
the zero blocks of the same size.
Following~\cite{Gilmer-Livingston:1983-1}, we say that a knot $K$ is
\emph{algebraically doubly slice} if $K$ has a hyperbolic Seifert
matrix.  We remark that the following are equivalent:
\begin{enumerate}
\item $K$ has a hyperbolic Seifert matrix.
\item Every Seifert matrix of $K$ is hyperbolic.
\item $K$ has a stably hyperbolic Seifert matrix, that is, the
  orthogonal sum of a Seifert matrix of $K$ and some other hyperbolic
  Seifert matrix is hyperbolic.
\item Every Seifert matrix of $K$ is stably hyperbolic.
\item The Blanchfield form of $K$ is hyperbolic, that is, the
  Alexander module is an internal direct sum $P_1\oplus P_2$ for some
  submodules $P_1$ and $P_2$ which are self-annihilating with respect
  to the Blanchfield form.
\item The Blanchfield form of $K$ is stably hyperbolic, that is, the
  orthogonal sum of the Blanchfield form of $K$ and some other
  hyperbolic Blanchfield form is hyperbolic.
\end{enumerate}

The equivalence $(3) \Leftrightarrow (6) \Leftrightarrow (4)$ seems to
be a classical fact; an explicit proof can be found
in~\cite{Orson:2015-3}.  Equivalences $(1) \Leftrightarrow (3)$,
$(2) \Leftrightarrow (4)$ and $(5) \Leftrightarrow (6)$ are recent
results of Orson~\cite{Orson:2017-1}.

By Proposition~\ref{proposition:homology-splitting} with $R=\Z$, a
doubly 1-solvable knot is algebraically doubly slice. In particular,
our examples in Theorem~\ref{theorem:infinite-rank} are algebraically
doubly slice.

It is known that a knot is doubly 0-solvable (respectively doubly
0.5-solvable) if and only if it has vanishing Arf invariant
(respectively it is algebraically
slice)~\cite[Corollary~2.9]{Kim:2006-1}.

\subsection*{Gilmer-Livingston obstruction}

In \cite{Gilmer-Livingston:1983-1}, Gilmer-Livingston introduced
obstructions for a knot to being doubly slice, using the idea that a
prime power fold cyclic cover of $S^3$ branched over a doubly slice
knot embeds in~$S^4$.  To state their result, we need the following
notations.  For a space $X$ and a homomorphism
$\phi\colon H_1(X)\to \Z_m$ where $m=q^s$ for a prime $q$, let $X_m$
denote the associated $m$-fold cyclic cover of $X$ and $T$ a generator
of the group of covering transformations.  Let $\overline{H_k}(X_m)$
be the $e^{2\pi i/m}$-eigenspace for the action of $T$ on
$H_k(X_m;\C)$. We define
$\overline{\beta_k}(X_m):= \dim \overline{H_k}(X_m)$ and
$\rho_k(X):= \dim H_k(X;\Z_q)$.

Suppose $M$ is a closed 3-manifold and $\phi\colon H_1(M)\to \Z_m$ is
a homomorphism.  Then there is a 4-manifold $V$ with a homomorphism
$\tilde{\phi}\colon H_1(V)\to \Z_m$ such that
$\partial(V,\tilde{\phi})=r(M,\phi)$ for some nonzero integer~$r$.
Let $\sigma_1(V_m)$ be the signature of the intersection form on
$H_2(V_m;\C)$ restricted to $\overline{H_2}(V_m)$. We define
$\sigma(M,\phi):= \frac{1}{r} (\sigma_1(V_m) - \sigma_0(V))$ where
$\sigma_0(V)$ is the ordinary signature of $V$.

Let $n$ be a prime power.  Let $L^n$ be the $n$-fold cyclic cover of
$S^3$ branched over a knot $K$, which is a rational homology 3-sphere.
 
\begin{theorem}[{Gilmer-Livingston obstruction~\cite{Gilmer-Livingston:1983-1}}]
  \label{theorem:Gilmer-Livingston obstructions}
  If $K$ is a doubly slice knot, then the following hold.
  \begin{enumerate}
  \item $H_1(L^n)\cong G_1\oplus G_2$ where $G_1$ and $G_2$ are
    metabolizers of the linking form of~$L^n$.
  \item If $\phi\colon H_1(L^n)\to \Z_m$ is an epimorphism such that
    $\phi(G_1)=0$ or $\phi(G_2)=0$, then
    \[
      |\sigma(L^n,\phi)| + |d-1-\overline{\beta_1}(L^n_m)| \le d
    \]
    where $d=\frac12 \dim H_1(L^n;\Z_q)$.
  \end{enumerate}
\end{theorem}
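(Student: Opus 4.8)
The plan is to make Gilmer--Livingston's branched covers geometric. Since $K$ is doubly slice, choose an unknotted $2$-sphere $S\subset S^4$ meeting $S^3$ transversely in $K$, and set $D_\pm:=S\cap D^4_\pm$, which are slice disks for $K$ in $D^4_\pm$. Because $S$ is unknotted, its $n$-fold cyclic branched cover $\Sigma_n(S^4,S)$ is again $S^4$, and it is cut along $\Sigma_n(S^3,K)=L^n$ into $W_\pm:=\Sigma_n(D^4_\pm,D_\pm)$, compact oriented $4$-manifolds with $\partial W_\pm=L^n$ and $W_+\cup_{L^n}W_-=S^4$. First I would record the Mayer--Vietoris consequences of this decomposition: over $\Q$ one gets $H_1(W_\pm;\Q)=0$ (using that $L^n$ is a rational homology sphere), and over $\Z$ the two inclusions give an isomorphism $H_1(L^n)\cong H_1(W_+)\oplus H_1(W_-)$.

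For part (1), set $G_1:=\ker(H_1(L^n)\to H_1(W_+))$ and $G_2:=\ker(H_1(L^n)\to H_1(W_-))$. The isomorphism above gives $G_1\cap G_2=0$, and since each inclusion $H_1(L^n)\to H_1(W_\pm)$ is onto, $|G_1|=|H_1(W_-)|$ and $|G_2|=|H_1(W_+)|$, so $|G_1|\,|G_2|=|H_1(L^n)|$ and therefore $H_1(L^n)=G_1\oplus G_2$. That the $G_i$ are metabolizers of the linking form $\lambda$ is the usual ``half lives, half dies'' statement for a $4$-manifold with vanishing rational first homology bounding a rational homology sphere: one has $G_i\subseteq G_i^{\perp}$ for each $i$, so nonsingularity of $\lambda$ gives $|H_1(W_-)|=|G_1|\le|G_1^{\perp}|=|H_1(W_+)|$ and symmetrically $|H_1(W_+)|\le|H_1(W_-)|$; the resulting equality forces $G_i=G_i^{\perp}$.

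For part (2), suppose $\phi\colon H_1(L^n)\to\Z_m$ is onto with $\phi(G_1)=0$ (the case $\phi(G_2)=0$ is symmetric). Then $\phi$ factors through $H_1(L^n)/G_1\cong H_1(W_+)$, hence extends to $\tilde\phi\colon H_1(W_+)\to\Z_m$, so $(L^n,\phi)=\partial(W_+,\tilde\phi)$; taking $V=W_+$ and $r=1$ in the definition of $\sigma(L^n,\phi)$ yields $\sigma(L^n,\phi)=\sigma_1((W_+)_m)-\sigma_0(W_+)$, where $(W_+)_m$ is the $\Z_m$-cover of $W_+$ associated to $\tilde\phi$. I would then bound each piece by its dimension, $|\sigma_1((W_+)_m)|\le\overline{\beta_2}((W_+)_m)$ and $|\sigma_0(W_+)|\le\dim_\Q H_2(W_+;\Q)$, and run an Euler-characteristic bookkeeping over the $\C$-eigenspaces of the deck transformation: comparing $\overline{\chi}$ of $(W_+)_m$ with $\overline{\chi}$ of its boundary $L^n_m$ via the long exact sequence of the pair restricted to the $e^{2\pi i/m}$-eigenspace together with Poincar\'e--Lefschetz duality over $\C$, and relating the outcome to $d=\tfrac12\dim H_1(L^n;\Z_q)$ through the $\Z_q$-homology of $L^n$, should produce the asserted inequality $|\sigma(L^n,\phi)|+|d-1-\overline{\beta_1}(L^n_m)|\le d$.

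The main obstacle is precisely this last numerical step, which is where the content of \cite{Gilmer-Livingston:1983-1} lies: one must keep track of which eigenvalues of the deck transformation of $L^n_m$ and $(W_+)_m$ genuinely contribute when $m=q^s$ is a prime power (not every nontrivial character has order exactly $m$), identify $\overline{\beta_1}((W_+)_m)$ with the rank of the appropriate layer of the Alexander-type module of the slice-disk exterior $\overline{D^4_+\setminus\nu(D_+)}$ and check it has the expected size, and finally use duality to convert the signature defect $\sigma_1((W_+)_m)-\sigma_0(W_+)$ into the combination $d-1-\overline{\beta_1}(L^n_m)$ that controls the $\Z_q$-Betti number. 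Everything preceding that is formal manipulation of the branched-cover decomposition $W_+\cup_{L^n}W_-=S^4$.
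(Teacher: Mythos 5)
This statement is quoted by the paper from Gilmer--Livingston and is not proved there; the closest thing to a proof in the paper is the argument for Proposition~\ref{proposition:doubly-2-solvability-and-GL-obstructions}, which runs the same kind of eigenspace signature/Betti-number bookkeeping in the more general doubly $2$-solvable setting. Your setup is the standard (and correct) route to the theorem itself: the $n$-fold branched cover of $(S^4,S)$ over the unknotted sphere is again $S^4$, it splits along $L^n$ into the branched covers $W_\pm$ of the two slice-disk exteriors, Mayer--Vietoris gives $H_1(L^n)\cong H_1(W_+)\oplus H_1(W_-)$, and the kernels are metabolizers by the usual order count plus $G_i\subseteq G_i^{\perp}$. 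Part (1) of your argument is essentially complete.

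Part (2), however, is left as a plan rather than a proof, and the missing step is exactly where the inequality comes from. Two concrete points. First, your bound $|\sigma_0(W_+)|\le\dim_\Q H_2(W_+;\Q)$ is not the useful statement: what one actually needs is $H_2(W_+;\Q)=0$ (which does hold here, by $\chi(W_+)=1$, $H_1(W_+;\Q)=0$ and duality against the rational homology sphere $L^n$), so that $\sigma_0(W_+)=0$ and $\sigma(L^n,\phi)=\sigma_1((W_+)_m)$. Second, the crude bound $|\sigma_1((W_+)_m)|\le\overline{\beta_2}((W_+)_m)$ cannot by itself produce the term $|d-1-\overline{\beta_1}(L^n_m)|$: one needs the nullity $N$ of the restricted intersection form, the exact sequence of $((W_+)_m,L^n_m)$ on the $e^{2\pi i/m}$-eigenspaces giving $\overline{\beta_1}(L^n_m)=N+\overline{\beta_1}((W_+)_m)$ and $|\sigma_1|+N\le\overline{\beta_2}((W_+)_m)$, the twisted Euler characteristic identity $\overline{\chi}((W_+)_m)=\chi(W_+)=1$ together with $\overline{\beta_0}=\overline{\beta_3}=\overline{\beta_4}=0$ (the vanishing of $\overline{\beta_3}$ uses Gilmer's bound $\overline{\beta_k}(V_m)\le\rho_k(V)$) to get $\overline{\beta_2}=\overline{\beta_1}((W_+)_m)+1$, and finally the two estimates $\overline{\beta_1}((W_+)_m)\le\overline{\beta_1}(L^n_m)$ and $\overline{\beta_1}((W_+)_m)\le\rho_1(W_+)-1=d-1$, where $\rho_1(W_+)=d$ uses that the metabolizer splitting halves $\dim H_1(L^n;\Z_q)$. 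Assembling these gives $|\sigma(L^n,\phi)|+\overline{\beta_1}(L^n_m)\le 2\min\{\overline{\beta_1}(L^n_m),\,d-1\}+1$, which is equivalent to the stated inequality. None of this chain is carried out in your proposal, and you acknowledge as much; since this is precisely the content of the theorem, the attempt as written has a genuine gap. The proof of Proposition~\ref{proposition:doubly-2-solvability-and-GL-obstructions} in the paper executes each of these steps (with the extra complication of the $2$-solution surfaces, which disappear in your branched-cover setting) and can serve as a template for filling it in.
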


\begin{proposition}
  \label{proposition:doubly-2-solvability-and-GL-obstructions}
  If a knot $K$ is doubly $2$-solvable, then $K$ has vanishing
  Gilmer-Livingston obstructions, that is, the conclusions (1) and
  (2) of Theorem~\ref{theorem:Gilmer-Livingston obstructions} hold.
\end{proposition}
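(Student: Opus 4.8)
The plan is to deduce conclusion~(1) of Theorem~\ref{theorem:Gilmer-Livingston obstructions} from algebraic double sliceness and to prove conclusion~(2) by a Casson--Gordon style signature computation on cyclic branched covers of a doubly $2$-solution. A doubly $2$-solution is in particular a double $1$-solution, so Proposition~\ref{proposition:homology-splitting} applies with $R=\Q$ and with $R=\Z_q$ for every prime~$q$: writing $(W_1,W_2)$ for a double $1$-solution of $M(K)$ and $P_i=\ker\big(H_1(M(K);R[t^{\pm1}])\to H_1(W_i;R[t^{\pm1}])\big)$, we get $H_1(M(K);R[t^{\pm1}])\cong P_1\oplus P_2$ with each $P_i$ self-annihilating for the Blanchfield form; in particular $K$ is algebraically doubly slice. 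For a prime power~$n$, the linking form on $H_1(L^n)$ is built functorially from the Blanchfield form by reducing modulo $t^n-1$ (carried out prime-by-prime so as to capture the $\ell$-torsion when $n=\ell^a$), so the hyperbolic splitting of the Blanchfield form descends to $H_1(L^n)\cong G_1\oplus G_2$ with each $G_i$ a metabolizer of the linking form. This gives conclusion~(1), and the construction identifies $G_i$ with the kernel of $H_1(L^n)\to H_1(V_i)$ for the $4$-manifolds $V_i$ defined next.

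For conclusion~(2), fix a prime power~$n$ and let $V_i$ be the $n$-fold cyclic cover of $W_i$ associated to $\pi_1(W_i)\to H_1(W_i)\cong\Z\to\Z_n$, capped off by a $2$-handle along the core of the covered surgery solid torus so that $\partial V_i=L^n=\Sigma_n(K)$. Since $W_i$ is a $1$-solution one checks $\ker(H_1(L^n)\to H_1(V_i))=G_i$, and since $\pi_1(W_1\cup_{M(K)}W_2)\cong\Z$ the closed manifold $V_1\cup_{L^n}(-V_2)$ has first homology of controlled size. Given an epimorphism $\phi\colon H_1(L^n)\to\Z_m$, $m=q^s$, with $\phi(G_1)=0$ (the case $\phi(G_2)=0$ being symmetric), $\phi$ kills $\ker(H_1(L^n)\to H_1(V_1))$ and hence, as in Gilmer--Livingston's treatment of the slice case, extends over $H_1(V_1)$; thus $V_1$ computes $\sigma(L^n,\phi)=\sigma_1(V_{1,m})-\sigma_0(V_1)$.

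The crux is bounding $\sigma_1(V_{1,m})-\sigma_0(V_1)$, and this is where the full strength of a \emph{$2$-solution} is used. The Lagrangian of $W_1$ for the $\Z[\pi/\pi^{(2)}]$-valued intersection form, pulled back along the metabelian coefficient system $\pi_1(V_1)\to\Z\times\Z_m$ underlying the $e^{2\pi i/m}$-eigenspace $\overline{H_2}(V_{1,m})$, forces that eigenspace intersection form to be metabolic except on a subspace whose dimension is controlled by $\dim H_1(L^n;\Z_q)$ and $\dim H_1(V_1;\Z_q)$ -- this is the localization and functoriality argument of \cite{Cochran-Orr-Teichner:1999-1} and \cite{Kim:2006-1}, run inside the branched-cover bookkeeping of \cite{Gilmer-Livingston:1983-1}. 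Tracking the resulting defect yields $|\sigma(L^n,\phi)|+|d-1-\overline{\beta_1}(L^n_m)|\le d$ with $d=\tfrac12\dim H_1(L^n;\Z_q)$, and the same argument applied to $V_2$ handles $\phi(G_2)=0$. The main obstacle is exactly this last step: one must merge Gilmer--Livingston's eigenspace and branched-cover accounting with the derived-series coefficient-system estimates that make a $2$-solution's intersection form ``metabolic up to bounded defect'' over the $\Z\times\Z_m$ cover.
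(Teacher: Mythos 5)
Your skeleton is the same as the paper's: cap off the $n$-fold cyclic covers of the two halves of a double $2$-solution to get $V^\pm$ with $\partial V^\pm=L^n$, obtain the metabolizer splitting from Proposition~\ref{proposition:homology-splitting}, extend $\phi$ over the $V$ corresponding to the metabolizer it kills, and exploit that a $2$-solution's Lagrangian is defined over $\Z[\pi/\pi^{(2)}]$, so its surfaces lift to the metabelian cover $V_m$. But as written the proof has a genuine gap: the whole quantitative content of conclusion (2) is asserted rather than proved, and you flag it yourself as ``the main obstacle.'' The paper closes exactly this step by an explicit computation which your ``metabolic up to bounded defect'' paragraph does not reproduce: (i) it first shows $\sigma_0(V)=0$ (from $\chi(V)=1+2rn$ the lifted classes $[x_i^j],[y_i^j]$ generate $H_2(V;\Q)$ and the form vanishes on the span of the $[x_i^j]$) --- in your final inequality the $\sigma_0(V_1)$ term silently disappears; (ii) it shows $\overline{\beta_3}(V_m)=0$, hence $\overline{H_1}(V_m,L^n_m)=0$, and uses the exact sequence of $(V_m,L^n_m)$ to get $\overline{\beta_1}(L^n_m)=N+\overline{\beta_1}(V_m)$, where $N$ is the nullity of the eigenspace form; (iii) it proves the key estimate $|\sigma_1(V_m)|+N\le\overline{\beta_2}(V_m)-2rn$ by identifying $\overline{H_2}(V_m)$ with twisted homology $H_2(V;\C^\omega)$, lifting the $2$-solution's surfaces to $V_m$, and checking they span a $2rn$-dimensional subspace meeting the nullspace trivially and containing an $rn$-dimensional isotropic subspace; (iv) it computes $\overline{\beta_2}(V_m)=\overline{\beta_1}(V_m)+1+2rn$ from the eigenspace Euler characteristic and finishes with Gilmer's inequalities $\overline{\beta_1}(V_m)\le\overline{\beta_1}(L^n_m)$ and $\overline{\beta_1}(V_m)\le\rho_1(V)-1=d-1$, giving $|\sigma(L^n,\phi)|+\overline{\beta_1}(L^n_m)\le 2\min\{\overline{\beta_1}(L^n_m),d-1\}+1$, which is equivalent to the stated bound with $d=\tfrac12\dim H_1(L^n;\Z_q)$. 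Without (i)--(iv) the specific Gilmer--Livingston inequality does not follow from the qualitative statement that the eigenspace form contains a large isotropic subspace.

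A secondary issue is conclusion (1). You propose to run Proposition~\ref{proposition:homology-splitting} with $R=\Q$ and $R=\Z_q$ prime by prime and then ``descend'' to $H_1(L^n)$; but assembling mod-$q$ splittings into an integral direct sum $H_1(L^n)\cong G_1\oplus G_2$ of metabolizers of the $\Q/\Z$-valued linking form is not automatic. The paper instead applies the splitting with $R=\Z$ (the cited proposition is stated for $\Q$ and $\Z$), observes that the integral Alexander module, hence each summand, is $\Z$-torsion free, and reduces modulo $t^n-1$ using the capped covers, obtaining $H_1(L^n)\cong H_1(V^+)\oplus H_1(V^-)$ with each summand a metabolizer; this integral route also furnishes the identification of $G_i$ with $\ker\bigl(H_1(L^n)\to H_1(V^\mp)\bigr)$ that you need for the extension of $\phi$ in part (2). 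You should replace the prime-by-prime argument with the integral one.
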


\begin{proof}
  Suppose $M(K)$ is doubly 2-solvable via $(U^+, U^-)$. Let $V^+$ be
  the 4-manifold obtained by adding a 2-handle to $U^+_n$ along the
  meridian of the preimage of $K$ in~$M(K)_n$.  We define $V^-$ in a
  similar fashion using~$U^-$. Then $\partial V^\pm = L^n$.

  By Proposition~\ref{proposition:homology-splitting}, we have
  \[
    H_1(M(K);\Z[t^{\pm 1}])\cong H_1(U^+;\Z[t^{\pm 1}])\oplus
    H_1(U^-;\Z[t^{\pm 1}])
  \]
  Also, each of $H_1(U^\pm;\Z[t^{\pm 1}])$ is a self-annihilating
  submodule of the Blanchfield form.  Since $H_1(M(K);\Z[t^{\pm 1}])$
  is $\Z$-torsion free, so are $H_1(U^\pm;\Z[t^{\pm 1}])$.  By taking
  the quotients by the submodule generated by $t^n-1$, it follows that
  $H_1(L^n)\cong H_1(V^+)\oplus H_1(V^-)$ and that $H_1(V^\pm)$ are
  metabolizers of the linking form of $L^n$.

  For brevity, let $U:=U^+$ and $V:=V^+$. Suppose that
  $\phi\colon H_1(L^n)\to \Z_m$ is an epimorphism which
  vanishes on $H_1(V^-)$. Then the map $\phi$ factors through
  $H_1(V)$. Denote by $\tilde{\phi}$ the homomorphism
  $H_1(V )\to \Z_m$. Now we have
  $\partial (V,\tilde{\phi})=(L^n, \phi)$.

  Since $U$ is a 2-solution, for $r:=\frac12 \dim H_2(U;\Q)$, we have surfaces $x_i$ and $y_i$
  $(1\le i\le r)$ in $U$ whose homology classes $[x_i]$ and $[y_i]$
  satisfy the conditions (1) and (2) in
  Definition~\ref{definition:n-solution}. Let $x^j_i$ and $y^j_i$
  $(1\le i\le r, 1\le j\le n)$ be the lifts of $x_i$ and $y_i$ in $V$.
  Since $\chi(U_n)=2rn$, we have $\chi(V) = 1+ 2rn$.  This implies
  $\dim H_2(V;\Q) = 2rn$.  It follows that $[x^j_i]$ and $[y^j_i]$
  generate $H_2(V;\Q)$, since $x^j_i$ and $y^j_i$ are dual to each
  other with respect to the intersection form on $H_2(V;\Q)$. Since
  the intersection form on $H_2(V;\Q)$ vanishes on the span of
  $[x^j_i]$, we have $\sigma_0(V)=0$. It follows that
  $\sigma(L^n,\phi)=\sigma_1(V_m) - \sigma_0(V) = \sigma_1(V_m)$.

  Since $H_3(V;\Z_q)\cong H^1(V,L^n;\Z_q)=0$, we have $\rho_3(V)=0$.
  By \cite[Proposition~1.4]{Gilmer:1981-1}, which states
  $\overline{\beta_3}(V_m) \le \rho_3(V)$, it follows that
  $\overline{\beta_3}(V_m)=0$, and hence $\overline{H_3}(V_m)=0$.  By
  duality we have
  $\overline{H_1}(V_m,L^n_m)\cong \overline{H_3}(V_m)=0$. Then the
  sequence
  \[
    \overline{H_2}(V_m)\to \overline{H_2}(V_m,L^n_m)\to
    \overline{H_1}(L^n_m)\to \overline{H_1}(V_m)\to 0
  \]
  is exact. Therefore, letting $N$ be the nullity of the matrix for
  the intersection form restricted to $\overline{H_2}(V_m)$, we have
  $|\sigma_1(V_m)|+N\le \overline{\beta_2}(V_m)$ and
  $\overline{\beta_1}(L^n_m) = N + \overline{\beta_1}(V_m)$.

  We assert that $|\sigma_1(V_m)|+ N\le \overline{\beta_2}(V_m)-2rn$. The proof is as follows.  Regard
  $\C$ as a $\Z[\Z_m]$-module where $T$, a generator of the group of
  the covering transformations $\Z_m$, acts on $\C$ via multiplication
  by~$e^{2\pi i/m}$.  Define
  $H_*(V;\C^\omega):=H_*(C_*(V_m)\otimes_{\Z[\Z_m]} \C)$, where the
  coefficient group $\C$ is twisted as above. Let
  $\lambda\colon H_2(V;\C^\omega)\times H_2(V;\C^\omega)\to \C$ be the
  (twisted) intersection form. Since $\C$ is flat as a
  $\Z[\Z_m]$-module, $H_*(V;\C^\omega)$ is isomorphic to
  $H_*(V_m)\otimes_{\Z[\Z_m]} \C$. Note that $V_m$ is a metabelian
  cover of $U$. Therefore, the surfaces $x^j_i$ and $y^j_i$ can be
  lifted to $V_m$. Since $U$ is a 2-solution, by our choice of $x_i$
  and $y_i$, we have $\lambda([x^j_i], [x^\ell_k]) = 0$ and
  $\lambda ([x^j_i], [y^\ell_k]) = \delta_{ik}\delta_{j\ell}$ where
  $1\le i,k\le r$ and $1\le j,\ell \le n$. Let
  $\overline{\lambda}\colon
  \overline{H_2}(V_m)\times\overline{H_2}(V_m) \to \C$ be the
  intersection form on $H_2(V_m;\C)$ restricted to
  $\overline{H_2}(V_m)$. Then,
  $H_2(V;\C^\omega)\cong \overline{H_2}(V_m)$ and
  $\lambda(x,y) = m\overline{\lambda}(x,y)$ (see
  \cite{Casson-Gordon:1986-1}). It follows that $[x^j_i]$ and
  $[y^j_i]$ span a $2rn$-dimensional subspace of $\overline{H_2}(V_m)$
  such that $\overline{\lambda}$ vanishes on the span of $[x^j_i]$,
  which is a subspace of dimension $rn$. Moreover, this
  $2rn$-dimensional subspace has trivial intersection with the
  nullspace of $\overline{\lambda}$. Therefore,
  $|\sigma_1(V_m)|+ N\le \overline{\beta_2}(V_m)-2rn$, as asserted
  above.

  Combining the above assertion with $\sigma(L^n,\phi)=\sigma_1(V_m)$
  and $\overline{\beta_1}(L^n_m) = N + \overline{\beta_1}(V_m)$, we
  obtain that
  $|\sigma(L^n,\phi)|+ \overline{\beta_1}(L^n_m)\le
  \overline{\beta_2}(V_m) + \overline{\beta_1}(V_m)-2rn$

  We compute $\overline{\beta_2}(V_m)$.  We have
  $\overline{\chi}(V_m)=\chi(V)=1+2rn$, where the first equality is
  due to~\cite[Proposition~1.1]{Gilmer:1981-1}.  Since $\tilde{\phi}$
  is not trivial, $\overline{\beta_0}(V_m)=0$. Since $V_m$ is not
  closed, $\overline{\beta_k}(V_m) = 0$ for $k\ge 4$. We have shown
  $\overline{\beta_3}(V_m)=0$ above, and therefore
  $\overline{\beta_2}(V_m) = \overline{\beta_1}(V_m)+1+2rn$.

  Therefore, we have
  $|\sigma(L^n,\phi)|+ \overline{\beta_1}(L^n_m)\le
  2\overline{\beta_1}(V_m) + 1$.  Since $\overline{H_1}(V_m,L^n_m)=0$,
  we have $\overline{\beta_1}(V_m)\le \overline{\beta_1}(L^n_m)$.
  Also,
  \[
    \overline{\beta_1}(V_m)\le \rho_1(V)-1 = \dim H_1(V;\Z_q)-1 =
    \tfrac12 \dim H_1(L^n;\Z_q)-1 = d-1,
  \]
  where the inequality is due to
  \cite[Proposition~1.5]{Gilmer:1981-1}.  It follows that
  \[
  |\sigma(L^n,\phi)|+ \overline{\beta_1}(L^n_m)\le
  2\min\{\overline{\beta_1}(L^n_m), d-1\} + 1.
  \]
  It is straightforward to verify that this is equivalent to
  \[
    |\sigma(L^n,\phi)| + |d-1-\overline{\beta_1}(L^n_m)| \le d.\qedhere
  \]
\end{proof}

\subsection*{Friedl obstruction}

In \cite{Friedl:2003-4}, Friedl gave double silceness obstructions
using $\eta$-invariants.  He defined a collection, denoted by
$P^{irr}_k(H_1(M(K);\Z[t^{\pm 1}])\rtimes \Z)$, of certain
$k$-dimensional unitary representations of the group
$H_1(M(K);\Z[t^{\pm 1}])\rtimes \Z$, and considered $\eta$-invariants,
denoted by $\eta(M(K),\alpha)$, of the zero surgery manifold $M(K)$
associated to $\alpha\in P^{irr}_k(H_1(M(K);\Z[t^{\pm 1}])\rtimes \Z)$.
Since we do not need precise descriptions of them, we omit details.
Refer to~\cite[Sections~3 and~4]{Friedl:2003-4}.

\begin{theorem}[{Friedl obstruction~\cite[Theorem~8.4]{Friedl:2003-4}}]
  \label{theorem:friedl-obstruction}
  If $K$ is a doubly slice knot, then there exist self-annihilating
  submodules $P_1$ and $P_2$ of $H_1(M(K);\Z[t^{\pm 1}])$ for the
  Blanchfield form such that
  \begin{enumerate}
  \item $H_1(M(K);\Z[t^{\pm 1}])\cong P_1\oplus P_2$, and
  \item $\eta(M(K), \alpha) = 0$ for any
    $\alpha \in P^{irr}_k(H_1(M(K);\Z[t^{\pm 1}])\rtimes \Z)$ which
    vanishes on either $P_1$ or~$P_2$.
  \end{enumerate}
\end{theorem}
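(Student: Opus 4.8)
This statement is \cite[Theorem~8.4]{Friedl:2003-4}, so the natural course is to cite it; for orientation I record the shape of the argument, which runs parallel to the Gilmer--Livingston case treated in Proposition~\ref{proposition:doubly-2-solvability-and-GL-obstructions}. Write $\Sigma$ for an unknotted $2$-sphere in $S^4$ meeting $S^3$ transversely in $K$, and let $D_\pm := \Sigma\cap D^4_\pm$ be the two slice disks, with exteriors $E_{D_\pm}\subset D^4_\pm$. Their union along $M(K)$ is the exterior $E_\Sigma$ of $\Sigma$ in $S^4$, which has infinite cyclic fundamental group since $\Sigma$ is unknotted.

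For conclusion~(1), set $P_\pm := \Ker\big(H_1(M(K);\Z[t^{\pm1}])\to H_1(E_{D_\pm};\Z[t^{\pm1}])\big)$. Each $P_\pm$ is self-annihilating for $\Bl$ by the \emph{half lives, half dies} principle applied to the slice-disk exterior $E_{D_\pm}$ (this is the same input as in Proposition~\ref{proposition:homology-splitting}). The decomposition $H_1(M(K);\Z[t^{\pm1}])\cong P_+\oplus P_-$ then follows from a Mayer--Vietoris computation for $E_\Sigma=E_{D_+}\cup_{M(K)}E_{D_-}$: the Alexander module of $E_\Sigma$ vanishes because $\pi_1(E_\Sigma)\cong\Z$, which forces $P_+\cap P_-=0$ and $P_++P_-$ to be everything.

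For conclusion~(2), fix $\alpha\in P^{irr}_k(H_1(M(K);\Z[t^{\pm1}])\rtimes\Z)$ vanishing on, say, $P_+$. Since the domain of $\alpha$ is the metabelian quotient $\pi_1(M(K))/\pi_1(M(K))^{(2)}$ and $\alpha$ kills $P_+$, it factors through $\pi_1(E_{D_+})/\pi_1(E_{D_+})^{(2)}\cong H_1(E_{D_+};\Z[t^{\pm1}])\rtimes\Z$, hence extends to a representation $\tilde\alpha$ of $\pi_1(E_{D_+})$. Thus $(M(K),\alpha)$ bounds $(E_{D_+},\tilde\alpha)$, and by the Atiyah--Patodi--Singer index theorem $\eta(M(K),\alpha)$ equals the twisted signature defect $\sigma_{\tilde\alpha}(E_{D_+})-k\cdot\sigma(E_{D_+})$. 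The slice-disk exterior $E_{D_+}$ is a $\Z$-homology $S^1\times D^3$, so $\sigma(E_{D_+})=0$; and the twisted intersection form on $H_2(E_{D_+};\C^k)$ is metabolic --- its signature vanishes for the same reason that $\sigma_0(V)=0$ in the Gilmer--Livingston argument, namely the relevant twisted second homology is carried by a Lagrangian coming from $H_2(M(K);\C^k)$ --- so the defect is $0$. Reversing the roles of $P_\pm$ handles the case $\alpha|_{P_-}=0$.

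The main obstacle is the last signature computation: one must check carefully that extending $\alpha$ over $E_{D_+}$ genuinely forces the twisted signature of $E_{D_+}$ to vanish, which requires controlling $H_2(E_{D_+};\C^k)$ through the long exact sequence of $(E_{D_+},M(K))$ together with Poincar\'e--Lefschetz duality, and uses that $\alpha$ kills exactly $P_+$. A secondary point is verifying that the extended representation $\tilde\alpha$ still lies in the class for which the $\eta$-invariant formalism of \cite[\S\S3--4]{Friedl:2003-4} applies; both points are addressed in \cite[\S8]{Friedl:2003-4}.
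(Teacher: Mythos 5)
The paper does not prove this statement at all: it is imported verbatim from Friedl as \cite[Theorem~8.4]{Friedl:2003-4}, and the paper's own contribution nearby is only Proposition~\ref{proposition:doubly-1.5-solvability-and-friedl-obstruction}, so your decision to cite is exactly the paper's treatment. Your accompanying sketch (the two slice-disk exteriors of the unknotted sphere giving the splitting into self-annihilating summands, then extending the representation over one exterior and computing $\eta$ as a vanishing signature defect) is the standard shape of the argument, with the genuinely delicate steps --- extendability of $\alpha$ and the twisted-signature vanishing, which is where the hypotheses built into $P^{irr}_k$ enter --- correctly flagged and deferred to \cite{Friedl:2003-4}.
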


\begin{proposition}
  \label{proposition:doubly-1.5-solvability-and-friedl-obstruction}
  If a knot $K$ is doubly $1.5$-solvable, then $K$ has vanishing
  Friedl obstructions, that is, the conclusions of
  Theorem~\ref{theorem:friedl-obstruction} hold.
\end{proposition}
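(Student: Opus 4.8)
The plan is to follow the proof of Proposition~\ref{proposition:doubly-2-solvability-and-GL-obstructions}, using Friedl's $\eta$-invariants in place of the Gilmer--Livingston branched-cover signatures and a double $1.5$-solution in place of the double $2$-solution; equivalently, to transplant Friedl's slice-disk argument in~\cite{Friedl:2003-4} to one half of a double $1.5$-solution.

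First, since a $1.5$-solution is in particular a $1$-solution, a doubly $1.5$-solvable knot $K$ is doubly $1$-solvable, say via a double $1$-solution $(U^+,U^-)$. Proposition~\ref{proposition:homology-splitting} with $R=\Z$ then gives $H_1(M(K);\Z[t^{\pm1}])\cong P_1\oplus P_2$, where $P_1$ and $P_2$ are the kernels of the inclusion-induced maps $H_1(M(K);\Z[t^{\pm1}])\to H_1(U^+;\Z[t^{\pm1}])$ and $H_1(M(K);\Z[t^{\pm1}])\to H_1(U^-;\Z[t^{\pm1}])$, and each $P_i$ is self-annihilating for the Blanchfield form. This is conclusion~(1) of Theorem~\ref{theorem:friedl-obstruction}.

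For conclusion~(2), I would take $\alpha\in P^{irr}_k(H_1(M(K);\Z[t^{\pm1}])\rtimes\Z)$ vanishing on, say, $P_1$; the case of $P_2$ is symmetric via $U^-$. Writing $\pi=\pi_1(U^+)$ and identifying $H_1(M(K);\Z[t^{\pm1}])\rtimes\Z$ with the metabelian quotient $\pi_1(M(K))/\pi_1(M(K))^{(2)}$, the hypothesis that $\alpha$ kills $P_1$---which by Proposition~\ref{proposition:homology-splitting} is precisely the kernel of $\pi_1(M(K))^{(1)}/\pi_1(M(K))^{(2)}\to\pi^{(1)}/\pi^{(2)}$---means, exactly as in Friedl's slice-disk argument, that $\alpha$ extends to a metabelian unitary representation $\tilde\alpha\colon\pi\to U(k)$ (after passing to a bounded number of parallel copies of $U^+$ and to an induced representation if necessary). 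Friedl's $4$-manifold description of the $\eta$-invariant then gives $\eta(M(K),\alpha)=\tfrac{1}{r}(\sign_{\tilde\alpha}(U^+)-k\sign(U^+))$ for the appropriate multiplicity $r$, and it remains to see that both signatures vanish because $U^+$ is a $1.5$-solution. The classes $[x_i]$ of Definition~\ref{definition:n-solution} span a half-rank Lagrangian for the ordinary intersection form, so $\sign(U^+)=0$; and their lifts $[\tilde x_i]\in H_2(U^+;\Z[\pi/\pi^{(2)}])$ satisfy $\lambda_2([\tilde x_i],[\tilde x_j])=\mu_2([\tilde x_i])=0$, so---since $\tilde\alpha$ factors through $\pi/\pi^{(2)}$---their images span an isotropic subspace of half the dimension of $H_2(U^+;\C^k_{\tilde\alpha})$, whence $\sign_{\tilde\alpha}(U^+)=0$. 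Thus $\eta(M(K),\alpha)=0$, which is conclusion~(2).

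The hard part will be the homological bookkeeping underlying the previous paragraph, which amounts to running Friedl's slice-case proof verbatim with $U^+$ in place of the slice disk exterior. Two points require care: (i) that $\alpha$, once it vanishes on $P_1$, really does produce a representation over $\pi$---the subtlety, already present for slice-disk exteriors, being that $H_1(M(K);\Z[t^{\pm1}])/P_1\hookrightarrow H_1(U^+;\Z[t^{\pm1}])$ need not be onto, so one invokes the induced-representation argument of~\cite{Friedl:2003-4}; and (ii) that $\dim_\C H_2(U^+;\C^k_{\tilde\alpha})$ equals twice the number of the $[\tilde x_i]$, i.e.\ that, with these twisted coefficients, $U^+$ is concentrated in degree $2$ with the expected rank, just like a slice-disk exterior---this is a twisted Euler-characteristic and Poincar\'e--Lefschetz duality computation of the sort carried out in~\cite{Casson-Gordon:1986-1, Friedl:2003-4}. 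Step~(ii) is exactly where the $1.5$-solution hypothesis, rather than mere $1$-solvability, is used: $\alpha$ is a metabelian representation, and the Lagrangian annihilating its twisted signature lives at the metabelian level $H_2(U^+;\Z[\pi/\pi^{(2)}])$ that only a $1.5$-solution supplies.
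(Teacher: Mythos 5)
Your handling of conclusion~(1) is exactly the paper's: apply Proposition~\ref{proposition:homology-splitting} to a double $1$-solution obtained from the double $1.5$-solution. For conclusion~(2), however, your route is genuinely different from the paper's, and it has a gap at the step you label~(ii). The paper does not compute a twisted signature defect of the solution at all: it invokes Friedl's Proposition~5.4 to rewrite $\eta(M(K),\alpha)$ as $\eta(M_k,\beta)$ for a $U(1)$-representation of the $k$-fold cyclic cover associated to a prime-power character $\chi$ on $H_1(L^k)$ vanishing on $P_i/(t^k-1)$, then uses the Cochran--Orr--Teichner theorem that $1.5$-solvability forces the Casson--Gordon invariant $\tau(K,\chi)$ to vanish, and finally uses Friedl's result (Proposition~5.3 of his related paper) that $\tau(K,\chi)=0$ implies $\eta(M_k,\beta)=0$. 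Your plan instead tries to reprove this vanishing directly from the intersection-form data of the $1.5$-solution.

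The gap is the claim that $\dim_\C H_2(U^+;\C^k_{\tilde\alpha})=2rk$, i.e.\ that with these metabelian twisted coefficients $U^+$ behaves ``just like a slice-disk exterior.'' By the twisted Euler characteristic (with $\tilde\alpha$ nontrivial, so the twisted $H_0$ and $H_4$ vanish) one only gets
\[
\dim_\C H_2(U^+;\C^k_{\tilde\alpha}) \;=\; 2rk \;+\; \dim_\C H_1(U^+;\C^k_{\tilde\alpha}) \;+\; \dim_\C H_3(U^+;\C^k_{\tilde\alpha}),
\]
and nothing in the definition of a $1.5$-solution controls these middle error terms: the $1.5$-condition only supplies the classes $\tilde x_i$, not any acyclicity at the metabelian level. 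Hence your $rk$-dimensional isotropic subspace need not be half-dimensional (even after dividing by the radical coming from the boundary), and $\sign_{\tilde\alpha}(U^+)=0$ does not follow; at best one gets a bound on the signature by the excess, not exact vanishing. This is exactly the phenomenon visible in the paper's own proof of Proposition~\ref{proposition:doubly-2-solvability-and-GL-obstructions}, where the analogous eigenspace Betti numbers $\overline{\beta_1}(V_m)$ are only estimated and the conclusion is an inequality rather than vanishing---and there a double $2$-solution, not merely $1.5$, is used. To obtain the exact vanishing asserted in Theorem~\ref{theorem:friedl-obstruction}(2) one needs the additional structure Friedl exploits (irreducibility of $\alpha$, prime-power orders, reduction to $U(1)$-representations of the finite cyclic cover, and the identification of the relevant obstruction with the Witt-class Casson--Gordon invariant), which is precisely why the paper routes the argument through $\tau(K,\chi)$ and the COT vanishing theorem instead of a direct signature computation on the solution. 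Your extension step~(i) is a secondary issue that Friedl's induced-representation argument does handle, but step~(ii) as stated would fail.
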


\begin{proof}
  Suppose that $K$ is doubly 1.5-solvable via $(W_1, W_2)$. Let $P_i$
  be the kernel of the map
  $H_1(M(K);\Z[t^{\pm 1}]) \to H_1(W_i; \Z[t^{\pm 1}])$. By
  Proposition~\ref{proposition:homology-splitting}, each $P_i$ is a
  self-annihilating submodule with respect to the Blanchfield form and
  $H_1(M(K);\Z[t^{\pm 1}])\cong P_1\oplus P_2$. Therefore the
  conclusion~(1) in Theorem~\ref{theorem:friedl-obstruction}
  holds. Let $k$ be a prime power and
  $\alpha\in P^{irr}_k(H_1(M(K);\Z[t^{\pm 1}])\rtimes \Z)$, following
  the notation in \cite{Friedl:2003-4}. Suppose $\alpha$ vanishes on
  either $P_1$ or~$P_2$. Let $M_k$ denote the $k$-fold cyclic cover of
  $M(K)$ and $L^k$ the $k$-fold cyclic cover of $S^3$ branched over
  $K$. Let $m$ be a prime power. By
  \cite[Proposition~5.4]{Friedl:2003-4},
  $\eta(M(K),\alpha) = \eta(M_k,\beta)$ for some 1-dimensional unitary
  representation $\beta:\pi_1M_k\to U(1)$ which is associated to a
  certain character $\chi\colon H_1(L^k)\to \Z_m$ and vanishes on
  $P_i/(t^k -1)$. Since $W_i$ are 1.5-solutions, the Casson-Gordon
  invariant $\tau(K, \chi) = 0$ (see
  \cite[Section~9]{Cochran-Orr-Teichner:1999-1}). By
  \cite[Proposition~5.3]{Friedl:2003-5}, we have
  $\eta(M_k, \beta) = 0$, and hence $\eta(M(K), \alpha)=0$. Therefore,
  the conclusion~(2) in Theorem~\ref{theorem:friedl-obstruction}
  holds.
\end{proof}

\subsection*{Livingston-Meier obstruction}

In \cite{Livingston-Meier:2015-1}, Livingston and Meier gave double
sliceness obstructions using twisted Alexander polynomials. Let $n$ be
a prime power and $p$ an odd prime. For a knot $K$, recall that $L^n$
denote the $n$-fold cyclic cover of $S^3$ branched over $K$. Let
$\zeta_p$ be a primitive $p$th root of unity and
$\Gamma_p:=\Q(\zeta_p)[t^{\pm 1}]$. For a homomorphism
$\rho\colon H_1(L^n)\to \Z_p$, let $\Delta_{K,\rho}(t)$ be the twisted
Alexander polynomial of $K$ associated to $\rho$, which is defined
in~\cite{Kirk-Livingston:1999-2}.

\begin{theorem}[{Livingston-Meier obstruction~\cite[Theorem~3.2]{Livingston-Meier:2015-1}}]
  \label{theorem:livingston-meier-obstruction}
  If $K$ is a doubly slice knot, then there exist subgroups $G_1$ and
  $G_2$ of $H_1(L^n)$ such that
  \begin{enumerate}
  \item $H_1(L^n)\cong G_1\oplus G_2$ where $G_i$ are invariant under
    the action of the covering transformations of $H_1(L^n)$,
  \item for every homomorphism $\rho\colon H_1(L^n)\to \Z_p$ which
    vanishes on $G_1$ or $G_2$, we have
    $\Delta_{K,\rho}(t)=af(t)\overline{f(t)}$ for some unit
    $a\in \Gamma_p$ and $f(t)\in \Gamma_p$.
  \end{enumerate}
\end{theorem}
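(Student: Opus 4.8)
The plan is to reconstruct the argument of Livingston and Meier, since this statement is \cite[Theorem~3.2]{Livingston-Meier:2015-1} and is used here only as an input. The starting point, exactly as in the proofs of the Gilmer--Livingston and Friedl obstructions above, is that an unknotted flat $2$-sphere $F\subset S^4$ has cyclic branched covers that are again $S^4$. First I would realize $K$ as the cross-section $F\cap S^3$ and write $D^\pm:=F\cap D^4_\pm$ for the two complementary (unknotted) slice disks. Since $n$ is a prime power, I would form the $n$-fold cyclic cover of $S^4$ branched over $F$; this is $S^4$ again, and inside it the $n$-fold cyclic cover of $(S^3,K)$---namely $L^n$---sits as a $\Z_n$-equivariant separating hypersurface, cutting $S^4$ into two pieces $V_1$ and $V_2$ which are the $n$-fold cyclic covers of $D^4_\pm$ branched over $D^\pm$. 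Because the $D^\pm$ are unknotted, each $V_i$ is a rational homology $4$-ball with $\partial V_i=L^n$, and the entire picture is equivariant for the deck group $\Z_n$.

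Next I would extract conclusion~(1) by setting $G_i:=\ker\bigl(H_1(L^n)\to H_1(V_i)\bigr)$. Because $V_1\cup_{L^n}V_2=S^4$ and each $V_i$ is a $\Q$-homology ball, the standard ``half-lives-half-dies'' Mayer--Vietoris argument for a rational homology sphere bounding rational homology balls on both sides yields $H_1(L^n)\cong G_1\oplus G_2$, and the $G_i$ are preserved by the covering transformations since the splitting $S^4=V_1\cup V_2$ is $\Z_n$-equivariant. For conclusion~(2) I would fix an odd prime $p$ and a homomorphism $\rho\colon H_1(L^n)\to\Z_p$ vanishing on, say, $G_1$, so that $\rho$ extends over $V_1$. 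Using the twisted Alexander polynomial formalism of \cite{Kirk-Livingston:1999-2}, $\Delta_{K,\rho}(t)$ is, up to a unit of $\Gamma_p=\Q(\zeta_p)[t^{\pm 1}]$, the order of the $\Gamma_p$-twisted Alexander module of $K$, equivalently a Reidemeister torsion. Since $V_1$ is a $\Q$-homology ball over which $\rho$ is defined, its $\Gamma_p$-twisted homology is essentially trivial, and Poincar\'e--Lefschetz duality on $V_1$ should exhibit $\ker\bigl(H_1(M(K);\Gamma_p)\to H_1(V_1;\Gamma_p)\bigr)$ as a submodule that is self-annihilating for the twisted Blanchfield pairing; feeding this into the twisted Fox--Milnor mechanism would give $\Delta_{K,\rho}(t)=a\,f(t)\overline{f(t)}$ for some $a\in\Gamma_p^\times$ and $f(t)\in\Gamma_p$.

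The hard part will be this last, twisted Fox--Milnor step over the non-field coefficient ring $\Gamma_p$: one must identify $\Delta_{K,\rho}$ with a torsion invariant that is genuinely symmetric under the involution $t\mapsto t^{-1}$ combined with conjugation on $\Q(\zeta_p)$, verify the self-annihilating property of the kernel submodule via the duality computation on the branched cover $V_1$ while controlling the torsion linking form over the Dedekind domain $\Gamma_p$, and pass carefully between $L^n$, its infinite cyclic cover, and the zero-surgery manifold $M(K)$. These are precisely the technical points addressed in \cite{Livingston-Meier:2015-1}, so in this paper the theorem is simply quoted and its proof is by reference.
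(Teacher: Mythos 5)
The paper offers no proof of this statement: it is quoted directly from Livingston--Meier, exactly as you conclude, so your treatment agrees with the paper's, and your sketch of the underlying argument (the $n$-fold branched cover of $S^4$ over the unknotted sphere giving $S^4=V_1\cup_{L^n}V_2$, the Mayer--Vietoris splitting $H_1(L^n)\cong G_1\oplus G_2$ with $G_i=\ker\bigl(H_1(L^n)\to H_1(V_i)\bigr)$ equivariant under the deck group, and the Kirk--Livingston twisted Fox--Milnor factorization applied to the slice disk over which $\rho$ extends) matches the strategy of the cited reference. One small correction: the $V_i$ are rational homology balls because $n$ is a prime power (so $L^n$ is a rational homology sphere, or equivalently by the standard slice-disk branched-cover argument), not because the disks $D^\pm$ are unknotted --- unknottedness of the sphere is what you need, and correctly use, only to identify the ambient branched cover with $S^4$.
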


\begin{proposition}
  \label{proposition:doubly-1.5-solvability-and-livingston-meier-obstruction}
  If a knot $K$ is doubly $1.5$-solvable, then $K$ has vanishing
  Livingston-Meier obstructions, that is, the conclusions of
  Theorem~\ref{theorem:livingston-meier-obstruction} hold.
\end{proposition}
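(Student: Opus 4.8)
The plan is to run the argument of Livingston and Meier~\cite{Livingston-Meier:2015-1} with the slice disk exteriors replaced by the two halves of a double $1.5$-solution, in the same way that Propositions~\ref{proposition:doubly-2-solvability-and-GL-obstructions} and~\ref{proposition:doubly-1.5-solvability-and-friedl-obstruction} adapt the Gilmer--Livingston and Friedl arguments. Fix a prime power $n$ and write $(W_1,W_2)$ for a double $1.5$-solution for $M(K)$; recall that $L^n$ denotes the $n$-fold cyclic cover of $S^3$ branched over $K$.

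First I would establish conclusion~(1) of Theorem~\ref{theorem:livingston-meier-obstruction}. Since a $1.5$-solution is in particular a $1$-solution, $(W_1,W_2)$ is a double $1$-solution, so Proposition~\ref{proposition:homology-splitting} with $R=\Z$ gives $H_1(M(K);\Z[t^{\pm 1}])\cong P_1\oplus P_2$, where $P_i=\ker\big(H_1(M(K);\Z[t^{\pm 1}])\to H_1(W_i;\Z[t^{\pm 1}])\big)$ and each $P_i$ is self-annihilating for the Blanchfield form. As in the proof of Proposition~\ref{proposition:doubly-2-solvability-and-GL-obstructions}, $H_1(M(K);\Z[t^{\pm 1}])$ is $\Z$-torsion-free, so reducing modulo $(t^n-1)$ yields $H_1(L^n)\cong G_1\oplus G_2$ with $G_i$ the image of $P_i$; since the $P_i$ are $\Z[t^{\pm 1}]$-submodules, the $G_i$ are invariant under the deck transformations of $L^n\to S^3$. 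This is exactly conclusion~(1).

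For conclusion~(2), fix an odd prime $p$ and a homomorphism $\rho\colon H_1(L^n)\to\Z_p$ vanishing on, say, $G_1$. Form the $4$-manifold $V_1$ obtained from the $n$-fold cover $(W_1)_n$ by attaching a $2$-handle along the meridian of the preimage of $K$, exactly as in the proof of Proposition~\ref{proposition:doubly-2-solvability-and-GL-obstructions}, so that $\partial V_1=L^n$. By the half-lives-half-dies property over the rational homology sphere $L^n$, together with the identification of $G_1$ with $\ker(H_1(L^n)\to H_1(V_1))$ coming from the Blanchfield splitting, the character $\rho$ extends to a homomorphism $H_1(V_1)\to\Z_p$. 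Following Kirk--Livingston~\cite{Kirk-Livingston:1999-2}, identify $\Delta_{K,\rho}(t)$, up to units, with the order over $\Gamma_p=\Q(\zeta_p)[t^{\pm 1}]$ of the $\rho$-twisted Alexander module of $K$, equipped with its twisted Blanchfield linking form valued in $\Q(\zeta_p)(t)/\Gamma_p$. The $\rho$-twisted intersection form of $V_1$ then provides, via a $\Gamma_p$-coefficient half-lives-half-dies argument, a half-rank submodule of this twisted Alexander module which is self-annihilating for the twisted Blanchfield form, and the existence of such a submodule forces its order $\Delta_{K,\rho}(t)$ to be of the form $a\,f(t)\,\overline{f(t)}$ for a unit $a\in\Gamma_p$ and some $f(t)\in\Gamma_p$. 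The symmetric argument with $W_2$ and $G_2$ handles characters vanishing on $G_2$, completing conclusion~(2).

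The step I expect to be the main obstacle is the twisted half-lives-half-dies input: one must know that the $\rho$-twisted homology of $V_1$ behaves like that of a slice-disk exterior, that is, that the $\rho$-twisted intersection form of $V_1$ contains a metabolizer of half the rank. This is the analog of the vanishing of the Casson--Gordon invariant used in the Friedl case (``$\tau(K,\chi)=0$'' because $W_i$ is a $1.5$-solution, cf.~\cite[\S9]{Cochran-Orr-Teichner:1999-1}), and I would deduce it in the same manner from $W_1$ being a $1.5$-solution, using the argument of~\cite[\S9]{Cochran-Orr-Teichner:1999-1} (or, for a uniform treatment, the amenable signature theorem of~\cite{Cha:2010-1}). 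Once this is in hand, the norm factorization of $\Delta_{K,\rho}(t)$ is a formal consequence of the algebra of Hermitian linking forms over $\Gamma_p$.
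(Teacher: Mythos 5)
Your argument for conclusion~(1) is the same as the paper's: apply
Proposition~\ref{proposition:homology-splitting} with $R=\Z$, pass to the quotient by
$(t^n-1)$ as in the proof of
Proposition~\ref{proposition:doubly-2-solvability-and-GL-obstructions}, and observe deck
invariance.  For conclusion~(2) you take a genuinely different, more hands-on route than
the paper.  The paper is short here: it notes that $G_1$ and $G_2$ are the torsion
subgroups of $H_1(U^+_n)$ and $H_1(U^-_n)$, so a character vanishing on one of them is of
the type covered by \cite[Theorem~9.11]{Cochran-Orr-Teichner:1999-1}, whose proof (applied
to the relevant $1.5$-solution) gives vanishing of the Casson--Gordon discriminant
invariant; the factorization $\Delta_{K,\rho}(t)=af(t)\overline{f(t)}$ is then exactly the
content of \cite[Theorem~6.5]{Kirk-Livingston:1999-2}.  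You instead propose to rebuild
this package: construct $V_1$ from $(W_1)_n$ by a $2$-handle attachment, extract a
metabolizer for a $\Gamma_p$-twisted Blanchfield form from the twisted intersection form
of $V_1$, and deduce the norm factorization from the algebra of Hermitian linking forms.
That is workable, but be aware of what it costs: the ``twisted half-lives-half-dies''
step you flag as the main obstacle is not a formality for a $1.5$-solution --- it is
precisely the technical content of \cite[Theorem~9.11]{Cochran-Orr-Teichner:1999-1}
(the $\lambda_{n+1}$-vanishing on the lifted classes is what produces the metabolizer,
and one must use the metabolizer coming from the solution itself, which is why the paper
identifies $G_i$ with the torsion of $H_1(U^\pm_n)$ so that the character's vanishing
hypothesis matches).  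Likewise, identifying $\Delta_{K,\rho}(t)$ with the order of a
module carrying a nonsingular twisted Blanchfield form, and keeping track of the
normalization (extra cyclotomic or $(t-1)$-type factors), is exactly what
\cite[Theorem~6.5]{Kirk-Livingston:1999-2} packages; if you re-derive it you must handle
those normalizations explicitly.  In short, your outline is correct, but the two
steps you defer are the two citations the paper uses, so the paper's proof is the
citation-level compression of your plan; what your version would buy is an argument
phrased directly in terms of twisted Blanchfield forms, avoiding the Casson--Gordon
discriminant as an intermediary.
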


\begin{proof}
  Suppose $K$ is doubly $1.5$-solvable via $(U^+,U^-)$. Using the same
  arguments as in the proof of
  Proposition~\ref{proposition:doubly-2-solvability-and-GL-obstructions},
  we can show that there are subgroups $G_i$ for $i=1,2$ such that
  $H_1(L^n)\cong G_1\oplus G_2$. Since $G_i$ are obtained by taking a
  quotient of a $\Z[t^{\pm 1}]$-module by $t^n-1$, $G_i$ are invariant
  under the action of the covering transformations.  This shows~(1).

  Suppose that $\rho\colon H_1(L^n)\to \Z_p$ vanishes on $G_1$ or
  $G_2$. Using the notations in the proof of
  Proposition~\ref{proposition:doubly-2-solvability-and-GL-obstructions},
  observe that $G_1$ and $G_2$ are the torsion subgroups of
  $H_1(U^+_n)$ and $H_1(U^-_n)$ respectively.  Then by
  \cite[Theorem~9.11]{Cochran-Orr-Teichner:1999-1} and its proof, the
  Casson-Gordon discriminant invariant $K$ associated to $\rho$
  vanishes. Then by \cite[Theorem~6.5]{Kirk-Livingston:1999-2},
  $\Delta_{K,\rho}(t)$ has a desired factorization required in~(2).
\end{proof}

\bibliographystyle{amsalpha}
\renewcommand{\MR}[1]{}
\bibliography{research}

\end{document}